\definecolor{darkgreen}{rgb}{0,0.45,0}
\renewcommand\subsubsection{\@startsection{subsubsection}{3}{\z@}%
                                     {-3.25ex\@plus -1ex \@minus -.2ex}%
                                     {-1.5ex \@plus -.2ex}
                                     {\normalfont\normalsize\bfseries}}
\let\LaTeXStandardTableOfContents\tableofcontents
\renewcommand{\tableofcontents}{%
\begingroup%
\renewcommand{\bfseries}{\relax}%
\LaTeXStandardTableOfContents%
\endgroup%
}%
\numberwithin{equation}{section}
\newtheorem{defn}{Definition}[section]
\newtheorem{lem}[defn]{Lemma}
\newtheorem{prop}[defn]{Proposition}
\newtheorem{cor}[defn]{Corollary}
\newtheorem{thm}[defn]{Theorem}
\newtheorem{thmx}{Theorem}
\theoremstyle{remark}
\newtheorem{rmk}[defn]{Remark}
\newtheorem{nota}[defn]{Notation}
\newtheorem{constr}[defn]{Construction}
\newcommand{\addQEDstyle}[2]{\AtBeginEnvironment{#1}{\pushQED{\qed}\renewcommand{\qedsymbol}{#2}}\AtEndEnvironment{#1}{\popQED}}
\def\noteson{\gdef\note##1{\noindent{\color{blue}[##1]}}}
\newcommand{\define}[1]{\textbf{\boldmath{#1}}}
\newcommand{\leng}[1]{{\scriptscriptstyle #1}}
\newcommand{\Z}{\mathbb{Z}}
\newcommand{\R}{\mathbb{R}}
\newcommand{\B}{\mathcal{B}}
\newcommand{\N}{\mathbb{N}}
\newcommand{\M}{\mathcal{M}}
\newcommand{\Mob}{\mathsf{Mob}}
\newcommand{\U}{\mathcal{U}}
\newcommand{\bO}{\mathcal{O}}
\newcommand{\V}{\mathcal{V}}
\renewcommand{\P}{\mathsf{Proj}}
\newcommand{\VR}{\mathsf{VR}}
\newcommand{\cl}{\mathsf{Cl}}
\newcommand{\Ten}{\mathsf{T}}
\newcommand{\Met}{\mathbf{Met}}
\newcommand{\Set}{\mathbf{Set}}
\newcommand{\id}{\mathsf{id}}
\newcommand{\GR}[1]{\mathsf{Gr}(#1)}
\newcommand{\ST}[1]{\mathsf{V}(#1)}
\renewcommand{\phi}{\varphi}
\DeclareMathOperator{\trace}{tr}
\DeclareMathOperator{\med}{med}
\DeclareMathOperator{\reach}{reach}
\DeclareMathOperator*{\colim}{colim}
\newcommand{\OP}{\mathsf{op}}
\newcommand{\vv}{\mathsf{cl}}
\newcommand{\Pin}{\mathsf{Pin}}
\newcommand{\Spin}{\mathsf{Spin}}
\newcommand{\ad}{\rho}
\newcommand{\av}{\mathsf{av}}
\newcommand{\atl}{\mathsf{triv}}
\newcommand{\bw}{\mathsf{w}}
\newcommand{\st}{\mathsf{st}}
\newcommand{\sw}{\mathsf{sw}}
\newcommand{\eu}{\mathsf{eu}}
\newcommand{\Prin}{\mathsf{Prin}}
\newcommand{\Vect}{\mathsf{Vect}}
\newcommand{\diam}{\mathsf{diam}}
\newcommand{\cov}{\mathsf{cov}}
\newcommand{\so}{\mathfrak{so}}
\newcommand{\sysp}{\mathsf{sys}}
\newcommand{\cech}{\mathsf{\check{C}}}
\newcommand{\cechm}{\mathsf{\check{C}}}
\newcommand{\cont}[2]{\mathsf{Maps}\left( {#1}, {#2} \right)}
\newcommand{\hcont}[2]{\left\lbrack {#1}, {#2} \right\rbrack}
\newcommand{\apprH}[4]{\check{\mathsf{H}}^{#1}_{#2}\left( #3 ; #4 \right)}
\newcommand{\aH}[1]{\apprH{1}{#1}{\U}{O(d)}}
\newcommand{\apprZ}[4]{\mathsf{Z}^{#1}_{#2}\left( #3 ; #4 \right)}
\newcommand{\aZ}[1]{\apprZ{1}{#1}{\U}{O(d)}}
\newcommand{\dapprH}[4]{\mathsf{DH}^{#1}_{#2}\left( #3 ; #4 \right)}
\newcommand{\dapprZ}[4]{\mathsf{DZ}^{#1}_{#2}\left( #3 ; #4 \right)}
\newcommand{\daZ}[1]{\dapprZ{1}{#1}{K}{O(d)}}
\newcommand{\apprA}[3]{\mathsf{T}_{#1}\left( #2 ; #3 \right)}
\newcommand{\aA}[1]{\apprA{#1}{\U}{d}}
\newcommand{\dapprA}[3]{\mathsf{DT}_{#1}\left( #2 ; #3 \right)}
\newcommand{\daA}[1]{\dapprA{#1}{K}{d}}
\newcommand{\dFrsa}{\mathsf{d}_{\mathsf{Fr}}}
\newcommand{\disHsa}{\mathsf{d}_{\mathsf{\check{H}}}}
\newcommand{\dZsa}{\mathsf{d}_{\mathsf{Z}}}
\newcommand{\dAsa}{\mathsf{d}_{\mathsf{T}}}
\newcommand{\dqsa}{\mathsf{d}_{\mathsf{q}}}
\newcommand{\dCsa}{\mathsf{d}_{\mathsf{C}}}
\newcommand{\dFr}[2]{\dFrsa\left( #1, #2 \right)}
\newcommand{\disH}[2]{\disHsa\left( #1, #2 \right)}
\newcommand{\dZ}[2]{\dZsa\left( #1, #2 \right)}
\newcommand{\dA}[2]{\dAsa\left( #1, #2 \right)}
\newcommand{\dq}[2]{\dqsa\left( #1, #2 \right)}
\newcommand{\dC}[2]{\dCsa\left( #1, #2 \right)}
\renewcommand{\epsilon}{\varepsilon}
\begin{document}

\title{Approximate and discrete Euclidean vector bundles\thanks{This work was partially supported by the National Science Foundation through grants CCF-2006661
and CAREER award  DMS-1943758.}}
\date{\vspace*{-0.5cm}}

\author{Luis Scoccola\thanks{Department of Mathematics, Northeastern University}\,\, and Jose A. Perea$^{\dagger}$\thanks{Department of Mathematics and Khoury College of Computer Sciences, Northeastern University}}

\maketitle

\begin{abstract}
    We introduce $\epsilon$-approximate versions of the notion of Euclidean vector bundle for $\epsilon \geq 0$, which recover the classical notion of Euclidean vector bundle when $\epsilon = 0$.
    In particular, we study \v{C}ech cochains with coefficients in the orthogonal group that satisfy an approximate cocycle condition.
    We show that $\epsilon$-approximate vector bundles can be used to represent classical vector bundles when $\epsilon > 0$ is sufficiently small.
    We also introduce distances between approximate vector bundles and use them to prove that sufficiently similar approximate vector bundles represent the same classical vector bundle.
    This gives a way of specifying vector bundles over finite simplicial complexes using a finite amount of data, and also allows for some tolerance to noise when working with vector bundles in an applied setting.
    As an example, we prove a reconstruction theorem for vector bundles from finite samples.
    We give algorithms for the effective computation of low-dimensional characteristic classes of vector bundles directly from discrete and approximate representations and illustrate the usage of these algorithms with computational examples.
\end{abstract}

\renewcommand{\thefootnote}{\fnsymbol{footnote}}
\footnotetext{2020 \emph{Mathematics Subject Classification.} Primary 55R99, 55N31, 68W05; Secondary 55U99.}
\renewcommand{\thefootnote}{\arabic{footnote}}

\vspace*{-0.5cm}
\tableofcontents

\newpage
\section{Introduction}

\subsection{Context and problem statement}

The notion  of \textit{fiber bundle} is fundamental in Mathematics and Physics (\cite{HJJS, VBR2, VBR1,Bott}).
Informally, a fiber bundle with fiber $F$ consists of a continuous function $p: Y \to X$ from the \textit{total space} $Y$ to the \textit{base space} $X$,
that, locally, looks like a projection $U \times F \to U$, in the following sense: $X$ can be covered by open sets $U \subseteq X$, each equipped with a homeomorphism $i : U \times F \to p^{-1}(U)$ such that $(p\circ i)(x,f) = x$ for every $(x,f) \in U\times F$.
In particular, $p^{-1}(x)$ is homeomorphic to $F$ for every $x \in X$.
\textit{Vector bundles} are fiber bundles for which $F$ is a vector space, and a key example is the tangent bundle $T\M \to \M$ of a  $d$-dimensional differentiable manifold $\M$.
The fiber of this bundle is $\R^d$, as the tangent space at each point of $\M$ is $d$-dimensional.
The M\"obius band $\Mob \to S^1$ is another example of a vector bundle,  interpreted as a collection of $1$-dimensional real vector spaces that change orientation as one goes around the equatorial circle $S^1$.
Of particular interest are fiber bundles for which the fiber $F$ is only allowed to ``twist'' according to a certain group $G$ of automorphisms of $F$; these correspond to \textit{principal $G$-bundles}.
Vector bundles can be identified with principal $G$-bundles with $G$ the general linear group, while   \textit{Euclidean vector bundles}
arise when $G$ is the orthogonal group.

Many problems in Mathematics and Physics can be reduced to finding  sections of a fiber bundle---i.e., maps $s : X\to Y$ for which $p \circ s = \id_X$---satisfying certain properties.
For this reason, one is interested in finding computable obstructions to the existence of certain sections, and, more generally, in defining computable invariants of fiber bundles that can aid in their classification up to isomorphism.

\textit{Characteristic classes}
are   examples of such invariants  (\cite{MS}).
Indeed, any principal bundle determines a collection of elements in the cohomology of its base space, called the characteristic classes of the bundle.
This is done in such a way that isomorphic bundles have the same characteristic classes.
The \textit{Stiefel--Whitney classes} of a vector bundle
are a particular type of characteristic class,
which  provide obstructions to solving  several geometric problems.
For instance, the first Stiefel--Whitney class  determines whether or not the vector bundle is orientable,
while, for a differentiable manifold $\M$,   the Stiefel--Whitney classes of its tangent bundle   provide obstructions to embedding $\M$ in $\R^n$ for  small $n$.
Similarly, the \textit{Euler class} of an oriented vector bundle
is yet another characteristic class, which provides an obstruction to the existence of a nowhere vanishing section.

Part of the ubiquity of principal bundles stems from the fact that they can be defined in several, a posteriori, equivalent ways.
Of particular interest to us are: (1) the definition of principal $G$-bundles by means of $G$-valued \textit{\v{C}ech cocycles}, which, roughly speaking, consist of local data on the base space specifying how the fibers must be glued  to reconstruct the total space; and
(2) the definition of principal $G$-bundles by means of \textit{classifying maps}, which are continuous functions from the base space to a certain \textit{classifying space} $\B G$.

Principal bundles and their characteristic classes appear also in practical applications.
Many synchronization problems, in which independent, local measurements need to be assembled into a global quantity, can be interpreted as the problem of trivializing a  \v{C}ech cochain of pairwise alignments.
Dimensionality reduction problems, where a high-dimensional point cloud concentrated around a low-dimensional manifold needs to be represented with low distortion in a low-dimensional space, can be interpreted
as an embedding problem for which estimates of the tangential characteristic classes
can provide obstructions.
Although it is informally clear that vector bundles are relevant for these kinds of problems,   the discrete and noisy nature of the input data makes it unclear whether the data actually determine a true vector bundle, and whether topological information of this bundle can be extracted from the noisy and incomplete input.

We identify two main difficulties for working with vector bundles in a practical setting.
One comes from a discrete aspect: mathematically, vector bundles are continuous entities specified, for instance, by (continuous) cocycles or by classifying maps.
How can one specify arbitrary vector bundles on, say, the geometric realization of a finite simplicial complex using a finite amount of data?
The other difficulty comes from the fact that, in practical applications, nothing is exact (e.g., cocycle conditions from noisy pairwise alignments) so one needs a notion of vector bundle that is robust to some degree of noise.
Although the results in this paper are mainly theoretical, they are motivated by problems in applied topology.
In the rest of this introduction, we describe some of these problems in more detail.

\subsubsection{Synchronization and cocycles.}
\label{synchronization-subsection}
Consider a synchronization problem with input a set $V$ of local measurements  that are pairwise aligned by elements $\{g_{ij}\}_{i,j \in V}$ of a group $G$.
One instance of this problem arises in cryogenic electron microscopy (cryo-EM), where, broadly speaking, one seeks to reconstruct the 3D shape of a molecule from several 2D projections taken from unknown viewing directions (\cite{F,vHea}).
Here the measurements are the 2D pictures, which are pairwise aligned by elements of the rotation group $SO(2)$.
Figure \ref{fig:example1} below shows   examples
of the input data for this kind of problem.

\begin{figure}[!htb]
    \centering
    \includegraphics[width=0.28\textwidth]{"./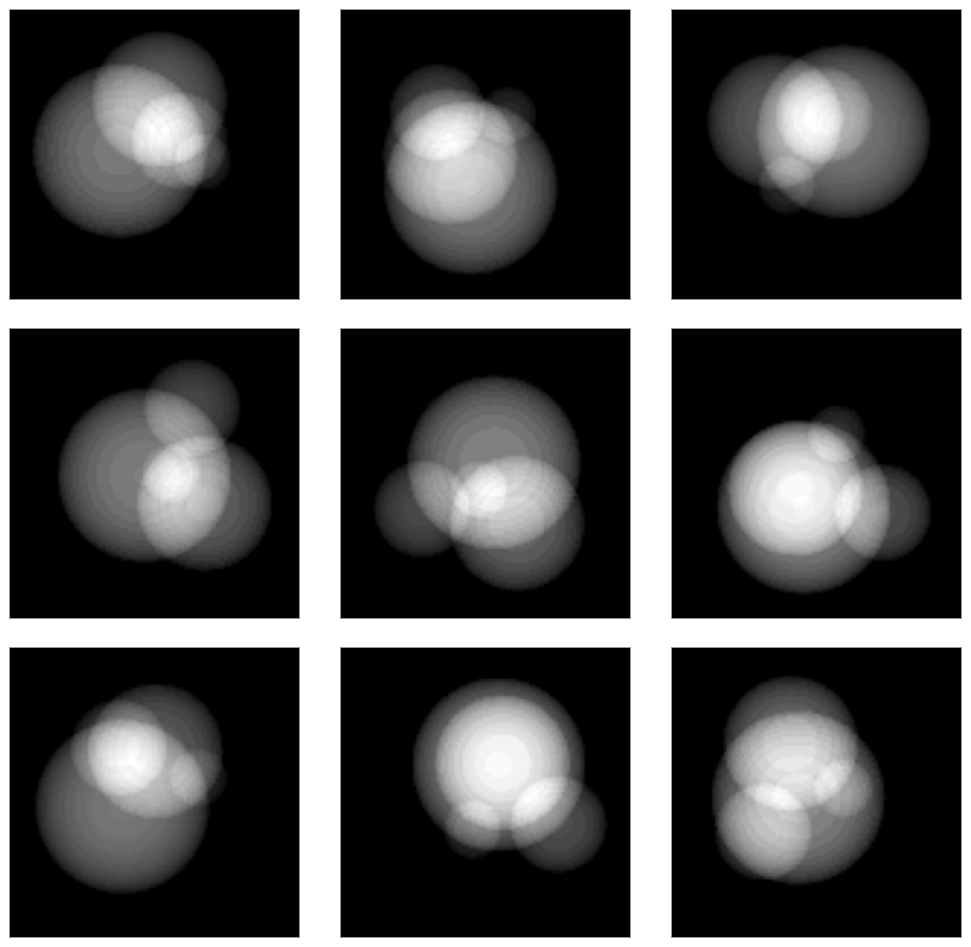"}
    \caption{2D projections of an unknown 3D shape.}\label{fig:example1}
\end{figure}

Letting $K$ be the graph with vertex set $V$ and an edge $(ij)$ if $g_{ij}$ aligns $i$ and $j$ sufficiently well, one expects the family of elements $\{g_{ij}\}_{(ij) \in K}$ to satisfy an \textit{approximate cocycle condition}.
This means that if $(ij)$, $(jk)$, and $(ik)$ are edges of $V$, then $\|g_{ij} g_{jk} - g_{ik}\| < \epsilon$ for some sufficiently small $\epsilon > 0$.
This condition indicates that the measurements can be approximately locally synchronized: in the case of cryo-EM, this implies that sets of 2D projections with very similar viewing directions can  be aligned simultaneously; this is key for averaging and thus denoising images with similar viewing directions.
Global synchronization is a different matter: in the case of cryo-EM we do not expect to be able to align all 2D projections simultaneously.
This is justified in \cite{SZSH} by observing that the pairwise alignments $\{g_{ij}\}$ approximate an $SO(2)$-cocycle  representing the tangent bundle of the $2$-sphere, which is non-trivial.
We will demonstrate in \cref{example-3} how a
data-derived estimate
of the associated Euler class yields a non-trivial obstruction
to globally aligning the data from \cref{fig:example1}.

In the general case, if the approximate cocycle $\{g_{ij}\}$ does indeed determine a true principal $G$-bundle, then global synchronization corresponds to the bundle being trivial.
This view of synchronization is studied in-depth in \cite{GBM} for the case of flat principal $G$-bundles (i.e., bundles determined by a group morphism from $\pi_1$ of the base to $G$).
However, as it is observed in \cite{YL}, the bundles underlying the cryo-EM problem are not flat.

To our knowledge, the problem of determining when an arbitrary approximate $O(d)$-valued cocycle defines a true vector bundle has not yet been addressed in the literature.
This is one of the goals of this paper.
Possible applications of a general theory of approximate $O(d)$-cocycles are the usage of characteristic classes of the underlying true vector bundle for model validation,   detecting non-synchronizability in data, and for guiding local synchronization and averaging algorithms.

\subsubsection{Local trivializations and dimensionality reduction.}
\label{motivation-local-pca}
Consider a point cloud   $X \subseteq \R^D$ concentrated around a $d$-dimensional embedded manifold $\M$,
and the ensuing problem of extracting information about the tangent bundle $T \M \to \M$ from the sample $X$.
Possible applications of this include using the tangential characteristic classes as computable obstructions for
local-to-global dimensionality reduction on $X$.

Here is an  example of such an approach. Fix $k \in \N$ and let $N_x$ denote the set of the $k$ closest neighbors of $x$ in $X$.
The application of Principal Components Analysis (PCA) to each $N_x$ with target dimension $d$
yields an ordered orthonormal basis $\Phi(x)$ of the $d$-dimensional linear subspace that best approximates $N_x$.
This method, or variations thereof, is sometimes known as \textit{local PCA} (\cite{S2}) and can be used, for instance, to estimate the local dimension of the data (\cite{LLJM}).

The aforementioned process defines a function
$\Phi : X \to \ST{d,D}$ from the data to the Stiefel manifold of orthonormal $d$-frames in $\R^D$,
which can be interpreted as an approximate vector bundle given by local trivializations.
One expects this construction to approximate the tangent bundle of the manifold $\M$. Our goal here is to formalize
this intuition, and, in particular,
to quantify the extent to which an approximate local trivialization induces a true vector bundle, and whether topological information of the bundle can be extracted from the approximation.

We apply these ideas in \cref{example-1}
to the problem of distinguishing non-homeomorphic but homotopy equivalent attractors
in the double-gyre dynamical system.
Indeed, we do so by combining
local trivializations and data-driven estimates
of the first tangential Stiefel--Whitney class.
Moreover, we also demonstrate in \cref{example-2} how the first two Stiefel--Whitney
classes can be estimated from data to yield
non-trivial obstructions  to dimensionality reduction
and parallelization.

%

\subsection{Contributions}
In this paper, we introduce relaxations and discretizations of the notion of vector bundle, we determine the extent to which these approximate representations determine true vector bundles, and we give algorithms for the effective computation of low-dimensional characteristic classes of the true vector bundle directly from approximate and discrete representations.
We run these algorithms on examples to illustrate their usage.

The notions of vector bundle we focus on correspond to Euclidean vector bundles, that is, to vector bundles endowed with a compatible, fiberwise inner product, or equivalently, vector bundles whose structure group is an orthogonal group.
To avoid cumbersome nomenclature, we drop the modifier ``Euclidean'' in our main definitions.

\paragraph{Approximate notions of vector bundle.}
Let $\epsilon \geq 0$.
For a topological space $B$ with an open cover $\U$ we define the set of $\epsilon$-approximate $O(d)$-valued $1$-cocycles $\aZ{\epsilon}$ (\cref{definition:approximate-cocycle}), which we use to define the $\epsilon$-approximate cohomology set $\aH{\epsilon}$ (\cref{definition:approximate-cohomology-set}).
We also introduce the set of $\epsilon$-approximate classifying maps $\cont{B}{\GR{d}^\epsilon}$ (\cref{definition:approximate-classifying-map}), which consists of continuous maps with codomain a thickened Grassmannian (\cref{thickenings-grassmannians-background}), and the set of $\epsilon$-approximate classifying maps up to homotopy $\hcont{B}{\GR{d}^\epsilon}$.
In order to relate these notions, we define the set of $\epsilon$-approximate local trivializations $\aA{\epsilon}$ (\cref{approx-local-triv-def}).
We introduce metrics on these sets, the most relevant of which being the metric $\disHsa$ on $\aH{\epsilon}$, which we use to state stability results, below.

When $B$ is a paracompact topological space and $\U$ is a countable open cover, we define maps between the sets, as follows:
\[
    \begin{tikzpicture}
      \matrix (m) [matrix of math nodes,row sep=2em,column sep=8em,minimum width=2em,nodes={text height=1.75ex,text depth=0.25ex}]
        { \aZ{\epsilon} & \aA{\epsilon} & \cont{B}{\GR{d}^{\epsilon}}\\
          \aH{\epsilon} &  & \hcont{B}{\GR{d}^{\epsilon}} . \\};
        \path[line width=0.75pt, -{>[width=8pt]}]
        (m-1-1) edge [bend left=10] node [above] {$\atl$} node [at end, above] {$\leng{1}$} (m-1-2)
                edge node [left] {} node [at end, right] {$\leng{1}$} (m-2-1)
        (m-1-2) edge node [above] {$\av$} node [at end, above] {$\leng{\sqrt{2}}$} (m-1-3)
                edge [bend left=10] node [below] {$\bw$} node [at end, below] {$\leng{3}$} (m-1-1)
        (m-1-3) edge node [right] {} node [at end, left] {$\leng{1}$} (m-2-3)
        (m-2-1) edge node [above] {$\vv$} node [at end, below] {$\leng{\sqrt{2}}$} (m-2-3)
        ;
    \end{tikzpicture}
\]
Here, a sub- or superscript with a constant $c$ at an arrow tip indicates that the map sends one kind of $\epsilon$-approximate vector bundle to another kind of $c\epsilon$-approximate vector bundle.
The maps $\atl$ and $\bw$ are defined in \cref{section-cocycles-and-local-trivializations}, the map $\av$ is defined in \cref{atlas-to-class-construction}, and the map $\vv$ is defined in \cref{main-thm-class-map}, which, in particular, also implies that the diagram above commutes when going from top left to bottom right.
In \cref{approximate-equivalence} we describe a way in which $\atl$ and $\bw$ are approximate inverses of each other.

By a result of Tinarrage (\cite[Lemma~2.1]{tinarrage}), there is a bijection $\pi_* : \hcont{B}{\GR{d}^{\epsilon}} \to \hcont{B}{\GR{d}}$ whenever $\epsilon \leq \sqrt{2}/2$.
In particular, when $\epsilon \leq 1/2$, the composite
\[\pi_* \circ \vv \,\,:\,\, \aH{\epsilon} \to \hcont{B}{\GR{d}}
\]
assigns a true (classical) vector bundle to every $\epsilon$-approximate cohomology class.
In this sense, an $\epsilon$-approximate cocycle determines a true vector bundle, as long as $\epsilon$ is sufficiently small.

The following result, which appears as \cref{main-thm-class-map} and \cref{relation-to-true-vb}, says in particular that the map $\vv$ is stable.

\begin{thmx}
    \label{thmA}
For any $\epsilon \geq 0$, the map $\vv : \aH{\epsilon} \to \hcont{B}{\GR{d}^{\sqrt{2}\epsilon}}$ is independent of arbitrary choices, such as a choice of a partition of unity subordinate to $\U$ or a choice of enumeration of the opens of $\U$, and is such that, if $\disH{\Omega}{\Lambda} < \delta$ in $\aH{\epsilon}$, then $\vv(\Omega)$ and $\vv(\Lambda)$ become equal in $\hcont{B}{\GR{d}^{\sqrt{2}(\epsilon + \delta)}}$.
    Moreover, if $\Omega \in \aH{\epsilon}$ and $\epsilon \leq \sqrt{2}/4$, then there exists $\Lambda \in \aH{}$ such that $\pi_*(\vv(\Omega)) = \vv(\Lambda) \in \hcont{B}{\GR{d}}$ and $\disH{\Omega}{\Lambda} \leq 9\epsilon$.
\end{thmx}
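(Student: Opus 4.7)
The plan is to decompose $\vv$ by reading off the diagram as $\vv = [\av \circ \atl]$, so that $\vv$ sends a cohomology class $\Omega$ to the homotopy class of $\av(\atl(\omega)) : B \to \GR{d}^{\sqrt{2}\epsilon}$ for any cocycle representative $\omega \in \Omega$. The argument then splits into three steps: well-definedness and independence of choices, stability, and construction of an approximating true cocycle.

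\textbf{Step 1 (independence and well-definedness).} The composite $\av \circ \atl$ depends a priori on a partition of unity $\{\psi_U\}_{U \in \U}$ subordinate to $\U$ and on an enumeration of $\U$. I would interpolate these choices linearly: two partitions of unity are joined by the straight-line convex combination, which remains a partition of unity, and the induced family of maps into $\GR{d}^{\sqrt{2}\epsilon}$ is a continuous homotopy; enumeration-independence follows because $\av$ depends symmetrically on its inputs up to homotopy of the averaging weights. For well-definedness on $\aH{\epsilon}$, if $\omega, \omega' \in \aZ{\epsilon}$ differ by an approximate $0$-coboundary, the coboundary produces a canonical linear path inside $\aZ{\epsilon}$, which pushes forward to the desired homotopy between $\av(\atl(\omega))$ and $\av(\atl(\omega'))$.

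\textbf{Step 2 (stability).} If $\disH{\Omega}{\Lambda} < \delta$, I would choose representatives $\omega \in \Omega$ and $\lambda \in \Lambda$ whose coboundary-difference is bounded by $\delta$, and interpolate them linearly through the inflated set $\aZ{\epsilon + \delta}$. Applying the homotopy construction of Step 1 at inflation level $\epsilon + \delta$ then produces a homotopy between $\av(\atl(\omega))$ and $\av(\atl(\lambda))$ inside $\GR{d}^{\sqrt{2}(\epsilon + \delta)}$, which is exactly the claimed stability.

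\textbf{Step 3 (approximation by a true cocycle).} Given $\Omega \in \aH{\epsilon}$ with $\epsilon \leq \sqrt{2}/4$, the class $\vv(\Omega)$ lies in $\hcont{B}{\GR{d}^{\sqrt{2}\epsilon}}$ with $\sqrt{2}\epsilon \leq 1/2 \leq \sqrt{2}/2$, so Tinarrage's lemma identifies $\pi_*(\vv(\Omega))$ with a genuine classifying map $g : B \to \GR{d}$, together with a short homotopy inside $\GR{d}^{\sqrt{2}\epsilon}$ from $g$ to a representative of $\vv(\Omega)$. I would then transfer $g$ back to the cocycle side of the diagram: choose compatible local frames for the pullback along $g$ of the tautological $d$-plane bundle over $\GR{d}$ to produce a strict local trivialization $\sigma \in \aA{0}$, and set $\Lambda := \bw(\sigma) \in \aZ{0}$. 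Then $\vv(\Lambda) = g = \pi_*(\vv(\Omega))$ by commutativity of the diagram. The bound $\disH{\Omega}{\Lambda} \leq 9\epsilon$ should follow by combining the cost of the Tinarrage retraction (at most $\sqrt{2}\epsilon$), the $\sqrt{2}$ inflation incurred by $\av$, and the factor-$3$ inflation of $\bw$, via a careful triangle-inequality estimate. The main obstacle I anticipate is here: ensuring that $\vv(\Lambda)$ equals $\pi_*(\vv(\Omega))$ on the nose (rather than up to a further homotopy) and squeezing the numerical constant down to exactly $9$ will require a quantitative version of Tinarrage's retraction together with estimates that relate the frame-level distance $\disHsa$ directly to distances between induced Grassmannian maps, rather than the looser bound obtained from naive composition of the multiplicative factors $\sqrt{2}, \sqrt{2}, 3$ appearing in the diagram.
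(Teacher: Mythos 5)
Your overall skeleton (factor $\vv$ through $\atl$ and $\av$, interpolate partitions of unity, get stability from Lipschitz estimates, and in the last part retract via Tinarrage and come back through $\bw$) matches the paper, but two of your key mechanisms do not work as stated. First, in Steps 1 and 2 you repeatedly invoke a ``canonical linear path'' between cocycles inside $\aZ{\epsilon}$ (between $\omega$ and $\Theta\cdot\omega$, and between nearby representatives at inflated level $\epsilon+\delta$). Linear interpolation of $O(d)$-valued cochains leaves $O(d)$, and no path need exist at all: for $d=1$ the group $O(1)=\{\pm 1\}$ is discrete, and a $0$-cochain $\Theta$ with $\Theta_2\equiv -1$ turns the constant cocycle $+1$ into $-1$, which lies in a different path component of $\aZ{\epsilon}$. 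So well-definedness on $\aH{\epsilon}$ cannot be obtained by pushing forward a path of cocycles; the paper instead uses a cover-doubling trick (duplicate each open of $\U$, build on the doubled cover a single $\epsilon$-approximate cocycle interpolating $\Omega$ and $\Theta\cdot\Omega$ via $\Theta$, and evaluate $\vv'$ with two different partitions of unity), and the homotopies for stability are linear homotopies \emph{in the thickened Grassmannian} after applying the $\sqrt{2}$-Lipschitz maps, i.e.\ $\dC{\av(\atl(\Omega))}{\av(\atl(\Lambda))}\leq\sqrt{2}\,\dZ{\Omega}{\Lambda}$ followed by the straight-line homotopy in $\GR{d}^{\sqrt{2}(\epsilon+\delta)}$. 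Also, enumeration-independence is not a formal symmetry statement; it needs the infinite-swindle homotopy $\chi^{\iota}\simeq\chi^{\iota'}$ on $\ST{d}$.

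Second, in Step 3 the choice of ``compatible local frames for the pullback along $g$ of the tautological bundle'' gives you an exact cocycle representing the right bundle, but no control whatsoever on $\disH{\Omega}{\Lambda}$: an arbitrary exact trivialization of that bundle can be arbitrarily far from $\atl^\phi(\Omega)$. The missing idea is the specific construction used in the paper: with $\Phi=\atl^\phi(\Omega)$, set $\Psi_i(y):=Q\bigl(\pi_*(\av^\phi(\Phi)(y))\,\Phi_i(y)\bigr)$, i.e.\ project each approximate frame onto the retracted plane $\pi_*(\av^\phi(\Phi)(y))$ and re-orthonormalize via the polar-decomposition map $Q$ (full rank is guaranteed when $\epsilon\leq\sqrt{2}/4$). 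This particular $\Psi$ simultaneously satisfies $\av^\phi(\Psi)=\pi_*(\av^\phi(\Phi))$ exactly (its frames span exactly the retracted planes, which gives $\vv(\Lambda)=\pi_*(\vv(\Omega))$ on the nose, using that $\atl^\phi(\bw(\Psi))\simeq\Psi$) and the quantitative bound $\dA{\Phi}{\Psi}\leq 4\sqrt{2}\,\epsilon$ ($2\sqrt{2}\epsilon$ from the projection plus $2\sqrt{2}\epsilon$ from orthonormalization). The constant $9$ then comes from the witness-stability estimate $\dZ{\Omega}{\bw(\Psi)}\leq \epsilon + 0 + \sqrt{2}\,\dA{\Phi}{\Psi} \leq \epsilon+8\epsilon$, not from multiplying the factors $\sqrt{2},\sqrt{2},3$ in the diagram as you anticipated. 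Without this construction, your Step 3 establishes existence of some exact $\Lambda$ with $\vv(\Lambda)=\pi_*(\vv(\Omega))$ but not the distance bound, which is the substantive half of the claim.
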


\cref{main-thm-class-map} also addresses refinements of covers and their action on approximate cohomology classes; see also \cref{approximate-cohomology} for a notion of approximate \v{C}ech cohomology independent of a cover.

\paragraph{Discrete approximate notions of vector bundle.}
For a simplicial complex $K$, we introduce discrete analogues of approximate cocycles, approximate cohomology, and approximate local trivializations.
Most importantly, we introduce the discrete approximate cohomology set $\dapprH{1}{\epsilon}{K}{O(d)}$
and the set of discrete approximate local trivializations $\daA{\epsilon}$.
These are useful in practice, as specifying elements of these sets requires a finite amount of data when $K$ is finite.
To highlight their simplicity, we give here the notion of discrete approximate cocycle over a simplicial complex $K$:
\[
    \dapprZ{1}{\epsilon}{K}{O(d)} = \left\{ \{\Omega_{ij} \in O(d)\}_{(ij) \in K_1} : \Omega_{ij} = \Omega_{ji}^t \text{ and }
                                      \| \Omega_{ij} \Omega_{jk} - \Omega_{ik} \| < \epsilon \text{ for all } (ijk) \in K_2 \right\}
\]
so that a discrete approximate cocycle consists of a set of orthogonal matrices indexed by the ordered $1$-simplices of $K$, which satisfies an approximate cocycle condition.
The definition of discrete approximate local trivialization is equally simple.

We motivate the introduction of these constructions by showing that, when $\epsilon \leq 1/2$, any discrete $\epsilon$-approximate cocycle and any discrete $\epsilon$-approximate local trivialization represent a true vector bundle on the geometric realization of $K$.
Moreover, we prove a completeness result (\cref{simplicial-completeness-thm}), which says that any vector bundle over a compact triangulable space can be represented by a discrete approximate cocycle on a sufficiently fine triangulation of the space.

We remark here that the map $\bw$ restricts to an algorithmic map from discrete approximate local trivializations to discrete approximate cocycles (\cref{relationship-discrete-appr-vb}).

\paragraph{Reconstruction from finite samples.}
Building on a result of Niyogi, Smale, and Weinberger (\cite{NSW}), we prove a reconstruction result for vector bundles on compact, embedded manifolds.
For readability, we give here a version of the result using big-$\bO$ notation and an informal notion of $(\epsilon,\delta)$-closeness; a formal statement with precise bounds is given in \cref{main-reconstruction-thm}.
In the statement, $\GR{d,n}$ is the Grassmannian of $d$-planes in $\R^n$, which we metrize using the Frobenius distance (\cref{grassmannians-background}).

\begin{thmx}
    \label{thmB}
    Let $\M \subseteq \R^N$ be a smoothly embedded compact manifold with reach $\tau > 0$ and let $f : \M \to \GR{d,n}$ be $\ell$-Lipschitz.
    Let $P \subseteq \R^N$ be a finite set and let $g : P \to \GR{d,n}$ be a function such that $(P,g)$ is $(\epsilon,\delta)$-close to $(\M,f)$.
    If $\alpha \in \left(\bO(\epsilon), \tau - \bO(\epsilon)\right) \cap \left(0,\sqrt{2}/4 \ell^{-1} - 2\delta \ell^{-1} - \epsilon\right)$, then there exists a homotopy commutative diagram as follows, in which the vertical maps are homotopy equivalences:
    \[
    \begin{tikzpicture}
      \matrix (m) [matrix of math nodes,row sep=1.5em,column sep=4em,minimum width=2em,nodes={text height=1.75ex,text depth=0.25ex}]
        { \M & \GR{n,d} \\
          \cech(P)(\alpha) & \GR{n,d}^{2(\ell \alpha + \ell \epsilon + \delta)}.\\};
        \path[line width=0.75pt, -{>[width=8pt]}]
        (m-1-1) edge node [above] {$f$} (m-1-2)
                edge node [left] {} (m-2-1)
        (m-2-1) edge node [above] {$\cechm(g)$} (m-2-2)
        (m-1-2) edge [right hook->] node [right] {} (m-2-2)
        ;
    \end{tikzpicture}
    \qedhere
    \]
    Moreover, the map $\cechm(g)$ can be represented by a discrete local trivialization $\Phi \in \dapprA{2\ell \epsilon + \delta}{\cech(P)(\epsilon)}{d}$, in the sense that $\cechm(g) = \av(\Phi)$.
\end{thmx}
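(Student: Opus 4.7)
The plan is to combine three ingredients: the Niyogi--Smale--Weinberger (NSW) reconstruction theorem, which will supply the left vertical homotopy equivalence; Tinarrage's lemma (as cited in the excerpt), which will supply the right vertical homotopy equivalence; and the averaging map $\av$ from the square of approximate-bundle notions displayed in the contributions section, which will produce the horizontal map $\cechm(g)$ from an appropriate discrete local trivialization on $P$.

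First I would define the discrete local trivialization by $\Phi(p) := g(p)$ for each $p \in P$ and verify directly that $\Phi \in \dapprA{2\ell\epsilon + \delta}{\cech(P)(\epsilon)}{d}$. For a $1$-simplex $(p_ip_j) \in \cech(P)(\epsilon)$ one has $\|p_i - p_j\| \leq 2\epsilon$ in $\R^N$; the $(\epsilon,\delta)$-closeness hypothesis supplies $x_i,x_j \in \M$ with $\|p_i-x_i\|,\|p_j-x_j\| \leq \epsilon$ and $d(g(p_i),f(x_i)), d(g(p_j),f(x_j)) \leq \delta$; the triangle inequality in $\R^N$ and the standard comparison of ambient and intrinsic distances for submanifolds of reach $\tau$ control $d_\M(x_i,x_j)$; and $\ell$-Lipschitzness of $f$ together with two triangle inequalities in $\GR{n,d}$ yield the required bound once the constants are sharpened.

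Next I would apply $\av$ to $\Phi$ and to its extension to $\cech(P)(\alpha)$, the larger scale costing an additional $2\ell\alpha$ in the approximation parameter by the same computation. By the square in the contributions, $\av$ multiplies the parameter by $\sqrt{2}$, so the resulting map $\cechm(g) := \av(\Phi)$ lands in $\GR{n,d}^{2(\ell\alpha+\ell\epsilon+\delta)}$ after minor rearrangement of constants. The two vertical maps are then already in hand: NSW applies under $\alpha \in (\bO(\epsilon), \tau-\bO(\epsilon))$, and the cited Tinarrage lemma applies once $2(\ell\alpha+\ell\epsilon+\delta) \leq \sqrt{2}/2$, which is precisely the stated constraint $\alpha < \sqrt{2}/(4\ell) - 2\delta/\ell - \epsilon$.

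The main obstacle will be homotopy commutativity of the square. My strategy is a pointwise argument: the NSW homotopy equivalence $\iota : \M \to \cech(P)(\alpha)$ can be realised by the nearest-point retraction, so for $x \in \M$ the image $\iota(x)$ lies in the realisation of a simplex $\sigma = [p_0,\dots,p_k]$ whose vertices are all within $\alpha + \epsilon$ of $x$ in $\R^N$. The value $\cechm(g)(\iota(x)) = \av(\Phi)(\iota(x))$ is then a convex combination, in the ambient space of symmetric projection matrices used to realise $\GR{n,d}$, of the frames $g(p_i)$; by the estimate above each $g(p_i)$ is within $\ell\alpha+\ell\epsilon+\delta$ of $f(x)$. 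Convex combinations preserve this bound, so the straight-line homotopy between $\cechm(g)\circ \iota$ and the inclusion of $f$ has all of its trajectories inside the $2(\ell\alpha+\ell\epsilon+\delta)$-thickening of $\GR{n,d}$. This yields the required homotopy in the target, and the representability statement $\cechm(g) = \av(\Phi)$ is built into the construction. The only remaining work is bookkeeping to match the stated thickening constants exactly and to check that the NSW retraction indeed sends $x$ into a simplex whose vertices meet the $(\alpha+\epsilon)$-neighbourhood estimate used above.
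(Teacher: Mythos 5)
Your overall route is the same as the paper's: the left vertical map is the Niyogi--Smale--Weinberger inclusion $\M \subseteq P^\alpha$ followed by a nerve-type map to $\cech(P)(\alpha)$ (\cref{NSWprop} and \cref{nerve-lem-cor}), the right vertical map is the inclusion into the thickening, a homotopy equivalence by \cref{pi-is-inverse} under exactly the constraint you identify, and homotopy commutativity is proved by the straight-line homotopy together with the pointwise estimate that every $g(p)$ carried by the simplex containing the image of $x$ is within $\ell\alpha+\ell\epsilon+\delta$ of $f(x)$, a bound preserved by convex combinations. Two caveats on this part: the left vertical map is not a ``nearest-point retraction'' but the partition-of-unity map $z \mapsto \sum_p \phi_p(z)[p]$ of \cref{nerve-lem-cor} (the vertices with nonzero coordinate are those with $z \in B(p,\alpha)$, which is what your estimate actually uses); and no ambient-versus-intrinsic comparison on $\M$ is needed or wanted, since $f$ is Lipschitz for the Euclidean distance restricted to $\M$ --- routing through the intrinsic distance would change the constants.

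There are two concrete gaps. First, your bookkeeping for why $\cechm(g)$ lands in $\GR{n,d}^{2(\ell\alpha+\ell\epsilon+\delta)}$ via ``$\av$ multiplies the parameter by $\sqrt{2}$'' does not work: that route produces the thickening of radius $\sqrt{2}\,(2\ell\alpha+2\ell\epsilon+2\delta)$, which is strictly larger than the stated one and cannot be ``rearranged'' away; the correct argument is the direct one in \cref{construction-of-cechm} (together with \cref{distance-projection-vertex}): the values $g(p)$ over a simplex are pairwise within $2(\ell\alpha+\ell\epsilon+\delta)$ in Frobenius distance, so their convex combinations stay within that distance of a point of $\GR{d,n}$, with no $\sqrt{2}$ appearing. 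Second, setting $\Phi(p) := g(p)$ does not define a discrete local trivialization: elements of $\dapprA{2\ell\epsilon+\delta}{\cech(P)(\epsilon)}{d}$ assign to each vertex a \emph{frame} in $\ST{d,n}$, not a projection in $\GR{d,n}$. One must choose, for each $p$, an orthonormal basis $\Phi_p$ of the subspace determined by $g(p)$, and then the witness condition --- the existence of $\Omega_{pq} \in O(d)$ with $\|\Phi_p \Omega_{pq} - \Phi_q\| < 2\ell\epsilon+\delta$ --- is not automatic from the Frobenius bound on the projections: it requires the comparison $\dqsa \leq \dFrsa$ of \cref{lem-equivalence-metrics-grassmannians}, proved via polar decomposition, which is exactly how \cref{local-trivialization-for-reconstruction} establishes both the membership of $\Phi$ and the identity $\av^\phi(\Phi) = \cechm(g)$ for the barycentric partition of unity. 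Your proposal omits this lemma, so the ``Moreover'' clause is not yet proved as written.
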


Applying $\bw$ to the discrete local trivialization of \cref{thmB}, we get an approximate cocycle that can be used to compute low dimensional characteristic classes of the true vector bundle using the algorithms of \cref{section-characteristic-classes}, which we describe next.
The extent to which the approximate cocycle $\bw(\Phi)$ recovers the true vector bundle $f$ is made precise in \cref{local-trivialization-for-reconstruction}; see also \cref{reconstruction-as-cocycle}.

\paragraph{Effective computation of characteristic classes.}
Our last main contribution is the definition of algorithms for the stable and consistent computation of the $1$- and $2$-dimensional characteristic classes of a vector bundle given by an approximate $O(d)$-cocycle.

The algorithms are based on well known results which say that the characteristic classes we consider are obstructions to lifting the structure group of the cocycle from an orthogonal group to certain other Lie groups.
In this sense, the algorithms are classical and most of our work goes into adapting them to the approximate setting and into giving precise bounds for their stability and consistency.
The following theorem is a summary of the results in \cref{section-characteristic-classes}.
In the theorem, $\check{\mathsf{H}}$ denotes \v{C}ech cohomology.

\begin{thmx}
    \label{thmC}
    Let $\U$ be a cover of a topological space $B$ with the property that non-empty binary intersections are locally path connected and simply connected. There are maps
    \begin{align*}
        \sw_1 &: \aH{\epsilon} \to \apprH{1}{}{\U}{\Z/2} \text{ for $\epsilon \leq 2$};\\
        \sw_2 &: \aH{\epsilon} \to \apprH{2}{}{\U}{\Z/2} \text{ for $\epsilon \leq 1$};\\
        \eu &: \apprH{1}{\epsilon}{\U}{SO(2)} \to \apprH{2}{}{\U}{\Z} \text{ for $\epsilon \leq 1$}.
    \end{align*}

    The map $\sw_1$ is $2$-stable, in the sense that, if $\Omega, \Omega' \in \aH{\epsilon}$ with $\epsilon \leq 2$, and $\disH{\Omega}{\Omega'} < 2$, then $\sw_1(\Omega) = \sw_1(\Omega') \in \apprH{1}{}{\U}{\Z/2}$.
    In this same sense, the maps $\sw_2$ and $\eu$ are $1$-stable.

    Assume that $\U$ is countable and that $B$ is paracompact and locally contractible.
    The map $\sw_1$ is $2/9$-consistent, in the sense that, if $\Omega \in \aH{\epsilon}$ with $\epsilon < 2/9$, then $\sw_1(\Omega)$ is the first Stiefel--Whitney class of the vector bundle classified by $\pi_*(\vv(\Omega)) : B \to \GR{d}$.
    In this same sense, the map $\sw_2$ is $1/9$-consistent and computes the second Stiefel--Whitney class, and the map $\eu$ is $1/9$-consistent and computes the Euler class of an oriented vector bundle of rank $2$.
\end{thmx}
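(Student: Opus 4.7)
The plan is to define each of the three maps by the classical recipe that identifies the characteristic class with the obstruction to lifting the cocycle along a Lie group homomorphism with discrete kernel, and then to check that everything extends to approximate cocycles as long as the resulting locally constant discrete choice cannot flip. Specifically, $\sw_1$ will use $\det : O(d) \to \{\pm 1\}\cong\Z/2$, $\sw_2$ will use the double cover $\Pin(d) \to O(d)$, and $\eu$ will use the universal cover $\R \to SO(2)$. Since each target coefficient group is discrete, the outputs land in classical Čech cohomology of $\U$, and the local path connectedness and simple connectedness hypotheses on non-empty pairwise intersections are precisely what the covering-space lifting theorem needs in order to produce continuous lifts of the maps $\Omega_{ij} : U_i \cap U_j \to O(d)$ (resp.\ $SO(2)$) in the $\sw_2$ and $\eu$ steps.

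For $\sw_1(\Omega)$ I would set $\sw_1(\Omega)_{ij} = \det\circ\Omega_{ij}$. The two connected components of $O(d)$ have Frobenius distance exactly $2$, realized for instance by $(I,\,\mathrm{diag}(-1,1,\dots,1))$, so the approximate cocycle bound $\|\Omega_{ij}\Omega_{jk}-\Omega_{ik}\|<\epsilon\le 2$ forces $\det(\Omega_{ij})\det(\Omega_{jk})=\det(\Omega_{ik})$ pointwise, producing an honest $\Z/2$-cocycle; the same separation bound absorbs the coboundary equivalence built into $\aH{\epsilon}$. For $\sw_2$ I would lift each $\Omega_{ij}$ to $\tilde\Omega_{ij} : U_i\cap U_j \to \Pin(d)$, form $c_{ijk}=\tilde\Omega_{ij}\tilde\Omega_{jk}\tilde\Omega_{ik}^{-1} : U_{ijk}\to \Pin(d)$, and note that the projection of $c_{ijk}$ to $O(d)$ is pointwise within $\epsilon\le 1$ of $I$. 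Since the two preimages of $I$ are $\pm 1 \in \Pin(d)$, separated by distance $2$, the bound $\epsilon\le 1$ ensures that $c_{ijk}$ stays in a single sheet over a neighborhood of $I$, so it factors through the central fiber $\{\pm 1\}\cong\Z/2$. The map $\eu$ is defined analogously: lift through $\R\to SO(2)$, form the additive triple sum $c_{ijk}=\tilde\Omega_{ij}+\tilde\Omega_{jk}-\tilde\Omega_{ik}$, and round to the unique nearby integer, which is unambiguous under $\epsilon\le 1$. The standard obstruction-theoretic identities (using that the fiber is central, which is automatic in the abelian case of $\eu$ and holds for $\Pin$) give the cocycle condition in dimension $2$ and the independence of the cohomology class from the choice of lifts.

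Stability is a direct consequence of the same separation estimates: a pointwise perturbation of size strictly less than $2$ (respectively $1$) cannot flip the locally constant discrete choice in $\{\pm 1\}$ (respectively in $\Z$), and the coboundary realignment hidden in the definition of $\disHsa$ is absorbed by the same threshold. For consistency I would invoke Theorem A: if $\epsilon\le\sqrt{2}/4$, every $\Omega\in\aH{\epsilon}$ admits a classical representative $\Lambda\in\aH{}$ with $\disH{\Omega}{\Lambda}\le 9\epsilon$ and $\vv(\Lambda)=\pi_*(\vv(\Omega))$. When $9\epsilon<2$, stability gives $\sw_1(\Omega)=\sw_1(\Lambda)$, and on the exact cocycle $\Lambda$ the formula $\sw_1(\Lambda)_{ij}=\det\circ\Lambda_{ij}$ is by definition the first Stiefel--Whitney class of the bundle classified by $\vv(\Lambda)=\pi_*(\vv(\Omega))$; the condition $9\epsilon<2$ is precisely the bound $\epsilon<2/9$. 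The Euler and second Stiefel--Whitney cases use the stronger threshold $9\epsilon<1$ and produce the $1/9$ bound in exactly the same way.

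The main obstacle will be the $\sw_2$ step, because the double cover $\Pin(d)\to O(d)$ is non-abelian. Concretely, one must quantify the neighborhood of $I\in O(d)$ over which each sheet of $\Pin(d)$ lifts homeomorphically in order to pin down the optimal threshold $\epsilon\le 1$; check that $c_{ijk}$ remains pointwise inside a single sheet, which uses connectedness of $U_{ijk}$ combined with the $\epsilon\le 1$ bound; and verify that changing the lifts modifies $c$ by a genuine $2$-coboundary, using the centrality of $\{\pm 1\}$ in $\Pin(d)$. These three pieces of bookkeeping, together with the precise handling of the coboundary term inside $\disHsa$ in the stability argument, constitute the most technically demanding portion of the proof; by contrast, $\sw_1$ requires no lifting and $\eu$ uses an abelian covering where the cocycle identities are manifestly additive.
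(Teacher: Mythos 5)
Your architecture is the paper's: push forward along $\det$ for $\sw_1$, using that the two components of $O(d)$ are at Frobenius distance at least $2$; define $\sw_2$ and $\eu$ as connecting-map obstructions along the central extensions $\Z/2 \to \Pin(d) \to O(d)$ and $\Z \to \R \to SO(2)$ by lifting, forming the triple product, and rounding to the discrete central fiber; get stability from separation of the fiber; and get consistency by combining \cref{relation-to-true-vb} (an exact $\Lambda$ with $\disH{\Omega}{\Lambda} \le 9\epsilon$ and $\vv(\Lambda) = \pi_*(\vv(\Omega))$) with the classical exact-case identifications. This is exactly the route of \cref{simple-change-of-coefficients}, \cref{change-of-coefficients}, and \cref{consistency-sw1,consistency-eu,consistency-sw2}.

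The gap is in the quantitative step that is supposed to deliver the advertised thresholds $\epsilon \le 1$ and the $1$-stability for $\sw_2$ and $\eu$. You assert that the two preimages $\pm\mathbf{1}$ of the identity in $\Pin(d)$ are ``separated by distance $2$,'' so that $\epsilon \le 1$ keeps $c_{ijk}$ on one sheet. In the metric in which this argument actually runs --- $\Pin(d)$ and $\R$ carrying the geodesic distance lifted along the covering, so that the cover is a local isometry --- the fiber separation is bounded below by the systole $\sysp(O(d)) = 2\sqrt{2}\pi$ (\cref{reach-and-systole}, \cref{systole-of-O}), not $2$; if instead you mean the Clifford-algebra norm, then you supply no comparison relating the hypothesis (Frobenius closeness to the identity downstairs) to closeness to the fiber upstairs, so nothing pins down the rounding or the constant $1$. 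The paper's mechanism is: the connecting map exists and is stable for $\epsilon \le \sysp(H)/8$ in the geodesic metric (\cref{change-of-coefficients}), and the Frobenius threshold $1$ then comes from $\reach(O(d)) = 1$ together with the comparison that Frobenius distance $<1$ implies geodesic distance $< \sqrt{2}\pi/4$ (\cref{reach-O-computed}, \cref{comparison-geodesic-and-frobenius}); the same comparison is what makes your integer rounding for $\eu$ unambiguous. Stability is also not just ``a perturbation below the threshold cannot flip the sign'': one must first choose lifts of the two nearby cocycles that are themselves pointwise close, which is the controlled lifting lemma \cref{metric-lift}, and the resulting triangle-inequality budget is why the working threshold is $\sysp(H)/8$ rather than half the fiber separation. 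Finally, drop the appeal to connectedness of $U_{ijk}$: triple intersections are neither assumed connected nor need to be, since the rounded cochain $\Gamma_{ijk}$ is only required to be locally constant, which already follows from continuity into a discrete fiber.
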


We show that the maps of \cref{thmC} are algorithmic when the input approximate cocycles are discrete approximate cocycles on a finite simplicial complex (see \cref{pseudocode-sw1}, \cref{pseudocode-eu}, and \cref{pseudocode-sw2}).
The time and space complexity of the algorithms is polynomial in the number of vertices of the simplicial complex.
For $\sw_1$, the complexity is also polynomial in $d$, while, for $\sw_2$, the complexity is exponential in $d$ and depends on calculating geodesic distances on the Spin group.
In \cref{technicalities-about-algo} we explain how to perform these calculations for small values of $d$.

\paragraph{Computational examples.}
We demonstrate our algorithms on data and show how they can be combined with persistent cohomology computations to obtain cohomological as well as bundle-theoretical information of the data; this is done in \cref{examples-section}.
A proof-of-concept implementation of our algorithms, together with code to replicate the examples, can be found at~\cite{examples-code}.

\subsection{Related work}


\paragraph{Cohomology of synchronization problems.}
Cohomological aspects of synchronization problems are discussed in \cite{YL,GBM,HG}.

In \cite{GBM}, Gao, Brodzki, and Mukherjee describe a general framework for studying cohomological obstructions to synchronization problems using principal $G$-bundles.
Although useful in many practical applications, the approach is limited by the fact that it only encompasses so-called \textit{flat} principal $G$-bundles, namely, bundles over a space $B$ classified by group homomorphisms $\pi_1(B) \to G$.
To see that this is indeed a limitation, note that no non-trivial vector bundle over $S^2$ can be represented by a group homomorphism $\pi_1(S^2) \to O(d)$, since $\pi_1(S^2) = 0$.
As mentioned previously, vector bundles over $S^2$ are central in the cryo-EM synchronization problem.

The fact that the cryo-EM bundles cannot be represented by discrete and exact $SO(2)$-cocycles on a triangulation of $S^2$ is observed by Ye and Lim in \cite{YL}, where a solution to this problem is proposed in the special case of $SO(2)$-cocycles over a $2$-dimensional simplicial complex.

\paragraph{Vector bundles over simplicial complexes.}
In \cite{KP}, Kn\"{o}ppel and Pinkall give a method for describing arbitrary complex line bundles over finite simplicial complexes using a finite amount of data.
They also describe applications to Physics and Computer Graphics.
Their theory relies on the fact that, up to isomorphism, a complex line bundle over a simplicial complex can be described exactly using a finite amount of data consisting of an angle $\eta_{ij} \in S^1$ (encoded as a complex number of absolute value $1$) for each edge $(ij)$ of the simplicial complex as well as a real number $\Omega_{ijk} \in \R$ for each $2$-simplex $(ijk)$ of the simplicial complex, which satisfy $\eta_{ki} \eta_{jk} \eta_{ij} = e^{\iota \Omega_{ijk}}$ for every $2$-simplex $(ijk)$, where $\iota$ denotes the imaginary unit. 
This fact does not seem to generalize in a direct way to vector bundles of higher rank.

In \cite{HG}, Hansen and Ghrist discuss synchronization on networks and describe a framework based on cellular sheaves.
The framework specializes to encompass flat vector bundles over a simplicial complex.
This application of their theory has the same limitation as \cite{GBM}: by having the vector bundles be described by a family of matrices $\{\Omega_{ij}\}$ indexed by the edges of the simplicial complex, subject to the restriction that $\Omega_{jk} \Omega_{ij} = \Omega_{ik}$ for every 2-simplex $(ijk)$ of the simplicial complex, only flat vector bundles can be described.

These two approaches require a certain equality to hold for each $2$-simplex of the simplicial complex.
The main difference between these approaches and the approach described in this paper is that we do not require an equality to hold for each $2$-simplex; instead, we keep track of how much a certain equality does not hold.

\paragraph{Computation of characteristic classes.}
In \cite{S2}, Singer and Wu describe an algorithm for the consistent estimation of the orientability of an embedded manifold.
This can be interpreted as estimating whether $\sw_1(\Omega)$ is zero or not, where $\Omega$ is the approximate cocycle given by the transition functions of the approximate local trivialization defined using a local PCA computation, as sketched in \cref{motivation-local-pca}.
Their approach is robust to outliers, a property not enjoyed by the approach presented in this paper.
But the algorithm does not provide the user with an actual cocycle, and thus, in the case $\sw_1(\Omega)$ is deemed to be non-zero by the algorithm, it is not clear how one can write it in a basis of the cohomology of a simplicial complex built from the data.

In \cite{tinarrage}, Tinarrage presents a framework for the consistent estimation of characteristic classes of vector bundles over embedded compact manifolds.
The input data of the framework consists of the value of a sufficiently tame classifying map on a sufficiently dense sample of the manifold.
Theoretical algorithms are given for the stable and consistent computation of arbitrary characteristic classes.
The characteristic classes are computed in a geometric way, as the algorithm uses explicit triangulations of the Grassmannians and pulls back the universal characteristic classes to a simplicial complex built from the sample.
The practicality of the algorithms is limited by the fact that the number of simplices required to triangulate a Grassmannian $\GR{d,n}$ is exponential in both $d$ and $n$ (\cite{GMP}), and the fact that the algorithm often requires iterated subdivisions of the simplicial complex built from the data.

\paragraph{Vector bundles from finite samples.}
As mentioned above, in \cite{tinarrage} Tinarrage presents a framework for the consistent estimation of characteristic classes of a vector bundle, given a sample.

In \cite{R}, Rieffel addresses the problem of giving a precise correspondence between vector bundles on metric spaces $X$ and $Y$ that are at small Gromov--Hausdorff distance.
In particular, one can use Rieffel's framework to extend a vector bundle on a sample of a manifold to the entire manifold.

\subsection{Structure of the paper}
In \cref{background-section-section} we give basic background.
In \cref{notions-approx-section} we present our three notions of approximate vector bundle and in \cref{relationships-between-notions} we relate them to each other and to classical vector bundles.
In particular, we prove \cref{main-thm-class-map} and \cref{relation-to-true-vb}.
In \cref{discrete-vector-bundles-section} we introduce our notions of discrete approximate vector bundles and show that they can be used to represent vector bundles over triangulable spaces, and to reconstruct vector bundles from finite samples.
In \cref{section-characteristic-classes} we give algorithms for the stable and consistent computation of low dimensional characteristic classes starting from a discrete and approximate cocycle.
In \cref{examples-section} we run our algorithms on examples.
In \cref{conclusions-section} we discuss open questions.

\paragraph{Acknowledgements.}
We thank Dan Christensen, Peter Landweber, Fernando Martin, and Raph\"{a}el Tinarrage for helpful conversations and comments, and Ximena Fern\'{a}ndez for discussions and suggesting the example in \cref{example-1}.
We also thank Peter Landweber for suggesting improvements to the proofs in \cref{thickening-subsection}.
Finally, we thank the anonymous reviewers for helpful feedback, which has improved this paper.
This work was partially supported by the National Science Foundation through grants   CCF-2006661
and CAREER award  DMS-1943758.

\section{Background}
\label{background-section-section}

In this section, we introduce the basic background needed to state and prove the results in this paper.
Some more technical definitions and results are in \cref{technical-details}.
In \cref{orth-grass-stief-section}, we introduce orthogonal groups, Grassmannians, and Stiefel manifolds, and fix notation.
In \cref{principal-bundles-back}, we recall some of the basics of the theory of principal bundles and vector bundles.
We assume familiarity with the very basics of algebraic topology, including cohomology.

\subsection{Orthogonal groups, Grassmannians, and Stiefel manifolds}
\label{orth-grass-stief-section}

\subsubsection{Main definitions.}
\label{grassmannians-background}
We start by recalling the definition of the Frobenius norm.
Let $l,m \in \N_{\geq 1}$ and let $\R^{l \times m}$ denote the set of $l \times m$ matrices with real coefficients.
Let $A \in \R^{l \times m}$.
The \define{Frobenius norm} of $A$ is defined by
\[
    \| A\|  = \sqrt{\trace(A^t A)} = \sqrt{\sum_{ 1 \leq i \leq l, 1 \leq j \leq m} A_{ij}^2},
\]
where $A^t \in \R^{m \times l}$ denotes the transpose of $A$.
The Frobenius norm, as any norm, induces a distance on $\R^{l \times m}$ defined by $\dFr{A}{B} = \|A - B\|$.
We refer to this distance as the \define{Frobenius distance}.

We now introduce the spaces of matrices we are most interested in.
Let $n\geq d \geq 1 \in \N$.

The \define{Grassmannian} $\GR{d,n}$ has as elements the $n \times n$ real matrices $A$ that satisfy $A = A^t = A^2$ and have rank equal to $d$, and is thus a subset of $\R^{n \times n}$.
We metrize and topologize $\GR{d,n}$ using the Frobenius distance.
Note that the elements of $\GR{d,n}$ are canonically identified with the orthogonal projection operators $\R^n \to \R^n$ of rank $d$.
Since these orthogonal projections are completely determined by the subspace of $\R^n$ which they span, it follows that the elements of $\GR{d,n}$ correspond precisely to the $d$-dimensional subspaces of $\R^n$.

The (compact) \define{Stiefel manifold} $\ST{d,n}$ has as elements the set of $n \times d$ real matrices with orthonormal columns.
We metrize $\ST{d,n}$ with the Frobenius distance.
Note that the elements of $\ST{d,n}$ can be identified with the $d$-dimensional subspaces of $\R^n$ equipped with an ordered orthonormal basis.
The elements of a Stiefel manifold are sometimes referred to as \define{frames}.

When $d = n$, the Stiefel manifold $\ST{d,n}$ coincides with the \define{orthogonal group} $O(d)$, which consists of real $d \times d$ matrices $\Omega$ such that $\Omega \Omega^t = \id$.
We metrize $O(d)$ using the Frobenius distance.
With this definition, $O(d)$ is a topological group, as matrix multiplication and inversion are continuous.

We remark here that, although we shall encounter other metrics for $O(d)$ and $\GR{d,n}$, our main results are stated using the Frobenius distance.
The relevant results about other metrics on the orthogonal group can be found in \cref{metrics-on-orth-section}, and the ones about other metrics on the Grassmannian are in \cref{metrics-on-grass-section}.

\subsubsection{Infinite dimensional Grassmannians and Stiefel manifolds.}
\label{infinite-dim-grass-section}
Let $n\geq d \geq 1 \in \N$.
There is an inclusion $\GR{d,n} \subseteq \GR{d,n+1}$ given by adding a row and a column of zeros at the bottom and right, respectively.
This inclusion is norm-preserving and thus metric-preserving.
We define the infinite dimensional Grassmannian as $\GR{d} = \bigcup_{n \in \N} \GR{d,n}$.
We topologize $\GR{d}$ using the direct limit topology;
recall that the direct limit topology on a union $X = \bigcup_{n \in \N} X_n$ of topological spaces $\{X_n\}_{n \in \N}$, where $X_n$ is a subspace of $X_{n+1}$ for all $n \in \N$, is the topology where $A \subseteq X$ is open if and only if $A \cap X_n$ is open in $X_n$ for all $n \in \N$.
Note that this topology is \textit{finer} (i.e., has more opens) than the topology induced by the metric inherited by $\GR{d}$ by virtue of it being an increasing union of metric spaces.

Similarly, we have an inclusion $\ST{d,n} \subseteq \ST{d,n+1}$ given by taking the matrix representation of a $d$-frame in $\R^n$ and adding a row of zeros at the bottom of the matrix.
Again, these inclusions are metric-preserving, and we define $\ST{d} = \bigcup_{n \in \N} \ST{d,n}$, with the direct limit topology induced by the inclusions $\ST{d,n} \to \ST{d}$.

There is a principal $O(d)$-bundle $\P : \ST{d,n} \to \GR{d,n}$, defined by mapping a $d$-frame $M$ to the matrix $MM^t$ (see \cref{principal-bundle-section} for the notion of principal bundle).
Note that the maps $\P : \ST{d,n} \to \GR{d,n}$ for each $n \geq 1$ assemble into a map
\[
    \P : \ST{d} \to \GR{d}.
\]
It is clear that $\P$ is continuous, as it restricts to a continuous map $\P : \ST{d,n} \to \GR{d,n}$ for each $n \geq 1$.

\subsubsection{Thickenings of Grassmannians.}
\label{thickenings-grassmannians-background}
For the general notion of thickening and some basic properties, we refer the reader to \cref{thickening-subsection}; here we briefly introduce the thickenings of Grassmannians, as they play an important role in our results.

We will be interested in thickenings of Grassmannians, and for that we need to include Grassmannians into a larger metric space.
A natural candidate is to let $\GR{d,n} \subseteq \R^{n\times n}$, which is metric-preserving if we metrize $\R^{n\times n}$ using the Frobenius distance.
Similarly, we have $\ST{d,n} \subseteq \R^{n \times d}$.
Analogously to what we did for Grassmannians and Stiefel manifolds, we define $\R^{\infty \times \infty} := \bigcup_{n \in \N} \R^{n\times n}$ and $\R^{\infty \times d} := \bigcup_{n \in \N} \R^{n \times d}$.

The elements of $\R^{\infty \times \infty}$ thus consist of infinite matrices with rows and columns indexed by the positive natural numbers, which have finite support, meaning that they have only finitely many non-zero entries.
Similarly, the elements of $\R^{\infty \times d}$ are matrices with finite support, with $d$ columns and rows indexed by the positive natural numbers.
Again, although $\R^{\infty \times \infty}$ and $\R^{\infty \times d}$ inherit natural metrics, we use instead the direct limit topologies induced by inclusions $\R^{n \times n} \to \R^{\infty \times \infty}$ and $\R^{n \times d} \to \R^{\infty \times d}$, respectively.

Let $\epsilon > 0$.
The \define{$\epsilon$-thickening} of $\GR{d,n}$, denoted $\GR{d,n}^\epsilon \subseteq \R^{n \times n}$, consists of all matrices in $\R^{n \times n}$ at Frobenius distance strictly less than $\epsilon$ from a matrix in $\GR{d,n}$.
Similarly, we define $\GR{d}^\epsilon \subseteq \R^{\infty \times \infty}$.
Clearly, we have $\GR{d,n}^\epsilon \subseteq \GR{d,n+1}^\epsilon \subseteq \GR{d}^\epsilon$.
The $\epsilon$-thickenings $\GR{d,n}^\epsilon \subseteq \R^{n\times n}$ and $\GR{d}^\epsilon \subseteq \R^{\infty \times \infty}$ are open subsets.
If $\epsilon = 0$, it is convenient to define $\GR{d,n}^\epsilon = \GR{d,n}$ and $\GR{d}^\epsilon = \GR{d}$.

The main result about thickenings of Grassmannians we will need is \cref{pi-is-inverse}.
The result follows directly from a result of Tinarrage (\cref{tinarrages-lemma}) and says that, if $\epsilon \leq \sqrt{2}/2$, then $\GR{d,n}^\epsilon$ deformation-retracts onto $\GR{d,n}$.

\subsection{Principal bundles and vector bundles}
\label{principal-bundles-back}

We recall the main notions and results that are relevant to this article.
For a thorough exposition, we refer the reader to, e.g., \cite[Appendices~A~and~B]{spingeometry}, \cite{Steen}, \cite{Husemoller}, and \cite{HJJS}.
The classic book \cite{MS} contains all of the standard results on vector bundles we need; see also \cite{HVB} for a modern exposition.

\subsubsection{Covers and nerve.}
A \define{cover} $\U = \{U_i\}_{i \in I}$ of a topological space $B$ consists of an indexing set $I$ together with, for every $i \in I$, an open subset $U_i \subseteq B$, such that $B = \cup_{i \in I} U_i$.
The \define{nerve} of $\U$, denoted $N(\U)$, is the simplicial complex with underlying set $I$ and simplices consisting of finite, non-empty subsets $J \subseteq I$ such that $\cap_{j \in J} U_j \neq \emptyset$.
An \define{ordered simplex} of $N(\U)$ consists of a list $(i_0 i_1 \dots i_n)$ such that $U_{i_n} \cap U_{i_{n-1}} \cap \dots \cap U_{i_0} \neq \emptyset$.
When quantifying over ordered $1$-simplices of $N(\U)$ we will write $(ij) \in N(\U)$, and, similarly, when quantifying over ordered $2$-simplices of $N(\U)$ we will write $(ijk) \in N(\U)$.

\subsubsection{Principal bundles.}
\label{principal-bundle-section}
Let $B$ and $F$ be topological spaces.
A \define{fiber bundle} over $B$ with fiber $F$ consists of a continuous map $p : E \to B$ such that, for every $y \in B$, there exists an open neighborhood $U \subseteq B$ of $y$ and a homeomorphism $\rho_U : U \times F \to p^{-1}(U)$ such that $p \circ \rho_U = \pi_1 : U \times F \to U$, where $\pi_1$ denotes projection onto the first factor.
Let $\U = \{U_i\}_{i \in I}$ be a cover of $B$.
A collection of maps $\{\rho_i : U_i \times F \to p^{-1}(U_i)\}_{i \in I}$ with the property above is referred to as a \define{local trivialization} of $p$.
By definition, any fiber bundle admits some local trivialization.

Let $G$ be a topological group.
A \define{principal $G$-bundle} over $B$ consists of a fiber bundle $p : E \to B$ with fiber $G$ together with a continuous, fiberwise right action $- \cdot - : E \times G \to E$, such that there exists a cover $\U = \{U_i\}_{i \in I}$ and a local trivialization $\{\rho_i : U_i \times G \to p^{-1}(U_i)\}_{i \in I}$ that is equivariant, meaning that, for every $g,h \in G$ and $y \in U_i$, we have $\rho_i(y,g h) = \rho_i(y,g) \cdot h \in E$.

Two principal $G$-bundles $p : E \to B$ and $p' : E' \to B$ are \define{isomorphic} if there exists a $G$-equivariant map $m : E \to E'$ such that $p' \circ m = p$.
Denote by $\Prin_G(B)$ the set of isomorphisms classes of principal $G$-bundles over $B$ (this is a set and not a proper class).

We remark that the definitions we have given are sometimes referred to as \textit{locally trivial} fiber bundle and \textit{locally trivial} principal bundle.

\subsubsection{\v{C}ech cocycles.}
Let $B$ be a topological space, $G$ a topological group, and $\U$ a cover of $B$.
A \v{C}ech $1$-\define{cocycle} subordinate to $\U$ with coefficients in $G$ consists of a family of continuous maps $\{\rho_{ij} : U_j \cap U_i \to G\}_{(ij) \in N(\U)}$ indexed by the ordered $1$-simplices of $N(\U)$, which satisfies the \define{cocycle condition}, meaning that for every $(ijk) \in N(\U)$ and $y \in U_k \cap U_j \cap U_i$, we have
\[
    \rho_{ij}(y) \rho_{jk}(y) = \rho_{ik}(y) \in G.
\]
We remark that we are using a convention for defining $\rho_{ij}$ so that $\rho_{ij}$ and $\rho_{jk}$ can be composed from left to right.
The opposite convention is also common.
The set of $1$-cocycles subordinate to $\U$ with coefficients in $G$ is denoted by $Z^1(\U;G)$.

The following construction associates a cocycle to any principal $G$-bundle and motivates the notion of cocycle.
Given a principal $G$-bundle over $B$, there exists, by definition, a cover $\U = \{U_i\}_{i \in I}$ and a family of $G$-equivariant local trivializations $\{\rho_i : U_i \times G \to p^{-1}(U_i)\}_{i \in I}$.
Note that, whenever $i,j \in I$ are such that $U_j \cap U_i \neq \emptyset$, the map $\rho_j^{-1} \circ \rho_i : (U_j \cap U_i) \times G \to (U_j \cap U_i) \times G$ is $G$-equivariant.
It follows that $\rho_j^{-1} \circ \rho_i$ induces a continuous map $\rho_{ij} : U_j \cap U_i \to G$ satisfying $(\rho_j^{-1}\circ \rho_i)(y,g) = (y, g \cdot \rho_{ij}(y))$ for all $(y,g) \in (U_j \cap U_i) \times G$, and that the family $\{\rho_{ij} : U_j \cap U_i \to G\}_{(ij) \in N(\U)}$ satisfies the cocycle condition.

\subsubsection{\v{C}ech cohomology.}
\label{cech-cohomology-section}
Let $\U = \{U_i\}_{i \in I}$ be a cover of $B$.
A \v{C}ech \define{$0$-cochain} subordinate to $\U$ with values in $G$ consists of a family of continuous maps $\Theta = \{\Theta_i : U_i \to G\}_{i \in I}$.
Let $C^0(\U;G)$ denote the set of $0$-cochains subordinate to $\U$ with values in $G$.
There is an action $C^0(\U;G) \curvearrowright Z^1(\U;G)$ with $\Theta = \{\Theta_i : U_i \to G\}_{i \in I}$ acting on a cocycle $\rho = \{\rho_{ij} : U_j \cap U_i \to G\}_{(ij) \in N(\U)}$ by $\Theta \cdot \rho = \{\Theta_i \rho_{ij} \Theta_j^{-1} : U_j \cap U_i \to G\}_{(ij) \in N(\U)}$.
The quotient of $Z^1(\U;G)$ by the action of $C^0(\U;G)$ is denoted by $\apprH{1}{}{\U}{G}$.

Let $\U = \{U_i\}_{i \in I}$ and $\V = \{V_j\}_{j \in J}$ be two covers of a common topological space $B$.
A \define{refinement} $\nu : \U \to \V$ consists of a function $\nu : I \to J$ such that for every $i \in I$ we have $U_i \subseteq V_{\nu(i)}$.
Given a refinement $\nu : \U \to \V$ and $\rho \in Z^1(\V;G)$, define a cocycle $\nu(\rho) \in Z^1(\U;G)$ by $\nu(\rho)_{ij} = \rho_{\nu(i)\nu(j)}$.
It is not hard to check that any two refinements $\nu, \nu' : \U \to \V$ induce the same map $\apprH{1}{}{\V}{G} \to \apprH{1}{}{\U}{G}$.
One then defines the \define{\v{C}ech cohomology} of $B$ with coefficients in $G$ as
\[
    \apprH{1}{}{B}{G} = \colim_{\U} \apprH{1}{}{\U}{G}
\]
where the colimit is indexed by the poset whose objects are covers of $B$ and where $\V \preceq \U$ if there exists a refinement $\U \to \V$.

As we saw previously, any principal $G$-bundle over $B$ can be trivialized over some cover of $B$ and induces a cocycle over that cover.
It is well known, and easy to see, that this construction induces a bijection
\[
    \Prin_G(B) \to \apprH{1}{}{B}{G}.
\]
For a description of principal $G$-bundles from this point of view, see \cite[Appendix~A]{spingeometry}.

\subsubsection{Vector bundles.}
\label{section:vector-bundles-def}
Let $d \in \N_{\geq 1}$.
A \define{vector bundle} of rank $d$ over $B$ consists of a fiber bundle $p : E \to B$ with fiber $\R^d$, where each fiber comes with the structure of a real vector space of dimension $d$, and such that $p$ admits a local trivialization that is linear on each fiber.

It follows that $\rho_j^{-1} \circ \rho_i$ induces a well-defined continuous map $\rho_{ij} : U_j \cap U_i \to GL(d)$ satisfying $(\rho_j^{-1}\circ \rho_i)(y,g) = (y, g \cdot \rho_{ij}(y))$ for all $(y,g) \in (U_j \cap U_i) \times \R^d$.
So every vector bundle of rank $d$ over $B$ that trivializes over a cover $\U$ gives a cocycle in $Z^1(\U;GL(d))$.

An isomorphism between vector bundles $p : E \to B$ and $p' : E' \to B$ consists of a fiberwise map $m : E \to E'$ that is a linear isomorphism on each fiber.
The family of isomorphism classes of rank-$d$ vector bundles over $B$ is denoted by $\Vect_d(B)$.

A partition of unity argument shows that, if $B$ is paracompact (in the sense of \cite[Section~5.8]{MS}), the trivialization of a vector bundle on $B$ can be taken so that the associated cocycle takes values in the orthogonal group.
When $B$ is paracompact, this gives a bijection
\[
    \Vect_d(B) \to \apprH{1}{}{B}{O(d)}.
\]
In particular $\Vect_d(B) \cong \apprH{1}{}{B}{O(d)} \cong \Prin_{O(d)}(B)$.

\subsubsection{Classifying maps.}
\label{section:classifying-maps}
For details about the claims in this section, see \cite[Theorem~3.1]{milnor-universal-2} for the existence of classifying spaces of topological groups and \cite[Appendix~B]{spingeometry} for an account of classifying spaces of Lie groups.

For every topological group $G$, there exists a \define{classifying space} $\B G$, which consists of a topological space with the property that, for every paracompact topological space $B$, there is a natural bijection
\[
    \Prin_G(B)\cong \hcont{B}{\B G}.
\]
The classifying space of the orthogonal group $O(d)$ is the Grassmannian $\GR{d}$, and thus
\[
    \Vect_d(B) \cong \apprH{1}{}{B}{O(d)} \cong \Prin_{O(d)}(B) \cong \hcont{B}{\GR{d}}.
\]
The map $\hcont{B}{\GR{d}} \to \Prin_{O(d)}(B)$ is constructed as follows.
Given $[f] \in \hcont{B}{\GR{d}}$, let $f : B \to \GR{d}$ be a representative.
Then, the pullback of $\P : \ST{d} \to \GR{d}$ along $f$ is a principal $O(d)$-bundle over $B$, whose isomorphism type is independent of the choice of representative for $[f]$.

\subsubsection{Characteristic classes.}
Let $G$ be a topological group, $B$ a paracompact topological space, and $p$ be a principal $G$-bundle over $B$.
Fix an Abelian group $A$, and let $n \in \N$.
The bundle $p$ can be represented by a continuous map $f : B \to \B G$, which can then be used to pull back any cohomology class $x \in H^n(\B G; A)$ to a class $f^*(x) \in H^n(B; A)$.
The cohomology classes $f^*(x)$ obtained in this way are the \define{characteristic classes} of the principal $G$-bundle $p$, and they are invariant under isomorphism of principal $G$-bundles.
For a presentation of characteristic classes from this point of view, see \cite[Appendix~B]{spingeometry}.

The cohomology ring $H^\bullet(\GR{d}; \Z/2)$ is isomorphic to a polynomial ring $(\Z/2)[\sigma_1, \dots, \sigma_d]$ with $\sigma_i \in H^i(\GR{d}; \Z/2)$ (\cite[Section~7]{MS}).
For $p : E \to B$ a rank-$d$ vector bundle over a paracompact topological space $B$, one defines the $i$th \define{Stiefel--Whitney class} of $p$ as the characteristic class corresponding to $\sigma_i \in H^i(\GR{d}; \Z/2)$.

For any even $d \geq 1$, there is a distinguished element $e_d \in H^d(\B SO(d); \Z)$, the \textit{universal Euler class} (\cite[Section~9]{MS},\cite[Theorem~1]{Feshbach}).
For $p : E \to B$ an oriented, rank-$d$ vector bundle over a paracompact topological space $B$, which corresponds, up to isomorphism, to an principal $SO(d)$-bundle, one defines the \define{Euler class} of $p$ as the characteristic class corresponding to $e_d \in H^d(\B SO(d); \Z)$.
We remark that the Euler class can be defined for odd $d$ too, but in this case it is often less useful as, for odd $d$, we have $2 e_d = 0$ (\cite[Property~9.4]{MS}).

\section{Three notions of approximate vector bundle}
\label{notions-approx-section}

In this section, we introduce relaxations of three standard definitions of vector bundle.
The base space of our bundles will be denoted by $B$, and a typical element will usually be denoted by $y \in B$.

The classical notions of vector bundle that we consider are those of a vector bundle given by an $O(d)$-valued \v{C}ech cocycle; a vector bundle given by a family of compatible maps from opens of a cover of $B$ to the Stiefel manifold $\ST{d}$, which we interpret as a local trivialization; and a vector bundle given by a continuous map from $B$ to the Grassmannian $\GR{d}$.

The reader may be more familiar with the notion of vector bundle given by a $GL(d)$-valued cocycle.
Being able to lift the structure group from $GL(d)$ to $O(d)$ corresponds to endowing the vector bundle with a compatible, fiberwise inner product.
Vector bundles endowed with this extra structure are sometimes referred to as \textit{Euclidean vector bundles}.
Any vector bundle over a paracompact space can be endowed with an inner product in an essentially unique way (\cite[Problems~2-C~and~2-E]{MS}), and thus is isomorphic to the underlying vector bundle of a Euclidean vector bundle.
The main reason for working with Euclidean vector bundles is that, for many choices of distance on the orthogonal group $O(d)$, the group acts on itself by isometries.
This is not the case for $GL(d)$.

\subsection{Approximate cocycles}
\label{section:approximate-cocycles}

The notion of approximate cocycle makes sense for any metric group, which we define next.
This extra generality makes some arguments clearer and will be of use in \cref{change-coeff-section}.

\begin{defn}
    A \define{metric group} consists of a group $G$ endowed with the structure of a metric space, such that left and right multiplication by any fixed element $g \in G$, and taking inverse are all isometries $G \to G$.
\end{defn}

The main example of metric group to keep in mind is that of the orthogonal group $O(d)$ endowed with the Frobenius distance.
Other relevant examples include all connected Lie groups endowed with the geodesic distance induced by a bi-invariant Riemannian metric; recall that all compact Lie groups, such as the orthogonal groups, the unitary groups, and the compact symplectic groups, admit a bi-invariant Riemannian metric (see, e.g., \cite[Corollary~1.4]{milnor-2}).
It is also relevant to note that the Frobenius distance on $O(d)$ does not arise as the geodesic distance induced by a Riemannian metric; \cref{metrics-on-orth-section} deals with some of the relationships between the Frobenius distance and the (usual) geodesic distance on $O(d)$.

Let $G$ be a metric group and let us denote its distance by $d_G : G \times G \to \R$.
Let $B$ be a topological space and let $\U = \{U_i\}_{i \in I}$ be a cover of $B$.

\begin{defn}
    A \define{$1$-cochain} on $B$ subordinate to $\U$ with values in $G$ consists of a family of continuous maps $\Omega = \{\Omega_{ij} : U_j \cap U_i \to G\}_{(ij) \in N(\U)}$ indexed by the ordered $1$-simplices of $N(\U)$ that is \define{symmetric}, i.e., such that for all $(ij) \in N(\U)$ and $y \in U_j \cap U_i$ we have $\Omega_{ij}(y) = \Omega_{ji}(y)^{-1}$.
\end{defn}

We denote the set of all $1$-cochains on $B$ subordinate to $\U$ with values in $G$ by $C^1(\U;G)$.
If there is no risk of confusion, we may refer to an element of $C^1(\U;G)$ simply as a cochain subordinate to $\U$.

\begin{defn}
    \label{definition:approximate-cocycle}
    Let $\epsilon \in (0,\infty]$.
    A cochain $\Omega$ subordinate to $\U$ is an \define{$\epsilon$-approximate cocycle}
    if for every $(ijk) \in N(\U)$ and every $y \in U_k \cap U_j \cap U_i$ we have $d_G\left(\Omega_{ij}(y) \Omega_{jk}(y), \Omega_{ik}(y)\right)  < \epsilon$,
    and an \define{exact cocycle} if we have $\Omega_{ij}(y) \Omega_{jk}(y) = \Omega_{ik}(y)$.
\end{defn}

We denote the set of $\epsilon$-approximate cocycles by $\apprZ{1}{\epsilon}{\U}{G} \subseteq C^1(\U;G)$, and the set of exact cocycles by either $\apprZ{1}{0}{\U}{G}$ or simply $\apprZ{1}{}{\U}{G}$.
Of course, when $\epsilon = \infty$, $\epsilon$-approximate cocycles are merely cochains, so to keep notation uniform, we denote $C^1(\U;G)$ by $\apprZ{1}{\infty}{\U}{G}$.
We endow the set $\apprZ{1}{\infty}{\U}{G}$ with the metric given by
\[
    \dZ{\Omega}{\Lambda} := \sup_{(ij) \in N(\U)} \sup_{y \in U_j \cap U_i} d_G\left(\Omega_{ij}(y), \Lambda_{ij}(y) \right),
\]
for $\Omega,\Lambda \in \apprZ{1}{\infty}{\U}{G}$.
This induces a metric on all spaces of approximate and exact cocycles.
In particular, for $\epsilon \leq \epsilon'$, we have metric embeddings
\[
    \apprZ{1}{}{\U}{G} \subseteq \apprZ{1}{\epsilon}{\U}{G} \subseteq \apprZ{1}{\epsilon'}{\U}{G} \subseteq \apprZ{1}{\infty}{\U}{G}.
\]

\begin{defn}
    A \define{$0$-cochain} subordinate to $\U$ with values in $G$ consists of a family of continuous maps $\Theta = \{\Theta_i : U_i \to G\}_{i \in I}$.
\end{defn}

We denote the set of all $0$-cochains by $C^0(\U;G)$.
The set $C^0(\U;G)$ forms a group, by pointwise multiplication.
There is an action $C^0(\U;G) \curvearrowright \apprZ{1}{\infty}{\U}{G}$ with $\Theta$ acting on $\Omega$ by
\[(\Theta \cdot \Omega)_{ij}(y) = \Theta_i(y)\, \Omega_{ij}(y)\, \Theta_j^{-1}(y)\] for every $y \in U_j \cap U_i$.
Since $G$ acts on itself by isometries, the above action restricts to an action on $\apprZ{1}{\epsilon}{\U}{G}$ for every $\epsilon \geq 0$.

\begin{defn}
    \label{definition:approximate-cohomology-set}
    Let $\epsilon \in [0,\infty]$.
    Define the \define{$\epsilon$-approximate cohomology set} $\apprH{1}{\epsilon}{\U}{G}$ as the quotient of $\apprZ{1}{\epsilon}{\U}{G}$ by the action of $C^0(\U;G)$.
\end{defn}

\begin{nota}
    We denote a typical element of $\apprH{1}{\epsilon}{\U}{G}$ by $\Omega$, or by $[\Omega]$ if we want to refer to the equivalence class of an approximate cocycle $\Omega \in \apprZ{1}{\epsilon}{\U}{G}$.
    In the latter case, we say that the approximate cocycle $\Omega \in \apprZ{1}{\epsilon}{\U}{G}$ is a \define{representative} of the approximate cohomology class $[\Omega] \in \apprH{1}{\epsilon}{\U}{G}$.
\end{nota}

Since the action $C^0(\U;G) \curvearrowright \apprZ{1}{\infty}{\U}{G}$ is by isometries, the set $\apprH{1}{\epsilon}{\U}{G}$ inherits a metric $\disHsa$ from $\apprZ{1}{\epsilon}{\U}{G}$, given as follows.
For $\Omega,\Lambda \in \apprH{1}{\epsilon}{\U}{G}$, we have
\[
    \disH{\Omega}{\Lambda} := \inf_{\overline{\Omega},\overline{\Lambda} \in \aZ{\epsilon}} \dZ{\overline{\Omega}}{\overline{\Lambda}},
\]
where $\overline{\Omega}$ and $\overline{\Lambda}$ range over all representatives of $\Omega$ and $\Lambda$ respectively.

\begin{rmk}
    Although we will not use this in what follows, we remark that, for $\U$ a cover of a topological space and $G$ a metric group, we have constructed a filtered (or persistent) metric space $\apprH{1}{\epsilon}{\U}{G}$, parametrized by $\epsilon \in [0,\infty]$, which one can interpret as a functor $\apprH{1}{-}{\U}{G} : ([0,\infty], \leq) \to \Met$.
\end{rmk}

\subsection{Approximate local trivializations}
We start with some considerations about the notion of local trivialization of a vector bundle (\cref{section:vector-bundles-def}), which motivate our notion of approximate local trivialization.
Any rank-$d$ Euclidean vector bundle $p : E \to B$ admits an isometric local trivialization, that is, a local trivialization over an open cover $\U = \{U_i\}_{i \in I}$ of $B$ given by a family of homeomorphisms $\{U_i \times \R^d \to p^{-1}(U_i) \}_{i \in I}$ with the property that, given $y \in U_i$, we obtain an orthonormal basis of the vector space $p^{-1}(y)$ by evaluating the map $U_i \times \R^d \to p^{-1}(U_i)$ on $(y,e_j)$ for $1 \leq j \leq d$, where $e_j$ is the $j$th canonical basis vector of $\R^d$.
Recall that any vector bundle over a paracompact base $B$ is classified by some continuous map $B \to \GR{d}$ (\cref{section:classifying-maps}).
This implies, in particular, that, up to isomorphism of vector bundles, any vector bundle $p : E \to B$ over a paracompact base is a Euclidean vector bundle whose fibers $p^{-1}(y)$ that are not just abstract vector spaces, but $d$-dimensional subspaces of $\R^\infty$.

With the above in mind, any isometric local trivialization of a Euclidean vector bundle over a paracompact base $B$ gives us maps $\{\Phi_i : U_i \to \ST{d}\}$, where, as in \cref{grassmannians-background}, the space $\ST{d}$ stands for the Stiefel manifold of $d$-frames in $\R^\infty$.
One can check that a family of maps $\{\Phi_i : U_i \to \ST{d}\}_{i \in I}$ comes from a vector bundle over $B$ precisely when, for every intersection $U_j \cap U_i \neq \emptyset$ and every $y \in U_j \cap U_i$, we have that $\Phi_i(y)$ and $\Phi_j(y)$ span the same subspace of $\R^\infty$, and thus, equivalently, when there exist continuous maps $\{\Omega_{ij} : U_j \cap U_i \to O(d)\}_{(ij) \in N(\U)}$ such that $\Phi_i(y) \Omega_{ij}(y) = \Phi_j(y)$ for all $y \in U_j \cap U_i$.
Our notion of approximate local trivialization is based on this last equivalent characterization of local trivializations, and, specifically, on relaxing this last equality.

%

Let $\U = \{U_i\}_{i \in I}$ be a cover of a topological space $B$.

\begin{defn}
    \label{approx-local-triv-def}
    Let $\epsilon \in (0,\infty]$.
    An \define{$\epsilon$-approximate local trivialization} subordinate to $\U$ consists of a family of continuous maps $\Phi = \{\Phi_i : U_i \to \ST{d}\}_{i \in I}$ such that, for every $(ij) \in N(\U)$, there exists a continuous map $\Omega_{ij} : U_j \cap U_i \to O(d)$ such that, for every $y \in U_j \cap U_i$, we have $\| \Phi_i(y) \Omega_{ij}(y) - \Phi_j(y)\|  < \epsilon$.
    An \define{exact local trivialization} is an approximate local trivialization for which $\Phi_i(y) \Omega_{ij}(y) = \Phi_j(y)$.
\end{defn}

We say that the family $\Omega = \{\Omega_{ij}\}_{(ij) \in N(\U)}$ in \cref{approx-local-triv-def} is a \define{witness} of the fact that $\Phi$ is an $\epsilon$-approximate (or exact) local trivialization.
We remark that this witness is not part of the data of an $\epsilon$-approximate local trivialization, and that we merely require that a witness exist.

An \define{approximate local trivialization} consists of an $\epsilon$-approximate local trivialization for some $\epsilon \in [0,\infty]$.
We denote the set of $\epsilon$-approximate local trivializations subordinate to $\U$ by $\aA{\epsilon}$.
We define a metric on $\aA{\epsilon}$ by
\[
    \dA{\Phi}{\Psi} := \sup_{i \in I} \sup_{y \in U_i} \|\Phi_i(y) - \Psi_i(y)\|.
\]

An approximate local trivialization $\{\Phi_i\}_{i \in I}$ is \define{non-degenerate} if, for every $(ij) \in N(\U)$ and every $y \in U_j \cap U_i$, the $d\times d$ matrix $\Phi_i(y)^t \Phi_j(y)$ has full rank.

\subsection{Approximate classifying maps}

Recall from \cref{thickenings-grassmannians-background} the definition of the thickened Grassmannians, and recall, in particular, that the topology of $\GR{d}^\epsilon$ is the direct limit topology and not the one induced by the Frobenius metric.
Let $B$ be a topological space.

\begin{defn}
    \label{definition:approximate-classifying-map}
    An \define{$\epsilon$-approximate classifying map} consists of a continuous map $B \to \GR{d}^\epsilon$.
\end{defn}

The set of $\epsilon$-approximate classifying maps is denoted by $\cont{B}{\GR{d}^\epsilon}$.
We define a metric on this set by
\[
    \dC{f}{g} = \sup_{y \in B} \| f(y)-g(y)\| .
\]

We are also interested in the set of classifying maps up to homotopy, which we denote by $\hcont{B}{\GR{d}^\epsilon}$.
Although we will not make use of this fact, we mention that one can interpret this as a persistent set $\hcont{B}{\GR{d}^{-}} : ([0,\infty], \leq) \to \Set$, in the sense of \cite[Definition~2.2]{curry}.
The following result is easily proven using a linear homotopy.

\begin{lem}
    \label{close-become-homotopic}
    Let $f,g \in \cont{B}{\GR{d}^\epsilon}$ and let $[f]$ and $[g]$ denote their images in $\hcont{B}{\GR{d}^\epsilon}$.
    Let $\delta > 0$.
    If $\|f(y) - g(y)\| < \delta$ for all $y \in B$, then $[f]$ and $[g]$ become equal in $\hcont{B}{\GR{d}^{\epsilon + \delta}}$.
    In particular, if $\dC{f}{g} < \delta$, then $[f]$ and $[g]$ become equal in $\hcont{B}{\GR{d}^{\epsilon + \delta}}$.\qed
\end{lem}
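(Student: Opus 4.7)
The plan is to use the linear homotopy $H : B \times [0,1] \to \R^{\infty\times\infty}$ defined by $H(y,t) = (1-t)f(y) + t g(y)$, and show that it takes values in $\GR{d}^{\epsilon+\delta}$ and is continuous. Once this is done, $H$ is a homotopy from $f$ to $g$ viewed as maps into $\GR{d}^{\epsilon+\delta}$, which immediately gives $[f] = [g] \in \hcont{B}{\GR{d}^{\epsilon+\delta}}$, and the second assertion follows from the observation that $\dC{f}{g} < \delta$ implies $\|f(y)-g(y)\| \leq \dC{f}{g} < \delta$ for every $y \in B$.

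The key pointwise computation is as follows. Fix $y \in B$. Since $f(y) \in \GR{d}^{\epsilon}$, there exists a projection $P_y \in \GR{d}$ with $\|f(y) - P_y\| < \epsilon$. By the triangle inequality and the hypothesis $\|f(y)-g(y)\| < \delta$, we then have $\|g(y) - P_y\| < \epsilon + \delta$. For any $t \in [0,1]$,
\[
\|H(y,t) - P_y\| = \|(1-t)(f(y)-P_y) + t(g(y)-P_y)\| \leq (1-t)\|f(y)-P_y\| + t\|g(y)-P_y\| < (1-t)\epsilon + t(\epsilon+\delta) = \epsilon + t\delta \leq \epsilon + \delta,
\]
so $H(y,t) \in \GR{d}^{\epsilon+\delta}$, as required. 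The strict inequality is preserved throughout because the two initial bounds $\|f(y)-P_y\| < \epsilon$ and $\|f(y)-g(y)\| < \delta$ are strict.

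For continuity, $H$ is manifestly a pointwise linear combination of the continuous maps $f,g$ with coefficients continuous in $t$; the only mild subtlety is that the target carries the direct-limit topology of \cref{thickenings-grassmannians-background} rather than the metric topology, but this is a standard feature of the setup used throughout the paper and causes no difficulty for linear combinations of maps already known to be continuous into $\R^{\infty\times\infty}$. I expect this is the only step that needs any care; the rest is just the triangle inequality. Since $H(y,0) = f(y)$ and $H(y,1) = g(y)$, the homotopy witnesses $[f]=[g]$ in $\hcont{B}{\GR{d}^{\epsilon+\delta}}$, completing the argument.
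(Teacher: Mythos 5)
Your proposal is correct and is essentially the paper's own argument: the paper disposes of this lemma by exactly the linear homotopy $(1-t)f(y)+tg(y)$, with the same triangle-inequality estimate keeping the homotopy inside $\GR{d}^{\epsilon+\delta}$. Your extra remarks on the strictness of the inequalities and on continuity with respect to the direct limit topology are fine and do not change the approach.
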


\section{Relationships between the notions}
\label{relationships-between-notions}

We now consider the problem of going back and forth between the different notions of approximate vector bundle.
As a consequence of this study, we relate approximate vector bundles to true (exact) vector bundles.
In particular, this lets us extract a true vector bundle from an $\epsilon$-approximate vector bundle when $\epsilon$ is sufficiently small.

There are two main results in this section.
\cref{main-thm-class-map} associates an approximate classifying map to any approximate cocycle, and lets us, in particular, assign a true vector bundle to any $\epsilon$-approximate cocycle as long as $\epsilon \leq 1/2$.
This is done in a way that is stable and independent of arbitrary choices.
\cref{relation-to-true-vb} gives an upper bound for the distance from an $\epsilon$-approximate cocycle to an exact cocycle representing the same true vector bundle, when $\epsilon \leq \sqrt{2}/4$.
This is used in \cref{section-characteristic-classes} to prove the consistency of algorithms to compute characteristic classes.

Many proofs in this section rely on various results stated and proven in \cref{technical-details}.

\subsection{Cocycles and local trivializations}
\label{section-cocycles-and-local-trivializations}

In this section, we relate approximate cocycles and approximate local trivializations.
We give constructions (\cref{cocycle-to-atlas-construction} and \cref{atlas-to-cocycle-construction}) to go back and forth between the notions, and we show that, in a sense, these constructions are approximate inverses of each other (\cref{approximate-equivalence}).
The construction to go from approximate local trivializations to approximate cocycles is in general not canonical; we conclude the section by showing that, when $\epsilon \leq 1$, the construction can be made canonical.

To motivate the assumptions made in the following construction, recall that a set $I$ is \define{countable} if there exists an injection $\iota : I \to \N_{\geq 1}$.
Recall also that any vector bundle on a paracompact topological space can be trivialized on a countable open cover (\cite[Lemma~5.9]{MS}),
and that every open cover of a paracompact topological space admits a subordinate partition of unity.

We start with a simplification.
Given $\V = \{V_i\}_{i \in I}$ a countable open cover of a topological space $B$ and an injection $\iota : I \to \N_{\geq 1}$, consider a new open cover $\U = \iota_*(\V)$ of $B$ indexed by $\N_{\geq 1}$ with $U_n = V_i$ if $\iota(i) = n$ or $U_n = \emptyset$ if there is no $i \in I$ such that $\iota(i) = n$.

\begin{rmk}
    \label{simplification-countable-covers}
Note that, using $\iota$, one can construct a canonical bijection between the set of partitions of unity subordinate to $\V$ and the set of partitions of unity subordinate to $\U$.
The same is true for the sets of approximate cocycles subordinate to $\V$ and $\U$, and for the sets of approximate local trivializations subordinate to $\V$ and $\U$.
\end{rmk}

We give the main constructions of this section for covers indexed by $\N_{\geq 1}$ and we will later generalize them to arbitrary countable covers, as this simplifies exposition.

\begin{constr}
    \label{cocycle-to-atlas-construction}
Let $B$ be a paracompact topological space and let $\V = \{V_i\}_{i \in \N_{\geq 1}}$ be a cover of $B$.
Let $\phi$ be a partition of unity subordinate to $\V$.
Given a cochain $\Omega$ subordinate to $\V$ define, for each $i \in \N_{\geq 1}$, a map $\Phi_i : V_i \to \ST{d}$ where
the rows of $\Phi_i(y)$ from $d\times j$ to $d \times (j+1)-1$ are given by
\[
    \sqrt{\phi_j(y)} \Omega_{ij}(y)^t.\qedhere
\]
\end{constr}

Note that the maps $\Phi_i$ of \cref{cocycle-to-atlas-construction} are continuous, and are well-defined since, if $y \not\in V_j$, then $\phi_j(y) = 0$.

\begin{lem}
    \label{approx-cocycle-gives-approx-atlas}
    Let $\epsilon \in [0,\infty]$ and let $\Omega$ be an $\epsilon$-approximate cocycle subordinate to an open cover $\V = \{V_i\}_{i \in \N_{\geq 1}}$ of a paracompact topological space.
    The maps $\Phi_i$ of \cref{cocycle-to-atlas-construction} form an $\epsilon$-approximate local trivialization.
\end{lem}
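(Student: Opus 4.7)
The plan is to use the cochain $\Omega$ itself as the witness for the approximate local trivialization: set $\widetilde{\Omega}_{ij} := \Omega_{ij}$ for every $(ij) \in N(\V)$, and verify both that the maps $\Phi_i : V_i \to \ST{d}$ of \cref{cocycle-to-atlas-construction} are well-defined and continuous, and that $\|\Phi_i(y)\,\Omega_{ij}(y) - \Phi_j(y)\| < \epsilon$ holds pointwise on $V_j \cap V_i$ (with equality in the exact case $\epsilon = 0$). The case $\epsilon = \infty$ is vacuous, as any cochain trivially serves as its own witness with no bound to verify.

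For well-definedness, I would view $\Phi_i(y)$ as the vertical concatenation of the blocks $\sqrt{\phi_j(y)}\,\Omega_{ij}(y)^t$, one per $j \in \N_{\geq 1}$, and compute
\[
\Phi_i(y)^t\, \Phi_i(y) \,=\, \sum_{j} \phi_j(y)\, \Omega_{ij}(y)\, \Omega_{ij}(y)^t \,=\, \Bigl(\sum_j \phi_j(y)\Bigr)\, I_d \,=\, I_d,
\]
using only that each $\Omega_{ij}(y) \in O(d)$ and that $\phi$ is a partition of unity. Local finiteness of $\phi$ then ensures $\Phi_i(y)$ has finitely many nonzero rows, so it lies in $\ST{d} = \cup_n \ST{d,n}$, and every $y \in V_i$ admits a neighborhood on which all but finitely many $\phi_j$ vanish; on such a neighborhood $\Phi_i$ factors continuously through some $\ST{d,n}$, which is what is needed for continuity into the direct limit topology.

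The core step is a block-by-block calculation. The $k$-th $d$-row-block of $\Phi_i(y)\,\Omega_{ij}(y)$ equals $\sqrt{\phi_k(y)}\,\Omega_{ik}(y)^t\,\Omega_{ij}(y)$, while the $k$-th block of $\Phi_j(y)$ equals $\sqrt{\phi_k(y)}\,\Omega_{jk}(y)^t$. Using the symmetry relation $\Omega_{ab}(y)^t = \Omega_{ba}(y)$ in $O(d)$, the squared Frobenius norm of the difference reduces to
\[
\|\Phi_i(y)\,\Omega_{ij}(y) - \Phi_j(y)\|^2 \,=\, \sum_{k} \phi_k(y)\, \|\Omega_{ki}(y)\,\Omega_{ij}(y) - \Omega_{kj}(y)\|^2.
\]
Whenever $\phi_k(y) > 0$, the point $y$ lies in $V_i \cap V_j \cap V_k$, so $(kij) \in N(\V)$ and the approximate cocycle hypothesis delivers $\|\Omega_{ki}(y)\,\Omega_{ij}(y) - \Omega_{kj}(y)\| < \epsilon$; combined with $\sum_k \phi_k(y) = 1$, this bounds the sum strictly by $\epsilon^2$, and taking square roots gives the desired estimate. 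The only subtlety is passing from the pointwise strict inequality to strict inequality of the finite convex combination, but this goes through since at least one $\phi_k(y)$ is strictly positive; there is no genuine obstacle here, and the lemma is in essence a clean repackaging of the approximate cocycle condition via averaging over the partition of unity.
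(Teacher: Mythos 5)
Your proposal is correct and follows essentially the same route as the paper: take $\Omega$ itself as the witness, carry out the block computation of $\Phi_i(y)\Omega_{ij}(y)-\Phi_j(y)$, and bound the resulting weighted sum by the approximate cocycle condition together with $\sum_k \phi_k(y)=1$. The extra checks you include (orthonormality of $\Phi_i(y)$, continuity via local finiteness, and strictness of the convex-combination bound) are details the paper disposes of in a brief remark after the construction, not a different argument.
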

\begin{proof}
    We give the proof for $\epsilon \in (0,\infty]$, the case $\epsilon = 0$ being similar.
    Let $(ij) \in N(\V)$.
    We claim that the original $\epsilon$-approximate cocycle $\Omega$ is a witness that the family $\Phi$ is an $\epsilon$-approximate local trivialization.
    To prove this, we must show that, for all $y \in V_j \cap V_i$, the Frobenius distance between $\Phi_i(y) \Omega_{ij}(y)$ and $\Phi_j(y)$ is less than $\epsilon$.
    Carrying out the product $\Phi_i(y) \Omega_{ij}(y)$, we get an element of $\ST{d}$ with rows from $d\times k$ to $d \times (k+1)-1$ given by
    \[\sqrt{\phi_k(y)} \Omega_{ik}(y)^t \Omega_{ij}(y).\]
    So $\|\Phi_i(y) \Omega_{ij}(y) - \Phi_j(y)\| = \left(\sum_{k \geq 1} \| \sqrt{\phi_k(y)} \left(\Omega_{ik}(y)^t \Omega_{ij}(y) - \Omega_{kj}(y)\right)\| ^2\right)^{1/2} < \epsilon$, as required.
\end{proof}

We have thus constructed a map
\[
    \atl^\phi : \apprZ{1}{\epsilon}{\V}{O(d)} \to \apprA{\epsilon}{\V}{d},
\]
for any cover $\V$ of a paracompact topological space $B$ that is indexed by $\N_{\geq 1}$.
It is important to note that this map depends on the choice of partition of unity $\phi$.
Nevertheless, using two different partitions of unity gives homotopic local trivializations, in the following sense.

\begin{lem}
    \label{cocycle-to-atlas-homotopy}
    Let $\Omega$ be an $\epsilon$-approximate cocycle subordinate to $\V = \{V_i\}_{i \in \N_{\geq 1}}$.
    If $\phi$ and $\phi'$ are two partitions of unity subordinate to $\V$, then $\atl^\phi(\Omega)$ and $\atl^{\phi'}(\Omega)$ are homotopic through a family of $\epsilon$-approximate local trivializations that admit $\Omega$ as a witness.
\end{lem}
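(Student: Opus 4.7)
The plan is to build the homotopy by linearly interpolating the partitions of unity themselves and pushing this interpolation through the construction $\atl$. Define, for $t \in [0,1]$,
\[
  \phi^t := (1-t)\,\phi + t\,\phi'.
\]
Since $\phi$ and $\phi'$ are both partitions of unity subordinate to $\V$, so is each $\phi^t$: the supports behave well (if $\phi_j(y)=\phi'_j(y)=0$ then $\phi^t_j(y)=0$), the values are nonnegative, and $\sum_j \phi^t_j(y) = (1-t)\sum_j \phi_j(y) + t\sum_j \phi'_j(y) = 1$. Local finiteness is inherited from $\phi$ and $\phi'$ jointly.

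Now set $\Phi^t := \atl^{\phi^t}(\Omega)$. Concretely, the rows of $\Phi^t_i(y)$ from $d\times j$ to $d\times(j+1)-1$ are
\[
  \sqrt{\phi^t_j(y)}\,\Omega_{ij}(y)^t.
\]
By \cref{approx-cocycle-gives-approx-atlas} applied to the partition of unity $\phi^t$, each $\Phi^t$ is an $\epsilon$-approximate local trivialization subordinate to $\V$, and the \emph{same} cochain $\Omega$ witnesses this for every $t$. By construction $\Phi^0 = \atl^\phi(\Omega)$ and $\Phi^1 = \atl^{\phi'}(\Omega)$, so it only remains to check that the assignment $(y,t) \mapsto \Phi^t_i(y)$ is jointly continuous on $V_i \times [0,1]$ with values in $\ST{d}$. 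This reduces to continuity of each entry, which in turn reduces to continuity of $(y,t)\mapsto \sqrt{\phi^t_j(y)}$. This is continuous because $\phi^t_j(y)$ is jointly continuous in $(y,t)$ and $\sqrt{\cdot}$ is continuous on $[0,\infty)$ (note the square root is taken \emph{after} the interpolation, not before, so there is no issue at zeros). Local finiteness of the supports of the $\phi_j$'s and $\phi'_j$'s ensures that in a neighborhood of any point only finitely many rows are nonzero, so joint continuity of the full matrix-valued map follows.

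The main (minor) obstacle is verifying that the interpolated map still lands in the Stiefel manifold, but this is immediate: one computes
\[
  \Phi^t_i(y)^{\,t}\,\Phi^t_i(y) \;=\; \sum_{j} \phi^t_j(y)\,\Omega_{ij}(y)\,\Omega_{ij}(y)^t \;=\; \sum_{j}\phi^t_j(y)\cdot \id \;=\; \id,
\]
since $\Omega_{ij}(y)\in O(d)$ and $\phi^t$ is a partition of unity. Thus $\{\Phi^t\}_{t\in[0,1]}$ is the required homotopy through $\epsilon$-approximate local trivializations admitting $\Omega$ as a common witness.
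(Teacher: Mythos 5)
Your proof is correct and is essentially the paper's own argument: interpolate the partitions of unity linearly, observe that each interpolant is again a partition of unity, and note that every $\atl^{\phi^t}(\Omega)$ admits the same $\Omega$ as a witness. The additional checks you spell out (joint continuity and that the interpolated frames remain in $\ST{d}$) are details the paper leaves implicit, but the route is the same.
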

\begin{proof}
    For any $\alpha \in [0,1]$, the formula $\phi_i^\alpha = \alpha \phi_i + (1-\alpha) \phi'_i$ gives a partition of unity.
    Now observe that the family of $\epsilon$-approximate local trivializations $\atl^{\phi^\alpha}(\Omega)$ admit $\Omega$ as a witness.
\end{proof}

Next, we show that the construction assigning an approximate local trivialization to an approximate cocycle is stable.

\begin{lem}
    \label{cocycle-to-altas-stable}
    Let $\V = \{V_i\}_{i \in \N_{\geq 1}}$ be a cover of a paracompact topological space $B$ and let $\phi$ be a partition of unity subordinate to $\V$.
    For $\Omega$ and $\Lambda$ $\epsilon$-approximate cocycles subordinate to $\V$,
    \[
        \dA{\atl^\phi(\Omega)}{\atl^\phi(\Lambda)} \leq \dZ{\Omega}{\Lambda}.
    \]
\end{lem}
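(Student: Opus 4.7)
The plan is a direct computation exploiting the block-row structure of $\atl^\phi$ given in \cref{cocycle-to-atlas-construction}. Fix $i \in \N_{\geq 1}$ and $y \in V_i$. For each $j \geq 1$, the block of rows from $d\cdot j$ to $d\cdot(j+1)-1$ of the matrix $\atl^\phi(\Omega)_i(y) - \atl^\phi(\Lambda)_i(y)$ equals $\sqrt{\phi_j(y)}\bigl(\Omega_{ij}(y)^t - \Lambda_{ij}(y)^t\bigr)$, with the convention (already used to make $\atl^\phi$ well defined) that the block is the zero matrix whenever $\phi_j(y) = 0$, in particular whenever $y \notin V_j$. Since the Frobenius norm is the $\ell^2$ norm on matrix entries, it decomposes as a sum of squared Frobenius norms over blocks, and it is invariant under transposition. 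This gives
\[
    \|\atl^\phi(\Omega)_i(y) - \atl^\phi(\Lambda)_i(y)\|^2 \;=\; \sum_{j \geq 1} \phi_j(y)\,\|\Omega_{ij}(y) - \Lambda_{ij}(y)\|^2,
\]
where the sum is effectively restricted to the finitely many indices $j$ with $y \in V_j \cap V_i$.

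For each such $j$, the definition of $\dZsa$ yields $\|\Omega_{ij}(y) - \Lambda_{ij}(y)\| \leq \dZ{\Omega}{\Lambda}$. Plugging this uniform bound into the above identity and using $\sum_{j \geq 1} \phi_j(y) = 1$ gives
\[
    \|\atl^\phi(\Omega)_i(y) - \atl^\phi(\Lambda)_i(y)\|^2 \;\leq\; \dZ{\Omega}{\Lambda}^2 \sum_{j \geq 1} \phi_j(y) \;=\; \dZ{\Omega}{\Lambda}^2.
\]
Taking the supremum over $i \in \N_{\geq 1}$ and $y \in V_i$ then produces the desired inequality $\dA{\atl^\phi(\Omega)}{\atl^\phi(\Lambda)} \leq \dZ{\Omega}{\Lambda}$.

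There is essentially no obstacle here; the argument mirrors the block-norm computation already performed in the proof of \cref{approx-cocycle-gives-approx-atlas}. The one point worth mentioning is the handling of indices $j$ with $y \notin V_j$, where $\Omega_{ij}(y) - \Lambda_{ij}(y)$ is undefined: since $\phi_j(y) = 0$ on the complement of $V_j$, the corresponding terms drop out of the sum and one need only bound the remaining (defined) terms by $\dZ{\Omega}{\Lambda}^2$.
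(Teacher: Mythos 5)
Your proof is correct and follows essentially the same route as the paper: a blockwise Frobenius-norm computation giving $\|\Phi_i(y)-\Psi_i(y)\|^2=\sum_j \phi_j(y)\|\Omega_{ij}(y)-\Lambda_{ij}(y)\|^2$, bounded by $\dZ{\Omega}{\Lambda}^2$ using $\sum_j \phi_j(y)=1$. Your extra remark about the indices with $\phi_j(y)=0$ is a point the paper handles implicitly right after \cref{cocycle-to-atlas-construction}, so nothing is missing.
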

\begin{proof}
    Let $\dZ{\Omega}{\Lambda} = \delta$, $\Phi := \atl^\phi(\Omega)$, and $\Psi := \atl^\phi(\Lambda)$.
    For $y \in V_i$, we have
    \[
        \| \Phi_i(y) - \Psi_i(y)\| ^2 = \sum_{k \in I} \phi_k(y) \| \Omega_{ik}(y) - \Lambda_{ik}(y)\| ^2 \leq \delta^2 \sum_{k \in I} \phi_k(y) = \delta^2,
    \]
    which proves the claim.
\end{proof}

Let $\U = \{U_i\}_{i \in I}$ be a countable open cover of a paracompact topological space $B$ and let $\iota : I \to \N_{\geq 1}$ be an injection.
Using \cref{simplification-countable-covers}, we can generalize \cref{cocycle-to-atlas-construction}, \cref{approx-cocycle-gives-approx-atlas}, \cref{cocycle-to-atlas-homotopy}, and \cref{cocycle-to-altas-stable} to $\U$.

In particular, we have a map $\atl^{\phi,\iota} : \aZ{\epsilon} \to \aA{\epsilon}$ that now also depends on the choice of injection $\iota$.
We now prove that, up to homotopy, $\atl$ is independent of the choice of $\iota$.
In order to show this, we prove a more general lemma that will be of use later.

\begin{lem}
    \label{homotopic-map-stiefel}
    Given an injection $\iota : \N_{\geq 1} \to \N_{\geq 1}$ define $\chi^\iota : \ST{d} \to \ST{d}$ by mapping a frame $\Phi \in \ST{d}$ to the frame whose $\iota(k)$th row is the $k$th row of $\Phi$, and whose other rows are identically $0$.
    If $\iota, \iota' : \N_{\geq 1} \to \N_{\geq 1}$ are injections, then $\chi^\iota$ and $\chi^{\iota'} : \ST{d} \to \ST{d}$ are homotopic.
\end{lem}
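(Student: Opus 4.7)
My plan is to reduce the lemma to the contractibility of the infinite Stiefel manifold $\ST{d}$, from which the statement is immediate: any two continuous maps into a contractible space are homotopic. Each $\chi^\iota$ is continuous because its restriction to any finite stage $\ST{d,n}$ factors continuously through $\ST{d, m}$ with $m = \max_{k \leq n} \iota(k)$, so continuity on the whole colimit follows from the universal property of the direct-limit topology; in particular $\chi^\iota$ lands in $\ST{d}$ because $T_\iota$ (as described below) is a linear isometric embedding and hence preserves orthonormality of columns.

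To see that $\ST{d}$ is contractible, I would invoke the classical connectivity estimate that $\ST{d,n}$ is $(n{-}d{-}1)$-connected (e.g.~via the fiber sequences $\ST{d-1,n-1} \to \ST{d,n} \to S^{n-1}$). Passing to the colimit, every homotopy group of $\ST{d}$ vanishes; since $\ST{d}$ is a CW complex built from cellular inclusions of compact CW manifolds, Whitehead's theorem gives contractibility. This is the same fact that makes $\GR{d}$ a classifying space for $O(d)$, which the paper uses implicitly in identifying $\Vect_d(B) \cong \hcont{B}{\GR{d}}$, so invoking it here is in keeping with the paper's conventions.

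If a more explicit, Whitehead-free argument is preferred, one can instead write $\chi^\iota(\Phi) = T_\iota \cdot \Phi$, where $T_\iota : \R^\infty \to \R^\infty$ is the linear isometric embedding with $T_\iota(e_k) = e_{\iota(k)}$, and produce a path of linear isometric embeddings from $T_\iota$ to $T_{\iota'}$ using a Kuiper-style doubling trick: fix an identification $\R^\infty \cong \R^\infty \oplus \R^\infty$ by interleaving coordinates, arrange via preliminary pre-compositions that the images of $T_\iota$ and $T_{\iota'}$ are orthogonal (one into the ``odd'' copy, one into the ``even'' copy), and then connect them by a simultaneous $2\times 2$ rotation in each coordinate plane spanned by the corresponding pair of basis-vector images. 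The main obstacle in this explicit route is verifying continuity of the combined rotation in the direct-limit topology, but this is handled by observing that only finitely many of the planar rotations act nontrivially on any fixed finite-dimensional subspace, so the restriction to each $[0,1] \times \ST{d,n}$ is visibly continuous.
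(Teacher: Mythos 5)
Your primary argument is correct, but it takes a genuinely different route from the paper. You deduce the lemma from the contractibility of $\ST{d}$ (connectivity of $\ST{d,n}$, passage to the colimit, Whitehead's theorem for the standard CW structure), together with the observation that any two maps into a contractible space are homotopic; your continuity check for $\chi^\iota$ via the direct-limit topology is also fine. The paper never invokes contractibility: it constructs the homotopy by hand, first noting that $\sqrt{\alpha}\,\chi^{\iota} + \sqrt{1-\alpha}\,\chi^{\iota'}$ works when $\iota$ and $\iota'$ have disjoint images, and then reducing the general case to comparing the identity with the doubling map, which it handles by a Hilbert-hotel homotopy moving one row at a time. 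Your route is shorter and rests on classical facts (indeed $\ST{d}$ is the total space of the universal $O(d)$-bundle, so its contractibility is standard), and it does prove the statement as written. What the paper's explicit construction buys is that every intermediate map is induced by a linear isometric embedding of $\R^\infty$ acting on rows, hence preserves quantities such as $\|\Phi\,\Omega_{ij} - \Psi\|$; this extra property is exactly what \cref{cocycle-to-atlas-homotopy-injection} and \cref{w-then-triv} rely on when they assert that the induced homotopies of local trivializations stay $\epsilon$-approximate and admit the same witness $\Omega$. A homotopy extracted from abstract contractibility carries no such metric control, so your proof, while valid for the lemma in isolation, could not be dropped into the paper without redoing that control.

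Your optional ``Whitehead-free'' variant is incomplete at precisely the point where the paper does its work: making the images of $T_\iota$ and $T_{\iota'}$ orthogonal by composing with odd/even shifts replaces the maps by $T_{\mathrm{odd}\circ\iota}$ and $T_{\mathrm{even}\circ\iota'}$, and you still owe a homotopy through isometric embeddings from $T_\iota$ to its shifted copy; a naive simultaneous rotation of each $e_{\iota(k)}$ toward its shifted image is not obviously isometric, since those coordinate planes need not be pairwise orthogonal (one may have $\iota(j)$ equal to a shifted index of $\iota(k)$). This missing step is exactly what the paper's row-by-row homotopy supplies.
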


    The proof of the lemma is standard, but we give it here for completeness.

\begin{proof}
    Assume that $\iota$ and $\iota'$ have disjoint images.
    Then, there is a homotopy between $\chi^\iota$ and $\chi^{\iota'}$ given by $\sqrt{\alpha} \chi^{\iota} + \sqrt{1-\alpha} \chi^{\iota'}$ for $\alpha \in [0,1]$.
    Let $2\iota : \N_{\geq 1} \to \N_{\geq 1}$ be given by $2\iota(k) = 2 \times \iota(k)$, and define $2\iota' + 1$ in an analogous way.
    Since $2\iota$ and $2\iota'+1$ have disjoint images, our previous reasoning reduces the problem to showing that $\chi^{\iota}$ and $\chi^{2\iota}$ are homotopic, and that $\chi^{\iota'}$ and $\chi^{2\iota' - 1}$ are homotopic.
    Since the two proofs are entirely analogous, we give the details only for the case of $\iota$.
    Moreover, since $2\iota : \N_{\geq 1} \to \N_{\geq 1}$ is the composite of $\iota$ with multiplication by $2$, it is enough to give the proof for $\iota = \id$, which we now do.

    We must show that the identity $\ST{d} \to \ST{d}$ is homotopic to $\chi^2 : \ST{d} \to \ST{d}$.
    Informally, the proof works by moving each row of $\Phi$ at a time.
    To simplify exposition, in the rest of this proof, the notation $\Phi_m$ will be used to refer to the $m$th row of a frame $\Phi \in \ST{d}$.
    Consider the family of functions $f^\alpha : \ST{d} \to \ST{d}$ indexed by $\alpha \in [0,1]$ defined as follows.
    For $\alpha = 0$, let $f^\alpha = \id$.
    For $n \in \N_{\geq 1}$, $\alpha \in [1/(n+1), 1/n]$, and $m \in \N_{\geq 1}$, define
    \[
        \left(f^{\alpha}(\Phi)\right)_m =
        \begin{cases}
          \Phi_m, & \text{$m < n$}\\
         \sqrt{\beta}\,\, \Phi_m,  & \text{$m = n$}\\
          0,  & \text{$n < m < 2n$}\\
         \sqrt{1-\beta}\,\, \Phi_m,    & \text{$m = 2n$}\\
          \Phi_{m/2},  & \text{$m > 2n$ and $m$ even}\\
          0,  & \text{$m > 2n$ and $m$ odd}
        \end{cases}
    \]
    where $\beta = (1/n - \alpha) \times (1/n - 1/(n+1))$.
    By inspection, we see that $f^\alpha$ gives a homotopy between the identity and $\chi^2$, using that $\ST{d}$ has the direct limit topology.
\end{proof}

\begin{lem}
    \label{cocycle-to-atlas-homotopy-injection}
    Let $\Omega$ be an $\epsilon$-approximate cocycle subordinate to a countable open cover $\U = \{U_i\}_{i \in I}$ and let $\iota,\iota' : I \to \N_{\geq 1}$ be injections.
    Let $\phi$ be a partition of unity subordinate to $\U$.
    Then $\atl^{\phi,\iota}(\Omega)$ and $\atl^{\phi,\iota'}(\Omega)$ are homotopic through a family of $\epsilon$-approximate local trivializations that admit $\Omega$ as a witness.
\end{lem}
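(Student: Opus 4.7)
The plan is to reduce to \cref{homotopic-map-stiefel} by applying its argument blockwise. View each frame $\Phi_i(y) \in \ST{d}$ produced by $\atl^{\phi,\iota}(\Omega)$ as an infinite vertical stack of $d \times d$ blocks indexed by $\N_{\geq 1}$, where the block at position $\iota(j)$ is $\sqrt{\phi_j(y)}\, \Omega_{ij}(y)^t$ and all other blocks vanish. Orthonormality of the columns is precisely the identity $\sum_{j \in I} \phi_j(y)\, \Omega_{ij}(y) \Omega_{ij}(y)^t = I$, which uses only that $\phi$ is a partition of unity and that each $\Omega_{ij}(y) \in O(d)$. Thus every approximate local trivialization coming from $\atl^{\phi,\iota}$ is of the form ``$\iota$-indexed block stacking'' of the same underlying family of blocks, and it suffices to connect $\iota$ and $\iota'$ through such stackings.

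First, I would treat the case in which $\iota$ and $\iota'$ have disjoint images. For $\alpha \in [0,1]$, define $\Phi^\alpha_i(y)$ to have block $\sqrt{\alpha\, \phi_j(y)}\, \Omega_{ij}(y)^t$ at position $\iota(j)$ and block $\sqrt{(1-\alpha)\, \phi_j(y)}\, \Omega_{ij}(y)^t$ at position $\iota'(j)$. The coefficients $\{\alpha \phi_j, (1-\alpha)\phi_j\}_{j \in I}$ still form a partition of unity, so the same identity as in \cref{approx-cocycle-gives-approx-atlas} shows that $\Phi^\alpha_i(y) \in \ST{d}$. Moreover, carrying out $\Phi^\alpha_i(y)\Omega_{ij}(y) - \Phi^\alpha_j(y)$ blockwise and summing Frobenius norms gives exactly
\[
\sum_{k \in I} \phi_k(y)\, \bigl\lVert \Omega_{ik}(y)^t \Omega_{ij}(y) - \Omega_{jk}(y)^t \bigr\rVert^2 < \epsilon^2,
\]
so $\Omega$ is a witness at every time $\alpha$.

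Second, I would handle a general pair of injections by the same two-step reduction used in the proof of \cref{homotopic-map-stiefel}. The injections $2\iota$ and $2\iota'+1$ (with $2\iota(i) := 2 \cdot \iota(i)$, and similarly) have disjoint images, so by the previous step $\atl^{\phi, 2\iota}(\Omega)$ and $\atl^{\phi, 2\iota'+1}(\Omega)$ are homotopic through $\epsilon$-approximate trivializations with witness $\Omega$. It then remains to connect $\atl^{\phi, \iota}(\Omega)$ to $\atl^{\phi, 2\iota}(\Omega)$ and $\atl^{\phi, \iota'}(\Omega)$ to $\atl^{\phi, 2\iota'+1}(\Omega)$, which I would do by transcribing the explicit ``move-one-row-at-a-time'' homotopy $f^\alpha$ of \cref{homotopic-map-stiefel} at the level of $d$-row blocks. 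At each stage, a single block $\sqrt{\phi_n(y)}\, \Omega_{in}(y)^t$ is split into $\sqrt{\beta\,\phi_n(y)}\, \Omega_{in}(y)^t$ at its old position and $\sqrt{(1-\beta)\,\phi_n(y)}\, \Omega_{in}(y)^t$ at the new position, while the remaining blocks are merely relabelled; this clearly preserves the ``coefficients form a partition of unity'' property and hence the orthonormality, and the witness computation is identical to the one above.

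The only real obstacle is bookkeeping: one must check that each intermediate frame is indeed a continuous function $U_i \to \ST{d}$, where $\ST{d}$ carries the direct limit topology, and that the Frobenius-norm expansion used to verify the witness bound is unaffected by the block permutations and scalings. Both points follow from the fact that every intermediate stacking has coefficient family $\{c_j(y)\}$ with $\sum_j c_j(y) = 1$, reducing all of the verifications to the same computation carried out in \cref{approx-cocycle-gives-approx-atlas}.
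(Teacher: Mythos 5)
Your proposal is correct, and the homotopies you use are exactly the ones the paper relies on, but the organization is genuinely different. The paper proves \cref{cocycle-to-atlas-homotopy-injection} in one line: it claims that $\iota' = b \circ \iota$ for some bijection $b$ of $\N_{\geq 1}$ and then invokes \cref{homotopic-map-stiefel} as a black box, leaving implicit the check that the homotopies constructed there, being linear, right-$O(d)$-equivariant, and norm-preserving, carry $\epsilon$-approximate local trivializations admitting $\Omega$ as a witness to trivializations of the same kind. You bypass any factorization and instead re-run the proof of \cref{homotopic-map-stiefel} at the level of $d \times d$ blocks directly on the trivializations: the $\sqrt{\alpha}/\sqrt{1-\alpha}$ interpolation for disjoint images, the reduction via $2\iota$ and $2\iota'+1$, and the one-block-at-a-time move, verifying orthonormality and the witness bound by the same partition-of-unity computation as in \cref{approx-cocycle-gives-approx-atlas}. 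What your route buys: the statement actually being proved (homotopy \emph{through} $\epsilon$-approximate local trivializations with witness $\Omega$) is verified explicitly rather than inherited silently, and you never need the factorization through a bijection, which as literally stated is not always available (take $I = \N_{\geq 1}$, $\iota = \id$, and $\iota'$ the shift $k \mapsto k+1$: no bijection $b$ satisfies $\iota' = b \circ \iota$); your direct argument sidesteps this and is the more robust one, while the paper's route buys brevity. One small caveat: your closing claim that continuity of the intermediate frames follows from the coefficients summing to $1$ is not quite the right reason; that identity gives orthonormality, whereas continuity (especially at the parameter where infinitely many block moves accumulate) still rests on the direct-limit-topology argument of \cref{homotopic-map-stiefel} together with local finiteness of $\phi$. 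This matches the level of detail the paper itself leaves implicit, so it is not a gap, but the justification should point there rather than to the partition-of-unity identity.
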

\begin{proof}
    The result follows at once from \cref{homotopic-map-stiefel} by noticing that, for any two injections $\iota,\iota' : I \to \N_{\geq 1}$ there exists a bijection $b$ of $\N_{\geq 1}$ such that $\iota' = b \circ \iota$.
\end{proof}

We now consider the problem of assigning an approximate cocycle to an approximate local trivialization.

\begin{constr}
    \label{atlas-to-cocycle-construction}
Let $\Phi = \{\Phi_i\}_{i \in I}$ be an $\epsilon$-approximate local trivialization subordinate to $\U = \{U_i\}_{i \in I}$.
By definition, there exists a witness that $\Phi$ is an $\epsilon$-approximate local trivialization.
Choose, arbitrarily, such a witness $\Omega$.
Without loss of generality, we may assume that $\Omega$ is symmetric, and thus that it is a cochain.
\end{constr}

\begin{lem}
    \label{altas-to-cocycle-lem}
    Let $\epsilon \in [0,\infty]$.
    Let $\Phi$ be an $\epsilon$-approximate local trivialization.
    Then, the cochain described in \cref{atlas-to-cocycle-construction} is a $3 \epsilon$-approximate cocycle.
    Thus, \cref{atlas-to-cocycle-construction} gives a map $\bw : \aA{\epsilon} \to \aZ{3\epsilon}$.
\end{lem}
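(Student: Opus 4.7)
The plan is to unpack \cref{atlas-to-cocycle-construction} and show that the cocycle condition almost-holds with slack at most $3\epsilon$ by performing all estimates in Frobenius norm, using two key metric facts about Stiefel frames and orthogonal matrices.

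First, I would fix $(ijk) \in N(\U)$ and $y \in U_i \cap U_j \cap U_k$, and write $\Phi_a = \Phi_a(y)$, $\Omega_{ab} = \Omega_{ab}(y)$, for brevity. The witness hypothesis gives the three bounds
\[
\|\Phi_i \Omega_{ij} - \Phi_j\| < \epsilon, \qquad \|\Phi_j \Omega_{jk} - \Phi_k\| < \epsilon, \qquad \|\Phi_i \Omega_{ik} - \Phi_k\| < \epsilon.
\]
The central idea is to compare the two approximations $\Phi_i \Omega_{ij}\Omega_{jk}$ and $\Phi_i \Omega_{ik}$ of $\Phi_k$, then strip the $\Phi_i$ off on the left.

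Next, I would chain the first two estimates. Using that $\Omega_{jk} \in O(d)$ and that right multiplication by an element of $O(d)$ is an isometry for the Frobenius norm (this is the standard identity $\|XU\|_F = \|X\|_F$ for $U$ orthogonal, which holds since the columns are permuted and rotated within themselves), I get
\[
\|\Phi_i\Omega_{ij}\Omega_{jk}-\Phi_j\Omega_{jk}\| \;=\; \|(\Phi_i\Omega_{ij}-\Phi_j)\,\Omega_{jk}\| \;=\; \|\Phi_i\Omega_{ij}-\Phi_j\| \;<\; \epsilon,
\]
which combined with the second bound via the triangle inequality yields $\|\Phi_i\Omega_{ij}\Omega_{jk}-\Phi_k\| < 2\epsilon$. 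A second application of the triangle inequality, together with the third bound, produces
\[
\|\Phi_i(\Omega_{ij}\Omega_{jk}-\Omega_{ik})\| \;=\; \|\Phi_i\Omega_{ij}\Omega_{jk}-\Phi_i\Omega_{ik}\| \;<\; 3\epsilon.
\]

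Finally, I would cancel the $\Phi_i$ by multiplying on the left by $\Phi_i^{t}$. The key observation is that, since $\Phi_i \in \ST{d}$ has orthonormal columns, $\Phi_i^{t}\Phi_i = \id_d$, and $\Phi_i^{t}$ has operator norm equal to $1$; hence for any matrix $X$,
\[
\|X\|_F \;=\; \|\Phi_i^{t}\Phi_i X\|_F \;\leq\; \|\Phi_i^{t}\|_{\mathrm{op}}\,\|\Phi_i X\|_F \;=\; \|\Phi_i X\|_F.
\]
Applied to $X = \Omega_{ij}\Omega_{jk}-\Omega_{ik}$ this gives the desired bound $\|\Omega_{ij}\Omega_{jk}-\Omega_{ik}\| < 3\epsilon$. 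Continuity of each $\Omega_{ab}$ and the symmetry assumption (which, as noted in the construction, is harmless: replace a witness $\{\Omega_{ab}\}$ by its symmetrization through $\Omega_{ba} := \Omega_{ab}^{-1}$ for $a<b$) confirm that $\bw(\Phi) \in \aZ{3\epsilon}$.

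The only subtle point, which I would emphasize in the write-up, is the use of the two different norm identities: right-multiplication by an \emph{orthogonal} matrix preserves the Frobenius norm exactly (used in the chaining step), whereas left-multiplication by the transpose of a \emph{Stiefel} frame is only a contraction with operator norm $1$ (used in the final cancellation step). The rest is routine triangle-inequality bookkeeping; no obstacle beyond keeping these two facts straight.
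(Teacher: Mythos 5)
Your proof is correct and takes essentially the same route as the paper's: the same two norm facts (right multiplication by an orthogonal matrix is a Frobenius isometry, and multiplication by a Stiefel frame is norm-preserving, resp.\ contractive for its transpose, as in \cref{product-by-frame-isometry}), assembled by triangle-inequality bookkeeping to get the $3\epsilon$ bound. The only difference is cosmetic: the paper strips the frames early, comparing $\Phi_i(y)^t\Phi_k(y)$ with $\Omega_{ik}(y)$, whereas you keep them until a final cancellation by $\Phi_i^t$.
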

\begin{proof}
    We address the case $\epsilon \in (0,\infty]$, the case $\epsilon = 0$ being similar.
    Let $(ijk) \in N(\U)$ and let $y \in U_k \cap U_j \cap U_i$.
    Since $\| \Phi_i(y) \Omega_{ik}(y) - \Phi_k(y)\|  < \epsilon$, we have that $\| \Phi_k(y)^t \Phi_i(y) \Omega_{ik}(y) - \id\|  < \epsilon$, by \cref{product-by-frame-isometry}.
    This implies that $\| \Phi_i(y)^t \Phi_k(y) - \Omega_{ik}(y)\|  < \epsilon$.

    A similar computation shows that $\| \Phi_i(y)^t \Phi_j(y) \Omega_{jk}(y) - \Phi_i(y)^t \Phi_k(y)\|  < \epsilon$.
    Using the triangle inequality and the first bound in the proof, we get that $\| \Phi_i(y)^t \Phi_j(y) \Omega_{jk}(y) - \Omega_{ik}(y) \|  < 2\epsilon$.
    By \cref{product-by-frame-isometry} and the first bound in this proof but for $i$ and $j$, we have that $\| \Phi_i(y)^t \Phi_j(y) \Omega_{jk}(y) - \Omega_{ij}(y) \Omega_{jk}(y)\|  < \epsilon$.
    The triangle inequality then finishes the proof.
\end{proof}

Although we can associate a $3\epsilon$-approximate cocycle to every $\epsilon$-approximate local trivialization, this choice is not canonical, as an approximate local trivialization can have many distinct witnesses.
Nonetheless, the following result says that any two witnesses cannot be too far apart.

\begin{lem}
    \label{stability-of-bw}
    Let $\Omega$ and $\Lambda$ be witnesses that $\Phi$ and $\Psi$ are, respectively, $\epsilon$- and $\delta$-approximate local trivializations.
    Then $\dZ{\Omega}{\Lambda} \leq \epsilon + \delta + \sqrt{2} \dA{\Phi}{\Psi}$.
\end{lem}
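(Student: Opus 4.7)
My plan is to prove the bound pointwise, at each ordered $1$-simplex $(ij) \in N(\U)$ and each $y \in U_j \cap U_i$; I suppress $y$ throughout. The argument rests on three ingredients: (i) the witness conditions $\|\Phi_i \Omega_{ij} - \Phi_j\| < \epsilon$ and $\|\Psi_i \Lambda_{ij} - \Psi_j\| < \delta$; (ii) the pointwise bounds $\|\Phi_i - \Psi_i\|, \|\Phi_j - \Psi_j\| \leq \dA{\Phi}{\Psi}$ coming from the definition of $\dAsa$; and (iii) two Frobenius-isometry facts, namely that left multiplication by a frame $\Phi \in \ST{d}$ preserves the Frobenius norm on $d \times d$ matrices (by \cref{product-by-frame-isometry}) and that right multiplication by an orthogonal matrix is a Frobenius isometry on matrices with $d$ columns.

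The first step is a triangle inequality that compares the two ``corrected'' frames by inserting $\Phi_j$ and $\Psi_j$:
\[
\|\Phi_i \Omega_{ij} - \Psi_i \Lambda_{ij}\|
\leq \|\Phi_i \Omega_{ij} - \Phi_j\| + \|\Phi_j - \Psi_j\| + \|\Psi_j - \Psi_i \Lambda_{ij}\|
< \epsilon + \delta + \dA{\Phi}{\Psi}.
\]
The second step is the algebraic splitting
\[
\Phi_i \Omega_{ij} - \Psi_i \Lambda_{ij} \;=\; \Phi_i(\Omega_{ij} - \Lambda_{ij}) + (\Phi_i - \Psi_i)\Lambda_{ij},
\]
after which the isometry properties give $\|\Phi_i(\Omega_{ij} - \Lambda_{ij})\| = \|\Omega_{ij} - \Lambda_{ij}\|$ and $\|(\Phi_i - \Psi_i)\Lambda_{ij}\| = \|\Phi_i - \Psi_i\| \leq \dA{\Phi}{\Psi}$. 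Isolating the first summand and combining with step one yields
\[
\|\Omega_{ij} - \Lambda_{ij}\| < \epsilon + \delta + 2\,\dA{\Phi}{\Psi},
\]
and passing to the supremum over $(ij)$ and $y$ produces a bound on $\dZ{\Omega}{\Lambda}$ with coefficient $2$ in front of $\dA{\Phi}{\Psi}$.

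The main subtlety is improving this coefficient from $2$ down to the claimed $\sqrt{2}$. The straightforward chain above applies the triangle inequality twice to two separate contributions each of size $\dA{\Phi}{\Psi}$, which loses a factor of $\sqrt{2}$. To tighten the estimate, I would consolidate the two ``geometric'' errors $\Phi_j - \Psi_j$ and $(\Phi_i - \Psi_i)\Lambda_{ij}$ into a single vector and bound $\|(\Phi_j - \Psi_j) - (\Phi_i - \Psi_i)\Lambda_{ij}\|^2$ by expanding the Frobenius inner product, aiming to show that the cross term $\langle \Phi_j - \Psi_j,\, (\Phi_i - \Psi_i)\Lambda_{ij}\rangle_F$ is non-positive, or otherwise controlled, by exploiting that $\Phi_\bullet, \Psi_\bullet \in \ST{d}$ satisfy $\Phi_\bullet^t \Phi_\bullet = \Psi_\bullet^t \Psi_\bullet = I_d$. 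If such control holds, Pythagoras' inequality $\sqrt{a^2 + b^2} \leq \sqrt{2}\max(a,b)$ replaces the trivial $a + b \leq 2\max(a,b)$ and the $\sqrt{2}$ coefficient falls out. Establishing the required cross-term estimate is the main obstacle and is where the proof will have to draw on the geometry of frames beyond the purely algebraic manipulations above.
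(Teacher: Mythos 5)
You correctly establish the weaker bound $\dZ{\Omega}{\Lambda} \leq \epsilon + \delta + 2\,\dA{\Phi}{\Psi}$: the triangle-inequality step, the splitting $\Phi_i\Omega_{ij} - \Psi_i\Lambda_{ij} = \Phi_i(\Omega_{ij}-\Lambda_{ij}) + (\Phi_i-\Psi_i)\Lambda_{ij}$, and the two isometry facts you invoke are all fine. But the lemma claims the constant $\sqrt{2}$, and that improvement is exactly what your proposal leaves unproved, as you acknowledge. Moreover, the route you sketch points the wrong way: writing your consolidated error as $X - Y$ with $X = \Phi_j - \Psi_j$ and $Y = (\Phi_i-\Psi_i)\Lambda_{ij}$, one has $\|X-Y\|^2 = \|X\|^2 + \|Y\|^2 - 2\langle X,Y\rangle_F$, so to conclude $\|X-Y\| \leq \sqrt{2}\,\dA{\Phi}{\Psi}$ you would need the cross term to be \emph{non-negative}; non-positivity (your stated aim) only makes the estimate worse. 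And nothing in the hypotheses forces a sign on $\langle X,Y\rangle_F$ — the perturbations of the frames at $i$ and at $j$ are essentially independent — so this is a genuine gap rather than a missing detail. Note also that the constant $\sqrt{2}$ is sharp (take $d=1$, exact witnesses, $\Phi_j = \Phi_i$, $\Psi_j = -\Psi_i$ with $\Phi_i \perp \Psi_i$: then $\dZ{\Omega}{\Lambda} = 2 = \sqrt{2}\,\dA{\Phi}{\Psi}$), so the slack in your two separate triangle inequalities cannot be waved away.

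The paper avoids the cross-term problem by a different decomposition, in which the frame-perturbation error is paid only once. From the witness conditions one deduces, exactly as in the proof of \cref{altas-to-cocycle-lem} via \cref{product-by-frame-isometry}, that $\|\Omega_{ij}(y) - \Phi_i(y)^t\Phi_j(y)\| < \epsilon$ and $\|\Lambda_{ij}(y) - \Psi_i(y)^t\Psi_j(y)\| < \delta$; these comparisons cost only $\epsilon + \delta$ and carry no factor of $\dAsa$. The entire dependence on $\dA{\Phi}{\Psi}$ then sits in the single term $\|\Phi_i(y)^t\Phi_j(y) - \Psi_i(y)^t\Psi_j(y)\|$, which the paper bounds by $\sqrt{2}\,\dA{\Phi}{\Psi}$ using the $\sqrt{2}$-Lipschitz property of the projection $\P$ (\cref{projection-is-lipschitz}); that one Lipschitz estimate is the sole source of the $\sqrt{2}$. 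If you want to repair your argument, compare $\Omega_{ij}$ and $\Lambda_{ij}$ to $\Phi_i^t\Phi_j$ and $\Psi_i^t\Psi_j$ first, as the paper does, rather than trying to control the sign of the cross term in your splitting.
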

\begin{proof}
    We address the case in which $\epsilon,\delta>0$, the case in which any of them is $0$ being similar.
    Let $(ij) \in N(\U)$ and $y \in U_j \cap U_i$.
    We have $\| \Omega_{ij}(y) - \Phi_i(y)^t \Phi_j(y)\|  < \epsilon$ and $\| \Lambda_{ij}(y) - \Psi_i(y)^t \Psi_j(y)\|  < \delta$, so it suffices to show that $\| \Phi_i(y)^t \Phi_j(y) - \Psi_i(y)^t \Psi_j(y)\|  \leq \sqrt{2} \| \Phi_i(y) - \Psi_i(y)\| $, which follows from \cref{projection-is-lipschitz}.
\end{proof}

\begin{rmk}
    \label{approximate-equivalence}
From \cref{stability-of-bw}, it follows that, if $\Omega$ and $\Lambda$ are witnesses that $\Phi$ is an $\epsilon$-approximate local trivialization, then $\dZ{\Omega}{\Lambda} \leq 2\epsilon$, and thus $\bw$ is, approximately, a left inverse of $\atl^\phi$, in the sense that $\dZ{\Omega}{\bw(\atl^\phi(\Omega))} \leq 2\epsilon$ for every $\epsilon$-approximate cocycle $\Omega$.
The following result can be interpreted as saying that $\bw$ is also a right inverse of $\atl^\phi$, since it implies, in particular, that $\atl^\phi(\bw(\Phi))$ is homotopic to $\Phi$ through $3\epsilon$-approximate local trivializations, whenever $\Phi$ is an $\epsilon$-approximate local trivialization.
\end{rmk}

\begin{lem}
    \label{w-then-triv}
    Let $\epsilon \geq 0$.
    Let $\Omega$ be a witness that $\Phi$ and $\Psi$ are $\epsilon$-approximate cocycles.
    Then $\Phi$ and $\Psi$ are homotopic through $\epsilon$-approximate local trivializations subordinate to $\U = \{U_i\}_{i \in I}$ that admit $\Omega$ as a witness.
\end{lem}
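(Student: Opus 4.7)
The plan is to interpolate $\Phi$ and $\Psi$ in three concatenated stages, exploiting the infinite-dimensional nature of $\ST{d}$ to sidestep the fact that a convex combination of two orthonormal frames need not itself be orthonormal. The middle stage will shuffle the rows of the two frames into disjoint positions so that a square-root-weighted sum does land in $\ST{d}$, while the two outer stages perform that shuffling through paths that preserve the witness $\Omega$.

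For the outer stages, I would first isolate a structural feature of the homotopy $f^\alpha:\ST{d}\to\ST{d}$ from $\id$ to $\chi^2$ built in the proof of \cref{homotopic-map-stiefel}: inspecting its piecewise definition row by row, each row of $f^\alpha(\Phi)$ is a nonnegative scalar multiple of a single row of $\Phi$, and for every source row the squares of the scalars reassigned from it sum to $1$. This immediately yields three properties: (i) $f^\alpha$ is linear in the matrix entries of $\Phi$; (ii) $f^\alpha$ preserves the Frobenius norm; (iii) $f^\alpha$ commutes with right multiplication by any element of $O(d)$, because right multiplication acts row-wise. Combining (i)--(iii), for every $\alpha$, every $(ij)\in N(\U)$, and every $y\in U_j\cap U_i$,
\[
\|f^\alpha(\Phi_i(y))\,\Omega_{ij}(y)-f^\alpha(\Phi_j(y))\|=\|f^\alpha(\Phi_i(y)\Omega_{ij}(y)-\Phi_j(y))\|=\|\Phi_i(y)\Omega_{ij}(y)-\Phi_j(y)\|<\epsilon,
\]
so $\{f^\alpha\circ\Phi_i\}_i$ is an $\epsilon$-approximate local trivialization admitting $\Omega$ as a witness for every $\alpha$. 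This interpolates between $\Phi$ and $s_{\mathrm{even}}\circ\Phi$, where $s_{\mathrm{even}}:=\chi^2$ places row $k$ at position $2k$ and zeros elsewhere; an analogous homotopy, obtained by shifting the indexing, connects $\Psi$ to $s_{\mathrm{odd}}\circ\Psi$, where $s_{\mathrm{odd}}$ places row $k$ at position $2k-1$.

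For the middle stage I would interleave the two relocated trivializations by setting
\[
\Phi^\alpha_i(y):=\sqrt{1-\alpha}\,s_{\mathrm{even}}(\Phi_i(y))+\sqrt{\alpha}\,s_{\mathrm{odd}}(\Psi_i(y)),\qquad \alpha\in[0,1].
\]
Because the row supports of $s_{\mathrm{even}}$ and $s_{\mathrm{odd}}$ are disjoint, the pairwise inner products of the columns of $\Phi^\alpha_i(y)$ split as $(1-\alpha)+\alpha=1$ on the diagonal and $0$ off the diagonal, so $\Phi^\alpha_i(y)\in\ST{d}$. The same disjointness, together with properties (ii)--(iii) applied to $s_{\mathrm{even}}$ and $s_{\mathrm{odd}}$, gives
\[
\|\Phi^\alpha_i\Omega_{ij}-\Phi^\alpha_j\|^2=(1-\alpha)\|\Phi_i\Omega_{ij}-\Phi_j\|^2+\alpha\|\Psi_i\Omega_{ij}-\Psi_j\|^2<\epsilon^2,
\]
so $\Omega$ remains a witness throughout. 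Concatenating the three stages produces the required homotopy. The main obstacle is really the verification of (i)--(iii) for $f^\alpha$; once those are in hand, everything else is formal, and the key conceptual point---that disjointness of row supports turns both Frobenius norm and right multiplication into clean $\ell^2$ combinations---takes care of the middle stage.
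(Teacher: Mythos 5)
Your proposal is correct and follows essentially the same route as the paper's proof: relocate $\Phi$ and $\Psi$ to even- and odd-indexed rows via the homotopy of \cref{homotopic-map-stiefel}, then interpolate with square-root weights using the disjointness of row supports. The only difference is that you spell out explicitly (via linearity, Frobenius-norm preservation, and equivariance under right multiplication by $O(d)$) why these homotopies preserve the witness $\Omega$, a verification the paper leaves implicit.
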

\begin{proof}
    We use the language of \cref{homotopic-map-stiefel}.
    Consider the maps $\iota, \iota' : \N_{\geq 1} \to \N_{\geq 1}$ given by $\iota(k) = 2k$ and $\iota'(k) = 2k - 1$.
    It is clear that $\Omega$ is a witness that $\{\chi^\iota \circ \Phi_i\}_{i \in I}$ is an $\epsilon$-local trivialization, and \cref{homotopic-map-stiefel} implies that $\Phi$ is homotopic to $\{\chi^\iota \circ \Phi_i\}_{i \in I}$ through $\epsilon$-approximate local trivializations that admit $\Omega$ as a witness.
    Similarly, we deduce that $\Psi$ is homotopic to $\{\chi^{\iota'} \circ \Psi_i\}_{i \in I}$ through $\epsilon$-approximate local trivializations that admit $\Omega$ as a witness.

    Consider, for $\alpha \in [0,1]$, the family $\{\sqrt{\alpha}(\chi^\iota \circ \Phi_i) + \sqrt{1-\alpha} (\chi^{\iota'} \circ \Psi_i)\}_{i \in I}$.
    Since the images of $\iota$ and $\iota'$ are disjoint, this constitutes an $\epsilon$-approximate local trivialization that admits $\Omega$ as a witness that varies continuously with $\alpha$.
    The result follows.
\end{proof}

The following result gives conditions under which there is a canonical approximate cocycle associated to an approximate local trivialization.

\begin{lem}
    \label{best-witness-lem}
    Let $\Phi$ be a non-degenerate $\epsilon$-approximate local trivialization.
    For $(ij) \in N(\U)$ and $y \in U_j \cap U_i$, let $\Omega_{ij}(y) \in O(d)$ minimize $\| \Phi_i(y) \Omega - \Phi_j(y)\| $, where $\Omega$ ranges over $O(d)$.
    Then, the matrices $\Omega_{ij}(y)$ assemble into a $3\epsilon$-approximate cocycle.
\end{lem}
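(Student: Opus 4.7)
The plan is to reduce the statement to \cref{altas-to-cocycle-lem} by showing that the pointwise minimizers assemble into a well-defined, continuous, symmetric cochain that itself serves as a witness for $\Phi$.

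First I would identify the pointwise problem as the orthogonal Procrustes problem. For fixed $(ij) \in N(\U)$ and $y \in U_j \cap U_i$, expanding the squared Frobenius norm and using that the columns of $\Phi_i(y)$ and $\Phi_j(y)$ are orthonormal gives $\|\Phi_i(y)\Omega - \Phi_j(y)\|^2 = 2d - 2\,\trace\!\left(\Omega^t \Phi_i(y)^t \Phi_j(y)\right)$, so minimizing is equivalent to maximizing $\trace(\Omega^t M)$ over $\Omega \in O(d)$, where $M = \Phi_i(y)^t \Phi_j(y)$. If $M = U\Sigma V^t$ is an SVD, then the maximum is $\trace(\Sigma)$, achieved at $\Omega = UV^t$, which is the orthogonal factor of the polar decomposition $M = \Omega P$ with $P$ positive semidefinite. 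The non-degeneracy hypothesis says exactly that $M$ has full rank on $U_j \cap U_i$, which makes both the polar decomposition unique and continuous in $M$. Since $y \mapsto \Phi_i(y)^t \Phi_j(y)$ is continuous, this yields a well-defined continuous map $\Omega_{ij} : U_j \cap U_i \to O(d)$.

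Next I would check symmetry. Using right-invariance of the Frobenius norm under $O(d)$,
\[
\|\Phi_j(y)\Omega^{-1} - \Phi_i(y)\| = \|(\Phi_j(y)\Omega^{-1} - \Phi_i(y))\Omega\| = \|\Phi_i(y)\Omega - \Phi_j(y)\|,
\]
so the function being minimized for $(ji)$ at $\Omega^{-1}$ equals the one being minimized for $(ij)$ at $\Omega$. By uniqueness of the minimizer, $\Omega_{ji}(y) = \Omega_{ij}(y)^{-1}$, so $\Omega$ is a cochain.

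Finally I would show $\Omega$ itself is a witness that $\Phi$ is an $\epsilon$-approximate local trivialization. By assumption some witness $\Omega'$ exists with $\|\Phi_i(y)\Omega'_{ij}(y) - \Phi_j(y)\| < \epsilon$, so by definition of $\Omega_{ij}(y)$ as minimizer,
\[
\|\Phi_i(y)\Omega_{ij}(y) - \Phi_j(y)\| \leq \|\Phi_i(y)\Omega'_{ij}(y) - \Phi_j(y)\| < \epsilon.
\]
Then \cref{altas-to-cocycle-lem}, applied to the continuous symmetric witness $\Omega$ just constructed, gives that $\Omega \in \aZ{3\epsilon}$, completing the proof. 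The main technical point is the uniqueness and continuity of the minimizer, which is exactly what non-degeneracy buys; once that is in hand, the cocycle bound is immediate from the already-established \cref{altas-to-cocycle-lem}.
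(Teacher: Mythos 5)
Your proof is correct and follows essentially the same route as the paper: continuity and uniqueness of the minimizer via the polar decomposition/Procrustes solution (the paper's \cref{Q-is-continuous}), symmetry from uniqueness, and then \cref{altas-to-cocycle-lem} to get the $3\epsilon$ bound. Your explicit observation that the minimizer is itself a witness (being at least as good as any existing witness) is exactly the step the paper leaves implicit when it invokes \cref{altas-to-cocycle-lem}.
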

\begin{proof}
    To see that the mappings $\Omega_{ij} : U_j \cap U_i \to O(d)$ are continuous, use \cref{Q-is-continuous}.
    To see that $\Omega_{ij} = \Omega_{ji}^t$ use that the minimizers are unique.
    Then, \cref{altas-to-cocycle-lem} directly implies that $\Omega$ satisfies the $3\epsilon$-approximate cocycle condition, as required.
\end{proof}

We now give sufficient conditions for an approximate local trivialization to be non-degenerate.

\begin{lem}
    \label{well-defined-cocycle-from-atlas}
    If $\epsilon \leq 1$, then any $\epsilon$-approximate local trivialization is non-degenerate.
\end{lem}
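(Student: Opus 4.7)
The plan is to reduce the non-degeneracy condition to the invertibility of a small perturbation of an orthogonal matrix. Fix $(ij) \in N(\U)$ and $y \in U_j \cap U_i$. Since $\Phi$ is an $\epsilon$-approximate local trivialization, there exists a witness $\Omega = \Omega_{ij}(y) \in O(d)$ with $\|\Phi_i(y)\Omega - \Phi_j(y)\| < \epsilon \leq 1$. Using that $\Phi_i(y) \in \ST{d}$ has orthonormal columns (so $\Phi_i(y)^t \Phi_i(y) = \id$), I would write
\[
    \Phi_i(y)^t \Phi_j(y) \,=\, \Omega + \Phi_i(y)^t\!\left(\Phi_j(y) - \Phi_i(y)\Omega\right).
\]
Call the second summand $E$. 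Since $\Phi_i(y)\Phi_i(y)^t$ is the orthogonal projection onto the column space of $\Phi_i(y)$, left multiplication by $\Phi_i(y)^t$ is non-expanding in Frobenius norm, so
\[
    \|E\| \,\leq\, \|\Phi_j(y) - \Phi_i(y)\Omega\| \,<\, 1.
\]

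Next, I would factor out $\Omega$ on the left to obtain $\Phi_i(y)^t \Phi_j(y) = \Omega\bigl(\id + \Omega^{-1}E\bigr)$. Since left multiplication by the orthogonal matrix $\Omega^{-1}$ preserves the Frobenius norm, $\|\Omega^{-1}E\| = \|E\| < 1$, and because the operator norm is bounded by the Frobenius norm, the operator norm of $\Omega^{-1}E$ is also strictly less than $1$. The Neumann series then shows that $\id + \Omega^{-1}E$ is invertible, and hence so is $\Phi_i(y)^t \Phi_j(y)$. In particular this $d \times d$ matrix has full rank, which is exactly the non-degeneracy condition.

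I do not expect a real obstacle here; the only care required is to keep track of which norm is being bounded by which, so that the strict inequality $\epsilon \leq 1$ translates through the non-expansiveness of $\Phi_i(y)^t$ and the orthogonal invariance of $\|\cdot\|$ into a strict inequality on the operator norm, which is what the Neumann series argument needs.
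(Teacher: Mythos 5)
Your argument is correct and is essentially the paper's: the observation that $\|\Phi_i(y)^t\Phi_j(y)-\Omega_{ij}(y)\|<\epsilon\leq 1$ (via the non-expansiveness of left multiplication by $\Phi_i(y)^t$, i.e.\ \cref{product-by-frame-isometry}) reduces the claim to the fact that a matrix at Frobenius distance less than $1$ from an orthogonal matrix is invertible, which the paper cites as \cref{key-lemma} and proves by the same Neumann-series argument you give.
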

\begin{proof}
    This is a consequence \cref{key-lemma}, which says that a matrix that is at Frobenius distance less than $1$ from an orthogonal matrix is invertible.
\end{proof}

We conclude by remarking that \cref{well-defined-cocycle-from-atlas} implies that, if $\epsilon \leq 1$, we have a canonical map (that is independent of any arbitrary choices)
\[
    \bw : \aA{\epsilon} \to \aZ{3\epsilon}
\]
given by taking the best witness, as in \cref{best-witness-lem}.

\subsection{Local trivializations and classifying maps}

In this section, we give a construction that, given an approximate local trivialization, returns an approximate classifying map.
We also observe that this construction behaves well with respect to homotopies between approximate local trivializations.

Recall from \cref{infinite-dim-grass-section} the definition of the map $\P : \ST{d} \to \GR{d}$.

\begin{constr}
    \label{atlas-to-class-construction}
    Let $\U$ be a countable open cover of a paracompact topological space $B$, and let $\phi$ be a partition of unity subordinate to $\U$.
    Let $\Phi$ be an approximate local trivialization subordinate to $\U$.
    Define a map $\av^\phi(\Phi) : B \to \R^{\infty \times \infty}$ by
    \[
        \av^\phi(\Phi)(y) = \sum_{i \in I} \phi_i(y) \P(\Phi_i(y)).\qedhere
    \]
\end{constr}

Note that the map $\av^\phi(\Phi)$ is continuous.

\begin{lem}
    \label{atlas-to-class-map-lem}
    Let $\epsilon \in [0,\infty]$.
    Under the hypotheses of \cref{atlas-to-class-construction}, if $\Phi$ is an $\epsilon$-approximate local trivialization, then $\av^\phi(\Phi)$ is a $\sqrt{2}\epsilon$-approximate classifying map.
\end{lem}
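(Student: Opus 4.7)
The plan is to reduce the claim to a pointwise estimate. Continuity of $\av^\phi(\Phi) : B \to \R^{\infty\times\infty}$ is immediate from the defining formula (at each $y$ only finitely many terms are nonzero, and each summand is continuous with values in a fixed finite-dimensional subspace), so the entire content of the lemma is to verify that for every $y \in B$ there is an element of $\GR{d}$ within Frobenius distance strictly less than $\sqrt{2}\epsilon$ of $\av^\phi(\Phi)(y)$. I will treat the case $\epsilon \in (0,\infty)$; the case $\epsilon = 0$ specializes all strict inequalities to equalities, while $\epsilon = \infty$ is vacuous since $\GR{d}^\infty = \R^{\infty\times\infty}$.

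Fix $y \in B$ and pick any $j \in I$ with $\phi_j(y) > 0$, which exists because $\{\phi_i\}$ is a partition of unity. The candidate nearby element of $\GR{d}$ is $\P(\Phi_j(y))$. Using $\sum_i \phi_i(y) = 1$, I would rewrite
\[
\av^\phi(\Phi)(y) - \P(\Phi_j(y)) \;=\; \sum_{i \in I} \phi_i(y)\bigl(\P(\Phi_i(y)) - \P(\Phi_j(y))\bigr),
\]
and apply the triangle inequality together with the nonnegativity of the weights to obtain
\[
\bigl\|\av^\phi(\Phi)(y) - \P(\Phi_j(y))\bigr\| \;\leq\; \sum_{i \in I} \phi_i(y)\,\bigl\|\P(\Phi_i(y)) - \P(\Phi_j(y))\bigr\|.
\]

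The main step is then to bound each summand with $\phi_i(y) > 0$ by $\sqrt{2}\epsilon$. For such $i$ we have $y \in U_i \cap U_j$, hence $(ij) \in N(\U)$, so the definition of approximate local trivialization provides a witness $\Omega_{ij}$ satisfying $\|\Phi_i(y)\Omega_{ij}(y) - \Phi_j(y)\| < \epsilon$. Since right multiplication by $\Omega_{ij}(y) \in O(d)$ preserves the column-span of a frame, $\P(\Phi_i(y)) = \P(\Phi_i(y)\Omega_{ij}(y))$. Invoking the $\sqrt{2}$-Lipschitz property of $\P : \ST{d} \to \GR{d}$ (the same estimate \cref{projection-is-lipschitz} that was used in the proof of \cref{stability-of-bw}) then yields $\|\P(\Phi_i(y)) - \P(\Phi_j(y))\| < \sqrt{2}\epsilon$. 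Substituting back into the convex combination, and observing that the $i = j$ summand is zero while $\phi_j(y) > 0$, the weighted sum is strictly below $\sqrt{2}\epsilon$, so $\av^\phi(\Phi)(y) \in \GR{d}^{\sqrt{2}\epsilon}$.

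I do not expect a real obstacle beyond partition-of-unity bookkeeping: the only nontrivial input is the Lipschitz bound for $\P$, which the paper has already established and used. The only point requiring mild care is ensuring that the strict-versus-nonstrict inequalities match the paper's convention that $\GR{d}^\epsilon$ is an \emph{open} thickening, and this is handled automatically by the zero summand at $i = j$ weighted by $\phi_j(y) > 0$.
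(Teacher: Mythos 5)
Your proposal is correct and follows essentially the same route as the paper: invoke the witness together with the $O(d)$-invariance and $\sqrt{2}$-Lipschitz property of $\P$ (\cref{projection-is-lipschitz}) to get $\|\P(\Phi_i(y))-\P(\Phi_j(y))\|<\sqrt{2}\epsilon$, then conclude by a convexity/triangle-inequality estimate, which the paper packages as \cref{distance-projection-vertex} and you simply carry out by hand.
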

\begin{proof}
    We address the case $\epsilon \in (0,\infty]$, the case $\epsilon = 0$ being similar.
    By definition, there exist, for each $(ij) \in N(\U)$, a continuous map $\Omega_{ij} : U_j \cap U_i \to O(d)$ such that $\| \Phi_i(y) \Omega_{ij}(y) - \Phi_j(y)\|  < \epsilon$ for every $y \in U_j \cap U_i$.
    Since $\P : \ST{d} \to \GR{d}$ is $O(d)$-invariant and $\sqrt{2}$-Lipschitz (\cref{projection-is-lipschitz}), it follows that $\| \P(\Phi_i(y)) - \P(\Phi_j(y))\|  < \sqrt{2}\epsilon$.
    The result then follows from \cref{distance-projection-vertex}.
\end{proof}

The following is clear.

\begin{lem}
    \label{atlas-to-class-map-stable-homotopy}
    Let $\Phi$ and $\Psi$ be approximate local trivializations subordinate to $\U$, a countable open cover of a paracompact topological space $B$.
    Let $\phi$ be a partition of unity subordinate to $\U$.
    \begin{enumerate}
        \item We have that $\dC{\av^\phi(\Phi)}{\av^\phi(\Psi)} \leq \sqrt{2} \dA{\Phi}{\Psi}$.
        \item If $\Phi$ and $\Psi$ are homotopic through $\epsilon$-approximate local trivializations subordinate to $\U$, then $\av^\phi(\Phi)$ and $\av^\phi(\Psi)$ are homotopic as maps $B \to \GR{d}^{\sqrt{2}\epsilon}$. \qed
    \end{enumerate}
\end{lem}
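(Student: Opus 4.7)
The plan is to prove both parts by direct unwinding of \cref{atlas-to-class-construction}, using the $\sqrt{2}$-Lipschitz property of $\P$ (invoked already in the proof of \cref{atlas-to-class-map-lem} via \cref{projection-is-lipschitz}) and the fact that a partition of unity gives a convex combination.

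For part (1), I would fix $y \in B$ and write
\[
    \av^\phi(\Phi)(y) - \av^\phi(\Psi)(y) = \sum_{i \in I} \phi_i(y)\bigl(\P(\Phi_i(y)) - \P(\Psi_i(y))\bigr),
\]
where the sum is finite at $y$ because $\phi$ is a partition of unity. Applying the triangle inequality in $\R^{\infty \times \infty}$, then the $\sqrt{2}$-Lipschitz bound $\|\P(\Phi_i(y)) - \P(\Psi_i(y))\| \leq \sqrt{2}\|\Phi_i(y) - \Psi_i(y)\|$, and finally the definition of $\dAsa$ together with $\sum_i \phi_i(y) = 1$, one obtains
\[
    \|\av^\phi(\Phi)(y) - \av^\phi(\Psi)(y)\| \leq \sqrt{2}\, \dA{\Phi}{\Psi}\sum_{i \in I}\phi_i(y) = \sqrt{2}\,\dA{\Phi}{\Psi}.
\]
Taking the supremum over $y \in B$ gives the claim.

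For part (2), suppose $\{\Phi^t\}_{t \in [0,1]}$ is a continuous family of $\epsilon$-approximate local trivializations subordinate to $\U$ with $\Phi^0 = \Phi$ and $\Phi^1 = \Psi$. I would define the candidate homotopy $H : B \times [0,1] \to \R^{\infty \times \infty}$ by $H(y,t) = \av^\phi(\Phi^t)(y)$. For each fixed $t$, \cref{atlas-to-class-map-lem} gives that $\av^\phi(\Phi^t)$ is a $\sqrt{2}\epsilon$-approximate classifying map, i.e.\ takes values in $\GR{d}^{\sqrt{2}\epsilon}$, so $H$ factors through $\GR{d}^{\sqrt{2}\epsilon}$ as required. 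Continuity of $H$ follows from continuity of the family $\Phi^t$ and of $\phi_i$ and $\P$, combined with local finiteness of $\phi$ so that the sum $\sum_i \phi_i(y)\P(\Phi_i^t(y))$ is locally a finite sum of continuous functions; the direct limit topology on $\R^{\infty \times \infty}$ poses no issue here because locally only finitely many summands are nonzero, so $H$ factors through some finite $\R^{n\times n}$ on each small enough neighborhood.

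There is no real obstacle: the only mildly subtle point is ensuring that the homotopy $H$ is continuous as a map into $\GR{d}^{\sqrt{2}\epsilon}$ (topologized as a direct limit), which is handled by the local finiteness of the partition of unity. This is exactly the reason the lemma is marked ``clear'' — both parts reduce mechanically to the already-established Lipschitz bound for $\P$ and the convex-combination structure of $\av^\phi$.
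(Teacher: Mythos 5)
The paper offers no written proof of this lemma (it is stated as ``clear''), and your argument is precisely the intended one: part (1) is the convexity/triangle-inequality estimate combined with the $\sqrt{2}$-Lipschitz bound for $\P$ from \cref{projection-is-lipschitz}, and part (2) simply applies $\av^\phi$ levelwise to the homotopy and invokes \cref{atlas-to-class-map-lem} at each parameter value. The one imprecision is your claim that $H$ locally factors through a finite $\R^{n\times n}$: since $B$ is only paracompact, not locally compact, the frames $\Phi_i^t(y)$ need not lie in a fixed $\ST{d,n}$ on a neighborhood; continuity of $H$ instead follows because $\P$ is continuous and addition and scalar multiplication on $\R^{\infty\times\infty}$ are continuous for the direct-limit topology (sequential colimits along closed inclusions of locally compact spaces commute with finite products), which is the same fact the paper implicitly uses when asserting that $\av^\phi(\Phi)$ itself is continuous.
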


\subsection{Cocycles and classifying maps}
\label{cocycles-to-classifying-maps-section}

In this section we relate approximate cocycles to approximate classifying maps in a way that is independent of any partition of unity and of any enumeration of the sets in the open cover the cocycle is subordinate to (\cref{main-thm-class-map}).
We also study the action of refinements on approximate cohomology.

Let $B$ be a paracompact topological space and let $\U = \{U_i\}_{i \in I}$ be a countable open cover.
Let $\phi$ be a partition of unity subordinate to $\U$ and let $\iota : I \to \N_{\geq 1}$ be an injection.
Using \cref{approx-cocycle-gives-approx-atlas} and \cref{atlas-to-class-map-lem}, we get a map $\av^\phi \circ \atl^{\phi,\iota} : \aZ{\epsilon} \to \cont{B}{\GR{d}^{\sqrt{2}\epsilon}}$ which we compose with the quotient map $\cont{B}{\GR{d}^{\sqrt{2}\epsilon}} \to \hcont{B}{\GR{d}^{\sqrt{2}\epsilon}}$ to obtain a map
\[
    \vv' : \aZ{\epsilon} \to \hcont{B}{\GR{d}^{\sqrt{2}\epsilon}}.
\]
Together, \cref{cocycle-to-atlas-homotopy}, \cref{atlas-to-class-map-stable-homotopy}, and \cref{cocycle-to-atlas-homotopy-injection} imply that this map is independent of the choice of partition of unity $\phi$ and of injection $\iota$.
The following result says that two approximate cocycles that differ in a $0$-cochain are sent to the same approximate classifying map by $\vv'$.

\begin{lem}
    \label{cobordant-are-homotopic}
    Let $B$ be a paracompact topological space and let $\U$ be a countable open cover.
    The map $\vv' : \aZ{\epsilon} \to \hcont{B}{\GR{d}^{\sqrt{2}\epsilon}}$ factors through $\aH{\epsilon}$.
\end{lem}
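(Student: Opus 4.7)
The plan is as follows. Recall that the $0$-cochain $\Theta = \{\Theta_i : U_i \to O(d)\}_{i \in I}$ acts on a cocycle by $(\Theta \cdot \Omega)_{ij}(y) = \Theta_i(y)\, \Omega_{ij}(y)\, \Theta_j(y)^{-1}$, so factoring $\vv'$ through $\aH{\epsilon}$ amounts to showing $\vv'(\Omega) = \vv'(\Theta \cdot \Omega)$ for every $\Omega \in \aZ{\epsilon}$ and every $0$-cochain $\Theta$. Fix a partition of unity $\phi$ subordinate to $\U$ and an injection $\iota : I \to \N_{\geq 1}$, and set $\Phi^0 := \atl^{\phi,\iota}(\Omega)$ and $\Phi^1 := \atl^{\phi,\iota}(\Theta \cdot \Omega)$. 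By \cref{approx-cocycle-gives-approx-atlas}, these are $\epsilon$-approximate local trivializations admitting $\Omega$ and $\Theta \cdot \Omega$ respectively as witnesses, and the definition of $\vv'$ reduces the problem to exhibiting a homotopy between $\av^\phi(\Phi^0)$ and $\av^\phi(\Phi^1)$ as maps $B \to \GR{d}^{\sqrt{2}\epsilon}$.

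The key step is to introduce an auxiliary local trivialization
\[
    \Psi_i(y) := \Phi^1_i(y)\, \Theta_i(y), \qquad i \in I,\ y \in U_i,
\]
which is continuous (since $\Theta_i$ and $\Phi^1_i$ are continuous on $U_i$) and which plays the roles of $\Phi^0$ and $\Phi^1$ simultaneously in two independent senses. First, because $\P : \ST{d} \to \GR{d}$ is invariant under right multiplication by $O(d)$, one gets $\P(\Psi_i) = \P(\Phi^1_i \Theta_i) = \P(\Phi^1_i)$, so $\av^\phi(\Psi) = \av^\phi(\Phi^1)$ holds on the nose as maps into $\GR{d}^{\sqrt{2}\epsilon}$. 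Second, applying the witness inequality $\| \Phi^1_i (\Theta_i \Omega_{ij} \Theta_j^{-1}) - \Phi^1_j \| < \epsilon$ for $\Phi^1$ with respect to its witness $\Theta \cdot \Omega$, and then right-multiplying by the isometry $\Theta_j \in O(d)$, yields $\| \Psi_i \Omega_{ij} - \Psi_j \| < \epsilon$. Hence $\Omega$ (rather than $\Theta \cdot \Omega$) is a witness for $\Psi$.

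With these two facts in hand the proof concludes quickly: since $\Phi^0$ and $\Psi$ both admit $\Omega$ as a witness, \cref{w-then-triv} provides a homotopy between them through $\epsilon$-approximate local trivializations subordinate to $\U$; then \cref{atlas-to-class-map-stable-homotopy}(2) converts this into a homotopy $\av^\phi(\Phi^0) \simeq \av^\phi(\Psi)$ as maps $B \to \GR{d}^{\sqrt{2}\epsilon}$, which combined with the identity $\av^\phi(\Psi) = \av^\phi(\Phi^1)$ gives the desired homotopy.

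The only real obstacle is identifying the correct auxiliary object $\Psi$. Once the formula $\Psi_i = \Phi^1_i \Theta_i$ is in hand, everything else is formal: the $O(d)$-invariance of $\P$ kills the right-multiplication by $\Theta_i$ on the level of projections, while the isometry property of $\Theta_j$ converts the witness back from $\Theta \cdot \Omega$ to $\Omega$, aligning $\Psi$ with $\Phi^0$ so that the common-witness machinery of \cref{w-then-triv} applies. No further analytic input beyond these elementary isometric properties of $O(d)$ is needed.
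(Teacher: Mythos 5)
Your proof is correct, but it takes a genuinely different route from the paper. The paper proves \cref{cobordant-are-homotopic} by a doubling trick: it builds a new cover $\V$ containing two copies of each open of $\U$, defines an interpolating $\epsilon$-approximate cocycle $\Lambda$ on $\V$ whose ``mixed'' entries involve $\Theta$, and then evaluates $\vv'$ on $\Lambda$ with two degenerate partitions of unity $\phi^0,\phi^1$ supported on the even and odd copies, invoking the already-established independence of the construction from the choice of partition of unity and enumeration (\cref{cocycle-to-atlas-homotopy-injection} together with \cref{atlas-to-class-map-stable-homotopy}) to identify the two outcomes with $\vv'(\Omega)$ and $\vv'(\Theta\cdot\Omega)$. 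You instead stay over the original cover $\U$ and perform a direct gauge transformation on the level of local trivializations: setting $\Psi_i = \Phi^1_i\Theta_i$ with $\Phi^1 = \atl^{\phi,\iota}(\Theta\cdot\Omega)$, the $O(d)$-invariance of $\P$ gives $\av^\phi(\Psi)=\av^\phi(\Phi^1)$ on the nose, while right multiplication by the isometry $\Theta_j$ converts the witness $\Theta\cdot\Omega$ for $\Phi^1$ into the witness $\Omega$ for $\Psi$, so that \cref{w-then-triv} (whose statement's ``$\epsilon$-approximate cocycles'' should read ``$\epsilon$-approximate local trivializations,'' as you correctly use it) and \cref{atlas-to-class-map-stable-homotopy}(2) finish the argument. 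Your computation of the transformed witness is right, and $\Psi_i(y)\in\ST{d}$ with the needed continuity for the same reasons the paper's $\Phi_i$ are continuous. What each approach buys: the paper's argument is pure bookkeeping on top of lemmas it has already proved and never touches the frames themselves, whereas yours is shorter, self-contained over $\U$, avoids the doubled cover entirely, and makes the geometric mechanism (the classifying map only sees the plane spanned by the frame, so a cochain acts by an irrelevant change of frame) completely explicit.
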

\begin{proof}
    By construction, we may assume that $\U$ is indexed by $I = \N$.
    Consider the following open cover $\V = \{V_j\}_{j \in J}$ indexed by $J = \N$.
    Let $V_j = U_i$ whenever $j = 2i + z$ with $z = 0$ or $z = 1$.
    So the open cover $\V$ consists of two copies of each open set of $\U$, where each open $U_i$ appears with an even index as $V_{2i}$ and with an odd index as $V_{2i+1}$.

    Suppose that $\Omega$ and $\Omega'$ are equal in $\aH{\epsilon}$, so that there is a $0$-cochain $\Theta$ such that $\Theta \cdot \Omega = \Omega'$.
    Let $\phi$ be a partition of unity subordinate to $\U$.
    This induces two partitions of unity $\phi^0$ and $\phi^1$ subordinate to $\V$, where $\phi^0_j$ is equal to $\phi_{j/2}$ if $j$ is even and is identically $0$ if $j$ is odd.
    Similarly, $\phi^1_j$ is equal to $\phi_{(j-1)/2}$ if $j$ is odd, and identically $0$ if $j$ is even.

    Consider the following cochain $\Lambda$ subordinate to $\V$.
    For $(jk) \in N(\V)$, define $\Lambda_{jk} = \Omega_{j/2\,k/2}$ if $j$ and $k$ are even, $\Lambda_{jk} = \Theta_{(j-1)/2}^t \Omega_{(j-1)/2\, (k-1)/2} \Theta_{(k-1)/2}$ if $j$ and $k$ are odd, $\Lambda_{jk} = \Theta_{(j-1)/2}^t \Omega_{(j-1)/2\,k/2}$ if $j$ is odd and $k$ is even, and $\Lambda_{jk} = \Omega_{j/2\,(k-1)/2} \Theta_{(k-1)/2}$ if $j$ is even and $k$ is odd.
    It is clear that $\Lambda$ is an $\epsilon$-approximate cocycle subordinate to $\V$.

    Finally, using \cref{cocycle-to-atlas-homotopy-injection}, if we use the partition of unity $\phi^0$, we see that $\vv'(\Lambda) = \vv'(\Omega)$, and if we use the partition of unity $\phi^1$, we see that $\vv'(\Lambda) = \vv'(\Omega')$.
    The result follows.
\end{proof}

Recall from \cref{cech-cohomology-section} the notion of refinement of a cover.

\begin{constr}
    \label{refinement-construction}
    Let $\nu : \U \to \V$ be a refinement of covers of a topological space $B$ and let $\epsilon \in [0,\infty]$.
    Let $\Omega \in \apprZ{1}{\epsilon}{\V}{O(d)}$.
    Define $\nu(\Omega) \in \apprZ{1}{\epsilon}{\U}{O(d)}$ by letting $\nu(\Omega)_{jk} = \Omega_{\nu(j)\nu(k)}$ for all $(jk) \in N(\U)$.
\end{constr}

\cref{refinement-construction} gives a map $\nu : \apprZ{1}{\epsilon}{\V}{O(d)} \to \apprZ{1}{\epsilon}{\U}{O(d)}$, that descends to a map
\[
    \nu : \apprH{1}{\epsilon}{\V}{O(d)} \to \apprH{1}{\epsilon}{\U}{O(d)}.
\]
It is clear that both these maps are $1$-Lipschitz with respect to $\dZsa$ and with respect to $\disHsa$.

\begin{lem}
    \label{refinement-well-behaved}
    Let $\epsilon \in [0,\infty]$, let $\mu,\nu : \U \to \V$, and let $\Omega \in \apprZ{1}{\epsilon}{\V}{O(d)}$.
    Then, for all $(jk) \in N(\U)$ and $y \in U_k \cap U_j$, we have $\|\mu(\Omega)_{jk}(y) - \nu(\Omega)_{jk}(y)\|< 2\epsilon$, and thus $\disH{\mu(\Omega)}{\nu(\Omega)} \leq 2\epsilon$.
\end{lem}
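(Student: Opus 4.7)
The plan is to exhibit an explicit $0$-cochain $\Theta \in C^0(\U; O(d))$ that conjugates $\mu(\Omega)$ into pointwise proximity with $\nu(\Omega)$; this gives the desired $\disHsa$ bound since $\Theta \cdot \mu(\Omega)$ represents the same class as $\mu(\Omega)$ in $\aH{\epsilon}$. Concretely, I would set $\Theta_j(y) = \Omega_{\nu(j)\mu(j)}(y)$ for each $j \in I$ and $y \in U_j$. This is well-defined and continuous because both $\mu$ and $\nu$ are refinements, so $U_j \subseteq V_{\mu(j)} \cap V_{\nu(j)}$, which places $U_j$ inside the domain of $\Omega_{\nu(j)\mu(j)}$ (the degenerate case $\mu(j)=\nu(j)$ is handled since $\Omega_{\mu(j)\mu(j)}$ is a continuous $O(d)$-valued function on $V_{\mu(j)}$).

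Next, fix $(jk) \in N(\U)$ and $y \in U_j \cap U_k$. Since $U_j \cap U_k \subseteq V_{\mu(j)} \cap V_{\nu(j)} \cap V_{\mu(k)} \cap V_{\nu(k)}$, every ordered $2$-simplex built from the four indices $\mu(j), \nu(j), \mu(k), \nu(k)$ is a valid simplex of $N(\V)$ at $y$, so the approximate cocycle condition can be applied along it. One computes
\[
(\Theta \cdot \mu(\Omega))_{jk}(y) = \Omega_{\nu(j)\mu(j)}(y)\,\Omega_{\mu(j)\mu(k)}(y)\,\Omega_{\mu(k)\nu(k)}(y),
\]
while $\nu(\Omega)_{jk}(y) = \Omega_{\nu(j)\nu(k)}(y)$. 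Applying the $\epsilon$-approximate cocycle condition along $(\nu(j),\mu(j),\mu(k))$ gives $\|\Omega_{\nu(j)\mu(j)}\Omega_{\mu(j)\mu(k)} - \Omega_{\nu(j)\mu(k)}\| < \epsilon$ at $y$; right-multiplication by the orthogonal matrix $\Omega_{\mu(k)\nu(k)}(y)$ is an isometry for the Frobenius metric (since $O(d)$ is a metric group), so the inequality persists after multiplying both sides on the right. A second application along $(\nu(j),\mu(k),\nu(k))$ yields $\|\Omega_{\nu(j)\mu(k)}\Omega_{\mu(k)\nu(k)} - \Omega_{\nu(j)\nu(k)}\| < \epsilon$. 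The triangle inequality then produces the pointwise bound $\|(\Theta \cdot \mu(\Omega))_{jk}(y) - \nu(\Omega)_{jk}(y)\| < 2\epsilon$ claimed in the lemma (read after the canonical conjugation by $\Theta$).

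Passing from the pointwise estimate to the final conclusion is automatic: the bound is uniform in $(jk)$ and $y$, so $\dZ{\Theta \cdot \mu(\Omega)}{\nu(\Omega)} \leq 2\epsilon$, and because $\Theta \cdot \mu(\Omega)$ is a representative of $[\mu(\Omega)] \in \aH{\epsilon}$, the definition of $\disHsa$ as an infimum over representatives gives $\disH{\mu(\Omega)}{\nu(\Omega)} \leq 2\epsilon$. The main obstacle is bookkeeping rather than analysis: one has to check that all four indices $\mu(j),\nu(j),\mu(k),\nu(k)$ are simultaneously valid at $y$ (which uses both refinement conditions at once) and to handle the degenerate cases where some of the indices coincide. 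Once that is in place, the proof amounts to two applications of the approximate cocycle condition combined with the isometry property of right-multiplication by an orthogonal element.
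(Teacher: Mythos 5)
Your proof is correct and is essentially the paper's argument: both exhibit a $0$-cochain built from the transition terms (you take $\Theta_j = \Omega_{\nu(j)\mu(j)}$ acting on $\mu(\Omega)$, the paper takes $\Theta_j = \Omega_{\mu(j)\nu(j)}$ acting on $\nu(\Omega)$), apply the $\epsilon$-approximate cocycle condition twice through the intermediate term $\Omega_{\nu(j)\mu(k)}$ together with the isometry of multiplication in $O(d)$, and pass to $\disHsa$ via the infimum over representatives. The only cosmetic difference is the degenerate case $\mu(j)=\nu(j)$, where the paper simply sets $\Theta_j = \id$ instead of invoking $\Omega_{\mu(j)\mu(j)}$ (which need not be part of the cochain data), a trivial adjustment to your argument.
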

\begin{proof}
    We address the case $\epsilon \in (0,\infty]$, the case $\epsilon = 0$ being similar.
    We start by defining a $0$-cochain $\Theta$ subordinate to $\U$.
    Given $j \in N(\U)$, let $\Theta_j = \Omega_{\mu(j)\nu(j)}$ if $\mu(j) \neq \nu(j)$, and the identity if $\mu(j) = \nu(j)$.
    Let $(jk) \in N(\U)$ and let $y \in U_k \cap U_j$.
    To simplify notation in the rest of this proof, let us denote $\Omega_{ab}(y)$ by $\Omega_{ab}$.
    We have
    \begin{align*}
        \|\mu(\Omega)_{jk}(y) - (\Theta \cdot \nu(\Omega))_{jk}(y)\| &= \|\Omega_{\mu(j)\mu(k)} - \Omega_{\mu(j)\nu(j)}\Omega_{\nu(j)\nu(k)}\Omega_{\nu(k)\mu(k)}\| \\
            &= \|\Omega_{\nu(j)\mu(j)}\Omega_{\mu(j)\mu(k)} - \Omega_{\nu(j)\nu(k)}\Omega_{\nu(k)\mu(k)}\|\\
            &\leq \|\Omega_{\nu(j)\mu(j)}\Omega_{\mu(j)\mu(k)} - \Omega_{\nu(j)\mu(k)}\| + \|\Omega_{\nu(j)\mu(k)} - \Omega_{\nu(j)\nu(k)}\Omega_{\nu(k)\mu(k)}\| < 2 \epsilon,
    \end{align*}
    where for the second equality we used the fact that $\Omega_{\nu(j)\mu(j)} = \Omega_{\mu(j)\nu(j)}^{-1}$ combined with the fact that the Frobenius norm is invariant under multiplication by an orthogonal matrix, and for the inequalities we used the triangle inequality and the approximate cocycle condition.
\end{proof}

We are now ready to state and prove the main result of this section.

\begin{thm}
    \label{main-thm-class-map}
    Let $B$ be a paracompact topological space and let $\U$ be a countable cover of $B$.
    Let $\epsilon \in [0,\infty]$.
    The map $\vv'$ induces a map
    \[\vv : \aH{\epsilon} \to \hcont{B}{\GR{d}^{\sqrt{2}\epsilon}}\]
    such that, if $\disH{\Omega}{\Lambda} < \delta$ in $\aH{\epsilon}$, then $\vv(\Omega)$ and $\vv(\Lambda)$ become equal in $\hcont{B}{\GR{d}^{\sqrt{2}(\epsilon + \delta)}}$.
    Moreover, if $\mu,\nu : \V \to \U$ are refinements and $\V$ is a countable cover of $B$, then $\vv(\mu(\Omega))$ and $\vv(\nu(\Omega))$ become equal in $\hcont{B}{\GR{d}^{2\sqrt{2}\epsilon}}$.
\end{thm}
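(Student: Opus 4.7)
The plan is to prove the three assertions of the theorem separately, building on the machinery of \cref{section-cocycles-and-local-trivializations}. The factorization of $\vv'$ through $\aH{\epsilon}$ to produce $\vv$ is exactly \cref{cobordant-are-homotopic}, and the discussion preceding it, combined with \cref{cocycle-to-atlas-homotopy}, \cref{cocycle-to-atlas-homotopy-injection}, and \cref{atlas-to-class-map-stable-homotopy}, shows that $\vv$ does not depend on the auxiliary choices of partition of unity or of injection. For the stability claim, given $\disH{\Omega}{\Lambda} < \delta$ in $\aH{\epsilon}$ I would use the definition of $\disHsa$ as an infimum to pick representatives $\bar{\Omega}, \bar{\Lambda} \in \aZ{\epsilon}$ with $\dZ{\bar{\Omega}}{\bar{\Lambda}} < \delta$. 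Then \cref{cocycle-to-altas-stable} gives $\dA{\atl^\phi(\bar{\Omega})}{\atl^\phi(\bar{\Lambda})} < \delta$, \cref{atlas-to-class-map-stable-homotopy}(1) gives $\dC{\av^\phi(\atl^\phi(\bar{\Omega}))}{\av^\phi(\atl^\phi(\bar{\Lambda}))} < \sqrt{2}\delta$, and \cref{close-become-homotopic} applied to these two $\sqrt{2}\epsilon$-approximate classifying maps yields equality of homotopy classes in $\hcont{B}{\GR{d}^{\sqrt{2}\epsilon + \sqrt{2}\delta}} = \hcont{B}{\GR{d}^{\sqrt{2}(\epsilon + \delta)}}$.

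For the refinement claim I would use a doubled-cover argument modeled on the proof of \cref{cobordant-are-homotopic}. After enumerating $\V$ by $\N$, form the cover $\W = \{W_n\}_{n \in \N}$ with $W_{2j} = W_{2j+1} = V_j$, and define a cochain $\Xi$ subordinate to $\W$ by $\Xi_{nm}(y) = \Omega_{x_n x_m}(y)$, where $x_{2j} = \mu(j)$ and $x_{2j+1} = \nu(j)$. Since $\mu$ and $\nu$ are refinements, $V_j \subseteq U_{\mu(j)} \cap U_{\nu(j)}$, so $V_{j_1} \cap V_{j_2} \subseteq U_{x_{n_1}} \cap U_{x_{n_2}}$ and each $\Xi_{nm}$ restricts continuously to $W_n \cap W_m$; moreover the approximate cocycle condition for $\Xi$ over $(n_1 n_2 n_3) \in N(\W)$ reduces directly to the approximate cocycle condition for $\Omega$ over the corresponding triple $(x_{n_1} x_{n_2} x_{n_3}) \in N(\U)$, so $\Xi \in \apprZ{1}{\epsilon}{\W}{O(d)}$. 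Let $\phi$ be a partition of unity subordinate to $\V$, and lift it to partitions $\phi^0, \phi^1$ on $\W$ supported on the even and odd indices respectively, exactly as in the proof of \cref{cobordant-are-homotopic}.

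By inspection of \cref{cocycle-to-atlas-construction}, the frame $\atl^{\phi^0}(\Xi)_{2j}(y)$ has block $2k$ equal to $\sqrt{\phi_k(y)}\,\Omega_{\mu(j)\mu(k)}(y)^t$ and zero odd blocks, which agrees on the nose with the frame $\atl^{\phi,\iota_0}(\mu(\Omega))_j(y)$ produced from the injection $\iota_0(k) = 2k$; consequently the classifying maps $\av^{\phi^0}(\atl^{\phi^0}(\Xi))$ and $\av^\phi(\atl^{\phi,\iota_0}(\mu(\Omega)))$ are literally equal as maps $B \to \GR{d}^{\sqrt{2}\epsilon}$. Combining this with \cref{cocycle-to-atlas-homotopy-injection} and \cref{atlas-to-class-map-stable-homotopy}(2) to remove the dependence on $\iota_0$ shows that the common map represents $\vv(\mu(\Omega))$ in $\hcont{B}{\GR{d}^{\sqrt{2}\epsilon}}$, and symmetrically for $\phi^1$ and $\vv(\nu(\Omega))$. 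Since $\vv'(\Xi)$ is independent of the partition of unity by \cref{cocycle-to-atlas-homotopy} combined with \cref{atlas-to-class-map-stable-homotopy}(2), concatenating these homotopies yields $\vv(\mu(\Omega)) = \vv(\nu(\Omega))$ already in $\hcont{B}{\GR{d}^{\sqrt{2}\epsilon}}$, which embeds into $\hcont{B}{\GR{d}^{2\sqrt{2}\epsilon}}$. The main obstacle is the bookkeeping needed to verify that $\Xi$ genuinely defines an $\epsilon$-approximate cocycle on the doubled cover and to match block decompositions of the frames produced on either side of the chain; once that is done, the rest of the argument is a sequence of straight applications of the lemmas listed.
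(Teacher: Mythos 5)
Your proof of the first two assertions (well-definedness via \cref{cobordant-are-homotopic} and the stability estimate via \cref{cocycle-to-altas-stable}, \cref{atlas-to-class-map-stable-homotopy}(1) and \cref{close-become-homotopic}) is exactly the paper's argument, with the choice of representatives made explicit. For the refinement claim, however, you take a genuinely different route: the paper simply invokes \cref{refinement-well-behaved}, which says that $\mu(\Omega)$ and $\nu(\Omega)$ differ by a $0$-cochain up to a pointwise error of $2\epsilon$, and then feeds this into the stability mechanism; you instead rerun the doubling trick from the proof of \cref{cobordant-are-homotopic}, building the cover $\W$ with $W_{2j}=W_{2j+1}=V_j$ and the $\epsilon$-approximate cocycle $\Xi_{nm}=\Omega_{x_n x_m}$, and identifying the classifying maps built from the even-supported and odd-supported partitions of unity with representatives of $\vv(\mu(\Omega))$ and $\vv(\nu(\Omega))$ respectively. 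Your block-matching of the frames is correct, and the construction does produce an $\epsilon$-approximate cocycle on $\W$ (every triple of opens of $\W$ sits inside the corresponding triple of opens of $\U$), so your argument is sound; the only bookkeeping you gloss over is the degenerate case $x_n=x_m$ (e.g.\ $\mu(j)=\nu(k)$), where one must set $\Xi_{nm}=\id$, a convention the paper itself uses implicitly in the proof of \cref{cobordant-are-homotopic}. What each route buys: the paper's is a one-line consequence of an already-proved metric lemma, while yours avoids any metric perturbation and actually yields the sharper conclusion that $\vv(\mu(\Omega))=\vv(\nu(\Omega))$ already in $\hcont{B}{\GR{d}^{\sqrt{2}\epsilon}}$, which in particular implies the stated equality in $\hcont{B}{\GR{d}^{2\sqrt{2}\epsilon}}$. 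One small wording fix: the natural map $\hcont{B}{\GR{d}^{\sqrt{2}\epsilon}} \to \hcont{B}{\GR{d}^{2\sqrt{2}\epsilon}}$ is not an embedding in general; you only need that it sends equal classes to equal classes, which is all your argument uses.
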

\begin{proof}
    The map $\vv$ is well-defined thanks to \cref{cobordant-are-homotopic}.
    For the stability of $\vv$, note that, using \cref{atlas-to-class-map-stable-homotopy}, we see that if $\Omega$ and $\Lambda$ are $\epsilon$-approximate cocycles subordinate to a countable cover $\U = \{U_i\}_{i \in I}$, $\phi$ is a partition of unity subordinate to $\U$, and $\iota : I \to \N_{\geq 1}$ is an injection, then
    \[
        \dC{\av^\phi \circ \atl^{\phi,\iota}\, (\Omega)\,}{\av^\phi \circ \atl^{\phi,\iota}\, (\Lambda)} \leq \sqrt{2} \dZ{\Omega}{\Lambda}.
    \]
    The stability then follows from \cref{close-become-homotopic}.
    Finally, the claim about refinements follows directly from \cref{refinement-well-behaved}.
\end{proof}

Note that the map $\vv$ is independent of any choice of partition of unity or enumeration of the cover $\U$.
We conclude with an interesting remark that is not used in the rest of the paper.

\begin{rmk}
    \label{approximate-cohomology}
    Let $\cov(B)$ be the category whose objects are the countable covers of a paracompact topological space $B$ and whose morphisms are the refinements.
    Let $\epsilon \in [0,\infty]$.
    \cref{refinement-construction} gives a functor $\apprH{1}{\epsilon}{-}{O(d)} : \cov(B)^{\OP} \to \Met$.
    We can then define
    \[
        \apprH{1}{\epsilon}{B}{O(d)} = \colim_{\U \in \cov(B)} \aH{\epsilon},
    \]
    with the caveat that $\apprH{1}{\epsilon}{B}{O(d)}$ may be a \textit{pseudo} metric space.
    \cref{main-thm-class-map} implies that there is a well-defined map $\vv : \apprH{1}{\epsilon}{B}{O(d)} \to \hcont{B}{\GR{d}^{2\sqrt{2}\epsilon}}$, natural in $\epsilon \in [0,\infty]$.
\end{rmk}

\subsection{Relationship to classical vector bundles}

We now relate approximate vector bundles to exact vector bundles, following the intuition that $\epsilon$-approximate vector bundles should correspond to true vector bundles as long as $\epsilon$ is sufficiently small.
For this, we use \cref{main-thm-class-map}.
In the case where an approximate cocycle represents a true vector bundle, we study the problem of constructing an exact cocycle that represents the same vector bundle.
We also give upper and lower bounds for the distance from an approximate cocycle to an exact cocycle representing the same vector bundle (\cref{relation-to-true-vb} and \cref{prop-lower-bound}).

We start by recalling that small thickenings of the Grassmannian embedded in $\R^{\infty \times \infty}$ retract to the Grassmannian.
More precisely, if $\epsilon \leq \sqrt{2}/2$, there is a map $\pi : \GR{d}^\epsilon \to \GR{d}$ which is a homotopy inverse of the inclusion $\GR{d} \subseteq \GR{d}^\epsilon$, by \cref{pi-is-inverse}.
Let $B$ be a topological space.
By postcomposing with $\pi$ we get an inverse for the natural map $\hcont{B}{\GR{d}} \to \hcont{B}{\GR{d}^\epsilon}$ which we denote by
\[
    \pi_* : \hcont{B}{\GR{d}^\epsilon} \to \hcont{B}{\GR{d}}
\]
By an abuse of notation, we also let $\pi_* : \cont{B}{\GR{d}^\epsilon} \to \cont{B}{\GR{d}}$.

Recall that we constructed a map $\vv : \aH{\epsilon} \to \hcont{B}{\GR{d}^{\sqrt{2}\epsilon}}$, so, if $\epsilon \leq 1/2$, then any $\epsilon$-approximate cocycle $\Omega$ represents a true vector bundle, namely $\pi_*(\vv(\Omega)) \in \hcont{B}{\GR{d}}$.
To summarize, if $\epsilon \leq 1/2$, we have defined a map
\[
    \pi_* \circ \vv : \aH{\epsilon} \to \hcont{B}{\GR{d}}
\]

\paragraph{Upper bound.}
For the rest of this section, we let $\Phi$ be an $\epsilon$-approximate local trivialization subordinate to a countable cover $\U = \{U_i\}_{i \in I}$ of a paracompact topological space $B$, with $\epsilon \in [0,\infty]$; we let $\phi$ be a partition of unity subordinate to $\U$ and let $\iota : I \to \N_{\geq 1}$ be an injection.
Recall from \cref{atlas-to-class-map-lem} that
\[\av^\phi(\Phi)(y) = \sum_{i \in I} \phi_i(y) \, \P(\Phi_i(y))\]
defines a $\sqrt{2}\epsilon$-approximate classifying map $B \to \GR{d}^{\sqrt{2}\epsilon}$.
We will make use of results in \cref{polar-decomposition-and-procrustes-section}.

\begin{lem}
    \label{perturbed-frame-lem}
    If $\epsilon \in (0,\infty]$, then for, $i \in I$ and $y \in U_i$, we have 
    \[
        \left\| \Phi_i(y)\Phi_i(y)^t - \pi_*\left(\av^\phi(\Phi)(y)\right) \right\|  < 2\sqrt{2}\epsilon \;\;\text{ and }\;\;
        \left\| \Phi_i(y) - \pi_*\left(\av^\phi(\Phi)(y)\right)\Phi_i(y)\right\|  < 2\sqrt{2}\epsilon.
    \]
    If $\epsilon = 0$, then $\Phi_i(y)\Phi_i(y)^t = \pi_*\left(\av^\phi(\Phi)(y)\right)$ and $\Phi_i(y) = \pi_*\left(\av^\phi(\Phi)(y)\right)\Phi_i(y)$.
\end{lem}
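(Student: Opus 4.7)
The plan is to handle both inequalities at once by first establishing the Frobenius bound on $\|\P(\Phi_i(y)) - \pi(\av^\phi(\Phi)(y))\|$, and then deducing the second bound from the first using $\Phi_i(y)^t\Phi_i(y) = \id_d$.

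First I would split the first quantity via the triangle inequality:
\[
\|\P(\Phi_i(y)) - \pi(\av^\phi(\Phi)(y))\| \;\leq\; \|\P(\Phi_i(y)) - \av^\phi(\Phi)(y)\| + \|\av^\phi(\Phi)(y) - \pi(\av^\phi(\Phi)(y))\|.
\]
The second summand is bounded by $\sqrt{2}\epsilon$: by \cref{atlas-to-class-map-lem}, $\av^\phi(\Phi)(y) \in \GR{d}^{\sqrt{2}\epsilon}$, so by the very definition of the thickening it lies at Frobenius distance less than $\sqrt{2}\epsilon$ from $\GR{d}$, and one invokes the construction of $\pi$ supplied by \cref{tinarrages-lemma} to see that $\pi$ realizes this as a nearest-point projection, hence moves points of $\GR{d}^{\sqrt{2}\epsilon}$ by less than $\sqrt{2}\epsilon$.

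For the first summand I would expand the convex combination as
\[
\P(\Phi_i(y)) - \av^\phi(\Phi)(y) \;=\; \sum_{j \in I} \phi_j(y)\,\bigl(\P(\Phi_i(y)) - \P(\Phi_j(y))\bigr).
\]
Any $j$ with $\phi_j(y) > 0$ satisfies $y \in U_j$, hence $y \in U_j \cap U_i$ and $(ij) \in N(\U)$, so a witness $\Omega_{ij}(y) \in O(d)$ exists with $\|\Phi_i(y)\Omega_{ij}(y) - \Phi_j(y)\| < \epsilon$. Since $\P$ is invariant under right $O(d)$-multiplication and $\sqrt{2}$-Lipschitz (\cref{projection-is-lipschitz}), we get $\|\P(\Phi_i(y)) - \P(\Phi_j(y))\| = \|\P(\Phi_i(y)\Omega_{ij}(y)) - \P(\Phi_j(y))\| < \sqrt{2}\epsilon$. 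A convex combination of quantities each strictly less than $\sqrt{2}\epsilon$ remains strictly less than $\sqrt{2}\epsilon$, so the first summand is $< \sqrt{2}\epsilon$, and the triangle inequality yields the first claim.

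For the second claim, using $\Phi_i(y)^t \Phi_i(y) = \id_d$ (as $\Phi_i(y) \in \ST{d}$) we rewrite $\Phi_i(y) = \P(\Phi_i(y))\,\Phi_i(y)$, so that
\[
\Phi_i(y) - \pi(\av^\phi(\Phi)(y))\,\Phi_i(y) \;=\; \bigl(\P(\Phi_i(y)) - \pi(\av^\phi(\Phi)(y))\bigr)\Phi_i(y).
\]
Because right-multiplication by a frame in $\ST{d}$ is non-expansive in the Frobenius norm (\cref{product-by-frame-isometry}), the same $2\sqrt{2}\epsilon$ bound transfers. The $\epsilon = 0$ case drops out by observing that every strict inequality above becomes an equality in the exact setting. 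The only nontrivial point in the plan is justifying the bound $\|A - \pi(A)\| < \sqrt{2}\epsilon$ for $A \in \GR{d}^{\sqrt{2}\epsilon}$, which requires invoking the explicit form of $\pi$ from \cref{tinarrages-lemma} rather than abstract homotopy-retraction properties; this is where the proof must engage with the concrete construction.
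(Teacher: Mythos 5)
Your proof is correct and follows essentially the same route as the paper's: the paper's one-line proof invokes \cref{distance-projection-vertex} together with \cref{projection-is-lipschitz}, which is precisely your combination of the convex-combination bound $\|\P(\Phi_i(y))-\av^\phi(\Phi)(y)\|<\sqrt{2}\epsilon$ with the nearest-point property of $\pi$ and the triangle inequality, and the second inequality is deduced from the first via \cref{product-by-frame-isometry} exactly as you do. The only difference is cosmetic: you unfold by hand what \cref{distance-projection-vertex} packages.
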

\begin{proof}
    We address the case $\epsilon \in (0,\infty]$, the case $\epsilon = 0$ being similar.
    The second inequality is a consequence of the first one and \cref{product-by-frame-isometry}.
    For the first inequality, use \cref{distance-projection-vertex} together with \cref{projection-is-lipschitz}.
\end{proof}

\begin{lem}
    \label{perturbed-frame-full-rank}
    Assume that $\epsilon \leq \sqrt{2}/4$.
    For every $i \in I$ and $y \in U_i$, the matrix $\pi_*\left(\av^\phi(\Phi)(y)\right)\Phi_i(y)$ has rank $d$.
\end{lem}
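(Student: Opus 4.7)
The plan is to reduce the rank claim to the second inequality of \cref{perturbed-frame-lem}, which gives
\[
\| \Phi_i(y) - \pi_*(\av^\phi(\Phi)(y))\Phi_i(y)\| < 2\sqrt{2}\epsilon,
\]
whenever $\epsilon \in (0,\infty]$; when $\epsilon = 0$ the lemma already yields $\pi_*(\av^\phi(\Phi)(y))\Phi_i(y) = \Phi_i(y) \in \ST{d}$, which has rank $d$, and we are done. So the interesting case is $\epsilon \in (0,\sqrt{2}/4]$. There the hypothesis $\epsilon \leq \sqrt{2}/4$ immediately upgrades the bound to $\| \Phi_i(y) - \pi_*(\av^\phi(\Phi)(y))\Phi_i(y)\| < 1$.

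From this, I would deduce the rank claim by an argument analogous to \cref{well-defined-cocycle-from-atlas}, but for rectangular (Stiefel) matrices rather than square (orthogonal) ones. Write $N := \pi_*(\av^\phi(\Phi)(y))\Phi_i(y)$ and $M := \Phi_i(y)$; note $M \in \ST{d}$, so $M^t M = \id_d$ and $\|Mv\| = 1$ for every unit vector $v \in \R^d$. Suppose, for contradiction, that $N$ has rank strictly less than $d$; then there exists a unit vector $v \in \R^d$ with $Nv = 0$, and consequently
\[
1 = \|Mv\| = \|(M-N)v\| \leq \|M - N\|_{\mathrm{op}} \leq \|M - N\| < 1,
\]
using that the operator norm is bounded above by the Frobenius norm. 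This contradiction shows $N$ has trivial kernel, hence full column rank $d$.

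The only genuinely subtle point is checking that the strict bound $< 2\sqrt{2}\epsilon$ still combines with the non-strict hypothesis $\epsilon \leq \sqrt{2}/4$ to produce a strict bound $< 1$; this works because $a < b$ and $b \leq 1$ imply $a < 1$, so the critical case $\epsilon = \sqrt{2}/4$ is covered. No further work is required, and the argument is essentially the rectangular analogue of the standard fact that an orthogonal matrix has an open neighborhood of invertible matrices around it in the Frobenius metric.
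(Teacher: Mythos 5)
Your proof is correct, but it takes a slightly different route from the paper's. The paper uses the \emph{first} inequality of \cref{perturbed-frame-lem} ($\| \Phi_i(y)\Phi_i(y)^t - \pi_*(\av^\phi(\Phi)(y))\|< 2\sqrt{2}\epsilon$): it reduces full rank of $N=\pi_*(\av^\phi(\Phi)(y))\Phi_i(y)$ to full rank of the square matrix $N^tN$, uses the idempotence and symmetry of the Grassmannian element $\pi_*(\av^\phi(\Phi)(y))$ to rewrite $N^tN = \Phi_i(y)^t\,\pi_*(\av^\phi(\Phi)(y))\,\Phi_i(y)$, bounds its Frobenius distance to $\id$ by $2\sqrt{2}\epsilon \leq 1$ via \cref{product-by-frame-isometry}, and then invokes the already-proved square-matrix fact \cref{key-lemma}. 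You instead use the \emph{second} inequality of \cref{perturbed-frame-lem} directly and prove a rectangular analogue of \cref{key-lemma} from scratch: a matrix within Frobenius (hence operator-norm) distance $<1$ of a frame $M \in \ST{d}$ has trivial kernel, since $Nv=0$ for a unit $v$ would force $1 = \|Mv\| = \|(M-N)v\| \leq \|M-N\| < 1$. Both arguments hinge on the same numerical fact $2\sqrt{2}\epsilon \leq 1$; yours is arguably more self-contained and elementary (no idempotence of the projection, no Neumann-series lemma needed), while the paper's buys economy by recycling \cref{key-lemma} and the frame-isometry lemma it already has on hand. Your handling of the $\epsilon=0$ case and of the strict-versus-nonstrict inequality at $\epsilon=\sqrt{2}/4$ is also fine.
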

\begin{proof}
    To simplify notation, let us omit from the formulas the evaluations on $y \in U_i$.
    It is enough to show that $\left(\pi_*(\av^\phi(\Phi))\Phi_i\right)^t\pi_*(\av^\phi(\Phi))\Phi_i$ has full rank.
    Note that
    \begin{align*}
        \left(\pi_*(\av^\phi(\Phi))\,\Phi_i\right)^t \,\, \pi_*(\av^\phi(\Phi))\,\,\Phi_i &= \Phi_i^t \,\,\pi_*(\av^\phi(\Phi))\,\,\pi_*(\av^\phi(\Phi))\,\,\Phi_i 
        = \Phi_i^t\,\, \pi_*(\av^\phi(\Phi))\,\,\Phi_i.
    \end{align*}
    Using \cref{perturbed-frame-lem} and \cref{product-by-frame-isometry}, we conclude that
    \[
        \left\| \Phi_i^t \,\, \pi_*\left(\av^\phi(\Phi)\right) \,\, \Phi_i - \id\right\|  = \|  \Phi_i^t \,\, \pi_*(\av^\phi(\Phi)) \,\, \Phi_i - \Phi_i^t \,\, \Phi_i \,\, \Phi_i^t \,\, \Phi_i\|  < 2\sqrt{2}\epsilon.
    \]
    The result then follows from \cref{key-lemma}, as $2\sqrt{2}\epsilon \leq 1$, by assumption.
\end{proof}

Given an $\epsilon$-approximate local trivialization $\Phi$ with $\epsilon \leq \sqrt{2}/4$, we now define an exact local trivialization $\Psi$ that represents the same vector bundle.
Given $i \in I$ and $y \in U_i$, we let
\[
    \Psi_i(y) := Q\left(\pi_*\left(\av^\phi(\Phi)(y)\right)\,\,\Phi_i(y)\right),
\]
where the map $Q$ is the one of \cref{Q-is-continuous}.
By \cref{perturbed-frame-full-rank} and \cref{Q-is-continuous} the maps $\Psi_i$ are well-defined and continuous.
To see that it is an exact local trivialization it suffices to check that, if $y \in U_j \cap U_i$, then the columns of $\Psi_i(y)$ and $\Psi_j(y)$ span the same subspace of $\R^\infty$.
This is a consequence of the fact that the columns of $\Psi_i(y)$ span the image of $\pi_*(\av^\phi(\Phi)(y)$.
We also deduce the following.

\begin{lem}
    Let $\phi$ be a partition of unity subordinate to $\U$.
    Then $\av^\phi(\Psi) = \pi_*(\av^\phi(\Phi))$.\qed
\end{lem}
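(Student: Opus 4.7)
The plan is to reduce the identity to a pointwise statement: for every $i \in I$ and $y \in U_i$, we will show that $\P(\Psi_i(y)) = \pi_*(\av^\phi(\Phi)(y))$. Once this is established, the conclusion is immediate, since
\[
    \av^\phi(\Psi)(y) \;=\; \sum_{i \in I} \phi_i(y)\, \P(\Psi_i(y)) \;=\; \pi_*(\av^\phi(\Phi)(y)) \sum_{i \in I} \phi_i(y) \;=\; \pi_*(\av^\phi(\Phi)(y)),
\]
using that $\phi$ is a partition of unity.

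To establish the pointwise statement, fix $i \in I$ and $y \in U_i$, and write $P := \pi_*(\av^\phi(\Phi)(y))$ and $M := P\,\Phi_i(y)$, so that $\Psi_i(y) = Q(M)$. First, I would observe that by \cref{perturbed-frame-full-rank} the matrix $M$ has rank $d$, and that its column space is contained in the image of $P$ since $M = P \cdot \Phi_i(y)$ and $P$ is idempotent. Because $P \in \GR{d}$ is a rank-$d$ orthogonal projection, its image is exactly $d$-dimensional, so the column space of $M$ coincides with the image of $P$.

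Next, I would use the definition of $Q$ from the polar decomposition: the columns of $Q(M)$ form an orthonormal basis for the column space of $M$. Hence $\Psi_i(y)\,\Psi_i(y)^t = Q(M)\,Q(M)^t$ is the orthogonal projection onto the column space of $M$, which we just identified with the image of $P$. Two orthogonal projections with the same image are equal, so $\P(\Psi_i(y)) = P$, as required.

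I do not expect any serious obstacle here; the only subtlety is making sure the two facts about $Q(M)$ that we need — full rank and the column-span property — are both justified, which is handled by \cref{perturbed-frame-full-rank} and the standard properties of the polar-decomposition map $Q$ recorded in \cref{Q-is-continuous}. The rest is a direct computation.
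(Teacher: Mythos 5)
Your proof is correct and follows the same route the paper intends: the paper leaves the lemma as immediate from the observation that the columns of $\Psi_i(y)$ span the image of $\pi_*(\av^\phi(\Phi)(y))$, which is exactly the pointwise identity $\P(\Psi_i(y)) = \pi_*(\av^\phi(\Phi)(y))$ you establish (under the standing assumption $\epsilon \leq \sqrt{2}/4$, via \cref{perturbed-frame-full-rank}) before summing against the partition of unity. Your write-up simply makes explicit the full-rank and column-span properties of the polar factor $Q(M)$ that the paper takes for granted.
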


We now bound the distance between $\Psi$ and $\Phi$.

\begin{lem}
    \label{exact-local-trivialization-close-lem}
    Assume that $\epsilon \leq \sqrt{2}/4$, then $\dA{\Phi}{\Psi} \leq 4 \sqrt{2}\epsilon$.
\end{lem}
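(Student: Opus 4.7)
The plan is to use the triangle inequality together with the Procrustes minimization property of the map $Q$ alluded to in the appendix (\cref{polar-decomposition-and-procrustes-section}): given a matrix $M$ of full rank $d$, the frame $Q(M)$ is the closest element of $\ST{d}$ to $M$ in Frobenius distance.

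Concretely, fix $i \in I$ and $y \in U_i$ and set
\[
M_i(y) \,:=\, \pi_*(\av^\phi(\Phi)(y))\,\Phi_i(y),
\]
so that $\Psi_i(y) = Q(M_i(y))$. The second inequality of \cref{perturbed-frame-lem} tells us that
\[
\|\Phi_i(y) - M_i(y)\| \,<\, 2\sqrt{2}\,\epsilon,
\]
and since $\Phi_i(y) \in \ST{d}$, the Procrustes characterization of $Q(M_i(y))$ as a minimizer of $\|M_i(y) - F\|$ over $F \in \ST{d}$ (applicable because $M_i(y)$ has full rank $d$ by \cref{perturbed-frame-full-rank}) gives
\[
\|M_i(y) - Q(M_i(y))\| \,\leq\, \|M_i(y) - \Phi_i(y)\| \,<\, 2\sqrt{2}\,\epsilon.
\]
Combining the two estimates via the triangle inequality yields $\|\Phi_i(y) - \Psi_i(y)\| < 4\sqrt{2}\,\epsilon$, and taking the supremum over $i \in I$ and $y \in U_i$ gives $\dA{\Phi}{\Psi} \leq 4\sqrt{2}\,\epsilon$ as claimed.

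The only subtle step is step three, invoking the Procrustes property of $Q$. One must verify that this minimization property, which is most classically stated for square invertible matrices and optimizing over $O(d)$, continues to hold when $M \in \R^{\infty \times d}$ has rank $d$ and the optimization is taken over $\ST{d}$; this should be a routine consequence of the polar decomposition and a reduction to the $d \times d$ case by restricting to the $d$-dimensional column span, and is presumably what \cref{polar-decomposition-and-procrustes-section} establishes. Once that property is in hand, the proof is a two-line triangle-inequality argument.
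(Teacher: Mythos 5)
Your proof is correct and has the same skeleton as the paper's: both bound $\|\Phi_i(y) - \Psi_i(y)\|$ by the triangle inequality through the intermediate matrix $M_i(y) = \pi_*(\av^\phi(\Phi)(y))\,\Phi_i(y)$ and control the first term with \cref{perturbed-frame-lem}. The difference is in the second term. The paper invokes \cref{approximate-stiefel-lem}, $\|M - Q(M)\| \leq \|M^t M - \id\|$, together with the computation from \cref{perturbed-frame-full-rank}, which gives $\|M_i(y)^t M_i(y) - \id\| = \|\Phi_i(y)^t\, \pi_*(\av^\phi(\Phi)(y))\, \Phi_i(y) - \id\| < 2\sqrt{2}\epsilon$; you instead use the best-approximation property of the polar factor, $\|M_i(y) - Q(M_i(y))\| \leq \|M_i(y) - \Phi_i(y)\|$, reusing the first bound. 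This works, but be aware that the property you need is not literally what \cref{polar-decomposition-and-procrustes-section} records: the Procrustes statement there solves $\min_{\Omega \in O(d)} \|M\Omega - N\|$, which only yields the nearest frame with a \emph{prescribed column span}, whereas you need that $Q(M)$ minimizes $\|M - V\|$ over all $V \in \ST{d}$. That statement is true and classical, but your suggested reduction (``restrict to the column span of $M$'') is exactly the non-routine point, since a competitor frame $V$ need not have its columns in that span, and orthogonally projecting its columns destroys orthonormality. A complete argument goes through the SVD $M = A\Sigma B^t$: writing $\|M - V\|^2 = \|M\|^2 + d - 2\trace(V^t M)$ and $\trace(V^t M) = \trace(B^t V^t A\, \Sigma) \leq \trace(\Sigma)$, since $B^t V^t A$ has operator norm at most $1$, with equality at $V = AB^t = Q(M)$. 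With that supplied (or by simply switching to \cref{approximate-stiefel-lem}, as the paper does), your two-line triangle-inequality proof is complete; what your version buys is that it recycles the bound from \cref{perturbed-frame-lem} instead of redoing the $\|M^t M - \id\|$ estimate, while the paper's version stays entirely within lemmas already proved in its appendix.
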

\begin{proof}
    To simplify notation, let us omit from the formulas the evaluations on $y \in U_i$.
    For every $i \in I$ and $y \in U_i$, we have
    \begin{align*}
    \left\| \Phi_i - Q\left(\pi_*\left(\av^\phi(\Phi)\right)\Phi_i\right)\right\|  \leq &\,\,\, \| \Phi_i - \pi_*(\av^\phi(\Phi))\Phi_i\|  
        + \left\| \pi_*(\av^\phi(\Phi))\Phi_i - Q\left(\pi_*\left(\av^\phi(\Phi)\right)\Phi_i\right)\right\|  \\
        < &\,\,\, 2\sqrt{2}\epsilon + 2\sqrt{2}\epsilon = 4\sqrt{2} \epsilon,
    \end{align*}
    where we bounded the first summand using \cref{perturbed-frame-lem}, and the second summand using \cref{approximate-stiefel-lem}.
    In order to satisfy the hypotheses of \cref{approximate-stiefel-lem}, we use the same argument as in \cref{perturbed-frame-full-rank}.
\end{proof}

\begin{thm}
    \label{relation-to-true-vb}
    Let $\epsilon \leq \sqrt{2}/4$ and let $\Omega \in \aZ{\epsilon}$.
    There exists $\Lambda \in \aZ{}$ such that $\vv(\Lambda) = \pi_*(\vv(\Omega))$ and such that $\dZ{\Omega}{\Lambda} \leq 9\epsilon$.
\end{thm}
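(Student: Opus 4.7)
The plan is to push $\Omega$ through the constructions already built in this section: first turn it into an approximate local trivialization using $\atl^{\phi,\iota}$, then ``project'' that approximate local trivialization onto the Grassmannian to make it exact (using the $\Psi := Q(\pi_*(\av^\phi(\Phi))\Phi)$ construction from the preceding paragraph), and finally extract an exact cocycle witness. The distance bound will then drop out of \cref{stability-of-bw} combined with \cref{exact-local-trivialization-close-lem}, and the identification of the classifying maps will follow from \cref{w-then-triv}.

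In detail: fix a partition of unity $\phi$ subordinate to $\U$ and an injection $\iota : I \to \N_{\geq 1}$, and set $\Phi := \atl^{\phi,\iota}(\Omega)$. By \cref{approx-cocycle-gives-approx-atlas}, $\Phi$ is an $\epsilon$-approximate local trivialization having $\Omega$ itself as a witness. Since $\epsilon \leq \sqrt{2}/4$, \cref{perturbed-frame-full-rank} allows us to define, for each $i$ and $y \in U_i$,
\[
    \Psi_i(y) := Q\!\left(\pi_*\bigl(\av^\phi(\Phi)(y)\bigr)\,\Phi_i(y)\right),
\]
producing an \emph{exact} local trivialization $\Psi$ with $\av^\phi(\Psi) = \pi_*(\av^\phi(\Phi))$ and $\dA{\Phi}{\Psi} \leq 4\sqrt{2}\epsilon$ by \cref{exact-local-trivialization-close-lem}. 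Finally, let $\Lambda := \bw(\Psi)$ be the canonical (exact) cocycle witness given by \cref{best-witness-lem}; since $\Psi$ is exact, $\Lambda \in \aZ{}$.

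For the distance bound, apply \cref{stability-of-bw} to $\Omega$ (witness of $\Phi$, which is $\epsilon$-approximate) and $\Lambda$ (witness of $\Psi$, which is exact):
\[
    \dZ{\Omega}{\Lambda} \;\leq\; \epsilon + 0 + \sqrt{2}\,\dA{\Phi}{\Psi} \;\leq\; \epsilon + \sqrt{2}\cdot 4\sqrt{2}\epsilon \;=\; 9\epsilon.
\]
For the equality $\vv(\Lambda) = \pi_*(\vv(\Omega))$, observe that $\Lambda$ witnesses both $\Psi$ and $\atl^{\phi,\iota}(\Lambda)$ as exact local trivializations (the latter via \cref{approx-cocycle-gives-approx-atlas}), so by \cref{w-then-triv} the two are homotopic through exact local trivializations. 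Applying \cref{atlas-to-class-map-stable-homotopy} yields a homotopy $\av^\phi(\Psi) \simeq \av^\phi(\atl^{\phi,\iota}(\Lambda))$ in $\cont{B}{\GR{d}}$; the right-hand side represents $\vv(\Lambda)$ by definition, while the left-hand side equals $\pi_*(\av^\phi(\Phi))$, which represents $\pi_*(\vv(\Omega))$.

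There is no real obstacle beyond assembling the pieces correctly; the only thing to watch is that all the preceding constructions require $\U$ to be a countable cover of a paracompact space (implicit in $\vv$ being defined on $\aH{\epsilon}$), and that the hypothesis $\epsilon \leq \sqrt{2}/4$ is precisely what is needed for $\pi_*$ to be defined on $\av^\phi(\Phi)$ and for $Q$ to be applicable to $\pi_*(\av^\phi(\Phi))\Phi_i$.
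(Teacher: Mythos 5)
Your proposal is correct and follows essentially the same route as the paper's proof: form $\Phi = \atl^{\phi,\iota}(\Omega)$, replace it by the exact local trivialization $\Psi$ built from $Q(\pi_*(\av^\phi(\Phi))\Phi_i)$ (using \cref{perturbed-frame-full-rank} and \cref{exact-local-trivialization-close-lem}), set $\Lambda = \bw(\Psi)$, and combine \cref{stability-of-bw} with the bound $\dA{\Phi}{\Psi} \leq 4\sqrt{2}\epsilon$ to get $\dZ{\Omega}{\Lambda} \leq \epsilon + \sqrt{2}\cdot 4\sqrt{2}\epsilon = 9\epsilon$, while \cref{w-then-triv} identifies $\vv(\Lambda)$ with $\pi_*(\vv(\Omega))$. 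The argument matches the paper step for step, with your treatment of the final identification only slightly more spelled out.
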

\begin{proof}
    Let $\phi$ be a partition of unity subordinate to $\U$.
    Since $\epsilon \leq \sqrt{2}/4 \leq 1/2$, it follows that $\pi_*(\vv(\Omega))$ is well-defined.
    By construction, $\pi_*(\vv(\Omega))$ is the homotopy class of $\pi_* \circ \av^\phi \circ \atl^{\phi,\iota}(\Omega)))$.
    By \cref{exact-local-trivialization-close-lem}, there is an exact local trivialization $\Psi$ such that $\dA{\atl^{\phi,\iota}(\Omega)}{\Psi} \leq 4\sqrt{2} \epsilon$ and such that $\av^\phi(\Psi) = \pi_* \circ \av^\phi \circ \atl^{\phi,\iota}(\Omega) $.
    Let $\Lambda = \bw(\Psi)$.
    Then $\Lambda$ is a witness that $\Psi$ is an exact cocycle, so, by \cref{stability-of-bw}, we have that $\dZ{\Omega}{\Lambda} \leq \epsilon + \sqrt{2} \times 4\sqrt{2} \epsilon = 9 \epsilon$.
    To conclude, note that $\vv(\Lambda) = [\av^\phi \circ \atl^{\phi,\iota} \circ \bw(\Psi)] = [\av^\phi(\Psi)] = [\pi_* \circ \av^\phi \circ \atl^{\phi,\iota} (\Omega)] = \pi_*(\vv(\Omega))$, where in the second equality we used \cref{w-then-triv} to conclude that $\atl^{\phi,\iota}(\bw(\Psi))$ and $\Psi$ are homotopic through $0$-approximate local trivializations.
\end{proof}

\paragraph{Lower bound.}

The following definition and result are inspired by Robinson's notion of consistency radius \cite{R1,R2}.
The idea of this short section is to give a lower bound for the distance from an approximate cocycle to an exact cocycle.

\begin{defn}
    Let $\Omega$ be a cochain subordinate to $\U$. The \define{consistency radius} of $\Omega$, denoted by $r(\Omega)$, is the infimum over all $\epsilon$ such that $\Omega$ belongs to $\aZ{\epsilon}$.
\end{defn}

A similar argument to the one in \cref{altas-to-cocycle-lem} proves the following.

\begin{lem}
    \label{close-to-cocycle}
    Let $\Lambda \in \aZ{\epsilon}$ and $\Omega \in C^1(\U; O(d))$.
    Let $\delta > 0$.
    If $\dZ{\Lambda}{\Omega} < \delta$, then $\Omega \in \aZ{\epsilon + 3\delta}$.
    \qed
\end{lem}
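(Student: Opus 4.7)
The plan is to prove this via a straightforward triangle inequality argument, using that $O(d)$ with the Frobenius norm is a metric group (so left and right multiplication by a fixed element is an isometry). Fix an ordered $2$-simplex $(ijk) \in N(\U)$ and a point $y \in U_k \cap U_j \cap U_i$. The goal is to bound $\|\Omega_{ij}(y)\Omega_{jk}(y) - \Omega_{ik}(y)\|$ by $\epsilon + 3\delta$.

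First I would insert the intermediate quantity $\Lambda_{ij}(y)\Lambda_{jk}(y)$ and the intermediate quantity $\Lambda_{ik}(y)$, and apply the triangle inequality to split the norm into four pieces, namely
\[
\|\Omega_{ij}(y)\Omega_{jk}(y) - \Lambda_{ij}(y)\Omega_{jk}(y)\|
+ \|\Lambda_{ij}(y)\Omega_{jk}(y) - \Lambda_{ij}(y)\Lambda_{jk}(y)\|
+ \|\Lambda_{ij}(y)\Lambda_{jk}(y) - \Lambda_{ik}(y)\|
+ \|\Lambda_{ik}(y) - \Omega_{ik}(y)\|.
\]
Next, I would use the metric-group property of $O(d)$: right multiplication by $\Omega_{jk}(y)$ and left multiplication by $\Lambda_{ij}(y)$ are isometries, so the first two summands equal $\|\Omega_{ij}(y) - \Lambda_{ij}(y)\|$ and $\|\Omega_{jk}(y) - \Lambda_{jk}(y)\|$ respectively. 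Each is strictly bounded by $\delta$ since $\dZ{\Lambda}{\Omega} < \delta$. The fourth summand is also bounded by $\delta$ directly.

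Finally, the third summand is strictly bounded by $\epsilon$ because $\Lambda \in \aZ{\epsilon}$ satisfies the approximate cocycle condition. Summing the four bounds gives $\epsilon + 3\delta$, which shows $\Omega \in \aZ{\epsilon + 3\delta}$. There is no real obstacle here; the argument is purely the triangle inequality together with bi-invariance of the Frobenius distance on $O(d)$, and is essentially identical in spirit to the computation already performed in \cref{altas-to-cocycle-lem}, as the lemma statement hints.
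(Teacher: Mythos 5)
Your proof is correct, and it is exactly the argument the paper intends: the paper gives no explicit proof, saying only that a triangle-inequality computation ``similar to \cref{altas-to-cocycle-lem}'' establishes the lemma, which is precisely your four-term decomposition combined with bi-invariance of the Frobenius distance on $O(d)$.
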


\begin{lem}
    \label{prop-lower-bound}
    Let $\Omega$ be a cochain and let $\epsilon > r(\Omega)$.
    Then, the distance from $\Omega$ to $\aH{\epsilon}$ is bounded below by $\frac{r(\Omega) - \epsilon}{3}$.
    In particular, if $\Lambda$ is an exact cocycle, then $\disH{\Omega}{\Lambda} \geq r(\Omega)/3$.
\end{lem}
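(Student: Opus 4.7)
The proof is essentially the contrapositive of \cref{close-to-cocycle}, which already does the heavy lifting; the remaining task is book-keeping about strict versus non-strict inequalities and about the passage from $\dZsa$ to $\disHsa$. Note that the statement is only meaningful when $\epsilon < r(\Omega)$ (otherwise $\Omega \in \aZ{\epsilon}$ itself and the lower bound is non-positive); I will treat that as the intended hypothesis.

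First I would prove the non-trivial direction of the dual statement: if $\Lambda \in \aZ{\epsilon}$ and $\rho := \dZ{\Omega}{\Lambda} < \infty$, then for every $\delta > \rho$ we have $\dZ{\Lambda}{\Omega} < \delta$, so \cref{close-to-cocycle} yields $\Omega \in \aZ{\epsilon + 3\delta}$, and hence $r(\Omega) \leq \epsilon + 3\delta$ by definition of the consistency radius as an infimum. Letting $\delta \searrow \rho$ gives $r(\Omega) \leq \epsilon + 3\rho$, i.e.
\[
\dZ{\Omega}{\Lambda} \geq \frac{r(\Omega) - \epsilon}{3}.
\]
This already establishes the lower bound for the $\dZsa$-distance from $\Omega$ to any element of $\aZ{\epsilon}$.

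To upgrade this to a lower bound on the distance from $\Omega$ to the cohomology set $\aH{\epsilon}$, the key observation is that the consistency radius is invariant under the $C^0(\U;G)$-action: for any $\Theta \in C^0(\U;G)$, a direct calculation using that $G$ acts on itself by isometries (left/right multiplication by $\Theta_i(y)$ and $\Theta_k(y)^{-1}$ preserves norms) shows that
\[
\|(\Theta \cdot \Omega)_{ij}(y)(\Theta \cdot \Omega)_{jk}(y) - (\Theta \cdot \Omega)_{ik}(y)\| = \|\Omega_{ij}(y)\Omega_{jk}(y) - \Omega_{ik}(y)\|,
\]
so $r(\Theta \cdot \Omega) = r(\Omega)$. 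Consequently, if $[\Omega']$ is any class in $\aH{\epsilon}$ with representative $\Omega' \in \aZ{\epsilon}$, and $\overline{\Omega} = \Theta \cdot \Omega$, $\overline{\Omega'} = \Theta' \cdot \Omega'$ are any other representatives, then $r(\overline{\Omega}) = r(\Omega)$ and $\overline{\Omega'} \in \aZ{\epsilon}$, so applying the first step gives $\dZ{\overline{\Omega}}{\overline{\Omega'}} \geq (r(\Omega) - \epsilon)/3$. Taking the infimum over all representatives yields the desired bound on $\disHsa$.

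Finally, for the ``in particular'' clause, specialize to $\epsilon = 0$: an exact cocycle $\Lambda$ lies in $\aZ{0} \subseteq \aZ{\epsilon'}$ for every $\epsilon' \geq 0$, so we may view $[\Lambda]$ inside any $\aH{\epsilon'}$ containing $[\Omega]$, and the previous paragraph gives $\disH{\Omega}{\Lambda} \geq r(\Omega)/3$. There is no real obstacle in this proof; the only point requiring a moment's care is the invariance of $r$ under the $C^0$-action, which is where the metric-group hypothesis (that $G$ acts on itself by isometries) is used.
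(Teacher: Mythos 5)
Your proof is correct and follows essentially the same route as the paper: both arguments are the contrapositive of \cref{close-to-cocycle}, using it to bound $r(\Omega)$ by $\epsilon$ plus three times the $\dZsa$-distance to an element of $\aZ{\epsilon}$. Your explicit check that the consistency radius is invariant under the $C^0(\U;G)$-action (so the bound passes to $\disHsa$), and your note that the hypothesis should read $\epsilon < r(\Omega)$ for the bound to be nontrivial, are points the paper's terser proof leaves implicit.
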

\begin{proof}
    Let $\Lambda$ be an exact cocycle and let $\delta > \dZ{\Omega}{\Lambda}$.
    It is enough to show that $\delta \geq r(\Omega)/3$.
    Equivalently, it is enough to show that, for every $(ijk) \in N(\U)$ and $y \in U_k \cap U_j \cap U_i$, we have $\| \Omega_{ij}(y) \Omega_{jk}(y) - \Omega_{ik}(y)\| < 3\delta$.
    This follows from \cref{close-to-cocycle}.
\end{proof}

\section{Discrete approximate vector bundles}
\label{discrete-vector-bundles-section}

In this section, we specialize the notions of approximate cocycle and approximate local trivialization to a certain open cover associated to any simplicial complex.
This gives us the notions of discrete approximate cocycle and of discrete approximate local trivializations.

We show in \cref{simplicial-completeness-thm} that any vector bundle over a compact triangulable space can be represented by a discrete approximate cocycle over a sufficiently fine triangulation of the space.
In \cref{reconstruction-section} we study the problem of reconstructing a vector bundle from finite samples as a discrete approximate local trivialization and as a discrete approximate cocycle.
We prove in \cref{main-reconstruction-thm} that this is possible provided the classifying map of the vector bundle we wish to reconstruct is sufficiently regular, and that we are given a sufficiently dense sample.

\subsection{Discrete approximate vector bundles over simplicial complexes}

We introduce two notions of discrete approximate vector bundle over a simplicial complex.
These notions of discrete approximate vector bundle induce approximate vector bundles over the geometric realization of the simplicial complex.

Since this will be relevant in \cref{section-characteristic-classes}, we define discrete approximate cocycles with values in an arbitrary metric group $G$.
Fix a simplicial complex $K$.

\begin{defn}
    \label{definition:discrete-approximate-vector-bundles}
    A \define{discrete $\epsilon$-approximate cocycle} on $K$ with values in $G$ consists of, for every ordered $1$-simplex $(ij) \in K$, an element $\Omega_{ij} \in G$ such that, for every ordered $2$-simplex $(ijk) \in K$, we have $d_G\left(\Omega_{ij} \Omega_{jk}, \Omega_{ik}\right)  < \epsilon$, and such that $\Omega = \{\Omega_{ij}\}_{(ij) \in K}$ is \define{symmetric}, i.e., we have $\Omega_{ij} = \Omega_{ji}^t$.
    A \define{discrete exact cocycle} consists of the same data, but subject to $\Omega_{ij} \Omega_{jk} = \Omega_{ik}$.
\end{defn}

We denote the set of discrete $\epsilon$-approximate cocycles on a simplicial complex $K$ with values in $G$ by $\dapprZ{1}{\epsilon}{K}{G}$.

\begin{defn}
    Let $i \in K$ be a vertex.
    Let $\st(i)$ be the geometric realization of the open star of $i$ seen as a vertex of the geometric realization $|K|$.
    The \define{star cover} of $|K|$ consists of the family of open sets $\{\st(i)\}_{(i) \in K}$.
    Denote the star cover of $|K|$ by $\st_K$.
\end{defn}

Note that there is a canonical isomorphism of simplicial complexes $N(\st_K) \cong K$ that maps a vertex $i \in K$ to itself.

\begin{constr}
    Let $\Omega$ be a discrete $\epsilon$-approximate cocycle on a simplicial complex $K$ with values in $G$.
    Define, for each $(ij) \in K$, a continuous map $\st(j) \cap \st(i) \to G$ that is constantly $\Omega_{ij}$.
    This defines a natural map $\dapprZ{1}{\epsilon}{K}{G} \to \apprZ{1}{\epsilon}{\st_K}{G}$.
\end{constr}

Note that the map $\dapprZ{1}{\epsilon}{K}{G} \to \apprZ{1}{\epsilon}{\st_K}{G}$ is injective.
With this in mind, we endow $\dapprZ{1}{\epsilon}{K}{G}$ with the metric $\dZsa$, and interpret the map $\dapprZ{1}{\epsilon}{K}{G} \to \apprZ{1}{\epsilon}{\st_K}{G}$ as an embedding of metric spaces.

\begin{defn}
    A \define{discrete $\epsilon$-approximate local trivialization} on a simplicial complex $K$ consists of a frame $\Phi_i \in \ST{d}$ for every $(i) \in K$ such that, for every $(ij) \in K$, there exists $\Omega_{ij} \in O(d)$ such that $\| \Phi_i \Omega_{ij} - \Phi_j\|  < \epsilon$.
    A \define{discrete exact local trivialization} consists of the same data, but subject to $\Phi_i \Omega_{ij} = \Phi_j$.
\end{defn}

Denote the set of discrete $\epsilon$-approximate local trivializations on a simplicial complex $K$ by $\daA{\epsilon}$.
In this discrete case too, the witness $\Omega$ that $\Phi$ is a discrete $\epsilon$-approximate local trivialization is not part of the data of the approximate local trivialization.

\begin{constr}
    Let $\Phi$ be a discrete $\epsilon$-approximate local trivialization on $K$.
    Define, for each $(i) \in K$, a map $\st(i) \to \ST{d}$ that is constantly $\Phi_i$.
    This defines a natural map $\daA{\epsilon} \to \apprA{\epsilon}{\st_K}{d}$.
\end{constr}

\begin{rmk}
    \label{relationship-discrete-appr-vb}
Using \cref{atlas-to-cocycle-construction} we obtain a map $\daA{\epsilon} \to \daZ{3\epsilon}$.
If $K$ is finite, this map is algorithmic since the minimization problem
\[
    \min_{\Omega \in O(d)} \| \Phi_i \Omega - \Phi_j\|
\]
can be solved by using the polar decomposition (\cref{polar-decomposition-and-procrustes-section}).
\end{rmk}

The next result guarantees that any vector bundle on a compact triangulable space can be encoded as a discrete approximate cocycle on a sufficiently fine triangulation of the space.

\begin{prop}
    \label{simplicial-completeness-thm}
    Let $E \to B$ be a vector bundle over a compact triangulable space $B$ and let $\epsilon \leq 3/8$.
    There exists a triangulation $K$ of $B$ and a discrete $\epsilon$-approximate cocycle $\Omega \in \daZ{\epsilon}$ such that $\pi_*(\vv(\Omega))$ represents the vector bundle $E \to B$.
\end{prop}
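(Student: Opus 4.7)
The plan is to lift the classifying map of the bundle to frames at the vertices of a sufficiently fine triangulation, producing a discrete approximate local trivialization, and then extract the desired cocycle via the canonical map $\bw$. Since $B$ is compact and paracompact, the bundle is classified by some continuous $f : B \to \GR{d}$ that factors through $\GR{d,n}$ for some $n$, hence $f$ is uniformly continuous. Fix $\delta \in \bigl(0, \sqrt{2}(1/2 - \epsilon)\bigr)$; this interval is non-empty since $\epsilon \leq 3/8 < 1/2$. By uniform continuity I choose a triangulation $K$ of $B$ fine enough that (i) $\|f(i) - f(j)\| < \epsilon/3$ for every edge $(ij)$ of $K$, and (ii) the diameter of $f(\bar\sigma)$ is less than $\delta$ for every closed simplex $\bar\sigma$ of $K$; both conditions are achieved simultaneously by sufficiently many iterated barycentric subdivisions. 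For every vertex $i$ of $K$, pick an arbitrary frame $\Phi_i \in \ST{d,n}$ with $\P(\Phi_i) = f(i)$.

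Next I verify that $\Phi = \{\Phi_i\}$ is a discrete $(\epsilon/3)$-approximate local trivialization. Given an edge $(ij) \in K$, let $\Phi_i^t \Phi_j = Q H$ be the polar decomposition, with $Q \in O(d)$ and $H$ symmetric positive semidefinite having singular values $\sigma_1, \dots, \sigma_d$. A direct calculation, using that the columns of $\Phi_i, \Phi_j$ are orthonormal, yields
\[
\|\Phi_i Q - \Phi_j\|^2 = 2d - 2 \sum_k \sigma_k \qquad \text{and} \qquad \|f(i) - f(j)\|^2 = 2d - 2 \sum_k \sigma_k^2.
\]
Since each $\sigma_k \in [0,1]$ satisfies $\sigma_k \geq \sigma_k^2$, I conclude $\|\Phi_i Q - \Phi_j\| \leq \|f(i) - f(j)\| < \epsilon/3$, exhibiting $Q$ as the required witness. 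Since $\epsilon/3 \leq 1/8 \leq 1$, $\Phi$ is non-degenerate by \cref{well-defined-cocycle-from-atlas}, and \cref{relationship-discrete-appr-vb} then yields a discrete cocycle $\Omega := \bw(\Phi) \in \daZ{\epsilon}$.

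It remains to show $\pi_*(\vv(\Omega)) = [f]$ in $\hcont{B}{\GR{d}}$. Fix a partition of unity $\phi$ subordinate to the star cover $\st_K$. Both $\Phi$ and $\atl^\phi(\bw(\Phi))$ are $\epsilon$-approximate local trivializations on $\st_K$ that admit $\bw(\Phi)$ as a witness (the first by definition of $\bw$, the second from the proof of \cref{approx-cocycle-gives-approx-atlas}); by \cref{w-then-triv} they are homotopic through $\epsilon$-approximate local trivializations, and \cref{atlas-to-class-map-stable-homotopy} then gives $\vv(\Omega) = [\av^\phi(\Phi)]$ in $\hcont{B}{\GR{d}^{\sqrt{2}\epsilon}}$. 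Now $\av^\phi(\Phi)(y) = \sum_i \phi_i(y)\, \P(\Phi_i) = \sum_i \phi_i(y)\, f(i)$, and because $\phi_i(y) = 0$ unless $i$ is a vertex of the unique open simplex containing $y$, condition (ii) gives the pointwise bound $\|\av^\phi(\Phi)(y) - f(y)\| \leq \sum_i \phi_i(y)\|f(i) - f(y)\| < \delta$. By \cref{close-become-homotopic}, $\av^\phi(\Phi)$ and $f$ become equal in $\hcont{B}{\GR{d}^{\sqrt{2}\epsilon + \delta}}$, and since $\sqrt{2}\epsilon + \delta < \sqrt{2}/2$ the retraction $\pi_*$ is defined there; applying $\pi_*$ gives $\pi_*(\vv(\Omega)) = \pi_*[f] = [f]$, as desired.

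The main technical point is the Procrustes-type inequality in the second paragraph, which converts Grassmannian closeness (all that one reads off of a classifying map at vertices) into the frame-level closeness required by the definition of a discrete approximate local trivialization. Once this is in place, the rest is careful bookkeeping through the constructions of \cref{section-cocycles-and-local-trivializations} and \cref{cocycles-to-classifying-maps-section}, combined with the standard observation that uniform continuity of $f$ on compact $B$ allows conditions (i) and (ii) to be met simultaneously by any sufficiently fine subdivision.
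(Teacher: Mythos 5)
Your proof is correct, but it takes a genuinely different route from the paper's. The paper never touches the classifying map directly: it trivializes $E$ over the star cover of a triangulation (the stars being contractible), obtaining an \emph{exact} cocycle $\Lambda$, subdivides until each transition function $\Lambda_{ij}$ has image of diameter less than $\epsilon/3$, samples constant values to get the discrete cochain $\Omega$, and then concludes via \cref{close-to-cocycle} and the stability clause of \cref{main-thm-class-map}, which forces the restriction $\sqrt{2}(\epsilon+\epsilon/3)\leq\sqrt{2}/2$, i.e.\ $\epsilon\leq 3/8$. You instead discretize a classifying map $f$ by choosing frames $\Phi_i\in\ST{d,n}$ over the vertices, exactly in the spirit of the reconstruction results of \cref{reconstruction-section} (your Procrustes-type inequality is precisely \cref{lem-equivalence-metrics-grassmannians}, $\dqsa\leq\dFrsa$, which you could simply cite instead of re-deriving), pass to a cocycle via the canonical $\bw$ of \cref{relationship-discrete-appr-vb}, and compare $\av^\phi(\Phi)$ with $f$ pointwise using \cref{w-then-triv}, \cref{atlas-to-class-map-stable-homotopy} and \cref{close-become-homotopic} --- the same maneuver the paper uses in the proof of \cref{relation-to-true-vb}. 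What your route buys: it only needs $\epsilon/3\leq 1$ and $\sqrt{2}\epsilon<\sqrt{2}/2$, so it actually proves the statement for all $\epsilon<1/2$, not just $\epsilon\leq 3/8$, and it avoids having to choose an exact cocycle extending to closed stars. What it costs: you need the (standard, but worth a line) fact that $f$ factors through some finite $\GR{d,n}$ --- e.g.\ because $f(B)$ is compact and $\GR{d}$ carries the direct limit topology along closed inclusions, or because a bundle over a compact base embeds in a trivial bundle --- plus the metric comparison between $\ST{d,n}$ and $\GR{d,n}$, neither of which the paper's argument requires.
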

\begin{proof}
    Let $S$ be a finite simplicial complex such that $|S| \cong B$.
    Without loss of generality, we assume $|S| = B$.
    Since the star cover of $S$ consists of contractible sets, the vector bundle $E \to |S|$ trivializes over $\st_{S}$ and thus is represented by an exact cocycle $\Lambda \in \apprZ{1}{}{\st_{S}}{O(d)}$.
    Moreover, since the closed stars $\overline{\st(i)}$ are also contractible, for each $(ij) \in S$, the continuous map $\Lambda_{ij} : \st(j) \cap \st(i) \to O(d)$ can be taken such that it extends to $\overline{\st(j)} \cap \overline{\st(i)}$, which is a closed set.
    Pick a metric that metrizes $|S|$, which must exists since $S$ is a finite simplicial complex.
    It follows that the maps $\Lambda_{ij}$ are uniformly continuous, and thus there exists $\delta > 0$ such that for every $(ij) \in S$ and every $T \subseteq \st(j) \cap \st(i)$ of diameter less than $\delta$, the diameter of $\Lambda_{ij}(T) \subseteq O(d)$ is less than $\epsilon/3$.

    For $n \in \N$, let $S^{(n)}$ denote the $n$th barycentric subdivision of $S$.
    Note that, for every $n \in \N$, the star cover $\st_{S^{(n+1)}}$ refines the star cover $\st_{S^{(n)}}$.
    Choose a refinement map $r^n : \st_{S^{(n+1)}} \to \st_{S^{(n)}}$ for each $n$ and let $\Lambda^n \in \apprZ{1}{\epsilon}{\st_{S^{(n)}}}{O(d)}$ denote the restriction of $\Lambda$ along these refinements, obtained by using \cref{refinement-construction}.
    Note that, by \cref{refinement-well-behaved}, $\Lambda$ still represents the original vector bundle $E \to |S|$.

    As $n$ goes to $\infty$, the maximum of the diameters $\max_{i \in S^{(n)}}\diam(\st_i)$ goes to $0$, since the diameter of simplices goes to $0$ uniformly, as $S$ has finitely many simplices.
    In particular, there exists $n_0$ such that the diameter of the image of $\Lambda^{n_0}_{ij}$ is less than $\epsilon/3$ for every $(ij) \in S^{(n_0)}$, since the original $\Lambda$ consists of uniformly continuous maps.
    Let $K = S^{n_0}$.

    For every $(ij) \in K$, pick a matrix $\Omega_{ij}$ in the image of $\Lambda^{n_0}_{ij}$ in such a way that $\Omega_{ij} = \Omega_{ji}^t$.
    The matrices $\Omega_{ij}$ assemble into a cochain $\Omega \in C^1(\st_{K}; O(d))$ such that $\dZ{\Omega}{\Lambda^{n_0}} < \epsilon/3$.
    It follows from \cref{close-to-cocycle} that $\Omega$ is a $\epsilon$-approximate cocycle, and since it is constant on each intersection, we have $\Omega \in \dapprZ{1}{\epsilon}{K}{O(d)}$.

    To conclude, note that, by \cref{main-thm-class-map}, we have that $\vv(\Omega)$ and $\vv(\Lambda^{n_0}) = \vv(\Lambda)$ become equal in $\hcont{B}{\GR{d}^{\sqrt{2}(\epsilon + \epsilon/3)}}$.
    Since $\sqrt{2}(\epsilon + \epsilon/3) \leq \sqrt{2}/2$, by assumption, we have that $\pi_*(\vv(\Omega)) = \vv(\Lambda)$, as required.
\end{proof}

\subsection{Reconstruction of vector bundles from finite samples}
\label{reconstruction-section}

Let $\delta \geq 0$ and $\ell > 0$.
A function $f : X \to Y$ between metric spaces is a \define{$\delta$-approximate $\ell$-Lipschitz map} if, for every $x,x' \in X$, we have $\ell d_X(x,x') + \delta \geq d_Y(f(x), f(x'))$.

\begin{defn}
    Let $X \subseteq \R^N$.
    The \define{\v{C}ech complex} of $X$ at distance scale $\epsilon > 0$, denoted $\cech(X)(\epsilon)$, consists of the simplicial complex given by the nerve of the cover $\{ B(x, \epsilon) \}_{x \in X}$ of $X^\epsilon$.
\end{defn}

Note that the cover $\{ B(x, \epsilon)\}_{x \in X}$ is indexed by the elements of $X$, so the $0$-simplices of $\cech(X)(\epsilon)$ consist of the elements of $X$.
As a set, $|\cech(X)(\epsilon)|$ consists of formal linear combinations
\[
    p = \sum_{x \in X} c_x [x]
\]
such that $S_p = \{x \in X : c_x > 0\}$ is a finite set with the property that the intersection $\cap_{x \in S_p} B(x,\epsilon) \subseteq \R^N$ is non-empty.
We will often write $\cech(X)(\epsilon)$ for the geometric realization $|\cech(X)(\epsilon)|$.

\begin{constr}
    \label{construction-of-cechm}
Let $X \subseteq \R^N$ and let $\epsilon > 0$, $\delta \geq 0$, and $\ell > 0$.
Assume that $X$ is finite.
Let $Y \subseteq \R^M$ and let $f : X \to Y$ be a $\delta$-approximate $\ell$-Lipschitz map with respect to the distances induced by the Euclidean norm $\|-\|_2$.
Define a continuous map
\begin{align*}
    \cechm(f)(\epsilon) : \cech(X)(\epsilon) &\to Y^{2\ell \epsilon + \delta} \subseteq \R^M\\
                 \sum_{x \in X} c_x [x] &\mapsto \sum_{x \in X}c_x f(x).\qedhere
\end{align*}
\end{constr}

The map $\cech(f)$ is well-defined since, if $p = \sum_{x \in X} c_x [x]$ is such that $c_{x_0} > 0$, then $\|f(z) - f(x_0)\|_2 < \ell 2\epsilon + \delta$ for every $z \in X$ such that $c_z > 0$, since, in that case, $\|x_0 - z\| < 2\epsilon$, as the balls $B(x_0, \epsilon)$ and $B(z,\epsilon)$ must intersect.

\begin{lem}
    \label{local-trivialization-for-reconstruction}
    Let $X \subseteq \R^N$ and let $\epsilon > 0$, $\delta \geq 0$, and $\ell > 0$.
    Assume that $X$ is finite.
    Let $f : X \to \GR{n,d} \subseteq \R^{n \times n}$ be a $\delta$-approximate $\ell$-Lipschitz map with respect to the Euclidean norm and the Frobenius norm.
    The $(2\ell \epsilon + \delta)$-approximate classifying map $\cechm(f)(\epsilon) : \cech(X)(\epsilon) \to \GR{d,n}^{2\ell \epsilon + \delta}$ of \cref{construction-of-cechm} can be represented by a discrete approximate local trivialization, in the sense that there exists a partition of unity $\phi$ of the star cover of $\cech(X)(\epsilon)$ and $\Phi \in \dapprA{2\ell \epsilon + \delta}{\cech(X)(\epsilon)}{d}$ such that $\av^\phi(\Phi) = \cech(f)(\epsilon)$.

    Moreover, there is a discrete $(3(2\ell\epsilon + \delta))$-approximate cocycle $\bw(\Phi)$
    such that $\vv(\Omega)$ is equal to $\cechm(f)(\epsilon)$ in $\hcont{\cech(X)(\epsilon)}{\GR{d,n}^{3\sqrt{2}(2\ell \epsilon +\delta)}}$.
\end{lem}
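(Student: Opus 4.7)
The plan is to build the local trivialization directly from $f$. For each vertex $x \in X$ of $\cech(X)(\epsilon)$, since $f(x) \in \GR{d,n}$ is a rank-$d$ orthogonal projection, I would let $\Phi_x \in \ST{d,n} \subseteq \ST{d}$ be any matrix whose columns form an orthonormal basis of the image of $f(x)$, so that $\P(\Phi_x) = \Phi_x\Phi_x^t = f(x)$. For the partition of unity subordinate to the star cover, I would use the barycentric one $\phi_x(p) = c_x$, where $p = \sum_{z} c_z[z]$. With these choices,
\[
\av^\phi(\Phi)(p) \;=\; \sum_{x \in X}\phi_x(p)\,\P(\Phi_x) \;=\; \sum_{x \in X} c_x\, f(x) \;=\; \cechm(f)(\epsilon)(p)
\]
on the nose, which is the identity claimed in the first part.

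The main technical step is to verify that $\Phi \in \dapprA{2\ell\epsilon+\delta}{\cech(X)(\epsilon)}{d}$. For each edge $(xx')$, the existence of a point in $B(x,\epsilon) \cap B(x',\epsilon)$ forces $\|x-x'\|_2 < 2\epsilon$, and the approximate Lipschitz hypothesis gives $\|f(x) - f(x')\| < 2\ell\epsilon + \delta$. I must upgrade this Grassmannian estimate to a Stiefel estimate, by producing $\Omega_{xx'} \in O(d)$ with $\|\Phi_x \Omega_{xx'} - \Phi_{x'}\| < 2\ell\epsilon + \delta$. For this I would take $\Omega_{xx'}$ to be a minimizer of the orthogonal Procrustes problem, given by the orthogonal factor in the polar decomposition of $\Phi_x^t\Phi_{x'}$ (\cref{polar-decomposition-and-procrustes-section}). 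A direct SVD computation shows
\[
\min_{\Omega \in O(d)} \|\Phi_x \Omega - \Phi_{x'}\|^2 \;=\; 2d - 2\textstyle\sum_i \sigma_i, \qquad \|f(x)-f(x')\|^2 \;=\; 2d - 2\textstyle\sum_i \sigma_i^2,
\]
where $\sigma_i \in [0,1]$ are the singular values of $\Phi_x^t \Phi_{x'}$; since $\sigma_i \geq \sigma_i^2$ on $[0,1]$, the first quantity is dominated by the second, yielding the required bound. I expect this Procrustes comparison to be the main conceptual obstacle; everything else reduces to assembling previous lemmas.

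For the \emph{Moreover} clause, set $\eta = 2\ell\epsilon + \delta$ and $\Omega = \bw(\Phi)$. By \cref{altas-to-cocycle-lem}, $\Omega$ is a discrete $3\eta$-approximate cocycle, and by construction of $\bw$ it is a witness that $\Phi$ is an $\eta$- (hence $3\eta$-) approximate local trivialization. Pick any partition of unity $\phi'$ and injection $\iota$, so that $\vv(\Omega)$ is represented by $\av^{\phi'}(\atl^{\phi',\iota}(\Omega))$; by \cref{approx-cocycle-gives-approx-atlas}, the same $\Omega$ is a witness that $\atl^{\phi',\iota}(\Omega)$ is a $3\eta$-approximate local trivialization. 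Applying \cref{w-then-triv} with this common witness produces a homotopy $\Phi \simeq \atl^{\phi',\iota}(\Omega)$ through $3\eta$-approximate local trivializations, and then \cref{atlas-to-class-map-stable-homotopy}(2) produces a homotopy $\av^{\phi'}(\Phi) \simeq \vv(\Omega)$ inside $\cont{\cech(X)(\epsilon)}{\GR{d,n}^{3\sqrt{2}\eta}}$. Finally, the linear interpolation $\phi^t = t\phi + (1-t)\phi'$ of partitions of unity, together with \cref{atlas-to-class-map-lem}, gives a homotopy $\av^\phi(\Phi) \simeq \av^{\phi'}(\Phi)$ inside $\cont{\cech(X)(\epsilon)}{\GR{d,n}^{\sqrt{2}\eta}}$. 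Concatenation identifies $\cechm(f)(\epsilon) = \av^\phi(\Phi)$ with $\vv(\Omega)$ in $\hcont{\cech(X)(\epsilon)}{\GR{d,n}^{3\sqrt{2}\eta}}$, which is the desired claim.
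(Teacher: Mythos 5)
Your proposal is correct and follows essentially the same route as the paper: the same choice of frames $\Phi_x$ spanning the image of $f(x)$, the same barycentric partition of unity giving $\av^\phi(\Phi)=\cechm(f)(\epsilon)$ exactly, and the same reduction of the ``Moreover'' clause to \cref{altas-to-cocycle-lem} and \cref{w-then-triv} (which you simply spell out in more detail). The only deviation is your Procrustes computation $\min_{\Omega}\|\Phi_x\Omega-\Phi_{x'}\|^2 = 2d-2\sum_i\sigma_i \le 2d-2\sum_i\sigma_i^2 = \|f(x)-f(x')\|^2$, which is a correct self-contained reproof of \cref{lem-equivalence-metrics-grassmannians}, the exact lemma the paper cites at that step.
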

\begin{proof}
    The second claim is a consequence of the first one and \cref{w-then-triv}, where $\bw$ is the map defined in \cref{altas-to-cocycle-lem}.

    For the first claim, for each $x \in X$, let $\Phi_x \in \ST{d,n}$ be an orthonormal basis of the subspace of $\R^n$ spanned by $f(x) \in \GR{d,n}$.
    Since $f$ is a $\delta$-approximate $\ell$-Lipschitz map, the family $\{\Phi_x\}_{x \in X}$ constitutes a discrete $(2\ell \epsilon +\delta)$-approximate local trivialization on the simplicial complex $\cech(X)(\epsilon)$, by \cref{lem-equivalence-metrics-grassmannians}.
    By taking the partition of unity $\phi$ subordinate to the star cover of $\cech(X)(\epsilon)$ given by $\phi_x(\sum_{x \in X} c_x [x]) = c_x$, we see that $\av^\phi(\Phi) = \cechm(f)(\epsilon)$.
\end{proof}

The following result is well known, see, e.g., \cite[Corollary~4G.3]{hatcher}.

\begin{lem}[Nerve lemma]
    Let $\U = \{U_i\}_{i \in I}$ be an open cover of a paracompact topological space $B$ and let $\phi$ be a partition of unity subordinate to $B$.
    If $\U$ has the property that any finite intersection of its elements is either contractible or empty, then the map $B \to |N(\U)|$ that sends $y$ to $\sum_{i \in I} \phi_i(y) [i]$ is well-defined, continuous, and a homotopy equivalence.\qed
\end{lem}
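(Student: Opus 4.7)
The plan is to handle the three claims in turn: well-definedness, continuity, and the homotopy equivalence. Well-definedness is immediate from the properties of a partition of unity. For each $y \in B$, local finiteness of $\phi$ makes $S_y := \{i \in I : \phi_i(y) > 0\}$ a finite subset of $I$, and since $\phi_i(y) > 0$ forces $y \in U_i$, we have $y \in \bigcap_{i \in S_y} U_i \neq \emptyset$, so $S_y$ is a simplex of $N(\U)$. Hence $\sum_i \phi_i(y)[i]$ lies in the closed simplex on $S_y$ inside $|N(\U)|$. Continuity similarly reduces to local finiteness: around any point of $B$ only finitely many $\phi_i$ are nonzero, and each $\phi_i$ is continuous.

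For the homotopy equivalence, I would introduce the standard intermediate ``blow-up'' space
\[
    \tilde B \;=\; \Bigl(\bigsqcup_{\sigma \in N(\U)} U_\sigma \times \Delta^\sigma\Bigr)\Big/\!\sim, \qquad U_\sigma := \bigcap_{i \in \sigma} U_i,
\]
where $\Delta^\sigma$ is the geometric simplex on the vertices of $\sigma$ and the identifications glue $U_\tau \times \Delta^\sigma$ into $U_\sigma \times \Delta^\tau$ along inclusions $\sigma \subseteq \tau$. There are two canonical projections $p_B : \tilde B \to B$ and $p_N : \tilde B \to |N(\U)|$. I would then show both are homotopy equivalences. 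For $p_N$, one uses the hypothesis that each $U_\sigma$ is contractible: over the open star of $\sigma$ in $|N(\U)|$, $p_N$ restricts to a projection with contractible fibers $U_\sigma$, and an acyclic carrier / Segal projection lemma argument (or Dold's theorem on partitions of unity) upgrades fiberwise contractibility to a global equivalence. For $p_B$, the partition of unity $\phi$ furnishes an explicit continuous section $y \mapsto [(y,\sum_i \phi_i(y) e_i)] \in U_{S_y} \times \Delta^{S_y}$ whose composition with $p_B$ is the identity on $B$ and whose other composite is fiberwise homotopic to the identity via a straight-line homotopy inside each $\Delta^\sigma$. Composing the section of $p_B$ with $p_N$ recovers precisely the map $y \mapsto \sum_i \phi_i(y)[i]$, so that map is a homotopy equivalence.

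The main obstacle is the homotopy equivalence for $p_N$: it is the step that genuinely uses the contractibility hypothesis, and a fully self-contained argument requires either a spectral sequence / homotopy colimit formalism or a careful skeletal induction of Mayer--Vietoris type, gluing contractibility of $U_\sigma$'s along refinements of the cover. In practice I would invoke the argument in Hatcher's Corollary 4G.3 (as the authors do), which packages this inductive gluing via iterated mapping cylinders; the remaining check is simply that the partition-of-unity map constructed above agrees, up to homotopy, with the equivalence produced by that inductive construction, which follows from a convex combination homotopy within each simplex of $|N(\U)|$.
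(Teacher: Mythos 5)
Your outline is correct, and it is essentially the argument the paper delegates to its citation: the paper offers no proof of its own, simply invoking Hatcher's Corollary~4G.3, whose mechanism (a blow-up/mapping-cylinder space with the two projections $p_B$ and $p_N$, contractibility of the $U_\sigma$ feeding a gluing or projection-lemma step) is exactly what you describe. You also rightly flag the one point the bare citation leaves implicit, namely that the specific partition-of-unity map $y \mapsto \sum_i \phi_i(y)[i]$ is the section of $p_B$ and hence itself the homotopy equivalence, which your convex-combination homotopy within each simplex settles.
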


\begin{cor}
    \label{nerve-lem-cor}
    Let $X \subseteq \R^N$ be a finite subset, let $\epsilon > 0$, and let $\phi = \{\phi_x\}_{x\in X}$ be a partition of unity subordinate to $\{ B(x,\epsilon) \}_{x \in X}$.
    Then, the following map is a homotopy equivalence:
\begin{align*}
    \pushQED{\qed}
    R^\phi : X^\epsilon &\to \cech(X)(\epsilon) \\
               z &\mapsto \sum_{x \in X} \phi_x(z) [x].\qedhere
    \popQED
\end{align*}
\end{cor}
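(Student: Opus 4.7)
The plan is to recognize this as an immediate application of the Nerve Lemma stated just above. First, I would note that by definition $X^\epsilon = \bigcup_{x \in X} B(x,\epsilon)$, so $\U = \{B(x,\epsilon)\}_{x \in X}$ is an open cover of $X^\epsilon$. Since $X^\epsilon$ is an open subset of the metric space $\R^N$, it is metrizable and hence paracompact, so the hypotheses of the Nerve Lemma on the ambient space are met. The partition of unity $\phi$ is subordinate to $\U$ by assumption.

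Next I would verify the contractibility hypothesis on finite intersections. Any finite intersection $\bigcap_{x \in S} B(x,\epsilon)$ with $S \subseteq X$ finite is an intersection of open convex subsets of $\R^N$, hence is either empty or convex (and in particular contractible). By the definition of $\cech(X)(\epsilon)$ as the nerve of this very cover, we have a canonical identification $N(\U) = \cech(X)(\epsilon)$, which matches vertices $x \in X$ with themselves.

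Finally, I would observe that the map $R^\phi$ defined in the statement is literally the map $y \mapsto \sum_{i} \phi_i(y)[i]$ appearing in the Nerve Lemma, once one identifies the indexing set $I$ with $X$ and the nerve $|N(\U)|$ with $|\cech(X)(\epsilon)| = \cech(X)(\epsilon)$ under our conventions. The Nerve Lemma therefore directly yields well-definedness, continuity, and the fact that $R^\phi$ is a homotopy equivalence, completing the proof. There is no real obstacle here beyond tracking the identifications; the only point worth checking carefully is that the convexity of finite intersections of open Euclidean balls is what supplies the contractibility hypothesis.
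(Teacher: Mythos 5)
Your proof is correct and is exactly the argument the paper intends: the corollary is stated as an immediate consequence of the Nerve Lemma, using that $X^\epsilon$ is the union of the balls $B(x,\epsilon)$ and that finite intersections of these balls are convex, hence empty or contractible, with the nerve of this cover being $\cech(X)(\epsilon)$ by definition. No further comment is needed.
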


Our reconstruction theorem for vector bundles builds on the following result by Niyogi, Smale, and Weinberger, which allows one to recover the homotopy type of a compact manifold smoothly embedded into $\R^N$ from a sufficiently close and dense sample.
In the result, $d_H$ denotes the Hausdorff distance.

\begin{prop}[{\cite[Proposition~7.1]{NSW}}]
    \label{NSWprop}
    Let $\M \subseteq \R^N$ be a smoothly embedded compact manifold with $\tau = \reach(M) > 0$.
    Let $P\subseteq \R^N$ such that $d_H(P,\M) < \epsilon < (3 - \sqrt{8}) \tau$ and let
    \[
        \alpha \in \left( \frac{( \epsilon + \tau) - \sqrt{ \epsilon^2 + \tau^2 - 6\tau \epsilon}}{2}, \frac{( \epsilon + \tau) + \sqrt{ \epsilon^2 + \tau^2 - 6\tau \epsilon}}{2} \right),
    \]
    which is a non-empty open interval.
    Then $\M \subseteq P^\alpha$ and the inclusion $\M \to P^\alpha$ is a homotopy equivalence.\qed
\end{prop}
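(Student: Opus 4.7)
The plan is to build an explicit deformation retraction of $P^\alpha$ onto $\M$ using the nearest-point projection guaranteed by the reach condition.

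First, $\M \subseteq P^\alpha$ follows immediately from $d_H(P,\M) < \epsilon$ together with the observation that the lower endpoint $((\epsilon + \tau) - \sqrt{\epsilon^2 + \tau^2 - 6\tau \epsilon})/2$ of the stated interval exceeds $\epsilon$ (a routine squaring argument, using only $\epsilon,\tau > 0$). For the harder direction, I want to extend the nearest-point projection to $P^\alpha$. Given $p \in P^\alpha$, a triangle inequality through some $x \in P$ with $\|p - x\| < \alpha$ and some $m \in \M$ with $\|x - m\| < \epsilon$ yields $d(p,\M) < \alpha + \epsilon$. A short check (squaring and comparing with $(\tau - \epsilon)^2$) shows that every $\alpha$ in the admissible interval satisfies $\alpha < \tau - \epsilon$, so $p \in \M^\tau$; since $\tau = \reach(\M)$, the nearest-point projection $\pi_\M(p) \in \M$ is well-defined, unique, and depends continuously on $p$.

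The core step is to verify that the straight-line homotopy
\[ H(p,t) \,=\, (1-t)\, p + t\, \pi_\M(p), \qquad t \in [0,1], \]
maps $P^\alpha \times [0,1]$ into $P^\alpha$. Granting this, $H$ restricts to the identity on $\M$ (since $\pi_\M$ does), and at $t = 1$ gives a retraction $P^\alpha \to \M$ whose composition with the inclusion is homotopic to $\mathrm{id}_{P^\alpha}$; thus $\M \hookrightarrow P^\alpha$ is a homotopy equivalence. To show that $H(p,t) \in P^\alpha$ for all $p \in P^\alpha$ and all $t \in [0,1]$, I would exhibit a point of $P$ at distance less than $\alpha$ from $H(p,t)$, choosing the witness adaptively in $t$: for small $t$ the original witness $x \in P$ near $p$ works, while for large $t$ one picks $x' \in P$ within $\epsilon$ of $\pi_\M(p)$, which exists by the Hausdorff bound. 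The argument then reduces to checking that these two regimes overlap on $[0,1]$.

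The main obstacle is the geometric estimate that makes this overlap go through. The segment from $p$ to $\pi_\M(p)$ is exactly normal to $\M$ at $\pi_\M(p)$, and the reach controls how far $\M$ curves from its tangent hyperplane there. Expanding $\|H(p,t) - x\|^2$ and $\|H(p,t) - x'\|^2$ in the orthogonal decomposition with respect to $T_{\pi_\M(p)}\M$ and using the reach to bound the tangential curvature contribution against the known normal displacement, both norms are less than $\alpha^2$ precisely when the quadratic $\alpha^2 - (\tau + \epsilon)\alpha + 2\tau\epsilon$ is negative. Its discriminant is exactly $\tau^2 - 6\tau\epsilon + \epsilon^2$, which is positive under the hypothesis $\epsilon < (3-\sqrt{8})\tau$, and its negativity interval is exactly the one in the proposition. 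Once this quadratic estimate is in hand, $H$ is a genuine deformation retraction and the homotopy equivalence follows formally.
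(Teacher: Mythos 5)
The paper does not actually prove this proposition: it is quoted verbatim from Niyogi--Smale--Weinberger and the \qed marks it as an external result, so the relevant comparison is with the proof in \cite{NSW} itself. Your outline follows essentially the same route as theirs: project $P^\alpha$ to $\M$ by the nearest-point map (well-defined because $\alpha+\epsilon<\tau$, which you verify correctly, as you do the fact that the lower endpoint of the interval exceeds $\epsilon$), and show the straight-line homotopy $H(p,t)=(1-t)p+t\pi_\M(p)$ stays in $P^\alpha$ by switching between a witness $x\in P$ near $p$ and a witness $x'\in P$ near $\pi_\M(p)$, with the crossover governed by the quadratic $\alpha^2-(\tau+\epsilon)\alpha+2\tau\epsilon<0$, whose roots are exactly the stated interval endpoints.

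However, as written there is a genuine gap: the entire content of the proposition is concentrated in the sentence asserting that ``both norms are less than $\alpha^2$ precisely when the quadratic is negative,'' and that assertion is never argued. Note that simple triangle inequalities do not suffice here: for the witness $x$ near $p$ one only gets $\|H(p,t)-x\|\le t\,\|p-\pi_\M(p)\|+\|p-x\|$, which exceeds $\alpha$ for any $t>0$, so the first regime is nontrivial and one must control the angle between the normal segment $p-\pi_\M(p)$ and $p-x$ (equivalently, the position of $x$ relative to the tangent space $T_{\pi_\M(p)}\M$). This is exactly where \cite{NSW} invest their work, via the reach-based estimate that a point of $\M$ at distance $r$ from $\pi_\M(p)$ deviates from the tangent plane at $\pi_\M(p)$ by at most $r^2/(2\tau)$, combined with the fact that $x$ lies within $\epsilon$ of some point of $\M$; it is this interplay that produces the constant term $2\tau\epsilon$ in the quadratic. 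Your phrase ``using the reach to bound the tangential curvature contribution against the known normal displacement'' names the right ingredient but does not carry out the computation, so the proposal is a correct plan whose decisive inequality remains a black box. To make it a proof you would need to write out the orthogonal decomposition of $\|H(p,t)-x\|^2$ and $\|H(p,t)-x'\|^2$, insert the tangent-plane deviation bound, and verify that the two ranges of $t$ in which the respective bounds are below $\alpha$ cover $[0,1]$ exactly under the stated condition on $\alpha$ -- or simply defer to \cite[Proposition~7.1]{NSW}, as the paper does.
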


We are now ready to prove the reconstruction theorem for vector bundles.
Before doing so, we give a short remark about representing vector bundles by Lipschitz maps.

\begin{rmk}
    \label{rmk-lipschitz-vb}
    In \cite[Proposition~3.1]{R} it is shown that any rank-$d$ vector bundle on a compact metric space $X$ can be represented by a Lipschitz map $X \to \GR{d,n}$ for some $n$, where $\GR{d,n}$ is seen as a subspace of the space of square matrices $\R^{n \times n}$ with the operator norm.
    It follows that a vector bundle on $X$ can also be represented by a Lipschitz map $X \to \GR{d,n}$ where now we use the Frobenius distance on $\GR{d,n}$, as we do in this paper.
    This motivates the assumptions made in the following result.
\end{rmk}

\begin{thm}
    \label{main-reconstruction-thm}
    Let $\M \subseteq \R^N$ be a smoothly embedded compact manifold and let $f : \M \to \GR{d,n}$ be an $\ell$-Lipschitz map with respect to the Euclidean distance on $\M$ and the Frobenius distance on $\GR{d,n}$. Assume that $\reach(\M) = \tau > 0$.
    Let $P \subseteq \R^N$ be a finite set and let $g : P \to \GR{d,n}$ be a function.
    Let $\epsilon, \delta > 0$ be such that
    \begin{itemize}
        \item[$\rhd$] for every $x \in \M$ there exists $p \in P$ such that $\|p - x\|_2 < \epsilon$;
        \item[$\rhd$] for every $p \in P$ there exists $x \in \M$ such that $\|p - x\|_2 < \epsilon$ and $\|g(p) - f(x)\| < \delta$,
    \end{itemize}
    so that $g$ is a $2(\delta + \ell \epsilon)$-approximate $\ell$-Lipschitz map.
    If $\epsilon < (3 - \sqrt{8}) \tau$, then, for every
    \[
        \alpha \in \left( \frac{( \epsilon + \tau) - \sqrt{ \epsilon^2 + \tau^2 - 6\tau \epsilon}}{2}, \frac{( \epsilon + \tau) + \sqrt{ \epsilon^2 + \tau^2 - 6\tau \epsilon}}{2} \right) \cap \left(0, \frac{\sqrt{2}/2-2\delta - 2\ell \epsilon}{2\ell}\right),
    \]
    there is a homotopy commutative diagram as follows, in which the vertical maps are homotopy equivalences:
\[
    \begin{tikzpicture}
      \matrix (m) [matrix of math nodes,row sep=1.5em,column sep=4em,minimum width=2em,nodes={text height=1.75ex,text depth=0.25ex}]
        { \M & \GR{n,d} \\
          \cech(P)(\alpha) & \GR{n,d}^{2(\ell \alpha + \ell \epsilon + \delta)}.\\};
        \path[line width=0.75pt, -{>[width=8pt]}]
        (m-1-1) edge node [above] {$f$} (m-1-2)
                edge node [left] {} (m-2-1)
        (m-2-1) edge node [above] {$\cech(g)$} (m-2-2)
        (m-1-2) edge [right hook->] node [right] {} (m-2-2)
        ;
    \end{tikzpicture}
    \]
\end{thm}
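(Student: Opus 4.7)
The plan is to assemble the diagram from four pieces: the Niyogi–Smale–Weinberger result, the nerve lemma, Tinarrage's retraction, and a linear homotopy showing the square commutes up to homotopy. First I would verify the assertion that $g$ is $2(\delta + \ell\epsilon)$-approximate $\ell$-Lipschitz: given $p,p' \in P$, choose $x,x' \in \M$ close to $p,p'$ respectively and use the triangle inequality
\[
    \|g(p) - g(p')\| \leq \|g(p) - f(x)\| + \|f(x) - f(x')\| + \|f(x') - g(p')\| < \delta + \ell(\|p-p'\| + 2\epsilon) + \delta,
\]
so that \cref{construction-of-cechm} indeed produces a map $\cechm(g)(\alpha) : \cech(P)(\alpha) \to \GR{d,n}^{2(\ell\alpha + \ell\epsilon + \delta)}$.

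Next, I would build the left vertical map. The Hausdorff hypothesis gives $d_H(P,\M) < \epsilon < (3-\sqrt{8})\tau$, so \cref{NSWprop} applies and the inclusion $\M \hookrightarrow P^\alpha$ is a homotopy equivalence for $\alpha$ in the stated open interval. Composing with the homotopy equivalence $R^\phi : P^\alpha \to \cech(P)(\alpha)$ from \cref{nerve-lem-cor} (for any choice of partition of unity $\phi$ subordinate to $\{B(p,\alpha)\}_{p \in P}$) yields the left vertical homotopy equivalence. For the right vertical map, the condition $\alpha < (\sqrt{2}/2 - 2\delta - 2\ell\epsilon)/(2\ell)$ is exactly designed so that $2(\ell\alpha + \ell\epsilon + \delta) < \sqrt{2}/2$, whence the inclusion $\GR{d,n} \hookrightarrow \GR{d,n}^{2(\ell\alpha + \ell\epsilon + \delta)}$ is a homotopy equivalence with inverse $\pi$, by \cref{pi-is-inverse}.

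For homotopy commutativity, for any $x \in \M$ the left-then-bottom composite sends $x$ to $\sum_{p \in P} \phi_p(x)\, g(p) \in \R^{n \times n}$. I would use the straight-line homotopy
\[
    h_t(x) = (1-t)\, f(x) + t \sum_{p \in P} \phi_p(x)\, g(p),\qquad t \in [0,1],
\]
between the two composites $\M \rightrightarrows \GR{d,n}^{2(\ell\alpha + \ell\epsilon + \delta)}$. To show each $h_t(x)$ lies in the thickening, note that $\phi_p(x) \neq 0$ forces $\|x - p\|_2 < \alpha$, so choosing $x_p \in \M$ with $\|x_p - p\|_2 < \epsilon$ and $\|g(p) - f(x_p)\| < \delta$ gives
\[
    \|g(p) - f(x)\| \leq \|g(p) - f(x_p)\| + \|f(x_p) - f(x)\| < \delta + \ell(\alpha + \epsilon).
\]
Averaging over $p$ and multiplying by $t$ yields $\|h_t(x) - f(x)\| < \delta + \ell(\alpha + \epsilon) < 2(\ell\alpha + \ell\epsilon + \delta)$, so $h_t(x) \in \GR{d,n}^{2(\ell\alpha + \ell\epsilon + \delta)}$ as required.

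The main obstacle is really just book-keeping: one must ensure that the interval for $\alpha$ is simultaneously compatible with the NSW interval (so that $\M \hookrightarrow P^\alpha$ is a homotopy equivalence) and small enough that the linear homotopy fits inside the thickened Grassmannian (which also ensures that $\cechm(g)(\alpha)$ lands there and that $\pi$ is defined on the target). All three constraints are captured by the displayed intersection, and the rest of the argument is a direct combination of \cref{NSWprop}, \cref{nerve-lem-cor}, \cref{pi-is-inverse}, and the linear homotopy above.
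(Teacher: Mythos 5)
Your proposal is correct and follows essentially the same route as the paper's proof: verify the approximate Lipschitz property of $g$, build the left vertical equivalence from \cref{NSWprop} and \cref{nerve-lem-cor}, use \cref{pi-is-inverse} for the right vertical inclusion, and establish homotopy commutativity via the straight-line homotopy with the same $\ell\alpha + \ell\epsilon + \delta$ distance estimate. No gaps to report.
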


Note that the interval to which $\alpha$ must belong to is non-empty as long as $\epsilon$ and $\delta$ are sufficiently small, a condition that depends only on $\tau$ and $\ell$.

\begin{proof}
    We start by showing that $g$ is indeed a $2(\delta + \ell \epsilon)$-approximate $\ell$-Lipschitz map, so that the bottom map of the diagram in the statement is well-defined.
    To do this, note that if $p,q \in P$, then there exist $x,y \in \M$ with $\|p - x\|_2, \|q - y\|_2 < \epsilon$ and $\|g(p) - f(x)\|, \|g(q) - f(y)\| < \delta$.
    Since $f$ is $\ell$-Lipschitz, we have $\|g(p) - g(q)\| \leq 2\delta + \ell \|x - y\| \leq 2\delta + \ell 2\epsilon + \ell \|p - q\|$, as required.

    The homotopy equivalence $\M \to \cech(P)(\alpha)$ is given by composing the inclusion $\M \subseteq P^\alpha$ with the map $R^\phi: P^\alpha \to \cech(P)(\alpha)$ for a choice of partition of unity $\phi$ subordinate to $\{B(p,\alpha)\}_{p\in P}$. The map $R^\phi$ is a homotopy equivalence by \cref{nerve-lem-cor}.
    The fact that the inclusion $\M \to P^\alpha$ is well-defined and a homotopy equivalence is the content of \cref{NSWprop}, whose hypotheses are satisfied since our conditions imply that $d_H(P,\M) < \epsilon$.

    The map $\GR{n,d} \to \GR{n,d}^{2(\ell \alpha + \ell \epsilon + \delta)}$ is simply the inclusion, and it is a homotopy equivalence by \cref{pi-is-inverse}, since $\alpha < (\sqrt{2}/2-2\delta - 2\ell\epsilon)/(2\ell)$.

    To conclude the proof, we must show that the diagram in the statement commutes up to homotopy.
    For this, let $z \in \M$.
    We have $\cechm(g)(R^\phi(z)) = \cechm(g)\left( \sum_{p \in P} \phi_p(z) [p] \right) = \sum_{p \in P} \phi_p(z) g(x)$.
    Consider the linear path $\beta f(z) + (1-\beta) \sum_{p \in P} \phi_p(z) g(p)$ for $\beta\in [0,1]$.
    It suffices to show that it is included in $\GR{n,d}^{2\ell \alpha + 2\ell \epsilon + 2\delta}$ for all $\beta\in [0,1]$, and we will show that it is at distance less than $\ell \alpha + \ell \epsilon + \delta$ from $f(z)$.
    We compute
    \begin{align*}
        \left\| f(z) - \left(\beta f(z) + (1-\beta) \sum_{p \in P} \phi_p(z) g(x)\right)\right\|_2 &= \left\|\beta f(z) + (1-\beta) f(x) - \left(\beta f(z) + (1-\beta) \sum_{p \in P} \phi_p(z) g(p)\right)\right\|_2\\
          &\leq  (1-\beta) \sum_{p\in P} \phi_p(z) \,\, \| f(z) - g(p) \|_2\\
          &< (1-\beta) \sum_{x\in X} \phi_x(z) \,\,(\ell \alpha + \ell \epsilon + \delta) \leq \ell \alpha + \ell \epsilon + \delta,
    \end{align*}
    where the strict inequality comes from the fact that, if $\phi_p(z)$ is non-zero, then $z \in B(p,\alpha)$, and thus there exists $x \in \M$ such that $\|f(z) - g(p)\|_2 \leq \|f(z) - f(x) \| +\|f(x) - g(p)\| < \ell \|z-x\|_2 + \delta \leq\ell (\|z - p\|_2 +  \|p - x\|_2) + \delta < \ell \alpha + \ell \epsilon + \delta$.
\end{proof}

We conclude this section with a few remarks.

\begin{rmk}
    For simplicity, we have proven \cref{main-reconstruction-thm} using the \v{C}ech complex.
    A similar result can be obtained for the Vietoris--Rips complex, using, for instance, the results of \cite{ALS}.
\end{rmk}

\begin{rmk}
    \label{reconstruction-as-cocycle}
As proven in \cref{local-trivialization-for-reconstruction}, the map reconstructed in \cref{main-reconstruction-thm} has a combinatorial description using a discrete approximate local trivialization that in turn induces a discrete approximate cocycle.
This discrete approximate cocycle can be used to compute characteristic classes combinatorially, which is the subject of the next section.
\end{rmk}

\section{Effective computation of characteristic classes}
\label{section-characteristic-classes}

In this section, we present three algorithms to compute, respectively, the first two Stiefel--Whitney classes of an approximate vector bundle given by an approximate $O(d)$-cocycle, and the Euler class of an oriented approximate vector bundle of rank $2$ given by an approximate $SO(2)$-cocycle.
The algorithms are based on well known results which say that the characteristic classes we consider are obstructions to lifting the structure group of the cocycle to certain other Lie groups.
The difficulty is in showing that these algorithms can be extended in a stable and consistent way to $\epsilon$-approximate cocycles, provided $\epsilon$ is sufficiently small.
Throughout the section, we will make use of basic Riemannian geometry; a reference for this topic is \cite{jL}.

In \cref{change-coeff-section} we recall two standard constructions used to change the coefficient group of a \v{C}ech cocycle, and we extend them to approximate cocycles.
In \cref{sw1-section} we give the algorithm for the first Stiefel--Whitney class, in \cref{eu-section} we give the algorithm for the Euler class, and in \cref{sw2-section} we give the algorithm for the second Stiefel--Whitney class.

\subsection{Change of coefficients}
\label{change-coeff-section}

In this section, we will make use of basic \v{C}ech cohomology with coefficients in a sheaf of Abelian groups.
We recall the essential components now; for an introduction to the subject, see for example \cite[Chapter~5]{warner}.

Let $\U$ be a cover of a topological space $B$.
For $A$ an Abelian group and $n \in \N$, we let $\apprH{n}{}{\U}{A}$ denote the $n$th \v{C}ech cohomology group of $(B,\U)$ with coefficients in the sheaf of locally constant functions with values in $A$ (\cite[p.~201]{warner}).
This cohomology group is a quotient of the subgroup of cocycles of $C^n(\U;A)$, which is the Abelian group of locally constant functions defined on all $(n+1)$-fold intersections $U_{i_n} \cap \cdots \cap U_{i_0} \to A$.
As usual, the \v{C}ech cohomology of $B$ with values in $A$ is defined as $\apprH{n}{}{B}{A} = \colim_{\U \text{ cover}} \apprH{n}{}{\U}{A}$.
It is well known that, when $B$ is paracompact and locally contractible, there is a natural isomorphism $\apprH{n}{}{B}{A} \cong H^n(B;A)$, where the right hand side denotes singular cohomology with coefficients in $A$.

Since we use these constructions only for $n = 1,2$, we elaborate on these two cases.
For $n=1$, the \v{C}ech cohomology is precisely the one we introduced in \cref{cech-cohomology-section}, where the group $A$ is endowed with the discrete topology.
For $n=2$, the $2$-cocycles $\apprZ{2}{}{\U}{A}$ consist of families of locally constant functions $\{\Gamma_{ijk} : U_k \cap U_j \cap U_i \to A\}_{(ijk) \in N(\U)}$ such that, for every $(ijkl) \in N(\U)$ and every $y \in U_l \cap U_k \cap U_j \cap U_i$ we have
\[
    \Gamma_{ijk}(y) \Gamma_{ijl}(y)^{-1} \Gamma_{ikl}(y) \Gamma_{jkl}(y)^{-1} = 1_A,
\]
where we are writing the operations of the Abelian group $A$ multiplicatively.
The operation on $\apprZ{2}{}{\U}{A}$ is pointwise multiplication.
The cohomology group $\apprH{2}{}{\U}{A}$ is the quotient of $\apprZ{2}{}{\U}{A}$ by the subgroup of $2$-cocycles of the form $y \mapsto \Omega_{ij}(y) \Omega_{jk}(y) \Omega_{ki}(y)$, for $\Omega \in C^1(\U;A)$.

Let $G$ and $H$ be topological groups and let $\zeta : G \to H$ be a continuous group morphism.
The map $\zeta$ induces a map $\apprZ{1}{}{\U}{G} \to \apprZ{1}{}{\U}{H}$, simply by applying $\zeta$ pointwise to a cocycle.
This map induces a well-defined map $\apprH{1}{}{\U}{G} \to \apprH{1}{}{\U}{H}$.
A bit more interestingly, given a central extension of groups $1 \to F \to G \to H \to 1$, there is a well-defined so-called \textit{connecting morphism} $\apprH{1}{}{\U}{H} \to \apprH{2}{}{\U}{F}$.
For a short introduction to these concepts, see \cite[Appendix~A]{spingeometry}.

In this section we generalize these two constructions to the case of approximate cocycles, when $F$, $G$, and $H$ are well-behaved metric groups
We start by generalizing the first construction.

\begin{constr}
    \label{change-of-coeff-construction}
    Let $\zeta : G \to H$ be a continuous group morphism between topological groups.
    Given $\Omega \in C^1(\U ; G)$, define a cochain $\zeta(\Omega)$ with values in $H$ by $\zeta(\Omega)_{(ij)}(y) = \zeta(\Omega_{(ij)}(y))$ for every $(ij) \in N(\U)$ and $y \in U_j \cap U_i$.
\end{constr}

\begin{lem}
    \label{simple-change-of-coefficients}
    Let $G$ and $H$ be metric groups and let $\zeta : G \to H$ be an $\ell$-Lipschitz group morphism.
    Let $\epsilon \in [0,\infty]$.
    \cref{change-of-coeff-construction} induces maps $\zeta : \apprZ{1}{\epsilon}{\U}{G} \to \apprZ{1}{\ell\epsilon}{\U}{H}$ and $\zeta : \apprH{1}{\epsilon}{\U}{G} \to \apprH{1}{\ell\epsilon}{\U}{H}$, and these maps are $\ell$-Lipschitz with respect to the distances $\dZsa$ and $\disHsa$ respectively.

    In particular, if the infimum over all distances between distinct elements of $H$ is bounded below by $\delta$ and $\ell\epsilon \leq \delta$, then $\zeta$ induces a map $\zeta : \apprH{1}{\epsilon}{\U}{G} \to \apprH{1}{}{\U}{H}$, such that, if $\Omega,\Omega' \in \apprH{1}{\epsilon}{\U}{G}$ satisfy $\dZ{\Omega}{\Omega'} < \delta/\ell$, then $\zeta(\Omega) = \zeta(\Omega') \in \apprH{1}{}{\U}{H}$.
\end{lem}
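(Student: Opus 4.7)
The plan is to verify the four assertions in turn, each of which will follow from routine manipulations once I remember that $\zeta \colon G \to H$ is simultaneously a group morphism (so it respects products, inverses, and conjugation actions) and $\ell$-Lipschitz (so it controls metric distances by a factor of $\ell$). I do not anticipate any real obstacle in the argument; the entire proof is essentially bookkeeping, with the only mildly delicate point being the infimum-computation when descending the Lipschitz bound to the quotient metric.

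First, I will verify that $\zeta(\Omega)$ really lands in $\apprZ{1}{\ell\epsilon}{\U}{H}$. Continuity of each component $\zeta(\Omega)_{ij} = \zeta \circ \Omega_{ij}$ is immediate from continuity of $\zeta$. Symmetry is preserved because $\zeta(\Omega_{ij}(y)) = \zeta(\Omega_{ji}(y)^{-1}) = \zeta(\Omega_{ji}(y))^{-1}$. The $\ell\epsilon$-approximate cocycle condition is the single computation
\[
d_H\bigl(\zeta(\Omega_{ij}(y))\zeta(\Omega_{jk}(y)),\, \zeta(\Omega_{ik}(y))\bigr) = d_H\bigl(\zeta(\Omega_{ij}(y)\Omega_{jk}(y)),\, \zeta(\Omega_{ik}(y))\bigr) \leq \ell\, d_G\bigl(\Omega_{ij}(y)\Omega_{jk}(y),\, \Omega_{ik}(y)\bigr) < \ell\epsilon.
\]

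Second, to see that the construction descends to cohomology, I would observe that postcomposition with $\zeta$ is equivariant for the $0$-cochain action: given $\Theta \in C^0(\U;G)$, the family $\zeta(\Theta) := \{\zeta \circ \Theta_i\}_i$ belongs to $C^0(\U;H)$, and
\[
\zeta\bigl((\Theta \cdot \Omega)_{ij}(y)\bigr) = \zeta(\Theta_i(y))\, \zeta(\Omega_{ij}(y))\, \zeta(\Theta_j(y))^{-1} = (\zeta(\Theta) \cdot \zeta(\Omega))_{ij}(y),
\]
so cohomologous cocycles are sent to cohomologous cocycles. For the Lipschitz bound on cochains, the pointwise inequality $d_H(\zeta(g), \zeta(g')) \leq \ell\, d_G(g,g')$ yields $\dZ{\zeta(\Omega)}{\zeta(\Lambda)} \leq \ell\, \dZ{\Omega}{\Lambda}$ after taking suprema. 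To descend this to the quotient metric, I note that for any representatives $\bar\Omega, \bar\Lambda$ of classes $\Omega, \Lambda \in \aH{\epsilon}$, the cocycles $\zeta(\bar\Omega), \zeta(\bar\Lambda)$ are in turn representatives of $\zeta(\Omega), \zeta(\Lambda)$, so
\[
\disH{\zeta(\Omega)}{\zeta(\Lambda)} \leq \inf_{\bar\Omega,\bar\Lambda} \dZ{\zeta(\bar\Omega)}{\zeta(\bar\Lambda)} \leq \ell \inf_{\bar\Omega,\bar\Lambda} \dZ{\bar\Omega}{\bar\Lambda} = \ell\, \disH{\Omega}{\Lambda}.
\]

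Finally, for the ``in particular'' clause, suppose $\delta$ lower-bounds the distances between distinct elements of $H$ and that $\ell\epsilon \leq \delta$. Then the approximate cocycle estimate above upgrades to $d_H(\zeta(\Omega_{ij}(y))\zeta(\Omega_{jk}(y)),\, \zeta(\Omega_{ik}(y))) < \ell\epsilon \leq \delta$, which by the discreteness hypothesis forces equality, so $\zeta(\Omega)$ is an exact cocycle and we get the promised map $\apprH{1}{\epsilon}{\U}{G} \to \apprH{1}{}{\U}{H}$. For the stability statement, given classes with $\disH{\Omega}{\Omega'} < \delta/\ell$, choose representatives $\bar\Omega, \bar\Omega'$ with $\dZ{\bar\Omega}{\bar\Omega'} < \delta/\ell$. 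The pointwise Lipschitz bound gives $d_H(\zeta(\bar\Omega_{ij}(y)), \zeta(\bar\Omega'_{ij}(y))) < \delta$ for every $(ij)$ and $y$, which again forces equality everywhere, so $\zeta(\bar\Omega) = \zeta(\bar\Omega')$ on the nose and the two classes coincide in $\apprH{1}{}{\U}{H}$.
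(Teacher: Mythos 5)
Your proposal is correct and follows essentially the same route as the paper's proof: verify the $\ell\epsilon$-approximate cocycle condition via the Lipschitz morphism property, check equivariance with respect to the $C^0$-action to descend to cohomology, and deduce the ``in particular'' clause from discreteness of $H$. You simply spell out the Lipschitz bound on $\disHsa$ and the exactness/stability deductions that the paper leaves as ``clear'' or ``a consequence of the first claim.''
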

\begin{proof}
    We start by checking that, if $\Omega \in \apprZ{1}{\epsilon}{\U}{G}$, then $\zeta(\Omega)$ is an $\ell\epsilon$-approximate cocycle.
    For this, let $(ijk) \in N(\U)$ and let $y \in U_k \cap U_j \cap U_i$.
    We have $d_H\left(\zeta(\Omega)_{ij}(y) \zeta(\Omega)_{jk}(y), \zeta(\Omega)_{ik}(y)\right) = d_H\left(\zeta( \Omega_{ij}(y) \Omega_{jk}(y) ), \zeta(\Omega_{ik}(y) )\right) \leq \ell d_G\left(\Omega_{ij}(y) \Omega_{jk}(y), \Omega_{ik}(y)\right) < \ell \epsilon$, using the fact that $\zeta$ is an $\ell$-Lipschitz group morphism.
    The fact that the maps are $\ell$-Lipschitz is clear.

    To see that $\zeta$ descends to approximate cohomology, note that, if $\Theta \in C^0(\U; G)$, then we can define $\zeta(\Theta) \in C^0(\U; H)$ by $\zeta(\Theta)_i(y) = \zeta(\Theta_i(y))$ for all $i \in N(\U)$ and $y \in U_i$, and that, with this definition, we have $\zeta(\Theta) \cdot \zeta(\Omega) = \zeta(\Theta \cdot \Omega)$, for every $\Omega \in \apprZ{1}{\epsilon}{\U}{G}$.

    The second claim is a consequence of the first one.
\end{proof}

We now generalize the connecting morphism construction.

\begin{constr}
    \label{connecting-morphism-construction}
    Let $\zeta : G \to H$ be a continuous and surjective group morphism between metric groups and let $\epsilon \geq 0$.
    Let $F$ be the kernel of $\zeta$ and assume that $F$ is locally compact and discrete in $G$, so that, in particular $G \to H$ is a covering map.
    Suppose that $\U$ is a cover of a topological space with the property that non-empty binary intersections of elements of $\U$ are locally path connected and simply connected.
    Let $\Omega \in C^1(\U ; H)$.
    Given $(ij) \in N(\U)$, choose a continuous lift $\Lambda_{ij} : U_j \cap U_i \to G$ of $\Omega_{ij} : U_j \cap U_i \to H$, such that $\Lambda_{ij} = (\Lambda_{ji})^{-1}$.
    Finally, for $(ijk) \in N(\U)$ and $y \in U_k \cap U_j \cap U_i$, let $\Gamma_{ijk}(y) \in F$ be a closest point of $F$ to $\Lambda_{ij}(y)\Lambda_{jk}(y)\Lambda_{ki}(y)$.
\end{constr}

A priori, the maps $\Gamma_{ijk}$ of \cref{connecting-morphism-construction} do not necessarily assemble into a $2$-cocycle with values in $F$, since the maps may not even be continuous.
We now give conditions under which the maps $\Gamma_{ijk}$ constitute a $2$-cocycle.
In order to do this, we need to introduce some definitions.

\begin{defn}
    \label{systole-def}
    Let $M$ be a metric space.
    The \define{systole} of $M$, denoted $\sysp(M)$, is the infimum of the lengths of all non-nullhomotopic loops of $M$.
\end{defn}

Recall from, e.g., \cite[Chapter~2]{jL}, that any connected Riemannian manifold $M$ can be endowed with the \define{geodesic distance} where the distance between two points is taken to be the infimum of the lengths of all piecewise regular (i.e., with non-zero velocity) paths between the two points.
If the manifold is not connected, then the same construction gives an \define{extended distance}, meaning a distance that can also take the value $\infty$.
Whenever we endow a Riemannian manifold with the geodesic distance, we are referring to this extended distance.
Finally, recall that, if the manifold is complete (as are all the examples we consider here), then the geodesic distance between two points can be calculated as the infimum of the length of all geodesics between the two points (\cite[Corollary 6.21]{jL}).

Before being able to give conditions under which the connecting morphism is well behaved, we need to prove the following technical lemma.

\begin{lem}
    \label{projections-and-multiplications}
    Let $F \subseteq G$ be an isometric inclusion of a discrete subgroup into a metric group.
    Let $2r$ be the infimum of the distances between distinct elements of $F$.
    Let $\langle - \rangle : F^r \to F$ denote the projection to the closest element of $F$, which is well-defined and continuous.
    \begin{enumerate}
        \item If $a \in F^s$, $b \in F^t$, $c \in F^u$ with $s + t + u \leq r$ then $\langle a \rangle \langle b \rangle \langle c \rangle = \langle a b c \rangle$.
        \item If $a \in F^r$ and $b \in F$, then $\langle a b\rangle = \langle a \rangle b$.
        \item If $a,c \in G$, $b \in F^{s}$, and $a \langle b \rangle c \in F^{t}$ with $s + t \leq r$, then $\langle a \langle b \rangle c\rangle = \langle a b c \rangle$.
    \end{enumerate}
\end{lem}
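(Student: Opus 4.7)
The plan is to use in all three parts the two basic facts at our disposal: first, that left and right multiplication in $G$ are isometries (part of the definition of metric group), so distances can be freely ``transported'' through products; and second, that the hypothesis on the separation of $F$ makes the projection $\langle-\rangle$ the unique element of $F$ within distance $r$. Concretely, for $g\in F^r$, an element $f\in F$ satisfies $f=\langle g\rangle$ if and only if $d(g,f)<r$, because any two distinct elements of $F$ are at distance at least $2r$.

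For part (1), I would write $a'=\langle a\rangle$, $b'=\langle b\rangle$, $c'=\langle c\rangle$ and bound
\[
d(abc,\, a'b'c') \leq d(abc, a'bc) + d(a'bc, a'b'c) + d(a'b'c, a'b'c') = d(a,a') + d(b,b') + d(c,c') < s+t+u \leq r,
\]
where the middle equality uses the isometry property applied three times. Since $a'b'c'\in F$, uniqueness of the closest element then forces $\langle abc\rangle = a'b'c'$. Part (2) is the degenerate case: right-multiplying by $b\in F$ is an isometry, so $d(ab,\langle a\rangle b) = d(a,\langle a\rangle) < r$ and $\langle a\rangle b\in F$, giving $\langle ab\rangle = \langle a\rangle b$ by the same uniqueness argument.

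For part (3), set $b'=\langle b\rangle$, so $d(b,b')<s$. Left and right multiplication by $a$ and $c$ being isometries give $d(abc,ab'c)=d(b,b')<s$. The hypothesis $ab'c\in F^t$ means $d(ab'c,F)<t$, so by the triangle inequality
\[
d(abc, F) \leq d(abc, ab'c) + d(ab'c, F) < s + t \leq r,
\]
which in particular shows $abc \in F^r$. Letting $f = \langle ab'c\rangle$, the same triangle inequality gives $d(abc,f)<s+t\leq r$, and since $f\in F$, uniqueness yields $\langle abc\rangle = f = \langle ab'c\rangle$.

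No step is really an obstacle; the only thing to be careful about is verifying that the relevant elements actually lie in $F^r$ before invoking $\langle-\rangle$, which is exactly what the hypotheses $s+t+u\leq r$ and $s+t\leq r$ are engineered to guarantee. The entire argument is a sequence of triangle inequality estimates converted into equalities of projections by the separation property of $F$.
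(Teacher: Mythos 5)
Your proof is correct and follows essentially the same route as the paper's: multiplication being an isometry transports the estimates, the triangle inequality gives a bound below $r$, and the uniqueness of the closest element of $F$ (forced by the $2r$ separation) converts each estimate into an equality of projections. The only cosmetic difference is that you spell out the intermediate "if $f\in F$ and $d(g,f)<r$ then $\langle g\rangle=f$" criterion slightly more explicitly, which the paper states once at the start of its proof.
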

\begin{proof}
    As we will use this for each claim, start by noting that, if $g \in G$, $h \in F$ and $d_G(g,h) < r$, then $\langle g \rangle = h$.
    Since multiplication is an isometry, we have $d_G(abc,\langle a \rangle b c) < s$.
    Using this idea two more times, and the triangle inequality, we deduce $d_G(abc, \langle a \rangle \langle b \rangle \langle c \rangle) < s + t + u \leq r$, and the first claim follows.
    For the second claim, note that $d_G(a b, \langle a \rangle b) < r$.
    Since $\langle a \rangle b \in F$, the second claim follows.
    Finally, $d_G(\langle a \langle b \rangle c \rangle, a \langle b \rangle c) < s$ and $d_G(a \langle b \rangle c, a b c) < t$, so $d_G(\langle a \langle b \rangle c \rangle, abc) < s + t \leq r$, and the third claim follows.
\end{proof}

\begin{thm}
    \label{change-of-coefficients}
    Let $1 \to F \to G \to H \to 1$ be a central extension of Lie groups with $H$ compact and $F$ discrete.
    Fix a bi-invariant Riemannian metric on $H$ and use it to metrize $H$ and $G$ with the geodesic distance and $F$ with the distance inherited from $G$.
    Let $\U$ be a cover of a topological space $B$ such that each set and each non-empty binary intersection is locally path connected and simply connected.
    Let $\epsilon \leq \sysp(H)/8$.
    Then, \cref{connecting-morphism-construction} induces maps $\partial : \apprZ{1}{\epsilon}{\U}{H} \to \apprZ{2}{}{\U}{F}$ and $\partial : \apprH{1}{\epsilon}{\U}{H} \to \apprH{2}{}{\U}{F}$.

    The second map is independent of any choice of lift, and is stable in the sense that if
    $\Omega,\Omega' \in \apprH{1}{\epsilon}{\U}{H}$
    are such that $\disH{\Omega}{\Omega'} < \sysp(H)/8$, then $\partial(\Omega) = \partial(\Omega') \in \apprH{2}{}{\U}{F}$.
\end{thm}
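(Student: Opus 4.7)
The plan is to verify in turn: (i) that $\Gamma_{ijk}$ is a well-defined locally constant $F$-valued function; (ii) that it satisfies the $2$-cocycle identity; and (iii) that the induced class in $\apprH{2}{}{\U}{F}$ is independent of the choice of lift and of the $0$-cochain action, and is stable under small perturbations of $\Omega$. Throughout, the metric on $G$ is pulled back from the biinvariant metric on $H$ via $\pi$, so $\pi$ is a local isometry and conjugation by any element of $G$ is an isometry fixing $F$ pointwise. Moreover, any minimizing $G$-geodesic between two distinct elements of $F$ projects to a non-contractible loop of the same length in $H$, and conversely, so distinct elements of $F$ are at $G$-distance exactly $\sysp(H)$; in the notation of \cref{projections-and-multiplications}, this means $r = \sysp(H)/2$. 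Since $d_H(\Omega_{ij}(y)\Omega_{jk}(y), \Omega_{ik}(y)) < \epsilon$, a short $H$-geodesic from $\Omega_{ij}(y)\Omega_{jk}(y)\Omega_{ki}(y)$ to $1_H$ lifts through $\pi$ to show $\tilde\Gamma_{ijk}(y) := \Lambda_{ij}(y)\Lambda_{jk}(y)\Lambda_{ki}(y) \in F^\epsilon \subseteq F^r$, whence $\Gamma_{ijk}(y) := \langle\tilde\Gamma_{ijk}(y)\rangle$ is well-defined, continuous into the discrete set $F$, and therefore locally constant.

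For (ii), the key identity $\Lambda_{ij}\Lambda_{jk} = \tilde\Gamma_{ijk}\Lambda_{ik}$ holds in $G$ by definition of $\tilde\Gamma_{ijk}$, and expanding $\Lambda_{ij}\Lambda_{jk}\Lambda_{kl}$ two ways gives
\[
\tilde\Gamma_{ijk}\tilde\Gamma_{ikl}\Lambda_{il} \;=\; \Lambda_{ij}\Lambda_{jk}\Lambda_{kl} \;=\; \Lambda_{ij}\tilde\Gamma_{jkl}\Lambda_{jl} \;=\; (\Lambda_{ij}\tilde\Gamma_{jkl}\Lambda_{ij}^{-1}) \cdot \Lambda_{ij}\Lambda_{jl} \;=\; \tilde\Gamma'_{jkl}\tilde\Gamma_{ijl}\Lambda_{il},
\]
where $\tilde\Gamma'_{jkl} := \Lambda_{ij}\tilde\Gamma_{jkl}\Lambda_{ij}^{-1}$ lies in $F^\epsilon$ with closest $F$-point still $\Gamma_{jkl}$, by centrality of $F$ together with the isometry property of conjugation. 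Cancelling $\Lambda_{il}$ and applying the first part of \cref{projections-and-multiplications} with $c = 1_G \in F^0$ (valid because $2\epsilon \leq \sysp(H)/4 \leq r$), we get $\Gamma_{ijk}\Gamma_{ikl} = \langle\tilde\Gamma_{ijk}\tilde\Gamma_{ikl}\rangle = \langle\tilde\Gamma'_{jkl}\tilde\Gamma_{ijl}\rangle = \Gamma_{jkl}\Gamma_{ijl}$, which by commutativity of $F$ is equivalent to the $2$-cocycle condition $\Gamma_{ijk}\Gamma_{ijl}^{-1}\Gamma_{ikl}\Gamma_{jkl}^{-1} = 1_F$.

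For (iii), two lifts of the same $\Omega$ differ by a locally constant $\mu \in C^1(\U;F)$, and by the second part of \cref{projections-and-multiplications} and centrality the two associated cocycles differ by the $2$-coboundary $\mu_{ij}\mu_{jk}\mu_{ki}$; replacing $\Omega$ by $\Theta\cdot\Omega$ and lifting each $\Theta_i$ to $\tilde\Theta_i : U_i \to G$ (available because each $U_i$ is simply connected and locally path-connected) makes the new $\tilde\Gamma_{ijk}$ conjugate to the old by $\tilde\Theta_i$, leaving the central $\Gamma_{ijk} \in F$ unchanged. For stability, after passing to representatives with $\dZ{\Omega}{\Omega'} < \sysp(H)/8$, we construct a lift $\Lambda'$ of $\Omega'$ by choosing, for each $(ij)$ and $y$, the unique element of $\pi^{-1}(\Omega'_{ij}(y))$ within $\sysp(H)/2$ of $\Lambda_{ij}(y)$; this element exists, varies continuously in $y$, and can be taken symmetric. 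Then $d_G(\tilde\Gamma_{ijk}, \tilde\Gamma'_{ijk}) < 3\sysp(H)/8$, so $d_G(\Gamma_{ijk}, \Gamma'_{ijk}) < 2\epsilon + 3\sysp(H)/8 \leq 5\sysp(H)/8 < \sysp(H)$, forcing equality of the two $F$-valued cocycles. The main obstacle is step (ii): the auxiliary products $\tilde\Gamma_{ijk}\tilde\Gamma_{ikl}$ and $\tilde\Gamma_{jkl}\tilde\Gamma_{ijl}$ are not literally equal in $G$, and recovering agreement on the $F$-projections requires the conjugation trick combined with bi-invariance, centrality, and careful scale bookkeeping for \cref{projections-and-multiplications}.
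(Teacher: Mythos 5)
Your proof is correct, and it follows the paper's overall skeleton—lift $\Omega$ to $\Lambda$, project $\tilde\Gamma_{ijk}=\Lambda_{ij}\Lambda_{jk}\Lambda_{ki}$ to $F$ via the nearest-point map $\langle-\rangle$, control all scales through \cref{projections-and-multiplications}, and prove lift-independence, invariance under the $C^0$-action, and stability—but your verification of the $2$-cocycle identity is genuinely different. The paper establishes $\Gamma_{ijk}\Gamma_{ijl}^{-1}\Gamma_{ikl}\Gamma_{jkl}^{-1}=1_F$ by a long chain of manipulations of nested projections, using all three parts of \cref{projections-and-multiplications}, centrality, and the isometry of inversion, with budgets up to $s=3\epsilon$; you instead expand $\Lambda_{ij}\Lambda_{jk}\Lambda_{kl}$ in two ways to obtain the exact identity $\tilde\Gamma_{ijk}\tilde\Gamma_{ikl}=(\Lambda_{ij}\tilde\Gamma_{jkl}\Lambda_{ij}^{-1})\,\tilde\Gamma_{ijl}$ in $G$, observe that conjugation is an isometry fixing the central $F$ pointwise so the conjugated term still projects to $\Gamma_{jkl}$, and then apply multiplicativity of $\langle-\rangle$ once, needing only $2\epsilon\leq r$. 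This is a cleaner and slightly more economical derivation of the central computation; the paper's version stays entirely at the level of projected elements and avoids the conjugation observation. Your stability argument is the paper's: the pointwise nearest-preimage lift whose existence, continuity and symmetry you assert is precisely \cref{metric-lift}, which the paper cites there, so that assertion should either cite it or include the short covering-space argument; also note that the unique preimage within $\sysp(H)/2$ of $\Lambda_{ij}(y)$ is in fact at distance $d_H(\Omega_{ij}(y),\Omega'_{ij}(y))<\sysp(H)/8$, which is the estimate your bound $d_G(\tilde\Gamma_{ijk},\tilde\Gamma'_{ijk})<3\sysp(H)/8$ actually uses. One harmless inaccuracy: distinct elements of $F$ need not be at distance \emph{exactly} $\sysp(H)$ (a systole-realizing loop of $H$ may lift to a loop in $G$); only the lower bound $2r\geq\sysp(H)$ of \cref{reach-and-systole} holds in general, and since every step of your argument uses only this lower bound, nothing breaks.
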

\begin{proof}
    Let $2r$ denote the infimum over the distances between distinct elements of $F$.
    By \cref{reach-and-systole}, we have $\sysp(H)/2 \leq r$ and thus $\epsilon \leq r/4$.
    For $x \in F^r$, let $\langle x \rangle \in F$ denote its closest point in $F$, which is well-defined and continuous.

    We start by showing that, for $(ijk) \in N(\U)$, the map $\Gamma_{ijk} : U_k \cap U_j \cap U_i \to F$ is continuous.
    It suffices to show that $\Lambda_{ij}(y) \Lambda_{jk}(y) \Lambda_{ki}(y) \in F^r$, as $\Gamma_{ijk}(y) = \langle \Lambda_{ij}(y) \Lambda_{jk}(y) \Lambda_{ki}(y) \rangle$.
    By definition of the metrics on $G$ and $H$, the distance from $\Lambda_{ij}(y) \Lambda_{jk}(y) \Lambda_{ki}(y)$ to $F$ is equal to the distance from $\Omega_{ij}(y) \Omega_{jk}(y) \Omega_{ki}(y)$ to the identity of $H$, which is less than $\epsilon \leq r/4$, by assumption.

    We now show that $\Gamma$ is an $H$-valued $2$-cocycle.
    Fix $y \in U_l \cap U_k \cap U_j \cap U_i$ and let us write $(ij)$ for $\Lambda_{ij}(y)$, and likewise for the other elements of $G$.
    We must show that
    \[
        \langle (ij) (jk) (ki) \rangle\; \langle (ij) (jl) (li) \rangle^{-1} \langle (ik) (kl) (li) \rangle\; \langle (jk) (kl) (lj) \rangle^{-1} = 1_F.
    \]
    Note that, although $F$ is Abelian, we are using multiplicative notation to be consistent with the fact that $G$ need not be Abelian.
    We now compute:
    \begin{align*}
        &\langle (ij) (jk) (ki) \rangle\; \langle (ij) (jl) (li) \rangle^{-1} \langle (ik) (kl) (li) \rangle\; \langle (jk) (kl) (lj) \rangle^{-1} && \\
        &\;\;\;\;\;\;= \langle (ij) (jk) (ki) \rangle\; \langle (il) (lj) (ji) \rangle\; \langle (ik) (kl) (li) \rangle\; \langle (jl) (lk) (kj) \rangle && \text{(taking inverse is an isometry)}\\
        &\;\;\;\;\;\;= \langle (ij) (jk) (ki) \rangle\; \langle (ik) (kl) (li) \rangle\; \langle (il) (lj) (ji) \rangle\;  \langle (jl) (lk) (kj) \rangle && \text{($F$ is central)}\\
        &\;\;\;\;\;\;= \langle (ij) (jk) (ki) (ik) (kl) (li) (il) (lj) (ji) \rangle\;  \langle (jl) (lk) (kj) \rangle && \text{(\cref{projections-and-multiplications}(1), with $s,t,u = \epsilon \leq r/4$)} \\
        &\;\;\;\;\;\;= \langle (ij) (jk) (kl) (lj) (ji) \rangle\;  \langle (jl) (lk) (kj) \rangle && \text{(cancellations)}\\
        &\;\;\;\;\;\;= \left\langle (ij) (jk) (kl) (lj) (ji)\; \langle (jl) (lk) (kj) \rangle\; \right\rangle && \text{(\cref{projections-and-multiplications}(2), with $s = 3\epsilon \leq 3/4 r$)} \\
        &\;\;\;\;\;\;= \left\langle (ij) (jk) (kl) (lj) \; \langle (jl) (lk) (kj) \rangle \;(ji) \right\rangle && \text{($F$ is central)}\\
        &\;\;\;\;\;\;= \langle (ij) (jk) (kl) (lj) (jl) (lk) (kj) (ji) \rangle && \text{(\cref{projections-and-multiplications}(3), with $s = \epsilon$ and $t = 3\epsilon$)} \\
         &\;\;\;\;\;\;= 1_F &&  \text{(cancellations)}.
    \end{align*}

    We now prove that the composite $\apprZ{1}{\epsilon}{\U}{H} \to \apprZ{2}{}{\U}{F} \to \apprH{2}{}{\U}{F}$ is independent of the choice of lifts $\Lambda$.
    In order to see this note that, if $\Lambda_{ij}, \Lambda'_{ij} : U_j \cap U_i \to G$ are two lifts of $\Omega_{ij} : U_j \cap U_i \to H$, then they differ by a function $f_{ij} : U_j \cap U_i \to F$.
    Since $F$ is central, it follows that $\Gamma_{ijk}$ and $\Gamma'_{ijk}$ differ by $f_{ij} f_{jk} f_{ki}$, using \cref{projections-and-multiplications}(2).

    Now, let $\Theta \in C^0(\U; H)$ and let $\Pi \in C^0(\U; G)$ be a lift of it, which exists since the elements of $\U$ are simply connected.
    Let us write $(i)$ for $\Pi_i(y)$.
    A lift for $\Theta \cdot \Omega$ is given by $\Pi \cdot \Lambda$, and the $2$-cochain in this case is
    \begin{align*}
        \langle (i) (ij) (j)^{-1} (j) (jk) (k)^{-1} (k) (ki) (i)^{-1} \rangle
        &= \langle (i) (ij) (jk) (ki) (i)^{-1} \rangle
        = \left\langle (i)\; \langle (ij) (jk) (ki) \rangle \; (i)^{-1} \right\rangle\\
        &= \left\langle (i) (i)^{-1} \; \langle (ij) (jk) (ki) \rangle \; \right\rangle = \Gamma_{ijk}(y),
    \end{align*}
    where in the second equality we used \cref{projections-and-multiplications}(3) and in the third one we used that $F$ is central.
    This shows that the map $\partial : \apprH{1}{\epsilon}{\U}{H} \to \apprH{2}{}{\U}{F}$ is well-defined.

    We conclude the proof by proving the stability claim.
    Let $[\Omega],[\Omega'] \in \apprH{1}{\epsilon}{\U}{H}$ be such that $\disH{[\Omega]}{[\Omega']} < \sysp(H)/8$,
    and choose representatives $\Omega,\Omega' \in \apprZ{1}{\epsilon}{\U}{H}$ such that $\dZ{\Omega}{\Omega'} < \sysp(H)/8$.
    By \cref{metric-lift}(2), we can pick lifts $\Lambda, \Lambda' \in C^1(\U; G)$ of $\Omega$ and $\Omega'$ respectively such that $\dZ{\Lambda}{\Lambda'} < \sysp(H)/8$.
    Now, let $y \in U_k \cap U_j \cap U_i$, and let us write $(ij)'$ for $\Lambda_{ij}'(y)$.
    We have
    \begin{align*}
        d_G\left((ij)' (jk)' (ki)', \langle (ij) (jk) (ki)\rangle\right) 
        &\leq d_G\left((ij)' (jk)' (ki)', (ij) (jk) (ki)\right) +
    d_G\left((ij) (jk) (ki), \langle (ij) (jk) (ki)\rangle \right)\\
        &< 3 \sysp(H)/8 + \epsilon = \sysp(H)/2,
    \end{align*}
    so $\langle \Lambda'_{ij}(y) \Lambda'_{jk}(y) \Lambda'_{ki}(y) \rangle = \langle \Lambda_{ij}(y) \Lambda_{jk}(y) \Lambda_{ki}(y)\rangle$, and the result follows.
\end{proof}

\subsection{First Stiefel--Whitney class}
\label{sw1-section}


Consider the map $\det : O(d) \to \{\pm 1\} \cong \Z/2$, and metrize $O(d)$ using the Frobenius distance and $\{\pm 1\}$ by $d(1,-1) = 2$.
It follows from \cref{real-det-is-lip} that $\det$ is $1$-Lipschitz.


\paragraph{Algorithm.}
Let $\U$ be a cover of a topological space.
Let $\epsilon \leq 2$.
By applying $\det$ to a representative cocycle, we get a map $\sw_1 : \apprH{1}{\epsilon}{\U}{O(d)} \to \apprH{1}{}{\U}{\Z/2}$ such that, if $\Omega,\Omega' \in \apprH{1}{\epsilon}{\U}{O(d)}$ satisfy $\dZ{\Omega}{\Omega'} < 2$, then $\sw_1(\Omega) = \sw_1(\Omega') \in \apprH{1}{}{\U}{\Z/2}$.
This follows directly from \cref{simple-change-of-coefficients}.

In particular, if $K$ is a simplicial complex, we get an algorithm $\sw_1 : \dapprH{1}{\epsilon}{K}{O(d)} \to H^1(K;\Z/2)$, that is polynomial in the number of vertices of $K$.
From \cref{simple-change-of-coefficients} it follows that the algorithm is stable, in the sense that if $\Omega,\Omega' \in \dapprH{1}{\epsilon}{K}{O(d)}$ satisfy $\dZ{\Omega}{\Omega'} < 2$, then $\sw_1(\Omega) = \sw_1(\Omega') \in H^1(K;\Z/2)$.

\paragraph{Oriented approximate vector bundles.}
The following observation is relevant for the computation of Euler classes, in the next section.
Let $\U$ be a cover of a topological space, with the property that non-empty binary intersections are locally path connected and simply connected.
If $\epsilon \leq 2$ and $\Omega \in \apprH{1}{\epsilon}{\U}{O(d)}$ is such that $\sw_1(\Omega) = 0$, then $\Omega = [\Lambda]$ for some $\Lambda \in \apprZ{1}{\epsilon}{\U}{SO(d)}$.
To prove this, let $[\Gamma] = \sw_1(\Omega)$, let $\Theta \in C^0(\U;\Z/2)$ be such that $\Theta \cdot \Gamma = \id$, and lift $\Theta$ to $\Pi \in C^0(\U;O(d))$.
Then $\Pi \cdot \Omega \in \apprZ{1}{\epsilon}{\U}{SO(d)}$.

\begin{table}
    {\fbox{\parbox{\textwidth}{%
    \begin{enumerate}
        \item[]\textbf{Input:} A simplicial complex $K$ with vertices $\{1, \dots, k\}$ and a discrete approximate cocycle $\Omega \in \dapprZ{1}{\epsilon}{K}{O(d)}$ with $\epsilon \leq 2$.
        \item[]\textbf{Output:} A simplicial cocycle $\sw_1(\Omega) \in Z^1(K;\Z/2)$.
        \item[] For each $(ij)$ $1$-simplex of $K$ with $i < j$ let $\sw_1(\Omega)_{ij} := \det(\Omega_{ji})$.
    \end{enumerate}}
    }\caption{Pseudocode for the algorithm $\sw_1$.} \label[algorithm]{pseudocode-sw1}}
    \vspace{-6mm}
\end{table}

\paragraph{Consistency of the algorithm.}
The following well known result can be found in, e.g., \cite[Example~A.3]{spingeometry}.
\begin{lem}
    \label{consistency0-sw1}
    Let $\U$ be a cover of a locally contractible topological space.
    If $\epsilon = 0$, the construction $\sw_1 : \apprH{1}{\epsilon}{\U}{O(d)} \to \apprH{1}{}{\U}{\Z/2}$ computes the first Stiefel--Whitney class of the vector bundle represented by the cocycle.\qed
\end{lem}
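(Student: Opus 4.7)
The plan is to reduce the claim to the classical fact that the first Stiefel--Whitney class of a vector bundle is the characteristic class corresponding, via the classifying space formalism, to the determinant homomorphism $\det : O(d) \to \Z/2$. Since $\epsilon = 0$, the construction $\sw_1$ is nothing other than the instance of \cref{change-of-coeff-construction} applied to $\det$, i.e., the usual change-of-coefficients map $\apprH{1}{}{\U}{O(d)} \to \apprH{1}{}{\U}{\Z/2}$ obtained by applying $\det$ pointwise to a representative cocycle. We must verify that, under the identifications $\Vect_d(B) \cong \apprH{1}{}{\U}{O(d)}$ (valid because $B$ admits a partition of unity subordinate to $\U$, so $B$ is paracompact) and $H^1(B;\Z/2) \cong \apprH{1}{}{\U}{\Z/2}$ (valid because $B$ is locally contractible, identifying \v{C}ech and singular cohomology), this map sends a vector bundle to its first Stiefel--Whitney class.

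First, I would recall the universal characterization of $\sw_1$ from \cref{principal-bundles-back}: for a bundle classified by a map $f : B \to \B O(d) = \GR{d}$, one has $\sw_1(E) = f^*(\sigma_1)$, where $\sigma_1 \in H^1(\B O(d); \Z/2)$ is the first universal Stiefel--Whitney class. Because $H^1(\B O(d); \Z/2) \cong \Z/2$ is generated by $\sigma_1$, identifying $\sigma_1$ amounts to identifying the unique nontrivial cohomology class in degree $1$. The continuous surjection $\det : O(d) \to \Z/2$ induces, after applying the classifying space functor $\B(-)$, a map $\B(\det) : \B O(d) \to \B(\Z/2) = K(\Z/2,1)$, which represents an element of $H^1(\B O(d); \Z/2)$; this element is nonzero (for instance, because $\det$ is surjective and hence $\B(\det)$ is nontrivial on $\pi_1$), and therefore must equal $\sigma_1$.

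Next, I would invoke naturality of the cocycle-to-classifying-map correspondence. Given any continuous homomorphism $\zeta : G \to H$ of topological groups and an exact cocycle $\Omega \in \apprZ{1}{}{\U}{G}$ representing a principal $G$-bundle with classifying map $f : B \to \B G$, the cocycle $\zeta(\Omega) \in \apprZ{1}{}{\U}{H}$ represents the principal $H$-bundle classified by $\B(\zeta) \circ f : B \to \B H$. Specializing to $\zeta = \det$, this means the cohomology class of $\det(\Omega)$ in $\apprH{1}{}{\U}{\Z/2}$ corresponds, under the isomorphism with $H^1(B; \Z/2)$, to $f^*(\B(\det)^*(\mathrm{id})) = f^*(\sigma_1) = \sw_1(E)$.

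The only substantive point is the identification $\B(\det)^*(\mathrm{id}) = \sigma_1$, which is a standard fact about the generators of the cohomology of $\B O(d)$ in low degree; the rest is a direct translation through the bijection between \v{C}ech cocycles and classifying maps. Together these observations establish that $\sw_1(\Omega) \in \apprH{1}{}{\U}{\Z/2}$ equals the first Stiefel--Whitney class of the vector bundle represented by $\Omega$, as claimed.
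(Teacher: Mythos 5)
Your argument is correct in substance, but note that the paper does not actually prove this lemma: it is stated with a \qed and justified only by the pointer to \cite[Example~A.3]{spingeometry}, i.e.\ it is treated as the classical fact that applying $\det$ to the transition functions of an $O(d)$-cocycle computes $w_1$ (equivalently, that $w_1$ is the obstruction to reducing the structure group along $SO(d) \to O(d) \to \Z/2$, an argument that stays entirely at the cocycle level). What you supply instead is the classifying-space version of that fact: naturality of the cocycle-to-classifying-map correspondence under a continuous homomorphism $\zeta : G \to H$, together with the identification $\B(\det)^*(\iota) = \sigma_1$, which you pin down correctly using $H^1(\GR{d};\Z/2) \cong \Z/2$ and the observation that $\det$ is an isomorphism on $\pi_0$, so $\B(\det)$ is nontrivial on $\pi_1$ and hence not nullhomotopic. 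This gives a self-contained argument where the paper defers to the literature; the cost is that the translation between cocycles and classifying maps, which you invoke as ``naturality'', is exactly the content that the cocycle-level argument avoids.

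Two points need tightening. First, for a fixed cover $\U$ the sets $\apprH{1}{}{\U}{\Z/2}$ and $\apprH{1}{}{\U}{O(d)}$ are \emph{not} in general isomorphic to $H^1(B;\Z/2)$ and $\Vect_d(B)$ (take $\U = \{B\}$); what exists is the natural comparison map $\apprH{1}{}{\U}{A} \to \apprH{1}{}{B}{A} \cong H^1(B;A)$, respectively the map sending a cocycle subordinate to $\U$ to the bundle it glues. The lemma must be read through these maps, and your argument goes through verbatim once ``identification'' is replaced by ``natural map'': naturality then says that the image of $[\det(\Omega)]$ in $H^1(B;\Z/2)$ equals $f^*(\B(\det)^*\iota) = f^*(\sigma_1)$, the first Stiefel--Whitney class of the bundle glued from $\Omega$. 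Second, paracompactness of $B$ is not among the hypotheses and does not follow from them --- nothing guarantees a partition of unity subordinate to $\U$, so your justification for it is unwarranted --- but some such hypothesis is needed anyway for the classifying-map description of $\sw_1$ that both you and the paper use, and it does hold where the lemma is applied (\cref{consistency-sw1}); it should simply be carried as an assumption rather than derived.
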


\begin{prop}
    \label{consistency-sw1}
    Let $\U$ be a countable cover of a locally contractible, paracompact space $B$.
    Let $\epsilon < 2/9$ and let $\Omega \in \apprH{1}{\epsilon}{\U}{O(d)}$.
    Then $\sw_1(\Omega) \in \apprH{1}{}{\U}{\Z/2}$ coincides with the first Stiefel--Whitney class of the vector bundle classified by $\pi_*(\vv(\Omega)) : B \to \GR{d}$.
\end{prop}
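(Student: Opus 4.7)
The plan is to reduce to the exact case using the upper bound of \cref{relation-to-true-vb} and then invoke the consistency of $\sw_1$ on exact cocycles (\cref{consistency0-sw1}) together with the stability of the $\sw_1$ algorithm. The key numerical observation is that $2/9$ has been chosen so that the two bounds line up: on one hand $2/9 < \sqrt{2}/4$, which is what is needed to apply \cref{relation-to-true-vb}; on the other hand $9 \cdot (2/9) = 2$, which is exactly the stability threshold of $\sw_1$.

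First, since $\epsilon < 2/9 \leq \sqrt{2}/4$, \cref{relation-to-true-vb} yields an exact cocycle $\Lambda \in \apprZ{1}{}{\U}{O(d)}$ such that $\vv(\Lambda) = \pi_*(\vv(\Omega))$ in $\hcont{B}{\GR{d}}$ and $\dZ{\Omega}{\Lambda} \leq 9\epsilon < 2$. In particular, $\Omega$ and $\Lambda$ (viewed as classes in $\apprH{1}{\epsilon}{\U}{O(d)}$, which is legitimate for $\Lambda$ since every exact cocycle is $\epsilon$-approximate for every $\epsilon$) satisfy $\disH{\Omega}{\Lambda} \leq \dZ{\Omega}{\Lambda} < 2$.

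Next, by the stability statement for $\sw_1$ recorded in \cref{sw1-section} (which is an immediate application of \cref{simple-change-of-coefficients} to the $1$-Lipschitz map $\det : O(d) \to \Z/2$), the inequality $\disH{\Omega}{\Lambda} < 2$ implies $\sw_1(\Omega) = \sw_1(\Lambda)$ in $\apprH{1}{}{\U}{\Z/2}$.

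Finally, \cref{consistency0-sw1} says that, when $\Lambda$ is exact, $\sw_1(\Lambda) \in \apprH{1}{}{\U}{\Z/2}$ agrees with the first Stiefel--Whitney class of the classical vector bundle associated to $\Lambda$. Unwinding \cref{cocycle-to-atlas-construction} and \cref{atlas-to-class-construction}, the map $\vv$ evaluated on the exact cocycle $\Lambda$ produces an \emph{exact} classifying map into $\GR{d} = \GR{d}^0$, and this map is the standard classifying map of the bundle determined by $\Lambda$; hence $\vv(\Lambda) = \pi_*(\vv(\Omega))$ classifies the same bundle as the one associated to $\Lambda$. Combining the three steps, $\sw_1(\Omega) = \sw_1(\Lambda)$ is the first Stiefel--Whitney class of the vector bundle classified by $\pi_*(\vv(\Omega))$, as desired.

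The main (and essentially only) subtlety is the bookkeeping to check that $\vv$ applied to an exact cocycle really recovers the standard classifying map of the classical vector bundle, so that \cref{consistency0-sw1} and the conclusion about the bundle classified by $\pi_*(\vv(\Omega))$ refer to the same bundle; this follows directly from the construction of $\vv$ (the partition-of-unity average of compositions $\P \circ \Phi_i$), which specializes to the classical construction when $\epsilon = 0$. All other steps are bookkeeping on the constants $9\epsilon$, $2$, and $\sqrt{2}/4$.
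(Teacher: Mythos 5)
Your proposal is correct and follows essentially the same route as the paper's own proof: apply \cref{relation-to-true-vb} (using $2/9 \leq \sqrt{2}/4$) to obtain an exact cocycle $\Lambda$ with $\pi_*(\vv(\Omega)) = \vv(\Lambda)$ and $\disH{\Omega}{\Lambda} \leq 9\epsilon < 2$, then conclude by the $2$-stability of $\sw_1$ and \cref{consistency0-sw1}. The extra remark verifying that $\vv$ on an exact cocycle recovers the classical classifying map is a harmless elaboration of a step the paper leaves implicit.
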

\begin{proof}
    Since $2/9 \leq \sqrt{2}/4$, there exists, by \cref{relation-to-true-vb}, an exact cocycle $\Lambda \in \apprH{1}{}{\U}{O(d)}$ such that $\disH{\Omega}{\Lambda} < 2/9 \times 9 = 2$, and such that $\pi_*(\vv(\Omega)) = \vv(\Lambda)$.
    From the stability of $\sw_1$, it follows that $\sw_1(\Omega) = \sw_1(\Lambda)$.
    Finally, by \cref{consistency0-sw1}, $\sw_1(\Lambda)$ is the first Stiefel--Whitney class of the vector bundle classified by $\vv(\Lambda)$, so the result follows.
\end{proof}

\subsection{Euler class of oriented, rank-$2$ vector bundles}
\label{eu-section}

Consider the group $SO(2)$ of orthogonal $2$-by-$2$ matrices with positive determinant.
There is a short exact sequence of Lie groups $1 \to \Z \to \R \to SO(2) \to 1$ given by mapping a real number $r$ to the rotation matrix
\[
    \begin{pmatrix}
        \cos(2\pi r) & -\sin(2\pi r) \\
        \sin(2\pi r) & \cos(2\pi r)
    \end{pmatrix}.
\]

\paragraph{Algorithm.}
Let $\U$ be a cover of a topological space, with the property that non-empty binary intersections are locally path connected and simply connected.
As usual, we endow $SO(2)$ with the Frobenius distance, and we let $SO(2)_g$ denote the same group, but endowed with the geodesic distance, with Riemannian structure inherited from the inclusion $SO(2) \subseteq \R^{2 \times 2} = \R^4$.

By \cref{systole-of-O}, we have $\sysp(SO(2)) = 2\sqrt{2} \pi$, so, applying \cref{change-of-coefficients} to the extension $1 \to \Z \to \R \to SO(2)_g \to 1$, we see that \cref{connecting-morphism-construction} gives a map $\eu : \apprH{1}{\epsilon}{\U}{SO(2)_g} \to \apprH{2}{}{\U}{\Z}$, as long as $\epsilon \leq \sqrt{2}\pi/4$.
In order to give an algorithm using Frobenius distances, we note that, if $\epsilon \leq 1$, then, by \cref{comparison-geodesic-and-frobenius}, any $\epsilon$-approximate $SO(2)$-cocycle is an $(\sqrt{2}\pi/4)$-approximate $SO(2)_g$-cocycle.
So \cref{connecting-morphism-construction} gives a map $\eu : \apprH{1}{\epsilon}{\U}{SO(2)} \to \apprH{2}{}{\U}{\Z}$, as long as $\epsilon \leq 1$.
By the stability statement of \cref{change-of-coefficients}, we see that, if $\Omega,\Omega' \in \apprH{1}{\epsilon}{\U}{SO(2)}$ are such that $\disH{\Omega}{\Omega'} < 1$, then $\eu(\Omega) = \eu(\Omega')$.

In particular, if $K$ is a simplicial complex, we get a $1$-stable algorithm $\eu : \dapprH{1}{\epsilon}{K}{SO(2)} \to H^2(K;\Z)$.


\begin{table}[h]
    {\fbox{\parbox{\textwidth}{%
    \begin{enumerate}
        \item[]\textbf{Input:} A simplicial complex $K$ with vertices $\{1, \dots, k\}$ and a discrete approximate cocycle $\Omega \in \dapprZ{1}{\epsilon}{K}{SO(2)}$ with $\epsilon \leq 1$.
        \item[]\textbf{Output:} A simplicial cocycle $\eu(\Omega) \in Z^2(K;\Z)$.
        \item[1.] For each $(ij)$ $1$-simplex of $K$ with $i < j$ let $\Lambda_{ij} \in \R$ be a lift of $\Omega_{ij} \in SO(2)$.
        \item[2.] For each $(ijk)$ $2$-simplex of $K$ with $i < j < k$ let $\eu(\Omega)_{ijk} \in \Z \subseteq \R$ the closest element to $\Lambda_{ij} + \Lambda_{jk} - \Lambda_{ik} \in \R$, using the usual distance of $\R$.
    \end{enumerate}}
    }\caption{Pseudocode for the algorithm $\eu$.} \label[algorithm]{pseudocode-eu}}
    \vspace{-6mm}
\end{table}

\paragraph{Consistency of the algorithm.}
We need the following well known result.

\begin{lem}
    \label{consistency0-eu}
    Let $\U$ be a cover of locally contractible space, with the property that non-empty binary intersections are locally path connected and simply connected.
    If $\epsilon = 0$, the construction $\eu : \apprH{1}{\epsilon}{\U}{SO(2)} \to \apprH{2}{}{\U}{\Z}$ computes the Euler class of the vector bundle represented by the cocycle.
\end{lem}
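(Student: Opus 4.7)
When $\epsilon = 0$, the input $\Omega$ is an honest $SO(2)$-valued \v{C}ech cocycle, and \cref{connecting-morphism-construction} collapses to the classical connecting homomorphism $\partial : \apprH{1}{}{\U}{SO(2)} \to \apprH{2}{}{\U}{\Z}$ induced by the short exact sequence of topological abelian groups $\Z \hookrightarrow \R \twoheadrightarrow SO(2)$, where the surjection sends $r \in \R$ to the rotation by angle $2\pi r$. The statement therefore reduces to the textbook identification of this connecting homomorphism with the Euler class map, and I would present the proof as such a reduction followed by a citation or sketch of the classical identification.

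The strategy I would follow is to pass to classifying spaces. The short exact sequence $\Z \to \R \to SO(2)$ deloops to a fiber sequence $\B \Z \to \B \R \to \B SO(2)$. Since $\R$ is a contractible topological group, $\B \R$ is contractible, so this forces a weak equivalence $\B SO(2) \simeq \B (\B \Z) \simeq K(\Z,2)$, the Eilenberg--MacLane space of type $(\Z,2)$. Under this equivalence, the canonical generator of $H^2(K(\Z,2);\Z) = \Z$ corresponds to the universal Euler class $e_2 \in H^2(\B SO(2);\Z)$; see, e.g., \cite[Section~9]{MS} or \cite[Appendix~B]{spingeometry} for this standard identification. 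Pulling back along the classifying map $f : B \to \B SO(2)$ of the principal $SO(2)$-bundle represented by $\Omega$ then equates $\partial([\Omega])$ with $f^*(e_2)$, which is by definition the Euler class of the associated oriented rank-$2$ vector bundle.

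The main technical point---and the one I expect to be the real obstacle---is to verify that the explicit formula of \cref{connecting-morphism-construction}, namely lifting each $\Omega_{ij}$ to $\Lambda_{ij} \in \R$ and projecting $\Lambda_{ij} + \Lambda_{jk} - \Lambda_{ik}$ to its nearest integer, truly computes the abstract sheaf-theoretic connecting homomorphism. This follows because, under the standing hypotheses on $\U$, the continuous surjection $\R \to SO(2)$ admits continuous local lifts on every non-empty binary intersection (as these are simply connected and locally path connected), and the exact cocycle condition for $\Omega$ forces $\Lambda_{ij} + \Lambda_{jk} - \Lambda_{ik}$ to already land in $\Z \subseteq \R$, so the nearest-integer projection is the identity and $\Gamma$ is literally the \v{C}ech coboundary of the lift in the sheaf of continuous $\R$-valued functions. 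Once this identification is in place, the acyclicity of that fine sheaf on a paracompact base, combined with the long exact sequence of \v{C}ech cohomology, makes $\partial$ the expected isomorphism $\apprH{1}{}{\U}{SO(2)} \cong \apprH{2}{}{\U}{\Z}$, and the classifying-space comparison of the previous paragraph then identifies its output with the Euler class, completing the proof.
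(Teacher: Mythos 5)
Your opening reduction coincides with what the paper does implicitly: when $\epsilon = 0$ the exactness of $\Omega$ forces $\Lambda_{ij}+\Lambda_{jk}+\Lambda_{ki}$ to lie in $\Z$, the nearest-point projection in \cref{connecting-morphism-construction} is vacuous, and $\eu$ is literally the classical connecting morphism of the exponential sequence $\Z \to \R \to SO(2)$. Where you genuinely diverge is in identifying that connecting morphism with the Euler class. The paper identifies $SO(2)$ with $U(1)$, quotes \cite[Example~A.5]{spingeometry} for the fact that the connecting morphism computes the first Chern class of the associated complex line bundle, and then quotes \cite[(20.10.6)]{BT} for the equality of $c_1$ with the Euler class of the underlying oriented rank-$2$ real bundle; you instead deloop the sequence, use contractibility of $\R$ to get $\B SO(2) \simeq K(\Z,2)$, and match the fundamental class with the universal class $e_2$, avoiding the complex-bundle detour. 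Both are standard and acceptable at the citation level the paper itself operates at, but note two small points in your version: (i) the compatibility of the \v{C}ech-level $\partial$ with the map on $[B,-]$ induced by the delooped fibration rests on naturality of $\partial$ together with acyclicity of the fine sheaf of continuous $\R$-valued functions, which uses paracompactness --- supplied by \cref{consistency-eu} where the lemma is applied, but not by the lemma's own hypotheses --- and your claim that $\partial$ is an isomorphism at the level of the \emph{fixed} cover $\U$ is neither needed for the statement nor justified without further hypotheses on $\U$; and (ii) the abstract equivalence only pins down the image up to sign, so identifying the fundamental class with $+e_2$ rather than $-e_2$ requires fixing conventions by a computation (e.g.\ on the tangent bundle of $S^2$ or the tautological bundle over $\C P^1$), which your citation gestures at but does not carry out. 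These are convention-level quibbles rather than gaps, so I would count your proposal as a correct proof by a different, classifying-space route.
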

\begin{proof}
    In this proof, we freely use the language of complex vector bundles.
    The Lie group $SO(2)$ is isomorphic to the group of unit complex numbers $U(1)$, and thus, up to isomorphism, real, oriented, rank-$2$ vector bundles are exactly the same as complex rank-$1$ vector bundles.
    By \cite[(20.10.6)]{BT}, the top Chern class of a complex vector bundle coincides with the Euler class of the bundle, seen as an oriented real vector bundle in view of the inclusions $U(n) \subseteq SO(2n)$.
    Finally, when $\epsilon = 0$, the construction $\eu$ computes the first Chern class (\cite[Example~A.5]{spingeometry}).
\end{proof}

\begin{prop}
    \label{consistency-eu}
    Let $\U$ be a countable cover of a locally contractible, paracompact space $B$, with the property that non-empty binary intersections are locally path connected and simply connected.
    Let $\epsilon < 1/9$ and let $\Omega \in \apprH{1}{\epsilon}{\U}{O(d)}$.
    Then $\eu(\Omega) \in \apprH{2}{}{\U}{\Z}$ coincides with the Euler class of the vector bundle classified by $\pi_*(\vv(\Omega)) : B \to \GR{d}$.
\end{prop}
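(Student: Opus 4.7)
The plan is to mirror the proof of Proposition~\ref{consistency-sw1}. The main ingredients will be Theorem~\ref{relation-to-true-vb}, the $1$-stability of $\eu$ (established in Section~\ref{eu-section}), and Lemma~\ref{consistency0-eu}. The one delicate point is that $\eu$ is defined on $SO(2)$-valued approximate cocycles (so the statement should be read as $\Omega \in \apprH{1}{\epsilon}{\U}{SO(2)}$, with $d=2$), while Theorem~\ref{relation-to-true-vb} is stated for $O(d)$-valued cocycles and produces an exact cocycle a priori in $O(2)$. Thus the one step that is absent from the $\sw_1$ argument is verifying that this exact cocycle is automatically $SO(2)$-valued when $\epsilon < 1/9$.

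First, since $\epsilon < 1/9 \leq \sqrt{2}/4$, I would view $\Omega$ as an element of $\apprZ{1}{\epsilon}{\U}{O(2)}$ and apply Theorem~\ref{relation-to-true-vb} to extract an exact cocycle $\Lambda \in \apprZ{1}{}{\U}{O(2)}$ with $\dZ{\Omega}{\Lambda} \leq 9\epsilon$ and $\vv(\Lambda) = \pi_*(\vv(\Omega))$ in $\hcont{B}{\GR{2}}$. To show that $\Lambda_{ij}(y)$ actually lies in $SO(2)$ for every $(ij) \in N(\U)$ and every $y \in U_j \cap U_i$, I would use that $\det\colon O(2) \to \{\pm 1\}$ is $1$-Lipschitz with respect to the metric $d(+1,-1) = 2$ on $\{\pm 1\}$ (cf.~Section~\ref{sw1-section}), together with the fact that $\det \Omega_{ij}(y) = 1$:
\[
    d\bigl(\det \Lambda_{ij}(y),\,1\bigr) \;=\; d\bigl(\det \Lambda_{ij}(y),\,\det \Omega_{ij}(y)\bigr) \;\leq\; \|\Lambda_{ij}(y) - \Omega_{ij}(y)\| \;\leq\; 9\epsilon \;<\; 1.
\]
Since the only element of $\{\pm 1\}$ at distance strictly less than $1$ from $+1$ is $+1$ itself, this forces $\det \Lambda_{ij}(y) = 1$, and hence $\Lambda \in \apprZ{1}{}{\U}{SO(2)}$.

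With $\Omega$ and $\Lambda$ now both $SO(2)$-valued and $\dZ{\Omega}{\Lambda} \leq 9\epsilon < 1$, I would invoke the $1$-stability of $\eu$ to conclude $\eu([\Omega]) = \eu([\Lambda])$ in $\apprH{2}{}{\U}{\Z}$. Finally, since $\Lambda$ is an exact $SO(2)$-cocycle, Lemma~\ref{consistency0-eu} identifies $\eu([\Lambda])$ with the Euler class of the vector bundle classified by $\vv([\Lambda]) = \pi_*(\vv([\Omega]))$, which is the desired conclusion.

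The only real difficulty is the orientation check in the second paragraph; once it is in place, the remainder is formally identical to the proof of Proposition~\ref{consistency-sw1}. The constant $1/9$ in the hypothesis is calibrated precisely so that $9\epsilon$ falls strictly below both the stability radius $1$ of $\eu$ and the separation $2$ between distinct elements of $\{\pm 1\}$, ensuring that the orientation carried by $\Omega$ propagates automatically to the reconstructed exact cocycle $\Lambda$.
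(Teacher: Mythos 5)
Your proof is correct and follows essentially the same route as the paper's: apply \cref{relation-to-true-vb} to produce an exact cocycle $\Lambda$ with $\dZ{\Omega}{\Lambda} \leq 9\epsilon < 1$ and $\vv(\Lambda) = \pi_*(\vv(\Omega))$, invoke the $1$-stability of $\eu$, and conclude with \cref{consistency0-eu}. Your determinant argument showing that $\Lambda$ is automatically $SO(2)$-valued is a welcome extra precision: it makes explicit a point the paper's proof (which phrases everything in terms of $O(d)$-valued cocycles) leaves implicit.
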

\begin{proof}
    Since $1/9 \leq \sqrt{2}/4$, there exists, by \cref{relation-to-true-vb}, an exact cocycle $\Lambda \in \apprH{1}{}{\U}{O(d)}$ such that $\disH{\Omega}{\Lambda} < 1$, and such that $\pi_*(\vv(\Omega)) = \vv(\Lambda)$.
    From the stability of $\eu$, it follows that $\eu(\Omega) = \eu(\Lambda)$.
    Finally, by \cref{consistency0-eu}, $\eu(\Lambda)$ is the Euler class of the vector bundle classified by $\vv(\Lambda)$.
\end{proof}

\subsection{Second Stiefel--Whitney class}
\label{sw2-section}


In order to compute the second Stiefel--Whitney class we will use a certain short exact sequence $1 \to \Z/2 \to \Pin(d) \to O(d) \to 1$.
To describe this sequence, we introduce the Clifford algebra associated to the standard inner product of $\R^d$.
We state some well known results whose proofs can be found in \cite{KT}, \cite[Chapter~1]{spingeometry}, and \cite[Chapter~1, Section~6]{BD}.

\paragraph{The group $\Pin$.}
Fix $d \in \N_{\geq 1}$.
The \define{Clifford algebra} corresponding to the inner product space $\left(\R^d, \langle-, -\rangle\right)$, which we denote by $\cl(d)$, is the quotient of the tensor algebra $\Ten(\R^d)$ by the two-sided ideal generated by elements of the form $v \otimes w + w \otimes v - 2\langle v, w\rangle \, \mathbf{1}$, for $v,w \in \R^d$, where $\mathbf{1}$ is the unit of $\Ten(\R^d)$.
We denote the product of two elements $x,y \in \cl(d)$ by $x \cdot y \in \cl(d)$.

Here we are using the ``positive convention'' for Clifford algebras, as we want the $\Pin$ group to be the group $\Pin^+$ discussed in, e.g., \cite{KT}.
We refer the interested reader to \cite[Section~1]{KT} for further details about this choice.

\begin{rmk}
    \label{representation-Pin}
A more concrete description of $\cl(d)$ is given by the free non-commutative $\R$-algebra generated by elements $\{e_1, \dots, e_d\}$ subject to the relations $e_i \cdot e_j = - e_j \cdot e_i$ if $i > j$, and $e_i^2 = \mathbf{1}$.
This description makes it evident that $\dim(\cl(d)) = 2^d$, since the elements of the form $e_{i_1}\cdots e_{i_k}$ for $i_1 < \dots < i_k$ form a basis of $\cl(d)$ (\cite[Chapter~1, Corollary~6.7]{BD}).
\end{rmk}

If $v \in \R^d$ is of unit length, then it is invertible when seen as an element of $\cl(d)$, since $v \cdot v = \mathbf{1}$.
Let $\Pin(d) \subseteq \cl(d)^\times$ be the subgroup of the group of units of $\cl(d)$ generated by elements $v \in \R^d$ of unit length.
There is a group morphism $\ad : \Pin(d) \to O(d)$ that is defined on generators by mapping $v \in \R^d$ with $\|v \| = 1$ to the orthogonal transformation given by reflection about the hyperplane orthogonal to $v$.
This morphism is surjective, and its kernel consists of $\{\pm \mathbf{1}\}$ (\cite[Section~1]{KT}).
This gives a central sequence of Lie groups $\Z/2 \cong \{\pm \mathbf{1}\} \to \Pin(d) \to O(d)$.

\paragraph{Algorithm.}
Let $\U$ be a cover of a topological space, with the property that non-empty binary intersections are locally path connected and simply connected.
Let $\epsilon \leq 1$.
An analogous analysis to the one made for the Euler class shows that, applying \cref{change-of-coefficients} to the extension $1 \to \Z/2 \to \Pin(d) \to O(d) \to 1$, we get a map $\sw_2 : \apprH{1}{\epsilon}{\U}{O(d)} \to \apprH{2}{}{\U}{\Z/2}$.
And that $\sw_2$ is such that, if $\Omega,\Omega' \in \apprH{1}{\epsilon}{\U}{O(d)}$ satisfy $\dZ{\Omega}{\Omega'} < 1$, then $\sw_2(\Omega) = \sw_2(\Omega') \in \apprH{2}{}{\U}{\Z/2}$.

In particular, if $K$ is a simplicial complex, we get a $1$-stable algorithm $\sw_2 : \dapprH{1}{\epsilon}{K}{O(d)} \to H^2(K;\Z/2)$.


\begin{table}[h]
    {\fbox{\parbox{\textwidth}{%
    \begin{enumerate}
        \item[]\textbf{Input:} A simplicial complex $K$ with vertices $\{1, \dots, k\}$ and a discrete approximate cocycle $\Omega \in \dapprZ{1}{\epsilon}{K}{O(d)}$ with $\epsilon \leq 1$.
        \item[]\textbf{Output:} A simplicial cocycle $\sw_2(\Omega) \in Z^2(K;\Z/2)$.
        \item[1.] For each $(ij)$ $1$-simplex of $K$ with $i < j$ let $\Lambda_{ij} \in \Pin(d)$ be a lift of $\Omega_{ij} \in O(d)$.
        \item[2.] For each $(ijk)$ $2$-simplex of $K$ with $i < j < k$ let $\sw_2(\Omega)_{ijk} \in \{\pm \mathbf{1}\} \subseteq \Pin(d)$ the closest element to $\Lambda_{ij} \Lambda_{jk} \Lambda_{ik}^{-1} \in \Pin(d)$, using the geodesic distance of $\Pin(d)$.
    \end{enumerate}}
    }\caption{Pseudocode for the algorithm $\sw_2$.} \label[algorithm]{pseudocode-sw2}}
    \vspace{-6mm}
\end{table}

\paragraph{Technicalities about the algorithm.}
\label{technicalities-about-algo}
In order to see that $\sw_2$ is really an algorithm, we must explain how to lift elements from $O(d)$ to $\Pin(d)$, how to multiply elements in $\Pin(d)$, and how to decide, given an element $x \in \Pin(d)$, which one of $\mathbf{1}$ or $-\mathbf{1}$ is closest to it in the geodesic distance.

We see $\Pin(d)$ as a subset of the Clifford algebra, which we represent as in \cref{representation-Pin}.
By definition of the map $\ad : \Pin(d) \to O(d)$, to lift a matrix $M \in O(d)$ to an element of $\Pin(d)$, we must write $M$ as a product of reflection matrices, which is exactly what the QR factorization of an orthogonal matrix by means of Householder reflections does (\cite[Theorem~2.1.14]{HJ}).

In order to multiply elements of $\Pin(d)$, we again use the representation of \cref{representation-Pin}.
We can thus multiply elements of $\Pin(d)$ in $\bO(4^d)$ time.

The problem of deciding which of $\mathbf{1}$ or $-\mathbf{1}$ is closest to an arbitrary element of $\Pin(d)$ in the geodesic distance is more involved, as it depends on the geometry of $\Pin(d)$.
We explain how this can be done explicitly for the cases $d = 2,3,4$, using extraordinary isomorphisms.
The same idea works for $d = 5, 6$, but one must be able to compute geodesic distances in $Sp(4)$ and $SU(4)$.

We first remark that, since $O(d)$ has two connected components, so must $\Pin(d)$, and that $+\mathbf{1}$ and $-\mathbf{1}$ belong to the same connected component of $\Pin(d)$, as they are both preimages of the identity matrix under the covering map $\Pin(d) \to O(d)$.
Let the \define{spin group} $\Spin(d) \subseteq \Pin(d)$ be the connected component of $+\mathbf{1}$.
Checking if $x \in \Pin(d)$ belongs to $\Spin(d)$ is easy, since it amounts to checking if its image in $O(d)$ has positive determinant.

If $x \in \Pin(d)$ does not belong to $\Spin(d)$, then its distance to $+\mathbf{1}$ is the same as the distance to $-\mathbf{1}$, namely infinity.
With this in mind, we may assume that $x \in \Spin(d)$ and that we want to know if $x$ is closest to $+\mathbf{1}$ or to $-\mathbf{1}$ in the geodesic distance of $\Spin(d)$.

For explicit formulas for the extraordinary isomorphisms mentioned below, see the example after Lemma~1.8.3 in \cite{Jurgen} or \cite{Pertti}.
One should keep in mind that \cite{Jurgen} is working with the ``negative convention'' for the Clifford algebra, and thus the isomorphisms are given for $\Spin(d) \subseteq \cl^-(d)$; but it is not hard to modify their formulas to get the isomorphisms when $\Spin(d) \subseteq \cl(d)$.

\begin{itemize}
    \item[($d=2$)] In this case, $\Spin(d)$ is isomorphic to the circle as Lie groups,
    so the geodesic distance on $\Spin(2)$ can be computed, up to a multiplicative constant, using arclengths on the circle.
    Note that the multiplicative constant is inconsequential for the purposes of determining if an element is closer to $+\mathbf{1}$ or to $-\mathbf{1}$.

    \item[($d=3$)] In this case, $\Spin(d)$ is isomorphic to $SU(2)$, which in turn is diffeomorphic to the $3$-sphere $S^3$.
    The geodesic distance is again computed, up to a multiplicative constant, using arcs.
    \item[($d=4$)] In this case, $\Spin(d)$ is isomorphic to $SU(2) \times SU(2)$, which in turn is diffeomorphic to $S^3 \times S^3$.
    Moreover, the Riemannian metric on $\Spin(4)$ induced by the double cover $\Spin(4) \to SO(4)$ coincides, up to a multiplicative constant, with the product metric on $S^3 \times S^3$, where the two copies of $S^3$ are endowed with the Riemannian metric inherited from the inclusion $S^3 \subseteq \R^4$.
    To compute the geodesic distance on $S^3 \times S^3$, recall that a geodesic in a product Riemannian manifold corresponds to a pair of geodesics, one on each factor (\cite[Problem~5-7]{jL}).
    This implies that, in a product of complete Riemannian manifolds $X\times Y$, the geodesic distance satisfies the Pythagorean theorem, meaning that the geodesic distance from $(x,y)$ to $(x',y')$ is equal to the square root of the sum of the squares of the geodesic distances from $x$ to $x'$ and from $y$ to $y'$.
\end{itemize}


So, when $d = 2,3,4$, this results in a numerical algorithm $\sw_2$ that is polynomial in the number of vertices of $K$.

\paragraph{Consistency of the algorithm.}
We need a well known result; see, e.g., \cite[Lemma~1.3]{KT}.

\begin{lem}
    \label{consistency0-sw2}
    Let $\U$ be a cover of a locally contractible space, with the property that non-empty binary intersections are locally path connected and simply connected.
    If $\epsilon = 0$, the construction $\sw_2 : \apprH{1}{\epsilon}{\U}{O(d)} \to \apprH{2}{}{\U}{\Z/2}$ computes the second Stiefel--Whitney class of the vector bundle represented by the cocycle.\qed
\end{lem}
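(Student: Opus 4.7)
The plan is to identify the construction $\sw_2$ in the case $\epsilon = 0$ with the classical connecting homomorphism in \v{C}ech cohomology associated with the central extension of Lie groups
\[
    \Z/2 \,\longrightarrow\, \Pin(d) \,\stackrel{\ad}{\longrightarrow}\, O(d),
\]
and then invoke the standard characterization of the second Stiefel--Whitney class as the obstruction to a $\Pin$-structure.

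First I would unpack the definition of $\sw_2$ when $\epsilon = 0$, as given by \cref{connecting-morphism-construction} specialized to an exact cocycle $\Omega \in \apprZ{1}{}{\U}{O(d)}$. Because the hypotheses guarantee that non-empty binary intersections $U_j\cap U_i$ are locally path connected and simply connected, every map $\Omega_{ij} : U_j \cap U_i \to O(d)$ admits a continuous lift $\Lambda_{ij} : U_j \cap U_i \to \Pin(d)$, which we may choose so that $\Lambda_{ij} = \Lambda_{ji}^{-1}$. For $(ijk) \in N(\U)$ and $y \in U_k \cap U_j \cap U_i$, the product $\Lambda_{ij}(y)\Lambda_{jk}(y)\Lambda_{ki}(y)$ projects under $\ad$ to $\Omega_{ij}(y)\Omega_{jk}(y)\Omega_{ki}(y) = \id$, so it lies in the fiber $\{\pm \mathbf{1}\}$. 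In the $\epsilon = 0$ case the ``closest point'' prescription in \cref{connecting-morphism-construction} is therefore simply the identification of this product with its own value in $\{\pm \mathbf{1}\} = \Z/2$, and the resulting $\Gamma \in \apprZ{2}{}{\U}{\Z/2}$ is exactly the boundary of the \v{C}ech cocycle $\Omega$ under the coboundary operator coming from the short exact sequence of sheaves
\[
    \underline{\Z/2} \,\longrightarrow\, \underline{\Pin(d)} \,\longrightarrow\, \underline{O(d)}
\]
on $B$, where here these are sheaves of non-abelian (resp.\ abelian, in the kernel's case) groups.

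Next I would invoke the classical fact, recorded in \cite[Lemma~1.3]{KT} and also standard in, e.g., \cite[Appendix~A]{spingeometry}, that for a rank-$d$ Euclidean vector bundle on a paracompact, locally contractible base $B$, represented by an $O(d)$-valued \v{C}ech cocycle $\Omega$, the image $\partial[\Omega] \in \apprH{2}{}{B}{\Z/2}$ under this connecting homomorphism coincides, under the natural isomorphism $\apprH{2}{}{B}{\Z/2} \cong H^2(B;\Z/2)$, with the second Stiefel--Whitney class $w_2$ of the bundle. Since the construction $\sw_2$ applied to $\Omega$ produces a representative of $\partial[\Omega]$, this identifies $\sw_2(\Omega)$ with $w_2$ at the level of $\apprH{2}{}{\U}{\Z/2}$, finishing the argument.

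The only non-routine point is verifying that \cref{connecting-morphism-construction} really computes the usual connecting homomorphism. This ultimately reduces to checking that the ``closest point in $F$'' prescription agrees, for $\epsilon = 0$, with picking the unique lift sitting in the fiber over the identity; and that our cocycle and coboundary conventions match those in the non-abelian cohomology literature (the only mild subtlety being the conventions for $\rho_{ij}$ vs.\ $\rho_{ji}$ and signs for the Clifford algebra, which the paper has already fixed in \cref{representation-Pin} and in the definition of $1$-cocycles). Once this is in place, there is nothing left to prove, as the identification with $w_2$ is the classical theorem.
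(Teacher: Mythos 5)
Your proposal is correct and follows essentially the same route as the paper: the paper treats this as the classical statement that the connecting morphism for the central extension $\Z/2 \to \Pin(d) \to O(d)$ (with the positive Clifford convention, so that one gets $\sw_2$ rather than $\sw_1^2 + \sw_2$) computes the second Stiefel--Whitney class, and simply cites \cite[Lemma~1.3]{KT} without further argument. Your additional observation that at $\epsilon = 0$ the closest-point prescription in \cref{connecting-morphism-construction} degenerates to the exact fiber identification, so that $\sw_2$ literally is the classical connecting homomorphism, is the same routine verification the paper leaves implicit.
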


Note that, also by \cite[Lemma~1.3]{KT}, if we would have used the negative convention for the Clifford algebra, the algorithm would be computing $\sw_1^2 + \sw_2$ instead of $\sw_2$.

The proof of the following proposition is analogous to the proof of \cref{consistency-eu}, but uses \cref{consistency0-sw2} instead of \cref{consistency0-eu}.

\begin{prop}
    \label{consistency-sw2}
    Let $\U$ be a countable cover of a locally contractible, paracompact space $B$, with the property that non-empty binary intersections are locally path connected and simply connected.
    Let $\epsilon < 1/9$ and let $\Omega \in \apprH{1}{\epsilon}{\U}{O(d)}$.
    Then $\sw_2(\Omega) \in \apprH{2}{}{\U}{\Z/2}$ is the second Stiefel--Whitney class of $\pi_*(\vv(\Omega))$.\qed
\end{prop}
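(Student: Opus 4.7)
The plan is to mimic the proof of \cref{consistency-eu} verbatim, substituting the relevant $\sw_2$ facts for the $\eu$ facts. The architecture is: (i) use the upper bound of \cref{relation-to-true-vb} to replace $\Omega$ by a nearby exact cocycle $\Lambda$ representing the same true vector bundle, (ii) apply the $1$-stability of $\sw_2$ to move the computation from $\Omega$ to $\Lambda$, and (iii) invoke the classical consistency statement \cref{consistency0-sw2} to identify $\sw_2(\Lambda)$ with the second Stiefel--Whitney class of the vector bundle classified by $\vv(\Lambda)$.

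In more detail, first I would note that $1/9 \leq \sqrt{2}/4$, so the hypotheses of \cref{relation-to-true-vb} are met by any representative of $\Omega$. Thus there exists an exact cocycle $\Lambda \in \apprZ{1}{}{\U}{O(d)}$ such that $\disH{\Omega}{\Lambda} \leq 9\epsilon < 1$ and such that $\pi_*(\vv(\Omega)) = \vv(\Lambda) \in \hcont{B}{\GR{d}}$. This is exactly the regime in which the $1$-stability of $\sw_2$ established in \cref{sw2-section} applies, so the inequality $\disH{\Omega}{\Lambda} < 1$ forces $\sw_2(\Omega) = \sw_2(\Lambda) \in \apprH{2}{}{\U}{\Z/2}$.

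It remains to identify $\sw_2(\Lambda)$ with the genuine characteristic class. Since $\Lambda$ is an exact cocycle and $\U$ satisfies the hypotheses required by \cref{consistency0-sw2} (namely that $B$ is locally contractible and that non-empty binary intersections of elements of $\U$ are locally path connected and simply connected), \cref{consistency0-sw2} applies and tells us that $\sw_2(\Lambda)$ is the second Stiefel--Whitney class of the vector bundle represented by $\Lambda$, which is precisely the bundle classified by $\vv(\Lambda) = \pi_*(\vv(\Omega))$. Chaining these equalities yields the claim.

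I do not foresee any real obstacle here: the substantive work has already been done in \cref{relation-to-true-vb} (the $9\epsilon$ upper bound), in the derivation of $\sw_2$ and its stability via \cref{change-of-coefficients} in \cref{sw2-section}, and in the classical \cref{consistency0-sw2}. The only point meriting a moment's care is the bookkeeping that $9 \epsilon < 1$ lands strictly inside the stability radius of $\sw_2$, which is exactly why the threshold in the statement is $1/9$ (matching the $\sqrt{2}/4$ bound of \cref{relation-to-true-vb} scaled by the stability constant $1$ of $\sw_2$, analogously to how the threshold $2/9$ arose in \cref{consistency-sw1} from the $2$-stability of $\sw_1$).
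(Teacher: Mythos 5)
Your proposal is correct and is exactly the argument the paper intends: the paper states that the proof is analogous to that of \cref{consistency-eu}, replacing \cref{consistency0-eu} by \cref{consistency0-sw2}, which is precisely the three-step scheme you carry out (apply \cref{relation-to-true-vb} since $1/9 \leq \sqrt{2}/4$, use the $1$-stability of $\sw_2$ with $9\epsilon < 1$, then invoke \cref{consistency0-sw2} for the exact cocycle $\Lambda$). No gaps to report.
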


\section{Computational examples}
\label{examples-section}

In this section, we run the algorithms of \cref{section-characteristic-classes} on data.
We describe the basic pipeline used in the examples in \cref{examples-pipeline} .

In \cref{example-1}, we use $\sw_1$ to study the topology and, in particular, the orientability of attractors in the time-variant double-gyre dynamical system.
This illustrates how characteristic classes can be used to classify such attractors.
The trajectory of particles in the dynamical system was computed numerically using the code in \cite{fernandez}.

In \cref{example-2}, we use $\sw_1$ and $\sw_2$ to provide experimental evidence that confirms that a certain dataset of lines in $\R^2$ can be parametrized by a projective plane.
The theoretical explanation of this phenomenon appears in \cite[Section~2.4]{PC}.
The code used to generate the dataset is from \cite{dreimac}.

In \cref{example-3}, we show how $\eu$ can be used to detect non-synchronizability of data.
The example is a version of the cryo-EM problem mentioned in \cref{synchronization-subsection}.

In \cref{table:times} we summarize the runtime of our algorithms.
Code to replicate these examples can be found at \cite{examples-code}.

\subsection{Pipeline}
\label{examples-pipeline}
We make free use of basic tools from persistence theory such as Vietoris--Rips complexes, persistent cohomology, and persistence diagrams; see, e.g., \cite{TDA1,TDA2}.
The persistent cohomology computations are done using the Python interface (\cite{ripserpy}) for Ripser (\cite{ripser}).

Let $\{K_r\}_{r \in \R_{\geq 0}}$ be a filtration of a finite simplicial complex $K$ by subcomplexes, so that we have $K_s \subseteq K_r \subseteq K$ for $s \leq r \in \R$.
For $\Omega \in \apprH{1}{\infty}{K}{O(d)}$ and $\epsilon \geq 0$, define the \define{$\epsilon$-death} of $\Omega$ as
\[
    \delta = \sup \left(\{r \geq 0 : \Omega \text{ is an $\epsilon$-approximate cohomology class on $K_r$}\} \cup \{0\}\right).
\]
The \define{$\epsilon$-span} of $\Omega$ is the subset of the persistence diagram of $K$ of classes whose (homological) birth is at most $\delta$ and whose (homological) death is at least $\delta$.

Fix a basis for the persistent cohomology of $K$, that is, cocycles $B = \{\Lambda_1, \dots, \Lambda_n\}$ representing each point of the persistence diagram of the $\Z/2$-cohomology of $K$.
Let $\epsilon = 2$.
For any $r < \delta$ we can write $\sw_1(\Omega)$ in the basis $B$.
To represent $\sw_1(\Omega)$ in the persistence diagram of $K$, we decorate the diagram by circling the classes of $B$ that appear with a non-zero coefficient.
Note that any such class must live inside the $2$-span of $\Omega$.
An analogous analysis, replacing $2$ with $1$, applies for $\sw_2$.
For $\eu$, we use $\Z/3$-cohomology and the mod $3$ reduction of the Euler class.

\subsection{Orientability of attractors}
\label{example-1}

The \define{double-gyre} is the dynamical system characterized by the differential equations $\dot{x} = \partial \psi/\partial y$ and
$\dot{y} = - \partial \psi/\partial x$ where
\[
    \psi(x,y,t) = A \;\sin(\pi f(x,t)) \sin(\pi y),\;\;\;\; f(x,t) = a(t)\; x^2 + (1-2a(t))\; x,\;\;\;\; a(t) = \epsilon \sin(\omega t),
\]
and $A$, $\epsilon$, and $\omega$ are positive parameters.
The system is defined over the domain $(x,y) \in [0,2] \times [0,1]$ with $t \in \R$ representing time.
It was introduced in \cite{dg} as a simplified model of the double-gyre pattern observed in geophysical flows.
Geometrically, the double-gyre system consists of a pair of vortices oscillating back and forth, horizontally, in time; see, e.g., \cite[Figure~5]{dg}.

Since the vector field characterizing the system varies periodically with time, the flow line of a particle initially at $(x_0,y_0)$ depends on the time $t_0$ at which the particle is at that spot.
Because of this, the \textit{phase space} of the system (the space parametrizing the possible states of a particle) is $[0,2] \times [0,1] \times S^1$, where the last coordinate represents time $\R$ modulo the period $\pi/\omega$.

Dynamical systems can be analyzed by studying the topology of their attractors (\cite{takens,DS1,TTS}).
Informally, an \textit{attractor} $\M$ of a dynamical system consists of a subset of the phase space that is invariant under the action of the system, and such that any point sufficiently close to the attractor gets arbitrarily close to it as the system evolves.
We refer the interested reader to \cite{smale,williams,takens} for formal notions of attractor.

Usually, one only has access to partial information about the trajectory of a particle, which one wants to use to study the topology of the attractor $\M$ the particle is converging to.
For example, one may be given a real-valued time series $\{x_n\}_{n=1,\dots,N}$, which comes from applying a differentiable map $F$ defined on the phase space to a finite sample of the trajectory of the particle.
If the attractor $\M$ is a smooth manifold and certain other technical conditions are satisfied, a theorem of Takens (\cite{takens}) implies that the \textit{delay embedding} of the time series
\[
    X = \left\{(x_i,x_{i+\tau},x_{i+2\tau},\dots,x_{i+(d-1)\tau})\right\}_{i=1,\dots,N-(d-1)\tau} \subseteq \R^d
\]
is concentrated around a diffeomorphic copy of $\M$, and is sufficiently dense in $\M$ so that the Vietoris--Rips complex of $X$ can be used to estimate the homology of $\M$, and local PCA can be used to estimate the tangent bundle of $\M$.
Here $d$ is the \textit{target dimension} and $\tau$ is the \textit{delay}.
We refer the reader to \cite{takens,DS1,TTS,TT} for more information about recovering the geometry of attractors from a delay embedding.

\begin{figure}[h]
        \centering
    \begin{subfigure}{0.49\textwidth}
        \centering
        \includegraphics[width=0.5\textwidth]{"./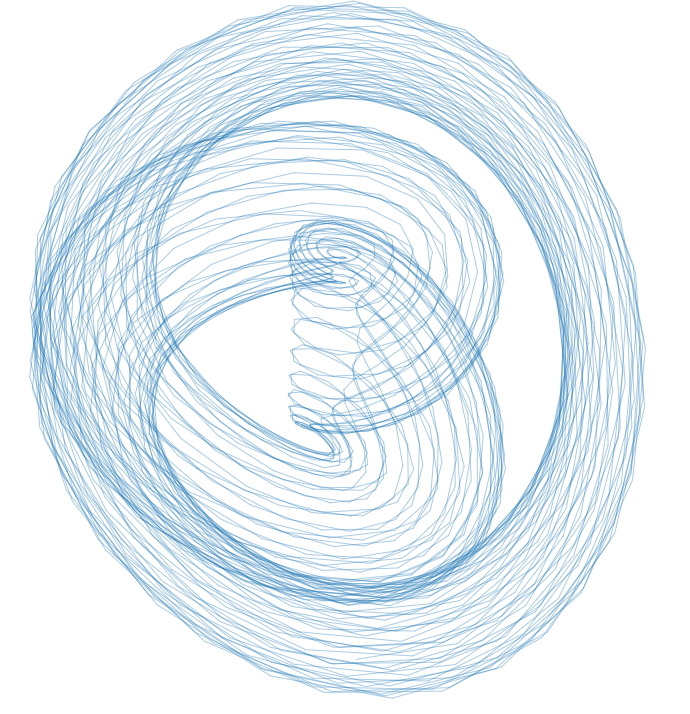"}
        \caption{2D projection of the \texttt{ATTR1} dataset.}
        \label{fig:attr}
    \end{subfigure}\hfill
    \begin{subfigure}{0.49\textwidth}
        \centering
        \includegraphics[width=0.45\textwidth]{"./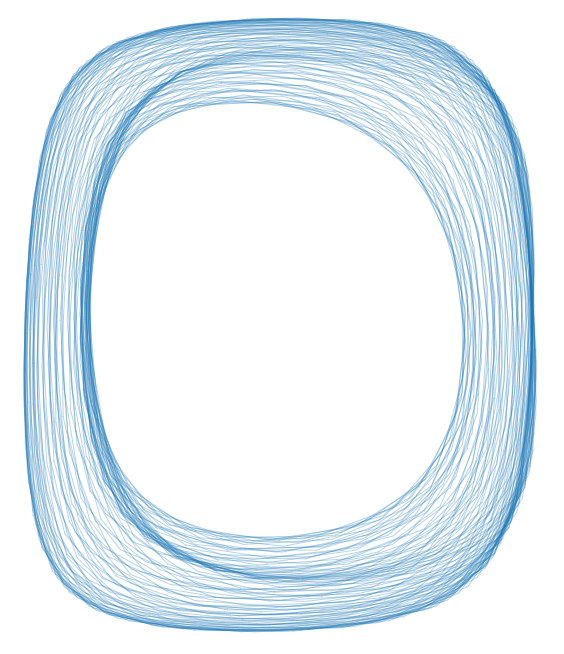"}
        \caption{2D projection of the \texttt{ATTR2} dataset.}
        \label{fig:attr2}
    \end{subfigure}
\end{figure}

In \cite[Section~4.1]{dgtop}, four attractors of the double-gyre dynamical system are studied.
The four attractors can be distinguished using their homology, except for two of them which, topologically, look like a cylinder and a M\"obius strip respectively, and thus have the same homology groups.
In order to deal with such examples, the authors of \cite{dgtop} develop an algorithm for detecting orientability of this kind of data.
Here, we show that our general purpose algorithms readily apply to this kind of data, and confirm, using $\sw_1$, that \cite[Section~4.1,~Example~4]{dgtop} is parametrized by a M\"obius band and \cite[Section~4.1,~Example~1]{dgtop} by a cylinder.

Fixing the initial condition $(x_0,y_0,t_0) = (0.55,0.5,0) \in [0,2] \times [0,1] \times S^1$, for a double-gyre with $A=0.1$, $\epsilon = 0.1$, and $\omega=\pi/5$, as in \cite{dgtop}, we sample the trajectory of the particle at $2000$ equally spaced times from $t=0$ to $t=1000$, obtaining a time series of positions $\{(x_n,y_n)\}_{n=1,\dots,2000} \subseteq [0,2] \times [0,1]$.
We take the function $F$ to be projection onto the $x$-coordinate, as in \cite{dgtop}, the delay $\tau =5$ and the target dimension $d=5$.
A 2D projection of $X$ is depicted in \cref{fig:attr}.
The dataset \texttt{ATTR1} consists of a subsample of $1000$ points of $X$.

We approximate the tangent bundle of $\M$ using local PCA as sketched in \cref{motivation-local-pca}.
This give us a discrete approximate local trivialization $\Phi$, which we use to compute an approximate cocycle $\Omega = \bw(\Phi)$ over the Vietoris--Rips complex of $X$.
We choose $k = 58$ for the local PCA computations as this value maximizes the $2$-death of $\Omega$.

\begin{figure}[h]
        \centering
    \begin{subfigure}{0.49\textwidth}
        \centering
        \hspace*{-0.9cm}
        \includegraphics[width=1.2\textwidth]{"./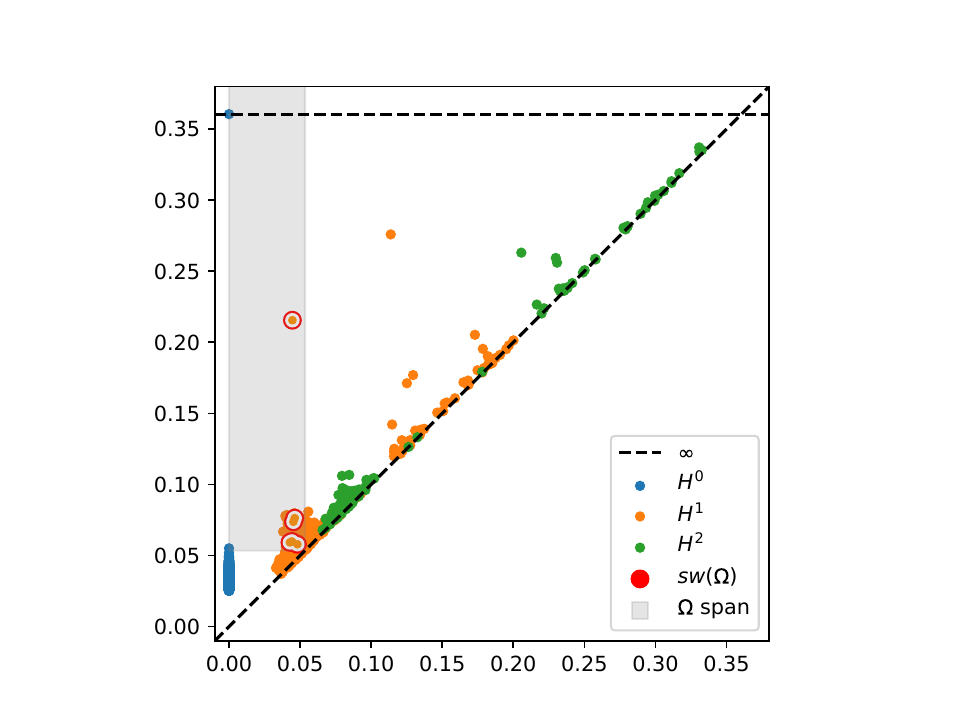"}
        \vspace*{-0.65cm}
        \caption{Persistence diagram of $\Z/2$-cohomology of \texttt{ATTR1} with Stiefel--Whitney class.}
        \label{fig:attr-pd}
    \end{subfigure}\hfill
    \begin{subfigure}{0.49\textwidth}
        \centering
        \hspace*{-0.9cm}
        \includegraphics[width=1.2\textwidth]{"./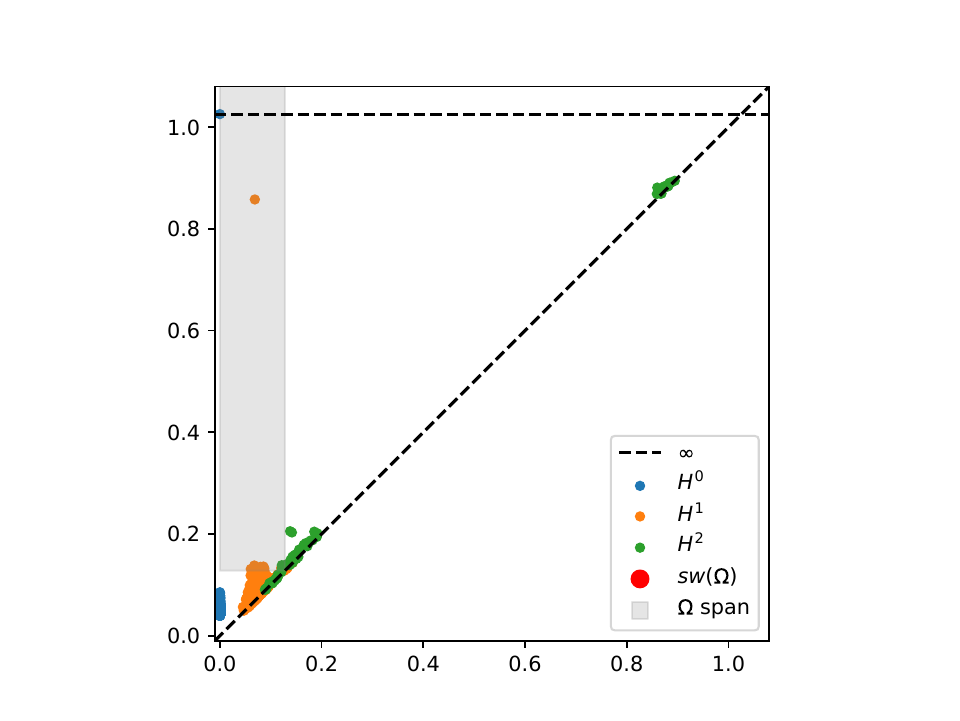"}
        \vspace*{-0.65cm}
        \caption{Persistence diagram of $\Z/2$-cohomology of \texttt{ATTR2} with Stiefel--Whitney class.}
        \label{fig:attr-pd2}
    \end{subfigure}
\end{figure}

Finally, we apply the algorithm $\sw_1$ to $\Omega$ and write the cohomology class thus obtained in the basis of the persistent cohomology provided by Ripser.
The classes circled in red in \cref{fig:attr-pd} are the classes that sum to $\sw_1(\Omega)$.
The fact that the most persistent class of the persistent diagram is colored in red tells us that, as one goes around the $1$-dimensional hole this class represents, the approximate vector bundle given by local PCA is changing orientation.
This, coupled with the fact that the dataset is locally $2$-dimensional, suggests that the dataset is parametrized by a M\"obius band.

We run the same pipeline but with initial condition $(x_0,y_0,t_0) = (0.25, 0.125, 0)$, as in \cite[Section~4.1,~Example~1]{dgtop}; we refer to this second dataset as \texttt{ATTR2}.
In this case, regardless of the parameters chosen for local PCA, the first Stiefel--Whitney class of the approximate cocycle it gives is zero, as we see in \cref{fig:attr-pd2}.

\subsection{Projective space of lines}
\label{example-2}

Datasets of simple geometrical shapes, such as lines in $\R^2$ with different orientations, appear, for instance, as datasets of weights learned by autoencoders and other neural networks (\cite[Figure~2.A]{hinton2}, \cite[Figures~2~and~3]{hintonetal}), and as datasets of local patches of natural images (\cite{CISZ}).
It has been shown that topology can be used to understand these datasets by giving insight into the functions learned by the hidden layers of neural networks (\cite{GC,NZL}), and by finding convenient parametrizations of spaces of image patches (\cite{PC}).

Here, we show how characteristic classes can be used to understand the topology of a set of lines in $\R^2$.
The dataset \texttt{LINES} consists of $160$ grayscale images represented by a $10 \times 10$ real matrix.
Each image represents a fuzzy line in $\R^2$ with a certain slope and offset.
A sample of $56$ elements of \texttt{LINES} is displayed in \cref{fig:projective-grid}.
We interpret this dataset as a point cloud $X \subseteq \R^{10 \times 10}$.

We proceed with the same pipeline as in \cref{example-1}.
We compute the persistence diagram of $\VR(X)$ with coefficients in $\Z/2$ and observe that there are two significant classes, one in $H^1$ and the other in $H^2$ (\cref{fig:projective-pd}).
By applying local PCA with a range of values $k$, we see that the dataset seems to have an intrinsic dimension of $2$.
This may lead one to suspect that the dataset is parametrized by the real projective plane.
One way to confirm this suspicion is by computing persistent cohomology with coefficients in $\Z/3$, and seeing that the most persistent classes in $H^1$ and $H^2$ disappear.

\begin{figure}
        \centering
    \begin{subfigure}{0.49\textwidth}
        \centering
        \includegraphics[width=0.65\textwidth]{"./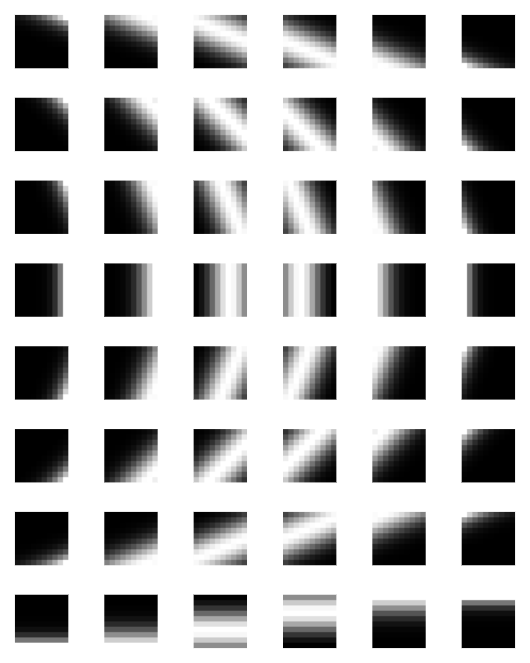"}
        \vspace*{0.73cm}
        \caption{Grid displaying $56$ images of the \texttt{LINES} dataset.}
        \label{fig:projective-grid}
    \end{subfigure}\hfill
    \begin{subfigure}{0.49\textwidth}
        \centering
        \hspace*{-0.9cm}
        \includegraphics[width=1.3\textwidth]{"./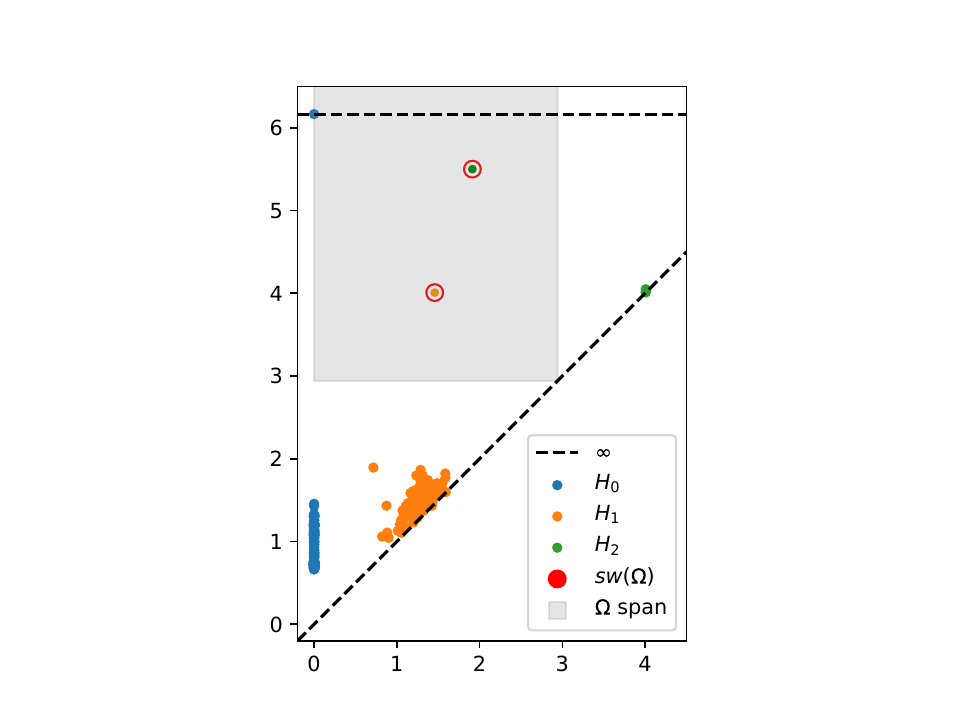"}
        \caption{Persistence diagram of $\Z/2$-cohomology of \texttt{LINES} with Stiefel--Whitney classes.}
        \label{fig:projective-pd}
    \end{subfigure}
\end{figure}

Another method to corroborate the hypothesis is the following.
Local PCA gives a discrete approximate local trivialization $\Phi$ over $\VR(X)$, and the $1$-death of $\Omega = \bw(\Phi)$ is maximized using $k = 18$.
We apply the algorithms $\sw_1$ and $\sw_2$ to $\Omega$ and see that the cohomology classes thus obtained coincide with the most persistent $H^1$ and $H^2$ classes of \cref{fig:projective-pd}, respectively.
This confirms the hypothesis that the dataset is parametrized by a projective plane, as we know that $H^1(\R P^2; \Z/2) \cong H^2(\R P^2; \Z/2) \cong \Z/2$ and that the Stiefel--Whitney classes of the tangent bundle of $\R P^2$ are non-zero.


\subsection{Lack of global synchronization in cryo-EM}
\label{example-3}

We give an example of how characteristic classes provide an obstruction to synchronization.
In order to do this, we simulate an instance of the main problem of cryo-EM (\cite{F,vHea}).
In this problem, one is given a set of 2D projections of an unknown 3D shape, and is asked to reconstruct the shape.
One possible formalism is to assume that the shape is given by a density $S : \R^3 \to \R$.
We think of the projection process as first acting on the molecule by an unknown element $v \in SO(3)$, which we call \define{projection angle}, giving $S \circ v : \R^3 \to \R$, and then integrating $S\circ v$ along the $z$-axis, yielding a map $S_v : \R^2 \to \R$.

One of the main difficulties is that one is only given a set of projected images, and is not given the projection angle $v$ corresponding to each image.
Much attention has thus been given to the problem of recovering the projection matrices corresponding to each image, up to a global rigid automorphism of $S^2$, i.e., an element of $SO(3)$.

Let $X$ be a set of 2D projections of an unknown 3D shape.
A successful approach (\cite{HS1,HS2,YL}) to recovering the projection angles starts by computing rotations $g_{ij} \in SO(2)$ that best align $x_i,x_j \in X$.
Formally, one chooses a distance $d$ between maps $\R^2 \to \R$ and finds $g_{ij} \in SO(2)$ that minimizes the distance between $x_i : \R^2 \to \R$ and $g_{ij} \circ x_j : \R^2 \to \R$.
One then fixes a threshold $\delta$ and constructs a $2$-dimensional simplicial complex $K$ with $0$-simplices given by the elements of $X$, $1$-simplices given by pairs $(ij)$ such that $d(x_i, g_{ij}x_j) < \delta$ and, and having a $2$-simplex $(ijk)$ anytime $(ij)$, $(jk)$, and $(ik)$ are $1$-simplices of $K$.
This construction can be interpreted as giving an approximate $SO(2)$-cocycle over $K$.
This cocycle approximates the tangent bundle of $S^2 \subseteq \R^3$, and can be used to recover the projection angles.

We are interested in simulating an instance of this problem, and in computing the Euler class of the approximate cocycle.
As we expect this class to be non-zero, this shows that characteristic classes of approximate vector bundles can be used for model validation and to detect non-synchronizability of data.
This also confirms the theoretical analysis of \cite{SZSH}, which shows that global synchronization is not possible.

In order to construct an instance of the problem, we define $S : \R^3 \to \R$ to be the characteristic function of a union of four balls of different radii, centered at different points of $\R^3$.
We then compute $400$ projections $\{S_{v_i} : \R^2 \to \R\}_{i = 1, \dots, 400}$ for $\{v_i \in SO(3)\}$ a well-distributed random sample of $SO(3)$.
We save these projections as $100 \times 100$ grayscale images, which constitute the dataset \texttt{PROJS}.
A sample of $9$ elements of \texttt{PROJS} is displayed in \cref{fig:proj-grid}.

\begin{figure}
        \centering
    \begin{subfigure}{0.49\textwidth}
        \centering
        \includegraphics[width=0.78\textwidth]{"./pictures/projs.png"}
        \vspace{0.5cm}
        \caption{Grid displaying $9$ images of the \texttt{PROJS} dataset.}
        \label{fig:proj-grid}
    \end{subfigure}\hfill
    \begin{subfigure}{0.49\textwidth}
        \centering
        \hspace*{-0.8cm}
        \includegraphics[width=1.2\textwidth]{"./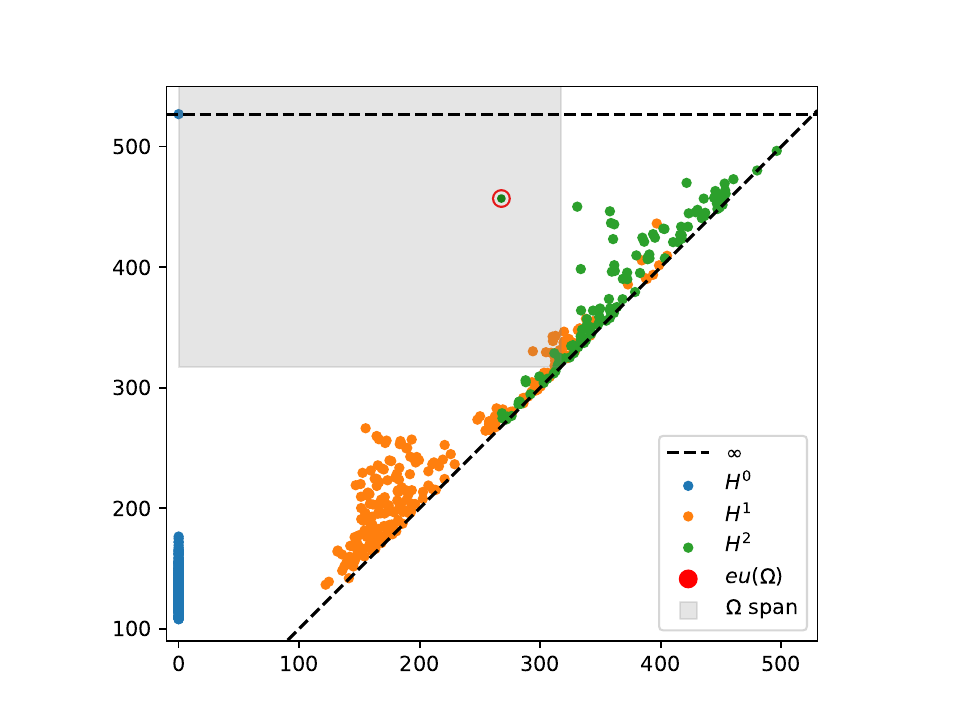"}
        \caption{Persistence diagram of $\Z/3$-cohomology of \texttt{PROJS}, with mod $3$ reduction of the Euler class.}
        \label{fig:proj-pd}
    \end{subfigure}
\end{figure}

Since aligning images optimally is a difficult problem in itself, we simplify our computations by using our knowledge of the projection angles $v_i$ and $v_j$ to align the images $x_i$ and $x_i$.
More specifically, given $v \in SO(3)$, let $v^3 \in S^2$ denote its third column.
We compose $v_i$ with the rotation $r_{ij} \in SO(3)$ along a minimizing geodesic between $v_i^3$ and $v_j^3$, and define $\Omega_{ij} \in SO(2)$ such that
\[
    r_{ij} v_i =
      \begin{pmatrix} \Omega_{ij} & 0 \\
                      0 & 1 \end{pmatrix} v_j.
\]

We let $X$ be a sample of $300$ elements of \texttt{PROJS} and compute the Vietoris--Rips complex of $X$, where the dissimilarity function $d(i,j)$ is given by the distance between the image $x_i$ and the rotated image $\Omega_{ij}x_j$.

Finally, we compute the persistence diagram of $\VR(X)$ with coefficients in $\Z/3$, compute the Euler class of $\Omega$ at its $1$-death, reduce it mod $3$, and write it in the basis provided by the persistent cohomology computations.
We see that the persistent class representing the reduction mod $3$ of the Euler class, which appears circled in red in \cref{fig:proj-pd}, is the most persistent class of $\VR(X)$.

\begin{table}[h]
    \centering
\begin{tabular}{|c|r|c|c|}
\hline
Dataset $X$                     &\multicolumn{1}{|c|}{$\VR(X)$ size} & Algorithms       & Time   \\
\hline
\texttt{ATTR1} & $2292$ \;$1$-simplices at $2$-death of $\Omega$            & $\bw$ and $\mathsf{sw}_1$ & $180$ms  \\
\hline
\texttt{ATTR2} & $7834$ \;$1$-simplices at $2$-death of $\Omega$             & $\bw$ and $\mathsf{sw}_1$ & $450$ms  \\
\hline
\texttt{LINES} & $3174$ \;$1$-simplices at $2$-death of $\Omega$             & $\bw$ and $\mathsf{sw}_1$ & $270$ms  \\
\hline
\texttt{LINES} & $26683$ \;$2$-simplices at $1$-death of $\Omega$            & $\bw$ and $\mathsf{sw}_2$ & $3950$ms \\
\hline
\texttt{PROJS} & $45573$ \;$2$-simplices at $1$-death of $\Omega$            & $\mathsf{eu}$   & $180$ms \\
\hline
\end{tabular}
        \caption{Runtime of the algorithms on laptop with a 2.20 GHz Intel Core i7 and 16GB of RAM.}
        \label{table:times}
\end{table}

\section{Future work}
\label{conclusions-section}


\paragraph{Robustness and applications.}
The notions of vector bundle we have presented can tolerate a certain amount of noise but are not robust to outliers.
What this means is that an $\epsilon$-approximate cocycle on a simplicial complex can satisfy the $\delta$-approximate cocycle condition for a very small $\delta$ on almost all $2$-simplices, except on very few, and this can still make $\epsilon$ very large.
This problem is similar to the robustness problem of persistent homology, were the addition of a few outliers to the dataset can completely alter the inferred persistent homology.

We are interested in studying extensions of our framework that lead to algorithms for the inference of topological information of approximate vector bundles even in the presence of outliers.
The work in \cite{S2} is an example of such an algorithm, as it can be interpreted as a robust algorithm for inferring whether the first Stiefel--Whitney class of the tangent space of an embedded manifold is $0$ or not.

\paragraph{Invariants of approximate cocycles.}
A related problem is that of defining invariants of $\epsilon$-approximate cocycles that do not require $\epsilon$ to be sufficiently small.
A possible avenue is to consider the persistent cohomology of the thickened Grassmannians, and use $\epsilon$-approximate classifying maps to pull back the cohomology classes of these Grassmannians.
The question of how long the universal Stiefel--Whintey classes persist in the thickened Grassmannians is related to the filling radius introduced by Gromov (\cite{Gromov}) and to the generalization considered by Lim, M\'{e}moli, and Okutan in \cite{LMO}.
A related question is whether better bounds for our results can be obtained by using the Vietoris--Rips complex of the Grassmannians, instead of the thickening.

\paragraph{Approximate sections.}
In applications, one is interested in finding sections of approximate vector bundles.
In order to do this in practice, and to prove consistency theorems for this kind of computation, a notion of approximate section needs to be introduced and studied.

\paragraph{Approximate vector bundles with connection.}
Discrete approximate cocycles over simplicial complexes, as in \cref{definition:discrete-approximate-vector-bundles}, appear in the physics literature (e.g., \cite{christiansen-halvorsen}), where they have a different interpretation: the value $\Omega_{ij} \in O(d)$ of the (discrete) cocycle $\Omega$ on the $1$-simplex $(ij)$ represents parallel transport from the fiber of the vertex $i$ to the fiber of the vertex $j$.
It would be interesting to relate this intepretation to the results in this paper, and to study to what extent a discrete approximate cocycle can be used to reconstruct a vector bundle together with a connection.

\paragraph{Algorithms for higher characteristic classes.}
Finding algorithms for the effective computation of higher characteristic classes, at least in the exact case, has been the subject of much research in the past.
For instance, \cite{BMc} and \cite{Mc} give cocycles representing Chern, Pontryajin, Stiefel--Whitney, and Euler classes of vector bundles over compact manifolds.
One should note that many of the formulas are not entirely algorithmic, as they require to determine, for example, whether certain singular cycles are $0$ in homology or not.

\paragraph{Other structure groups.}
Some synchronization problems are most naturally described using a structure group different from the orthogonal group $O(d)$.
We hope our theory can be extended to bundles having other structure groups, such as $U(d)$, $PO(d)$, and $(\R^n, +)$.


%

\appendix
\section{Technical details}
\label{technical-details}

\subsection{Reach and thickenings}
\label{thickening-subsection}

Let $\emptyset \neq X \subseteq \R^N$ be closed.
For $\epsilon \in [0,\infty]$, define the \define{$\epsilon$-thickening} of $X$ by
$X^\epsilon = X$ if $\epsilon = 0$, and otherwise by 
\[
    X^\epsilon = \left\{y \in \R^N : \exists x \in X, \| x - y\| _2 < \epsilon\right\}.
\]


A \define{closest point} of $y \in \R^N$ in $X$ is a point $x \in X$ that minimizes $\| x - y\| _2$.
The \define{medial axis} of $X$, denoted $\med(X)$, consists of all points $y \in \R^N$ that admit more than one closest point in $X$.
The \define{reach} of $X$ is the supremum of $\epsilon \geq 0$ such that $X^\epsilon$ does not intersect $\med(X)$.
Define a function $\pi : \R^N \setminus \med(X) \to X$ by mapping a point $y$ to its closest point of $X$.

\begin{lem}
    \label{projection-well-defined}
    If $\emptyset \neq X \subseteq \R^N$ is closed, then the map $\pi : \R^N \setminus \med(X) \to X$ is continuous.
    In particular, if $X$ has strictly positive reach and $\epsilon \leq \reach(X)$, then $\pi : X^\epsilon \to X$ is well-defined and continuous.
    It follows that the map $\pi : X^\epsilon \to X$ is a homotopy equivalence, with inverse the inclusion $X \to X^\epsilon$.
\end{lem}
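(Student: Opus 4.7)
The plan is to prove the three claims in order, with the main substance in the first continuity claim; the remaining two follow by routine verifications.

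For the first claim, I would fix a convergent sequence $y_n \to y$ in $\R^N \setminus \med(X)$ and aim to show $\pi(y_n) \to \pi(y)$. First, I would bound $\{\pi(y_n)\}$: since $y \notin \med(X)$, the closest point $\pi(y)$ exists in $X$ (using closedness of $X$), and the continuous function $z \mapsto d(z, X)$ satisfies $\|y_n - \pi(y_n)\|_2 = d(y_n, X) \to d(y,X)$, so the sequence $\pi(y_n)$ stays in a bounded region of $\R^N$. By Bolzano--Weierstrass, any subsequence has a sub-subsequence $\pi(y_{n_k}) \to x^\star$, and $x^\star \in X$ because $X$ is closed. Passing to the limit in $\|y_{n_k} - \pi(y_{n_k})\|_2 = d(y_{n_k}, X)$ gives $\|y - x^\star\|_2 = d(y,X)$, so $x^\star$ is a closest point of $y$ in $X$. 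The hypothesis $y \notin \med(X)$ forces $x^\star = \pi(y)$, and since every convergent subsequence has this same limit, the bounded sequence $\pi(y_n)$ converges to $\pi(y)$.

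For the second claim, I would argue that $X^\epsilon \cap \med(X) = \emptyset$ whenever $\epsilon \leq \reach(X)$. Since $X^\epsilon$ is open and equals $\bigcup_{\epsilon' < \epsilon} X^{\epsilon'}$, this follows from the definition of reach as a supremum. Hence $\pi$ restricts to a well-defined continuous map $X^\epsilon \to X$ by the first claim. (The edge case $\epsilon = 0$ is trivial since $X^0 = X$ and $\pi$ is the identity.)

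For the homotopy equivalence, the composite $\pi \circ \iota$ with $\iota : X \hookrightarrow X^\epsilon$ is the identity on $X$ by definition of $\pi$. For the other composite, I would use the straight-line homotopy $H : X^\epsilon \times [0,1] \to \R^N$ given by $H(y,t) = (1-t) y + t\, \pi(y)$, which is continuous by continuity of $\pi$. To check that $H$ takes values in $X^\epsilon$, observe that for $y \in X^\epsilon$ one has $\|y - \pi(y)\|_2 = d(y,X) < \epsilon$, and therefore
\[
    \|H(y,t) - \pi(y)\|_2 = (1-t)\, \|y - \pi(y)\|_2 < \epsilon,
\]
so $H(y,t) \in X^\epsilon$ because $\pi(y) \in X$. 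Since $H(-,0) = \id_{X^\epsilon}$ and $H(-,1) = \iota \circ \pi$, this proves $\pi$ is a homotopy equivalence with homotopy inverse $\iota$. The only nontrivial step is the first-claim continuity argument, which is a standard compactness/uniqueness combination and presents the main (though mild) obstacle.
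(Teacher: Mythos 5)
Your proposal is correct and follows essentially the same route as the paper: sequential continuity of $\pi$ via boundedness of $\{\pi(y_n)\}$, closedness of $X$, continuity of $d(-,X)$, and uniqueness of the closest point, then the observation that $X^\epsilon$ misses $\med(X)$ because reach is a supremum, and finally the linear homotopy for the homotopy equivalence. You merely spell out two steps the paper leaves implicit (the subsequence/Bolzano--Weierstrass reduction and the check that the straight-line homotopy stays inside $X^\epsilon$), which is fine.
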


\begin{proof}
    We start by proving the first claim.
    For this, we show that $\pi$ maps convergent sequences to convergent sequences.
    Let $y_n \to y$ be a convergent sequence in $\R^N \setminus \med(X)$.
    By the triangle inequality, we have
    \[
        \|\pi(y_n) - \pi(y)\| \leq \|\pi(y_n) - y_n\| + \|y_n - y\| + \|y - \pi(y)\| = d(y_n, X) + \|y_n - y\| + d(y,X) \to 2 d(y,X).
    \]
    Thus, for $n$ sufficiently large, we have that $y_n$ is inside the closed ball of radius $2d(y,X) + 1$ around $\pi(y)$, which is a compact set.
    Combining this with the fact that $X$ is closed, we can assume, without loss of generality, that $\pi(y_n)$ converges to some point $x \in X$.
    We must show that $x = \pi(y)$, and to prove this it is enough to show that $\| y - x\| _2 = d(y,X)$, by uniqueness of minimizers.
    By definition, $\| y_n - \pi(y_n)\| _2 = d(y_n, X)$, and $d(-,X)$ is a continuous function, so, taking a limit, we see that $\| y - x\|_2  = d(y, X)$.

    For the second claim it suffices to show that $X^{\reach(X)}$ does not intersect $\med(X)$, which is clear since any point of $X^{\reach(X)}$ belongs to $X^\delta$ for some $\delta < \reach(X)$.

    Finally, to see that $\pi$ is a homotopy equivalence, note that the inclusion followed by the projection is certainly the identity of $X$, and the projection followed by the inclusion is homotopic to the identity of $X^\epsilon$ by means of a linear homotopy.
\end{proof}

%

\begin{lem}
    \label{distance-projection-vertex}
    Let $X \subseteq \R^N$ be closed with strictly positive reach $r$.
    Let $z$ be a convex combination of $x_1,\dots,x_n \in X$ such that $\| x_i - x_j\|  < r$ for all $1 \leq i,j \leq n$.
    Then $\| z - x_j\|  < r$ and $\| \pi(z) - x_j\|  < 2 r$ for all $1 \leq j \leq n$.
\end{lem}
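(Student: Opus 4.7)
The plan is to write $z$ explicitly as a convex combination $z = \sum_{i=1}^n c_i x_i$ with $c_i \geq 0$ and $\sum_i c_i = 1$, and then apply the triangle inequality twice.

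For the first bound, I would compute
\[
\|z - x_j\| = \Bigl\| \sum_{i=1}^n c_i (x_i - x_j) \Bigr\| \leq \sum_{i=1}^n c_i \|x_i - x_j\| < \sum_{i=1}^n c_i\, r = r,
\]
using the hypothesis $\|x_i - x_j\| < r$ for every $i$ (including $i=j$, where the term vanishes).

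For the second bound, the first bound implies $z \in X^{r}$, so since $r \leq \reach(X)$ the point $\pi(z)$ is well-defined by \cref{projection-well-defined}. Because $\pi(z)$ minimizes the distance from $z$ to $X$ and $x_j \in X$, we have $\|\pi(z) - z\| \leq \|x_j - z\| < r$. The triangle inequality then gives
\[
\|\pi(z) - x_j\| \leq \|\pi(z) - z\| + \|z - x_j\| < r + r = 2r,
\]
as required. There is no real obstacle here; the only mild subtlety is noting that the first bound justifies the applicability of $\pi$ at $z$ before invoking it in the second bound.
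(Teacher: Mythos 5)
Your proof is correct and matches the paper's (unwritten) argument: the paper simply remarks that the lemma is a straightforward application of the triangle inequality, and your two applications of it, together with the observation that $\pi(z)$ is well-defined and minimizes the distance from $z$ to $X$, are exactly the intended details. No issues.
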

\begin{proof}
    Fix $1 \leq j \leq n$.
    The points $x_1, \dots, x_n$ are all contained in the open ball of radius $r$ around $x_j$.
    It follows that $z$ is also contained in this ball, by the convexity of open balls.
    In particular, $z \in X^r$ and $\|\pi(z) - z\| < r$.
    By the triangle inequality, we have $\|pi(x) - x_j\| < 2r$, as required.
\end{proof}

\subsection{Polar decomposition and orthogonal Procrustes problem}
\label{polar-decomposition-and-procrustes-section}

In this section we collect a few standard facts about the orthogonal Procrustes problem; a standard reference is \cite{HJ}.

Let $n \geq m \geq 1 \in \N$.
Let $N \in \R^{n \times m}$.
A \define{polar decomposition} of $N$ consists of matrices $U \in \R^{n \times m}$ and $P \in \R^{m\times m}$ such that $N = U P$, the matrix $U$ has orthonormal columns, and the matrix $P$ is positive semidefinite.
We will need the following well known fact about polar decompositions; for a reference, see \cite[Theorem~7.3.1]{HJ}.
In the statement, for $A$ a symmetric positive semidefinite matrix, $A^{1/2}$ denotes the unique positive semidefinite square root (\cite[Theorem~7.2.6]{HJ}), also called the \define{principal square root} of $A$.

\begin{lem}
    \label{unique-polar-decomposition-lem}
    Any matrix admits a polar decomposition.
    The factor $P$ is uniquely determined and satisfies $P = (N^t N)^{1/2}$.
    The factor $U$ is uniquely determined if $N$ has full rank.\qed
\end{lem}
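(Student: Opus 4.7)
The plan is to reduce everything to the singular value decomposition (SVD). Given $N \in \R^{n \times m}$ with $n \geq m$, fix an SVD $N = V \Sigma W^t$ where $V \in \R^{n \times m}$ has orthonormal columns, $W \in O(m)$, and $\Sigma \in \R^{m \times m}$ is diagonal with non-negative entries.

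For existence, I would set $U := V W^t \in \R^{n \times m}$ and $P := W \Sigma W^t \in \R^{m \times m}$. A direct check shows $U^t U = W V^t V W^t = W W^t = I_m$, so $U$ has orthonormal columns; and $P$ is symmetric with non-negative eigenvalues (the diagonal entries of $\Sigma$), hence positive semidefinite. Finally, $U P = V W^t W \Sigma W^t = V \Sigma W^t = N$.

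For uniqueness of $P$: suppose $N = U P$ for any such pair. Then
\[
N^t N \;=\; P^t U^t U P \;=\; P \, I_m \, P \;=\; P^2,
\]
using $U^t U = I_m$ and $P^t = P$. Since $P$ is positive semidefinite, it is the unique positive semidefinite square root of the positive semidefinite matrix $N^t N$ (this uniqueness is a standard consequence of the spectral theorem, which I would invoke as a black box). Hence $P = (N^t N)^{1/2}$ regardless of the chosen decomposition.

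For uniqueness of $U$ when $N$ has full rank: full column rank of $N$ means $N^t N$ is invertible, so $P = (N^t N)^{1/2}$ is invertible as well. Then the identity $N = UP$ forces $U = N P^{-1}$, which is uniquely determined. There is no real obstacle here; the only non-trivial ingredient invoked is the uniqueness of the positive semidefinite square root, and everything else is a direct computation from the SVD.
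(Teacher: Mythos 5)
Your argument is correct and complete: the reduced SVD gives existence, the identity $N^t N = P^2$ together with uniqueness of the positive semidefinite square root gives uniqueness of $P$, and invertibility of $P$ in the full-rank case forces $U = N P^{-1}$. The paper itself does not prove this lemma at all — it simply cites \cite[Theorem~7.3.1]{HJ} and marks the statement with a \qed — so there is no in-paper argument to compare against; your SVD-based proof is the standard textbook argument for exactly this fact and fills in the omitted details correctly.
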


Let $\ST{m,n} \subseteq \R^{n \times m}$ denote the subset of matrices with orthonormal columns.
Consider the map $Q : \R^{n \times m} \to \ST{m,n}$ that assigns to a matrix $N$ a matrix $U$ that is part of a polar decomposition $N = UP$.
Note that, for general matrices, this map depends on a choice of polar decomposition.

\begin{cor}
    \label{Q-is-continuous}
    The map $Q : \R^{n \times m} \to \ST{m,n}$ is uniquely determined and continuous if we restrict its domain to matrices with full rank.
\end{cor}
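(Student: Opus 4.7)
The plan is to leverage the previous lemma to handle uniqueness and then reduce continuity to the continuity of the square root map on positive-definite matrices.

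First, uniqueness of $Q$ on the open subset of full-rank matrices is immediate from \cref{unique-polar-decomposition-lem}: given $N \in \R^{n \times m}$ of full rank, the positive semidefinite factor must be $P = (N^t N)^{1/2}$, and since $N$ has full rank, $N^t N$ is positive definite, so $P$ is invertible. This forces $U = N P^{-1}$, giving the explicit formula
\[
Q(N) = N \, (N^t N)^{-1/2}.
\]
I should check that this $U$ indeed has orthonormal columns, which is a one-line computation: $U^t U = P^{-1} N^t N P^{-1} = P^{-1} P^2 P^{-1} = \mathrm{id}$.

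Next, for continuity on the open set of full-rank matrices, I would decompose $Q$ as the composition of four maps: $N \mapsto N^t N$ (polynomial, hence continuous); the matrix square root on the open cone of positive definite $m \times m$ matrices; matrix inversion on $GL(m)$; and matrix multiplication $N \cdot (-)$. The first, third, and fourth are standard. The one step that needs justification is the continuity of $A \mapsto A^{1/2}$ on positive-definite matrices.

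The main obstacle is the continuity of the matrix square root. I would justify it by the implicit function theorem applied to $F : \mathrm{Sym}^+(m) \times \mathrm{Sym}^+(m) \to \mathrm{Sym}(m)$ defined by $F(P, A) = P^2 - A$; the partial derivative $D_P F[H] = HP + PH$ is the Lyapunov operator, which is invertible whenever $P$ is positive definite (its eigenvalues are the pairwise sums of eigenvalues of $P$, all strictly positive). This gives smoothness, and in particular continuity, of the square root branch $A \mapsto A^{1/2}$ on the open cone of positive definite matrices. Combining all four steps yields that $Q$ is continuous (in fact smooth) on the full-rank locus, completing the proof.
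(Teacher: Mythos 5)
Your proposal is correct, and it follows the route the paper intends: the corollary is stated with no written proof, being regarded as immediate from \cref{unique-polar-decomposition-lem} via the explicit formula $Q(N) = N\,(N^t N)^{-1/2}$ on the full-rank locus. Your fleshing-out of the continuity of the positive-definite square root (via the implicit function theorem and invertibility of the Lyapunov operator $H \mapsto HP + PH$) is a valid justification of the one step the paper treats as standard.
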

\begin{proof}
    By \cref{unique-polar-decomposition-lem}, if $N$ has full rank, we have $U = N P^{-1} = N \left((N^t N)^{1/2}\right)^{-1}$.
    The map $A \mapsto A^{1/2}$ that takes a symmetric positive definite matrix to its principal square root is continuous \cite[Chapter~6.2,~Problem~26]{HJ2}, and so is the map that takes an invertible matrix to its inverse.
    The result follows.
\end{proof}

For $M,N \in \R^{n\times d}$, the \define{orthogonal Procrustes problem} is the following optimization problem:
\[
    \min_{\Omega \in O(d)} \| M \Omega - N\| .
\]
It is well known (see, e.g., \cite[Section~7.4.5]{HJ}) that the solutions to the above orthogonal Procrustes problem are precisely the orthogonal matrices $U$ that are part of a polar decomposition $M^t N = U P$.
In particular, if $M^t N$ has full rank, then the above problem admits exactly one solution.
Moreover, this solution varies continuously in $M^t$ and $N$, as long as we restrict ourselves to problems for which $M^t N$ has full rank.


\subsection{Metrics on Grassmannians}

\label{metrics-on-grass-section}


In this section, we compare two metrics on the Grassmannians and we recall the computation of the reach of the Grassmannians due to Tinarrage (\cite{tinarrage}).

Recall that $\P : \ST{d,n} \to \GR{d,n}$ is $O(d)$-invariant, with $O(d)$ acting by matrix multiplication on the right.
This means that we have $\P(M) = \P(M \Omega)$ for every $\Omega \in O(d)$.
In fact, it is easy to see that, for $N,M \in \ST{d,n}$, one has $\P(N) = \P(M)$ if and only if $N \Omega = M$ for some $\Omega \in O(d)$.
This shows that, as a set, $\GR{d,n}$ is the quotient of $\ST{d,n}$ by the action of $O(d)$ by matrix multiplication on the right.
Since $O(d)$ acts by isometries, this induces a metric $\dqsa$ on $\GR{d,n}$, given by
\[
    \dq{A}{B} = \min_{\substack{N,M \in \ST{d,n}\\ \P(N) = A, \P(M) = B}} \| N - M\| .
\]
We now prove some useful comparisons between the metric and the Frobenius distance.

\begin{lem}
    \label{product-by-frame-isometry}
    Let $M \in \ST{d,n}$, $A \in \R^{d \times m}$, and $B \in \R^{n \times m}$. Then $\| MA\|  = \| A\| $ and $\| M^t B\|  \leq \| B\| $.
\end{lem}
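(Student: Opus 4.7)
The plan is to reduce both claims to elementary properties of the trace formulation of the Frobenius norm, namely $\|N\|^2 = \trace(N^t N)$, combined with the defining identity $M^t M = \id_d$ of a Stiefel frame.

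For the first identity, I will simply expand
\[
    \|MA\|^2 \;=\; \trace\bigl((MA)^t (MA)\bigr) \;=\; \trace\bigl(A^t (M^t M)\, A\bigr) \;=\; \trace(A^t A) \;=\; \|A\|^2,
\]
since $M \in \ST{d,n}$ means exactly $M^t M = \id_d$ by the definition in \cref{grassmannians-background}.

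For the inequality $\|M^t B\| \leq \|B\|$, the observation I will exploit is that $M M^t \in \R^{n \times n}$ is an orthogonal projection: it is symmetric, and $(M M^t)(M M^t) = M (M^t M) M^t = M M^t$. I will argue column by column. Writing the columns of $B$ as $b_1,\dots,b_m \in \R^n$, the Frobenius norm decomposes as $\|B\|^2 = \sum_i \|b_i\|_2^2$, and similarly $\|M^t B\|^2 = \sum_i \|M^t b_i\|_2^2$, so it is enough to show $\|M^t b\|_2 \leq \|b\|_2$ for each $b \in \R^n$. This follows by noting that $\|M^t b\|_2^2 = b^t M M^t b = \|M M^t b\|_2^2$ (again using $M^t M = \id_d$), and then by the Pythagorean decomposition $b = M M^t b + (\id - M M^t)b$, whose two summands are orthogonal because $M M^t$ is a self-adjoint projection.

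No step is really an obstacle; the lemma is a bookkeeping statement formalizing the intuition that left-multiplication by a frame is an isometry, while left-multiplication by its transpose is a (possibly strict) contraction onto $\R^d$. Its role in the paper is to be invoked repeatedly to transfer Frobenius bounds between frames in $\ST{d,n}$ and their $O(d)$-images, so precision on the constants, rather than depth of argument, is what matters here.
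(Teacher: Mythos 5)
Your proof is correct and essentially the paper's argument: the first identity is the same trace computation using $M^t M = \id_d$, and the second rests on the same key fact that $M M^t$ is an orthogonal projection. The only cosmetic difference is that you contract column by column via the Pythagorean decomposition, whereas the paper bounds $\trace(M M^t B B^t)$ by diagonalizing the projection in an orthonormal eigenbasis; both are equally elementary and give the same constants.
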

\begin{proof}
    For the first claim, we compute
    \[
        \| MA\| ^2 = \trace((MA)^t (MA)) = \trace(A^t M^t M A) = \trace(A^t A) = \| A\| ^2.
    \]
    For the second, note that $\| M^tB\| ^2 = \trace(M M^t B B^t)$.
    Since $M M^t$ is an orthogonal projection to a subspace, there is an orthogonal change of basis such that $M M^t$ is diagonal, with diagonal entries $1$ repeated $d$ times and $0$ repeated $n-d$ times. The result follows by computing $\trace(M M^t B B^t)$ in that basis.
\end{proof}

Let $\dFrsa$ denote the metric on Grassmannians induced by the Frobenius norm.

\begin{lem}
    \label{projection-is-lipschitz}
    The map $\P : \ST{d,n} \to \GR{d,n}$ given by $\P(M) = M M^t$ is $\sqrt{2}$-Lipschitz.
    In particular, we have $\dFrsa \leq \sqrt{2} \dqsa$.
\end{lem}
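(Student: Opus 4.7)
The plan is to reduce the inequality $\|MM^t - NN^t\| \leq \sqrt{2} \, \|M - N\|$ to a clean algebraic statement about $A := M^t N \in \R^{d \times d}$ by expanding both sides as traces. Using $M^t M = N^t N = \id_d$ (orthonormality of columns), the idempotency $MM^t \cdot MM^t = MM^t$ gives $\trace((MM^t)^2) = \trace(MM^t) = d$, and likewise for $N$, so
\[
\|MM^t - NN^t\|^2 = 2d - 2\,\trace(M M^t N N^t) = 2d - 2\|M^t N\|^2,
\]
while $\|M-N\|^2 = 2d - 2\,\trace(M^t N)$. The desired Lipschitz bound is thus equivalent to $\|A\|^2 \geq 2\,\trace(A) - d$.

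For the key inequality, I would use the singular values $\sigma_1, \dots, \sigma_d$ of $A$. Since $M^t N$ is a product of matrices with orthonormal columns, each $\sigma_i \in [0,1]$. The bound $\trace(A) \leq \sum_i \sigma_i$ (eigenvalues are dominated by singular values in absolute value) combined with Cauchy--Schwarz gives $\trace(A) \leq \sqrt{d \sum_i \sigma_i^2} = \sqrt{d}\,\|A\|$, so $\|A\|^2 \geq \trace(A)^2/d$. Setting $t = \trace(A)$, the elementary identity $(t-d)^2/d = t^2/d - 2t + d \geq 0$ rearranges to $t^2/d \geq 2t - d$, giving $\|A\|^2 \geq 2\,\trace(A) - d$ as needed.

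For the ``in particular'' clause, given $A, B \in \GR{d,n}$, I would pick $M, N \in \ST{d,n}$ realizing the minimum in the definition of $\dqsa$, so that $\P(M) = B$, $\P(N) = A$, and $\|M - N\| = \dq{A}{B}$. The Lipschitz bound then yields $\dFr{A}{B} = \|\P(M) - \P(N)\| \leq \sqrt{2}\,\|M - N\| = \sqrt{2}\,\dq{A}{B}$.

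The main obstacle is that the naive triangle-inequality approach, writing $MM^t - NN^t = M(M-N)^t + (M-N)N^t$ and applying \cref{product-by-frame-isometry} to each summand, only produces the weaker bound $\|MM^t - NN^t\| \leq 2\|M-N\|$. Getting the sharper constant $\sqrt{2}$ requires exploiting cancellation, which the trace expansion does by replacing a bound on $\|M^tN - \id\|$ by a bound involving $\|M^tN\|^2$ and $\trace(M^tN)$ simultaneously.
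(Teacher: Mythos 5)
Your proof is correct, but it follows a genuinely different route from the paper's. The paper first reduces, via orthogonal invariance of the Frobenius norm, to the case where $N$ is the standard frame $[\id\;\, 0]^t$, and then carries out a block computation on $MM^t - NN^t$, using \cref{product-by-frame-isometry} together with the identity $\|AA^t - \id\| = \|A^tA - \id\|$ for square $A$. You instead work globally: expanding both $\|MM^t - NN^t\|^2 = 2d - 2\|M^tN\|^2$ and $\|M - N\|^2 = 2d - 2\trace(M^tN)$ as traces reduces the claim to $\|A\|^2 \geq 2\trace(A) - d$ for $A = M^tN$, which you verify; your handling of the ``in particular'' clause via minimizing frames matches the paper's. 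One simplification worth noting: your detour through singular values and Cauchy--Schwarz is unnecessary, since $\|A\|^2 - 2\trace(A) + d = \|A - \id\|^2 \geq 0$ for \emph{any} $d \times d$ matrix $A$, so the key inequality is immediate. With that shortcut your argument is arguably cleaner than the paper's (no choice of canonical frame, no block bookkeeping), while the paper's reduction-by-symmetry makes the role of the off-diagonal block $B$ --- the source of the factor $\sqrt{2}$, which is tight already for $d=1$, $n=2$ --- more geometrically visible.
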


Simple examples with $d = 1$ and $n=2$ show that this bound is tight.

\begin{proof}
    The second claim is a consequence the first one.
    For the first claim, we start by recalling that, for any matrix $A$, one has $\| A^t A\|  = \| A A^t\| $.
    In particular, if $A$ is square, $\| AA^t - \id\| ^2 = \| AA^t\| ^2 + \| \id\| ^2 - 2\trace(AA^t) = \| A^t A\| ^2 + \| \id\| ^2 - 2\trace(A^t A) = \| A^t A - \id\| ^2$.

    Now, let $M,N \in \ST{d,n}$.
    Since multiplication by an orthogonal matrix preserves the Frobenius norm, we may assume that $M$ and $N$ are matrices given by blocks:
    \[
        M = \begin{bmatrix}
            A \\ B
        \end{bmatrix},\;\;
        N = \begin{bmatrix}
            \id \\ 0
        \end{bmatrix},
    \]
    where the top block is $d \times d$ and the bottom one is $(n-d) \times d$.
    Note that, by assumption, we have $A^t A + B^t B = \id$.
    We now bound as follows:
    \begin{align*}
        \| MM^t - \id\| ^2 &= \| A A^t - \id\| ^2 + \| A B^t\| ^2 + \| BA^t\| ^2 + \| B B^t\| ^2\\
                         &= \| A^tA - \id\| ^2 + 2\| BA^t\| ^2 + \| B B^t\| ^2\\
                         &= 2\left(\| BA^t\| ^2 + \| B B^t\| ^2\right) = 2 \| B M^t\| ^2 = 2 \| B\| ^2\\
                         &\leq 2 \left(\| A-\id\| ^2 + \| B^t\| ^2\right) = 2\| M - \id\| ^2,
    \end{align*}
    where we used the previous observations in the second and third equality, and \cref{product-by-frame-isometry} in the fifth equality.
\end{proof}


To prove a partial converse, we need the following lemma.

\begin{lem}
    \label{square-root-psd}
    If $P$ is a symmetric and positive semidefinite $d\times d$ matrix then $\| P^{1/2} - \id\| \leq \| P - \id\|$.
\end{lem}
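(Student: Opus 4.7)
The plan is to diagonalize $P$ using the spectral theorem and then reduce the matrix inequality to a scalar inequality that factors cleanly.

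First, since $P$ is symmetric, write $P = Q D Q^t$ where $Q \in O(d)$ and $D$ is diagonal with entries $\lambda_1, \dots, \lambda_d$. Positive semidefiniteness gives $\lambda_i \geq 0$, so the principal square root is $P^{1/2} = Q D^{1/2} Q^t$ where $D^{1/2}$ has entries $\sqrt{\lambda_i}$. Since conjugation by an orthogonal matrix preserves the Frobenius norm, I obtain
\[
  \|P - \id\|^2 = \sum_{i=1}^d (\lambda_i - 1)^2
  \qquad \text{and} \qquad
  \|P^{1/2} - \id\|^2 = \sum_{i=1}^d (\sqrt{\lambda_i} - 1)^2.
\]

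It then suffices to prove the scalar inequality $(\sqrt{\lambda}-1)^2 \leq (\lambda - 1)^2$ for every $\lambda \geq 0$ and sum termwise. This is immediate from the factorization $\lambda - 1 = (\sqrt{\lambda} - 1)(\sqrt{\lambda} + 1)$, which gives $(\lambda - 1)^2 = (\sqrt{\lambda} - 1)^2 (\sqrt{\lambda}+1)^2$, and the observation that $(\sqrt{\lambda}+1)^2 \geq 1$ since $\sqrt{\lambda} \geq 0$.

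There is no real obstacle here: the argument is entirely elementary once the spectral decomposition is invoked, and the only thing to verify is that the Frobenius norm is invariant under orthogonal conjugation, which follows from $\|QAQ^t\|^2 = \trace(Q A^t Q^t Q A Q^t) = \trace(A^t A) = \|A\|^2$.
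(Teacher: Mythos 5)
Your proof is correct and follows essentially the same route as the paper: diagonalize $P$ orthogonally, use invariance of the Frobenius norm, and reduce to the scalar inequality $|\sqrt{\lambda}-1|\leq|\lambda-1|$ for $\lambda\geq 0$. The only difference is that you spell out the scalar step via the factorization $\lambda-1=(\sqrt{\lambda}-1)(\sqrt{\lambda}+1)$, which the paper simply calls clear.
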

\begin{proof}
    Diagonalizing $P$ in an orthonormal basis, we can assume that $P$ is diagonal with non-negative entries.
    Since then the square root of $P$ corresponds to taking the square root of each diagonal entry, it is enough to prove that, for any non-negative real number $a$ one has $|\sqrt{a} - 1| \leq |a - 1|$, which is clear.
\end{proof}

The next lemma is not needed in what follows, but, since it is a direct consequence of the previous result, we give it here.

\begin{lem}
    \label{approximate-stiefel-lem}
    Let $M \in \R^{n \times d}$ and $M = UP$ be a polar decomposition. Then $\| M - U\|  \leq \| M^t M - \id\| $.
\end{lem}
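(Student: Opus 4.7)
The plan is to combine the polar decomposition structure with the two preceding lemmas in a very direct way. First I would write $M - U = UP - U = U(P - \id)$ using the polar decomposition hypothesis. Since $U \in \R^{n \times d}$ has orthonormal columns, I can apply \cref{product-by-frame-isometry} (the identity $\|MA\| = \|A\|$ for $M$ with orthonormal columns) to conclude
\[
    \|M - U\| = \|U(P - \id)\| = \|P - \id\|.
\]

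Next I would identify $P$ via the polar decomposition. Computing $M^t M = (UP)^t(UP) = P^t U^t U P = P^2$, using that $U^t U = \id$ (orthonormal columns) and that $P$ is symmetric. Hence $P = (M^t M)^{1/2}$, and in particular $P$ is itself symmetric positive semidefinite.

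The final step is to apply \cref{square-root-psd} to the symmetric positive semidefinite matrix $P^2 = M^t M$, which gives
\[
    \|P - \id\| = \|(P^2)^{1/2} - \id\| \leq \|P^2 - \id\| = \|M^t M - \id\|.
\]
Chaining this with the first equation completes the proof. There is essentially no obstacle here: the content of the lemma is entirely captured by the two preceding lemmas, and the only thing to verify is that the polar factor $P$ really equals $(M^t M)^{1/2}$, which is immediate from $U^t U = \id$. No case analysis on the rank of $M$ is required, since \cref{square-root-psd} holds for any symmetric positive semidefinite matrix (and the polar factor $P = (M^t M)^{1/2}$ is always uniquely defined by \cref{unique-polar-decomposition-lem}, even when $U$ is not unique).
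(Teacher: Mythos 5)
Your proof is correct and follows essentially the same route as the paper: the paper's (terser) proof also reduces the claim to $P^2 = M^t M$ together with \cref{square-root-psd}, with the steps $\|M - U\| = \|U(P-\id)\| = \|P-\id\|$ via \cref{product-by-frame-isometry} left implicit. You have simply spelled out those intermediate steps explicitly.
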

\begin{proof}
    By definition, $P^2 = M^t M$. So the result follows from \cref{square-root-psd}.
\end{proof}

We are ready to prove the converse.

\begin{lem}
    \label{lem-equivalence-metrics-grassmannians}
    Let $M,N \in \ST{d,n}$.
    Then there exists $\Omega \in O(d)$ such that $\| M \Omega - N\|  \leq \| MM^t - NN^t\|$.
    In particular, $\dqsa \leq  \dFrsa$.
\end{lem}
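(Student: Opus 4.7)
The plan is to take $\Omega$ to be the optimal solution of the orthogonal Procrustes problem $\min_{\Omega \in O(d)} \|M\Omega - N\|$, which, by the discussion in \cref{polar-decomposition-and-procrustes-section}, is an orthogonal factor $U$ of a polar decomposition $M^t N = U P$ with $P$ symmetric positive semidefinite. The idea is then to express both $\|M\Omega - N\|^2$ and $\|MM^t - NN^t\|^2$ in terms of $\trace(P)$ and $\trace(P^2)$, and to compare them using the fact that $P$ has eigenvalues in $[0,1]$.

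For the first quantity, I would compute
\[
    \|MU - N\|^2 = \|MU\|^2 + \|N\|^2 - 2\trace(U^t M^t N) = 2d - 2\trace(U^t U P) = 2d - 2\trace(P),
\]
using that $M,N \in \ST{d,n}$ (so $\|MU\|^2 = \|N\|^2 = d$ by \cref{product-by-frame-isometry}) and that $U^t U = \id$. For the second, noting that $MM^t, NN^t \in \GR{d,n}$ are idempotents of rank $d$, I would expand
\[
    \|MM^t - NN^t\|^2 = 2d - 2\trace(MM^t NN^t) = 2d - 2\trace(N^t M M^t N) = 2d - 2\trace(P^2),
\]
where the last equality uses $N^t M M^t N = (M^t N)^t (M^t N) = P U^t U P = P^2$.

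The desired inequality $\|MU - N\|^2 \leq \|MM^t - NN^t\|^2$ therefore reduces to $\trace(P^2) \leq \trace(P)$. The key input is that the eigenvalues of $P$, which are the singular values of $M^t N$, lie in $[0,1]$: indeed, since the columns of $M$ are orthonormal, the operator norm of $M^t$ is $1$, and similarly $N$ acts as an isometry on $\R^d$, so $M^t N$ has operator norm at most $1$. If $\sigma_1,\dots,\sigma_d \in [0,1]$ are these eigenvalues, then $\sigma_i^2 \leq \sigma_i$ and summing gives $\trace(P^2)\leq \trace(P)$, completing the first claim.

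The second claim, $\dqsa \leq \dFrsa$, is then immediate: given $A = \P(M)$ and $B = \P(N)$ in $\GR{d,n}$, the element $M\Omega$ produced above satisfies $\P(M\Omega) = \P(M) = A$ (by $O(d)$-invariance of $\P$), and $N$ represents $B$, so
\[
    \dq{A}{B} \leq \|M\Omega - N\| \leq \|MM^t - NN^t\| = \dFr{A}{B}.
\]
I do not anticipate a serious obstacle here; the only subtlety is remembering to bound the singular values of $M^t N$ using that both $M$ and $N$ have orthonormal columns, rather than trying to push through a direct manipulation of $M\Omega - N$.
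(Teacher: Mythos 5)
Your proof is correct. It shares the paper's starting point---the orthogonal factor $U$ of a polar decomposition $M^t N = U P$, i.e.\ the Procrustes minimizer of \cref{polar-decomposition-and-procrustes-section}---but the actual comparison is carried out along a different route. The paper argues by a chain of norm estimates, reducing to $\|\id - P\| \leq \|\id - P^2\|$ via \cref{square-root-psd} and then to $\|N^t(MM^t - NN^t)N\| \leq \|MM^t - NN^t\|$ via \cref{product-by-frame-isometry}; you instead expand both squared Frobenius norms exactly, getting $\|MU - N\|^2 = 2d - 2\trace(P)$ and $\|MM^t - NN^t\|^2 = 2d - 2\trace(P^2)$, and finish with $\sigma^2 \leq \sigma$ for the singular values $\sigma \in [0,1]$ of $M^t N$. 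The scalar fact at the heart of both arguments is the same (comparing $\sigma$ with $\sigma^2$ on $[0,1]$, which the paper packages as \cref{square-root-psd}), but your identity-based version buys something concrete: it makes the equality cases transparent, and it bypasses the paper's intermediate bound $\|M\Omega - N\| \leq \|\Omega - M^t N\|$, which as printed does not hold in general---one has the exact relation $\|M\Omega - N\|^2 = \|\Omega - M^t N\|^2 + \|(\id - MM^t)N\|^2$, and the extra term is typically nonzero (already for $d=1$, $n=2$ with $N$ a small rotation of $M$)---so the printed chain needs repair while your computation does not. Two minor points to state explicitly in a final write-up: the polar decomposition $M^t N = U P$ with $U \in O(d)$ exists even when $M^t N$ is singular (\cref{unique-polar-decomposition-lem}), and you never actually use minimality of $U$, only the factorization itself; both observations are harmless and only simplify your argument.
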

\begin{proof}
    The second claim is a consequence of the first one.
    For the first claim, let $\Omega \in O(d)$ minimize $\| M \Omega - N\| $.
    From \cref{polar-decomposition-and-procrustes-section}, we know that $\Omega$ is part of a polar decomposition $M^t N = \Omega (N^t M M^t N)^{1/2}$.
    Now $\|M \Omega - N\| \leq \|\Omega - M^t N\| = \|\Omega - \Omega (N^t M M^t N)^{1/2}\| = \|(N^t M M^t N)^{1/2} - \id\| \leq \|(N^t M M^t N)^{1/2} - \id\|$, by \cref{square-root-psd}.
%
\end{proof}

We now recall the computation of the reach of the Grassmannians, due to Tinarrage (\cite{tinarrage}).

\begin{constr}
    \label{projection-to-grassmannian}
    Let $A \in \R^{n \times n}$.
    Define $A^s = (A + A^t)/2$, and let $A^s = \Omega D \Omega^t$ be a diagonalization of $A^s$ by an ordered orthonormal basis $\Omega \in O(n)$, where the diagonal entries of $D$ contain the eigenvalues of $A^s$ in decreasing order.
    Let $J_d$ be the diagonal $n\times n$ matrix with the first $d$ diagonal entries equal to $1$ and the rest equal to $0$.
    Let $\pi(A) = \Omega J_d \Omega^t$.
\end{constr}

\begin{lem}[{\cite[Lemma~2.1]{tinarrage}}]
    \label{tinarrages-lemma}
    If $\dFr{A}{\GR{d,n}} < \sqrt{2}/2$, then $\pi(A)$ is the unique minimizer of
    \[
        \min_{B \in \GR{d,n}} \| A - B\| .
    \]
    Moreover, we have $\reach(\GR{d,n}) = \sqrt{2}/2$.
\end{lem}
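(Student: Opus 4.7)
The plan is to reduce the problem to a spectral optimization and then extract an eigenvalue gap from the distance hypothesis.

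First, I would exploit that every $B\in\GR{d,n}$ is symmetric. Decomposing $A = A^s + A^a$ into its symmetric and antisymmetric parts and using that symmetric and antisymmetric matrices are Frobenius-orthogonal, one gets $\|A-B\|^2 = \|A^s - B\|^2 + \|A^a\|^2$, so the minimizer over $B$ depends only on $A^s$, and $\pi(A) = \pi(A^s)$ by construction. Thus I can reduce to the case that $A$ is symmetric with spectral decomposition $A = \Omega D \Omega^t$, eigenvalues $\lambda_1\geq\cdots\geq\lambda_n$. Since $\trace(B) = \trace(B^2) = d$ for every $B\in\GR{d,n}$, expanding $\|A-B\|^2 = \|A\|^2 - 2\trace(AB) + d$ reduces the minimization to maximizing $\trace(AB)$ over $B\in\GR{d,n}$.

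Next, for $B\in\GR{d,n}$ with orthonormal frame $w_1,\ldots,w_d$ spanning its image, writing $c_i = \sum_j (v_i^t w_j)^2$ where $v_i$ are the columns of $\Omega$ gives $\trace(AB) = \sum_i \lambda_i c_i$ with $0\leq c_i\leq 1$ and $\sum_i c_i = d$. It is then a standard linear-programming argument that this sum is maximized by $c_1=\cdots=c_d=1$, $c_{d+1}=\cdots=c_n=0$, which corresponds exactly to $B=\pi(A)$; moreover, if $\lambda_d > \lambda_{d+1}$ strictly, this maximizer is unique, as any other choice of $(c_i)$ would transfer mass from a coordinate with eigenvalue $\lambda_d$ to one with eigenvalue $\lambda_{d+1}$ and strictly decrease the sum.

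The key step is therefore to extract the strict separation $\lambda_d > \lambda_{d+1}$ from the hypothesis $\dFr{A}{\GR{d,n}}<\sqrt{2}/2$. Pick any $B_0\in\GR{d,n}$ with $\|A^s - B_0\|\leq\|A-B_0\|<\sqrt{2}/2$. Since $B_0$ has eigenvalues $1$ (with multiplicity $d$) and $0$ (with multiplicity $n-d$), the Hoffman--Wielandt inequality applied to the pair of symmetric matrices $A^s$ and $B_0$ yields
\[
(1-\lambda_d)^2 + \lambda_{d+1}^2 \;\leq\; \sum_{i=1}^n (\lambda_i(A^s)-\lambda_i(B_0))^2 \;\leq\; \|A^s - B_0\|^2 \;<\; \tfrac{1}{2}.
\]
Setting $a = 1-\lambda_d$ and $b = \lambda_{d+1}$, Cauchy--Schwarz gives $(a+b)^2\leq 2(a^2+b^2)<1$, hence $a+b<1$, i.e.\ $\lambda_d - \lambda_{d+1}>0$. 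This forces the top-$d$ eigenspace of $A^s$ to be well-defined, makes $\pi(A)$ independent of the choice of orthonormal basis in \cref{projection-to-grassmannian}, and establishes the uniqueness of the minimizer by the previous paragraph. I expect this Hoffman--Wielandt-plus-Cauchy--Schwarz step to be the only nontrivial part; both inequalities are standard but the numerics have to be sharp enough to hit the $\sqrt{2}/2$ threshold exactly.

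Finally, for the reach computation, the preceding argument shows $\reach(\GR{d,n})\geq\sqrt{2}/2$. For the reverse inequality I would exhibit a symmetric matrix with two equally close projections at distance exactly $\sqrt{2}/2$: take $A$ diagonal with entries $(1,\ldots,1,\tfrac{1}{2},\tfrac{1}{2},0,\ldots,0)$ ($d-1$ ones, two halves, $n-d-1$ zeros). The two rank-$d$ projections obtained by including either of the two $\tfrac{1}{2}$ eigenvectors into the top eigenspace are both at Frobenius distance $\sqrt{(1/2)^2 + (1/2)^2} = \sqrt{2}/2$ from $A$, showing $A$ lies on the medial axis at radius $\sqrt{2}/2$. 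Hence $\reach(\GR{d,n}) = \sqrt{2}/2$, completing the proof.
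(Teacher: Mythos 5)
Your proposal is correct, but note that the paper itself offers no argument to compare against: \cref{tinarrages-lemma} is imported verbatim from Tinarrage (\cite[Lemma~2.1]{tinarrage}) and used as a black box, so what you have written is a self-contained substitute for the cited proof. The structure is sound: the orthogonal splitting $A = A^s + A^a$ correctly reduces the problem to the symmetric part (the cross term vanishes because elements of $\GR{d,n}$ are symmetric), the trace identities $\trace(B)=\trace(B^2)=d$ turn the minimization into maximizing $\trace(A^s B)$, and the capped-simplex linear program together with the observation that $c_i=1$ forces $v_i \in \Im(B)$ identifies the unique maximizer with the projection onto the top-$d$ eigenspace, i.e.\ with $\pi(A)$ of \cref{projection-to-grassmannian}, provided $\lambda_d > \lambda_{d+1}$. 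The Hoffman--Wielandt step (in its sorted, Hermitian form) plus Cauchy--Schwarz is exactly sharp: $(1-\lambda_d)^2 + \lambda_{d+1}^2 < 1/2$ gives $(1-\lambda_d)+\lambda_{d+1} < 1$, hence the gap, and equality $\lambda_d=\lambda_{d+1}=1/2$ in your diagonal example shows the threshold $\sqrt{2}/2$ cannot be improved, which also settles $\reach(\GR{d,n}) \le \sqrt{2}/2$ since that matrix lies on the medial axis at distance exactly $\sqrt{2}/2$. Two small points worth making explicit if you write this up: (i) in the uniqueness step, the clean argument is that any feasible $c$ with $\delta := \sum_{i>d} c_i > 0$ loses at least $\delta(\lambda_d - \lambda_{d+1})$, which also shows $\pi(A)$ is independent of the chosen eigenbasis since the top-$d$ span is the sum of eigenspaces for eigenvalues exceeding $\lambda_{d+1}$; (ii) your medial-axis example needs $n \ge d+1$, and indeed for $n=d$ the Grassmannian is a single point with infinite reach, a degenerate case implicitly excluded in the cited statement as well.
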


In particular, if $A \in \GR{d,n}^\epsilon$ for $\epsilon \leq \sqrt{2}/2$, then $\pi(A)$ is independent of the choice of orthonormal basis $\Omega$.
We deduce the following.

\begin{prop}
    \label{pi-is-inverse}
    Let $\epsilon \leq \sqrt{2}/2$.
    The inclusion $\GR{d,n} \subseteq \GR{d,n}^\epsilon$ is a homotopy equivalence, with inverse given by the projection $\pi$.
    This is natural in both $n$ and $\epsilon$.
    Moreover, the projections assemble into a homotopy inverse of the inclusion $\GR{d} \subseteq \GR{d}^\epsilon$.
\end{prop}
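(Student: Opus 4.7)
The plan is to verify the three assertions in order: the finite-dimensional homotopy equivalence, the two naturality properties, and the assembly into the infinite-dimensional statement, relying mainly on Tinarrage's reach computation (\cref{tinarrages-lemma}) and the projection-continuity result (\cref{projection-well-defined}).

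For the first claim, fix $n$ and $\epsilon \leq \sqrt{2}/2$. Every $A \in \GR{d,n}^\epsilon$ satisfies $\dFr{A}{\GR{d,n}} < \sqrt{2}/2 = \reach(\GR{d,n})$, so \cref{tinarrages-lemma} makes $\pi(A)$ the unique nearest point, and \cref{projection-well-defined} gives continuity of $\pi : \GR{d,n}^\epsilon \to \GR{d,n}$. Clearly $\pi \circ i = \id_{\GR{d,n}}$. For $i \circ \pi \simeq \id_{\GR{d,n}^\epsilon}$, use the linear homotopy $H(A,t) = (1-t)A + t\pi(A)$; this stays in $\GR{d,n}^\epsilon$ because $\| H(A,t) - \pi(A)\| = (1-t)\| A - \pi(A)\| < \epsilon$.

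Naturality in $\epsilon$ is immediate, as the nearest-point projection is independent of how one thickens the target. Naturality in $n$ is the step I expect to be the main obstacle: I need that the inclusion $\GR{d,n}^\epsilon \hookrightarrow \GR{d,n+1}^\epsilon$ (appending a zero row and column) commutes with $\pi$. Using \cref{projection-to-grassmannian}, $\pi(A)$ is built from the top $d$ eigenvectors of $A^s = (A+A^t)/2$. The included matrix $A'$ satisfies $(A')^s$ equals $A^s$ bordered by zeros, so the spectrum of $(A')^s$ is that of $A^s$ together with an extra eigenvalue $0$ along $e_{n+1}$. A Weyl-inequality estimate, using $\| A^s - \pi(A)\| \leq \| A - \pi(A)\| < \sqrt{2}/2$, shows the top $d$ eigenvalues of $A^s$ lie above $1 - \sqrt{2}/2 > 0$; so the top $d$ eigenvectors of $(A')^s$ agree with the (included) top $d$ eigenvectors of $A^s$, and $\pi(A')$ is the inclusion of $\pi(A)$.

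Finally, by the naturality in $n$, the finite-stage projections $\pi_n$ are compatible with the inclusions of the filtration, hence assemble via the universal property of the direct limit topology into a continuous map $\pi : \GR{d}^\epsilon \to \GR{d}$. The linear homotopies $H_n$ are likewise compatible with inclusions (they are constructed entrywise), so they assemble into a homotopy $i \circ \pi \simeq \id$ on $\GR{d}^\epsilon$, while $\pi \circ i = \id_{\GR{d}}$ holds already termwise. This gives the desired homotopy equivalence in the infinite-dimensional case.
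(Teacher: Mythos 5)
Your proposal is correct and follows essentially the same route as the paper: Tinarrage's reach computation together with the nearest-point projection lemma (and its linear homotopy) handles each finite stage, commutation of $\pi$ with the stabilization inclusions gives naturality, and the direct-limit universal property assembles the statement for $\GR{d}$. Your Weyl-inequality check that the appended zero eigenvalue cannot displace any of the top $d$ eigenvalues is a sound expansion of the step the paper dispatches by merely citing the construction of $\pi$.
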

\begin{proof}
    The first claim follows from \cref{projection-well-defined} and \cref{tinarrages-lemma}.
    In the second claim, naturality means that the inclusion maps $i : \GR{d,n} \to \GR{d,n}^\epsilon$ commute with the inclusions into $\GR{d,n+1}$ and $\GR{d,n+1}^\epsilon$, which is clear, and that the projections do as well. The fact that projections commute with the inclusion $i$ follows from \cref{projection-to-grassmannian}.
    The third claim follows at once from naturality.
\end{proof}

\subsection{Metrics on orthogonal groups}
\label{metrics-on-orth-section}

In this section, we will make use of basic Riemannian geometry; see for instance \cite{jL}.
Our main goals are to calculate the reach (\cref{thickening-subsection}) of the orthogonal groups, seen as a subspace of the metric space of square matrices, with metric given by the Frobenius distance; to compare the Frobenius distance to the geodesic distance; and to calculate the systole (\cref{systole-def}) of the orthogonal groups using the geodesic distance.

The geodesic distance we consider on the orthogonal groups is the one that comes from the bi-invariant metric given by the smooth inclusion $O(d) \subseteq \R^{d\times d}$, where the inner product on $\R^{d \times d}$ is the one associated to the Frobenius norm (\cite[Example~3.16(e)]{jL}).

\begin{lem}
    \label{real-det-is-lip}
    Let $M,N \in O(d)$ such that $\det(M) = 1 = - \det(N)$.
    Then $\| M-N\|  \geq 2$.
\end{lem}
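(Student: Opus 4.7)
The plan is to reduce the problem to a statement about the trace of a single orthogonal matrix with determinant $-1$, and then invoke the real spectral theory of orthogonal matrices.

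First I would expand the squared Frobenius norm using the identity $\|A\|^2 = \trace(A^tA)$. Since $M^tM = N^tN = \id$, one immediately gets
\[
\|M - N\|^2 = \trace(M^tM) - 2\trace(M^tN) + \trace(N^tN) = 2d - 2\trace(M^tN).
\]
Set $P := M^tN$. Then $P \in O(d)$ and $\det(P) = \det(M^t)\det(N) = -1$. So the inequality $\|M-N\| \geq 2$ reduces to showing that $\trace(P) \leq d - 2$ for every $P \in O(d)$ with $\det(P) = -1$.

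For the trace bound, I would appeal to the real normal form for orthogonal matrices: there exists $\Omega \in O(d)$ such that $\Omega^t P \Omega$ is block diagonal with blocks of the form $[+1]$, $[-1]$, and $2 \times 2$ rotation blocks $R_\theta = \begin{pmatrix}\cos\theta & -\sin\theta \\ \sin\theta & \cos\theta\end{pmatrix}$. Writing $n_+$, $n_-$, and $p$ for the multiplicities of these three block types, we have $n_+ + n_- + 2p = d$ and $\det(P) = (-1)^{n_-}$, so the assumption forces $n_-$ to be odd, hence $n_- \geq 1$. The trace computes as
\[
\trace(P) = n_+ - n_- + \sum_{j=1}^{p} 2\cos\theta_j \leq n_+ - n_- + 2p = d - 2n_- \leq d - 2.
\]
Substituting back yields $\|M - N\|^2 \geq 2d - 2(d-2) = 4$, as desired.

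There is no real obstacle here; the only thing to be careful about is invoking the correct real canonical form (the two-dimensional rotation blocks, rather than diagonalization over $\C$, so as to keep the argument inside $\R$) and observing that the parity of $n_-$ is pinned down by the determinant. Equality $\|M - N\| = 2$ is in fact attained, e.g.\ by taking $M = \id$ and $N = \mathrm{diag}(-1, 1, \dots, 1)$, which corresponds to the extremal case $n_- = 1$, $n_+ = d-1$, $p = 0$ in the bound above.
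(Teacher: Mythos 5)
Your proof is correct and follows essentially the same route as the paper: both reduce to comparing a single orthogonal matrix of determinant $-1$ with the identity (you via the identity $\|M-N\|^2 = 2d - 2\trace(M^tN)$, the paper via the isometric group action) and then invoke the real block-diagonal normal form of an orthogonal matrix. The only cosmetic difference is that you finish by bounding $\trace(P)$ using the parity of the number of $-1$ blocks, whereas the paper bounds the Frobenius contribution of a single offending block directly; both yield the sharp bound $2$.
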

\begin{proof}
    Since $O(d)$ acts on itself by isometries, it suffices to show
    that $\| N - \id\|  \geq 2$ whenever $\det(N) = -1$.
    Since $N$ is orthogonal, there is an orthogonal change of basis that
    takes it to be diagonal by blocks, where blocks are $1 \times 1$ and equal
    $1$ or $-1$, or $2 \times 2$ and rotation matrices.
    We can thus assume that $N$ is diagonal by blocks of the form above.
    If any of the blocks is a $1\times 1$ block of the form $-1$ we are done.
    Otherwise, there must be a block that consists of a rotation with negative determinant.
    In this case,
    \[
        \| N - \id\| \geq \sqrt{2(\sin(\theta))^2 + (1-\cos(\theta))^2 + (1+\cos(\theta))^2} = 2.
        \qedhere
    \]
\end{proof}

The following lemma appears in \cite{BLW} and is a key result that lets us compare the geodesic distance of the orthogonal groups to the Frobenius distance.
Before giving the result, we recall that any metric space has an induced geodesic distance (also known as a path-length distance), where the distance between any two points is taken to be the infimum of the lengths of the continuous paths between them (\cite[Section~2]{BLW}, \cite[Section~2.3.3]{BBS}).
If the metric space is not connected, this is an extended distance.
Finally, if the metric space consists of a manifold smoothly embedded in Euclidean space, then this geodesic distance coincides with the geodesic distance associated to the Riemannian metric induced by the embedding into Euclidean space (\cite[Exercise~5.1.8]{BBS}).

\begin{lem}[{\cite[Lemma~3]{BLW}}]
    \label{geodesic-vs-embedded}
    Let $S \subseteq \R^N$ be a closed set such that $R = \reach(S) > 0$.
    Let $d_S$ denote the geodesic distance on $S$ induced by the restriction of the Euclidean distance.
    If $x,y \in S$ are such that $\|x - y\|_2 < 2R$, then $d_{S}(x,y) \leq 2\, R\, \arcsin\left(\frac{\|x - y\|}{2R}\right)$.
    \qed
\end{lem}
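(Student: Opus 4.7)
The plan is to construct, by iterated bisection and projection onto $S$, a sequence of polygonal approximations whose total lengths converge to $2R\arcsin(\|x-y\|_2/(2R))$ and which limit to an actual rectifiable path in $S$ of at most this length; the bound on $d_S(x,y)$ will then follow by definition of the geodesic distance.

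Set $\ell = \|x-y\|_2$. Since $\ell/2 < R$, the straight segment from $x$ to $y$ lies in $S^R$, so by \cref{projection-well-defined} the projection $\pi:S^R \to S$ is well defined on it. I would set $m = (x+y)/2$ and $x_1 := \pi(m)$, and write $r = \|m - x_1\|_2 \leq \ell/2$. The core technical lemma to establish is the \emph{circular bisection bound}
\[
\|x - x_1\|_2 \leq 2R\sin(\alpha/2), \qquad \alpha := \arcsin(\ell/(2R)),
\]
and similarly for $\|y - x_1\|_2$. To prove it I would invoke Federer's supporting-ball characterization of reach (a consequence of $\reach(S) \geq R$): the open ball $B(x_1 + R(m-x_1)/r,\, R)$ is disjoint from $S$, hence contains neither $x$ nor $y$. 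Working in coordinates with $x_1$ at the origin and $m - x_1$ along the last axis, and decomposing $x = (x', x_N)$, exclusion from the supporting ball reads $|x'|^2 + x_N^2 \geq 2R x_N$ while $\|x - m\|_2 = \ell/2$ reads $|x'|^2 + (x_N - r)^2 = \ell^2/4$. Eliminating $|x'|^2$ yields $\|x - x_1\|_2^2 \leq R(\ell^2/4 - r^2)/(R - r)$, and a one-variable maximization over $r \in [0, \ell/2]$, attained at $r^{*} = R - \sqrt{R^2 - \ell^2/4}$, gives precisely $2R^2(1 - \sqrt{1 - \ell^2/(4R^2)}) = (2R\sin(\alpha/2))^2$.

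Granting the bisection bound, I would iterate on each of the two subchords $[x,x_1]$ and $[x_1,y]$. If $\ell_k$ denotes the maximum chord length after $k$ bisections, then by induction $\ell_k \leq 2R\sin(\alpha/2^k)$, so the total polygonal length at level $k$ is bounded by $2^k \cdot 2R\sin(\alpha/2^k) \to 2R\alpha = 2R\arcsin(\ell/(2R))$ as $k \to \infty$. To obtain a continuous path in $S$ (rather than an increasingly fine polygon in $\R^N$), I would keep inserting bisection midpoints at all scales and extract a uniform limit of the piecewise-linear interpolations reparametrized proportionally to arc length. The limit exists because successive refinements differ uniformly by at most the chord-arc defect at that scale; it takes values in $S$ because $S$ is closed and every refinement vertex lies in $S$; and lower semicontinuity of length under uniform convergence gives the desired length bound.

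The main obstacle is the circular bisection bound. Once one guesses the right form of the inequality, the analysis above reduces it to a clean two-constraint optimization, but the crucial insight is to use the supporting ball at $x_1$ in the direction $m - x_1$ and to verify that the worst case is the planar circular one; verifying that the maximum of the one-variable function is attained in the feasible range $[0,\ell/2]$ (equivalent to $\reach(S) \geq \ell/2$, which holds by hypothesis) is the small computation that makes this work. The remaining iteration and the passage to a genuine continuous curve in $S$ are routine.
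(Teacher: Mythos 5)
The paper does not actually prove \cref{geodesic-vs-embedded}: it is imported verbatim from \cite{BLW}, so there is no internal argument to compare against, and your proposal should be judged as a self-contained proof. As such it is correct, and it is in essence the same bisection-and-projection argument used in the cited source. Your key computation checks out: with $m=(x+y)/2$, $x_1=\pi(m)$, $r=\|m-x_1\|_2\leq \ell/2<R$ (so $\pi$ is defined by \cref{projection-well-defined}), Federer's tangent-ball property gives the exclusion $|x'|^2+x_N^2\geq 2Rx_N$, and eliminating $|x'|^2$ via $\|x-m\|_2=\ell/2$ yields $\|x-x_1\|_2^2\leq \ell^2/4-r^2+2rx_N\leq R(\ell^2/4-r^2)/(R-r)$, whose maximum over $r\in[0,\ell/2]$ is indeed $2R^2\bigl(1-\sqrt{1-\ell^2/(4R^2)}\bigr)=(2R\sin(\alpha/2))^2$; the critical point $r^{*}=R-\sqrt{R^2-\ell^2/4}$ lies in $[0,\ell/2]$ exactly because $\ell<2R$. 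Two small points to make explicit in a full write-up: (i) the degenerate case $r=0$ (no direction in which to place the supporting ball) is handled directly, since then $x_1=m$ and $\|x-x_1\|_2=\ell/2\leq 2R\sin(\alpha/2)$; and (ii) in the limit step, the piecewise-linear interpolations themselves leave $S$, so one should parametrize by dyadic vertices (vertex $j$ of the level-$k$ polygon at parameter $j2^{-k}$), note that all points of the level-$(>k)$ construction inside a dyadic subinterval stay within $2R\alpha 2^{-k}$ of its left vertex, and conclude uniform convergence to a continuous curve in the closed set $S$ whose length, computed over dyadic partitions, is at most $2R\alpha$. With those routine details filled in, the induction $\ell_k\leq 2R\sin(\alpha/2^{k})$ and the bound $2^{k}\cdot 2R\sin(\alpha/2^{k})\leq 2R\alpha$ complete the proof as you describe.
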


\begin{lem}
    \label{key-lemma}
    Let $A \in \R^{d \times d}$ and $\Omega \in O(d)$ such that $\| A - \Omega\|  < 1$, then $A$ has full rank.
\end{lem}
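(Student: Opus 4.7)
The plan is to reduce the statement to the case $\Omega = \id$ and then argue by contradiction using the standard inequality between the Frobenius and operator norms.

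First I would exploit the fact that multiplication by an orthogonal matrix is an isometry with respect to the Frobenius norm (this is \cref{product-by-frame-isometry} applied with $d = n$). Replacing $A$ by $\Omega^t A$, we have $\|\Omega^t A - \id\| = \|\Omega^t(A - \Omega)\| = \|A - \Omega\| < 1$, and $A$ has full rank if and only if $\Omega^t A$ does. So it suffices to prove: if $\|A - \id\| < 1$, then $A$ has full rank.

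Next I would argue by contrapositive. Suppose $A$ does not have full rank; then there exists a unit vector $v \in \R^d$ with $Av = 0$, so $(A - \id) v = -v$ and hence $\|(A - \id) v\|_2 = 1$. The key observation is that the operator $2$-norm is bounded above by the Frobenius norm: for any matrix $B \in \R^{d \times d}$ and any unit vector $v$, we have $\|Bv\|_2 \leq \|B\|$. This is a one-line computation using that $\|B\|^2 = \sum_i \|Br_i\|_2^2$ for any orthonormal basis $\{r_i\}$, and choosing $v$ as the first element of such a basis.

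Applying this to $B = A - \id$ gives $1 = \|(A-\id) v\|_2 \leq \|A - \id\|$, contradicting the hypothesis $\|A - \id\| < 1$. This finishes the proof. There is no serious obstacle here; the only thing to be careful about is the direction of the reduction (left versus right multiplication by $\Omega^t$), which is harmless because \cref{product-by-frame-isometry} gives invariance of the Frobenius norm under orthogonal multiplication on either side.
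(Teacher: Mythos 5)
Your proof is correct, and while the first step coincides with the paper's (both reduce to $\Omega = \id$ via $O(d)$-invariance of the Frobenius norm, i.e.\ \cref{product-by-frame-isometry} with $d=n$), the core argument is genuinely different. The paper proves invertibility directly by a Neumann series: setting $B = \id - A$, it uses submultiplicativity of the Frobenius norm and $\|B\| < 1$ to sum $\sum_{n \geq 0} B^n$ and exhibit an inverse of $A$. You instead argue by contrapositive: a rank-deficient $A$ has a unit kernel vector $v$, so $\|(A - \id)v\|_2 = 1$, and the elementary bound $\|Bv\|_2 \leq \|B\|$ (operator norm dominated by Frobenius norm, which your trace identity $\|B\|^2 = \sum_i \|Br_i\|_2^2$ over an orthonormal basis justifies) forces $\|A - \id\| \geq 1$. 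Your route avoids any series or submultiplicativity and is arguably more elementary; it also makes the sharpness of the constant transparent, since it shows every singular matrix is at Frobenius distance at least $1$ from $O(d)$, matching the matrix $M_0$ used in \cref{reach-O-computed} to show $\reach(O(d)) \leq 1$, and in fact it proves the stronger statement with the operator norm in the hypothesis. What the paper's argument buys in exchange is an explicit formula for $A^{-1}$ as a convergent series, though that extra information is never used. One small remark: your closing comment about invariance under orthogonal multiplication ``on either side'' is unnecessary (you only multiply on the left), and as stated \cref{product-by-frame-isometry} only covers left multiplication; right invariance would need a transpose argument, but nothing in your proof depends on it.
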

\begin{proof}
    It is enough to show that $\Omega^t A$ has full rank, so, since the Frobenius norm is $O(d)$-invariant, it is sufficient to prove it for the case $\Omega = \id$.
    Let $B = \id - A$.
    Since $\|B\| < 1$, and the Frobenius norm is submultiplicative, the matrix $\sum_{n \geq 0} B^n$ is well-defined.
    Finally, we have $(\id - B) \left(\sum_{n \geq 0} B^n\right) = \id$, so $\id - B = A$ is invertible, as required.
\end{proof}

\begin{lem}
    \label{reach-O-computed}
    Consider $O(d) \subseteq \R^{d \times d}$ with the Frobenius distance.
    Then $\reach(O(d)) = 1$.
\end{lem}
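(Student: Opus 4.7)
The strategy is to prove the two inequalities $\reach(O(d)) \geq 1$ and $\reach(O(d)) \leq 1$ separately, both of which reduce to facts already in hand.

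For the lower bound, I would show that every matrix $A \in \R^{d \times d}$ at Frobenius distance strictly less than $1$ from $O(d)$ admits a unique closest point in $O(d)$, and hence lies outside $\med(O(d))$. By \cref{key-lemma}, any such $A$ has full rank; by \cref{unique-polar-decomposition-lem}, the orthogonal factor $U$ in a polar decomposition $A = U P$ is therefore uniquely determined. Applying the orthogonal Procrustes fact recalled at the end of \cref{polar-decomposition-and-procrustes-section} to $\min_{\Omega \in O(d)} \|\Omega - A\|$ (with the ``$M$'' of the Procrustes problem set to $I$), the unique minimizer is precisely $U$. Consequently $O(d)^\epsilon \cap \med(O(d)) = \emptyset$ for every $\epsilon \leq 1$, giving $\reach(O(d)) \geq 1$.

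For the upper bound, I would exhibit a point of $\med(O(d))$ at Frobenius distance exactly $1$ from $O(d)$: take $A_0 \in \R^{d \times d}$ to be the diagonal matrix with entries $(1,\dots,1,0)$. Expanding $\|A_0 - \Omega\|^2 = \|A_0\|^2 - 2\,\trace(A_0 \Omega^t) + \|\Omega\|^2 = (d-1) - 2\sum_{i < d}\Omega_{ii} + d$ and using that $|\Omega_{ii}| \leq 1$ for every $\Omega \in O(d)$, I get $\sum_{i<d}\Omega_{ii} \leq d-1$, with equality forcing the first $d-1$ columns of $\Omega$ to be the corresponding standard basis vectors and the last column to be $\pm e_d$. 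Thus the minimum distance from $A_0$ to $O(d)$ is $1$, attained at precisely two orthogonal matrices, namely $\Omega = I$ and $\Omega$ equal to the diagonal matrix with entries $(1,\dots,1,-1)$. Hence $A_0 \in \med(O(d))$, so $O(d)^\epsilon \cap \med(O(d)) \neq \emptyset$ for every $\epsilon > 1$, yielding $\reach(O(d)) \leq 1$.

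There is no real obstacle here: the proof is an assembly of \cref{key-lemma}, the uniqueness of the polar factor for full-rank matrices, and the orthogonal Procrustes characterization of the closest orthogonal matrix. The only thing requiring a moment of thought is the explicit example $A_0$, together with the observation that the threshold $1$ arises symmetrically from both directions---it is simultaneously the distance at which rank deficiency can occur (by \cref{key-lemma}) and the distance at which a rank-deficient matrix admitting two equidistant closest orthogonal representatives first appears.
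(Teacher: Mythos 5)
Your proof is correct and follows essentially the same route as the paper's: the lower bound via \cref{key-lemma}, uniqueness of the polar factor for full-rank matrices, and the Procrustes characterization, and the upper bound via a rank-deficient diagonal matrix (the paper uses $\mathrm{diag}(0,1,\dots,1)$, you use $\mathrm{diag}(1,\dots,1,0)$) admitting two closest orthogonal matrices at distance $1$. Your explicit trace computation just verifies by hand what the paper gets directly from the polar decomposition $M_0 = \id\, M_0$.
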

\begin{proof}
    Let $M_x$ denote the $d\times d$ diagonal matrix with $1$ in all diagonal entries except the first one, which is $x$.
    Since a polar decomposition of $M_0$ is given by $M_0 = \id M_0$, we have that the identity is a closest orthogonal matrix to $M_0$, by \cref{polar-decomposition-and-procrustes-section}.
    Now note that $\|M_0 - \id\| = 1 = \|M_0 - M_{-1}\|$, so $M_{-1}$ is a closest orthogonal matrix to $M_0$ too, and thus $\reach(O(d)) \leq 1$.

    To conclude, we must show that, if $\|M - \id\| < 1$, then $M$ has a unique closest orthogonal matrix.
    This is a consequence of \cref{key-lemma} and \cref{unique-polar-decomposition-lem}.
\end{proof}

Combined, \cref{geodesic-vs-embedded} and \cref{reach-O-computed} give us upper bounds for the geodesic distance of $O(d)$ in terms of the Frobenius distance.
We will need the following specific bound.

\begin{cor}
    \label{comparison-geodesic-and-frobenius}
    Let $d_G$ be the geodesic distance on $O(d)$ induced by the embedding $O(d) \subseteq \R^{d \times d}$.
    For $M,N \in O(d)$, if $\|M - N\| < 1$, then $d_G(M,N) < \sqrt{2} \pi / 4$.
\end{cor}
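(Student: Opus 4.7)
The plan is to reduce the corollary to a direct application of Lemma \ref{geodesic-vs-embedded} using the reach computation in Lemma \ref{reach-O-computed}. Take $S = O(d)$, viewed as a closed subset of $\R^{d\times d}$ with the Frobenius norm. By Lemma \ref{reach-O-computed}, $R := \reach(O(d)) = 1$, so the hypothesis $\|M-N\| < 1 < 2R$ required by Lemma \ref{geodesic-vs-embedded} is satisfied.

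Before invoking the lemma, one needs to observe that the ``$d_S$'' appearing there, namely the path-length distance induced by restricting the Frobenius distance to $O(d)$, agrees with the geodesic distance $d_G$ in the statement of the corollary. This is exactly the standard fact, recorded in the paragraph immediately preceding Lemma \ref{geodesic-vs-embedded}, that for a smoothly embedded submanifold of Euclidean space the path-length metric coincides with the geodesic distance of the induced Riemannian metric; for $O(d) \subseteq \R^{d\times d}$ the latter is, by construction, the biinvariant metric we are using.

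With these identifications, Lemma \ref{geodesic-vs-embedded} gives
\[
d_G(M,N) \;\leq\; 2\, R\, \arcsin\!\left(\frac{\|M-N\|}{2R}\right) \;=\; 2\arcsin\!\left(\frac{\|M-N\|}{2}\right).
\]
Since $\|M-N\| < 1$ and $\arcsin$ is strictly increasing on $[0,1]$, this yields
\[
d_G(M,N) \;<\; 2\arcsin(1/2) \;=\; \frac{\pi}{3}.
\]
To conclude, note the numerical inequality $\pi/3 < \sqrt{2}\,\pi/4$, which is equivalent to $4 < 3\sqrt{2}$ and holds since $3\sqrt{2} \approx 4.24$.

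There is no real obstacle: the entire content sits in the two previously established lemmas. The only thing that deserves a line of justification is the identification of the two notions of geodesic distance, which is immediate from the embedded-submanifold framework already set up in the preceding paragraph of the appendix.
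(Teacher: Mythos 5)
Your proposal is correct and is essentially the paper's own argument: combine \cref{reach-O-computed} ($\reach(O(d))=1$) with \cref{geodesic-vs-embedded} to get the bound $2\arcsin(1/2)=\pi/3$, and then observe numerically that this is less than $\sqrt{2}\,\pi/4$. Your additional remark identifying the path-length distance with the Riemannian geodesic distance is exactly the justification the paper records in the paragraph preceding \cref{geodesic-vs-embedded}, so nothing is missing.
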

\begin{proof}
    By inspection, $2 \arcsin(1/2) < \sqrt{2} \pi /4$.
    Note that a slightly tighter bound is possible, but we prefer this one for readability.
\end{proof}

\begin{lem}
    \label{systole-of-O}
    Using the geodesic distance $d_G$, we have $\sysp(O(d)) = 2 \sqrt{2} \pi$.
\end{lem}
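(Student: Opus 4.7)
The plan is to reduce the computation to $SO(d)$ and then use the structure of the biinvariant Frobenius metric together with the block diagonalization of skew-symmetric matrices.

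First, I would observe that the two connected components of $O(d)$ are isometric via left multiplication by any fixed matrix of determinant $-1$ (an isometry for the biinvariant Frobenius metric). Every continuous loop is contained in one component, so after translating we may assume the loop lies in $SO(d)$; hence $\sysp(O(d)) = \sysp(SO(d))$, and it suffices to analyze the identity component.

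For the upper bound, I would exhibit an explicit short non-nullhomotopic loop. Let $J_1 \in \so(d)$ be the skew-symmetric matrix whose upper-left $2\times 2$ block is $\bigl(\begin{smallmatrix} 0 & -1 \\ 1 & 0\end{smallmatrix}\bigr)$ and whose remaining entries vanish, so that $\|J_1\|_F = \sqrt{2}$. The curve $\gamma(t) = \exp(2\pi t J_1)$, $t \in [0,1]$, is a smooth loop at the identity whose length is $2\pi\|J_1\|_F = 2\sqrt{2}\pi$, since under the biinvariant metric a one-parameter subgroup $\exp(tX)$ has constant speed $\|X\|_F$ (multiplication by orthogonal matrices preserves the Frobenius norm). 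This loop lies in the standard $SO(2) \hookrightarrow SO(d)$, and on $\pi_1$ it represents the generator; for $d=2$ this is nontrivial in $\pi_1(SO(2)) = \Z$, and for $d \geq 3$ the inclusion $SO(2) \hookrightarrow SO(d)$ sends the generator to the nontrivial element of $\pi_1(SO(d)) = \Z/2$.

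For the lower bound, I would pass to the universal cover $\tilde G \to SO(d)$ (so $\tilde G = \R$ for $d=2$ and $\tilde G = \Spin(d)$ for $d\geq 3$), equipped with the pulled-back biinvariant metric so that the projection is a local isometry. A loop $\gamma$ based at $\id$ lifts to a path $\tilde\gamma$ in $\tilde G$ of the same length, and is nullhomotopic iff $\tilde\gamma(1)$ is the basepoint. Therefore any non-nullhomotopic loop has length at least $\min_{g \neq \tilde e} d_{\tilde G}(\tilde e, g)$, where $g$ ranges over the remaining points of the fiber over $\id$. Using the standard fact that on a compact Lie group with biinvariant metric geodesics through the identity are exactly the one-parameter subgroups, so that $d_{\tilde G}(\tilde e, g) = \inf\{\|X\|_F : \exp(X) = g\}$, the problem reduces to minimizing $\|X\|_F$ over $X \in \so(d)$ whose associated loop $\exp(tX)$ in $SO(d)$ is not nullhomotopic. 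By block diagonalizing $X$ via an element of $O(d)$ (which preserves $\|\cdot\|_F$), I reduce to $X$ with $2\times 2$ blocks $\bigl(\begin{smallmatrix} 0 & -\theta_i \\ \theta_i & 0\end{smallmatrix}\bigr)$; the closing condition $\exp(X) = \id$ in $SO(d)$ forces $\theta_i \in 2\pi\Z$, and the lifted loop is nontrivial precisely when some $\theta_i \neq 0$ (if $d=2$) or when $\sum_i \theta_i/(2\pi)$ is odd (if $d\geq 3$). In either case at least one $|\theta_i| \geq 2\pi$, so
\[
\|X\|_F^2 \;=\; 2\sum_i \theta_i^2 \;\geq\; 2(2\pi)^2 \;=\; 8\pi^2,
\]
yielding $\|X\|_F \geq 2\sqrt{2}\pi$, which matches the upper bound.

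The main obstacle is a clean justification of the identity $d_{\tilde G}(\tilde e, g) = \inf\{\|X\|_F : \exp(X)=g\}$; this is a standard consequence of the fact that biinvariant geodesics are one-parameter subgroups together with Hopf--Rinow surjectivity of the exponential on a compact Lie group, but it must be invoked carefully (with a reference such as \cite{jL}) so that one is sure the lower bound controls \emph{all} paths, not just smooth one-parameter subgroups. A secondary technical point is the classification of which $X \in \so(d)$ with $\exp(X)=\id$ produce nontrivial loops in $\pi_1(SO(d))$ for $d\geq 3$; I would handle this by noting that the map $\pi_1(SO(2)) \to \pi_1(SO(d))$ induced by the standard inclusion is the mod-$2$ reduction and then invoking additivity of $\pi_1$-classes under commuting block sums.
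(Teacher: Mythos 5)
Your proof is correct, and its computational core is the same as the paper's: block-diagonalize a skew-symmetric logarithm of the identity, observe the angles are integer multiples of $2\pi$, and get $\|X\| = 2\pi\sqrt{2}\,\bigl(\sum_i n_i^2\bigr)^{1/2} \geq 2\sqrt{2}\pi$. The route to that computation differs. The paper works directly in $O(d)$: it reduces to loops based at the identity, identifies the constant-speed, locally length-minimizing loops at the identity with one-parameter subgroups $t \mapsto \exp(tL)$, $\exp(L)=\id$ (citing \cite[Proposition~5.19]{jL}), and then does the norm computation; it is terse precisely at the two points you flag, namely why the infimum over \emph{all} non-nullhomotopic loops is controlled by geodesic loops, and why the minimizing configuration (a single $n_i = \pm 1$) is actually non-nullhomotopic so that the value $2\sqrt{2}\pi$ is attained. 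Your detour through the universal cover ($\R$ for $d=2$, $\Spin(d)$ for $d \geq 3$) handles the first point cleanly --- every lift of a non-nullhomotopic based loop ends at a nontrivial point of the fiber, so its length is bounded below by the minimal displacement, and Hopf--Rinow plus ``biinvariant geodesics are one-parameter subgroups'' turns that displacement into $\min\{\|X\|:\exp(X)=g\}$ --- while your explicit loop $\exp(2\pi t J_1)$ together with the standard fact that $SO(2)\hookrightarrow SO(d)$ hits the generator of $\pi_1$ handles the second. What the paper's argument buys is brevity (no cover, no completeness/surjectivity of $\exp$, no $\pi_1$ classification); what yours buys is an explicit lower bound valid for arbitrary paths and an explicit witness for the upper bound. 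One small economy available to you: the parity classification for $d \geq 3$ is not needed for the lower bound, since nontriviality of the class already forces $X \neq 0$ and hence some $|\theta_i| \geq 2\pi$; and your reduction $\sysp(O(d)) = \sysp(SO(d))$ is fine because left translation by a determinant $-1$ matrix is an isometry and each component is open, so homotopies of loops stay in their component.
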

\begin{proof}
    Since $O(d)$ is a group acting on itself by isometries, to compute $\sysp(O(d))$, it suffices to consider loops on the identity matrix $\id \in O(d)$.
    We observe that the constant speed, locally length-minimizing curves from the identity to itself are in bijection with the logarithms $L \in \so(d)$ of the identity.
    More specifically, any such curve can be written as $t \mapsto \exp(t L)$ with $L \in \so(d)$ (\cite[Proposition~5.19]{jL}).
    We also observe that the speed of such a curve is $\|L\|$, the Frobenius norm of $L$.

    So let $L \in \so(d)$ be a skew-symmetric matrix such that $\exp(L) = \id$.
    Since $L$ is skew-symmetric, there exists an orthogonal change of basis such that $L$ is a block-diagonal matrix with either $1 \times 1$ blocks containing a $0$ or $2\times 2$ blocks with $0$ in the diagonal and $\lambda, -\lambda \in \R$ in the anti-diagonal.
    Since $L$ is a logarithm of the identity, the number $\lambda$ in any of these blocks must be an integer multiple of $2\pi$.
    It follows that $\|L\| = \sqrt{ 2 \sum (2 \pi n_i)^2} = 2\pi \sqrt{2} \sqrt{ \sum n_i^2 }$.
    Since we are considering non-nullhomotopic loops, at least one of the integers $n_i$ must be non-zero, and thus the smallest value of $\|L\|$ is attained when one of the $n_i$ is $1$ and the rest are zero, and in such case we have $\|L\| = 2 \sqrt{2}\pi$, concluding the proof.
\end{proof}

\subsection{Riemannian manifolds and covering spaces}

In this section, we use the systole to give a lower bound for the distance between two distinct elements in the fiber of a covering map between Riemannian manifolds, and we prove a metrically controlled lifting property for covering maps between Riemannian manifolds.

\begin{lem}
    \label{reach-and-systole}
    Let $G \to H$ be a covering map between Riemannian manifolds.
    Fix $h \in H$ and let $F \subseteq G$ be the fiber of $h$.
    Then, the infimum of the distances between distinct elements of $F$ is bounded below by $\sysp(H)$.
\end{lem}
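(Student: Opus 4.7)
The plan is to reduce the claim to a statement about lengths of loops in $H$, exploiting the standard homotopy lifting property of covering maps together with the fact that covering maps between Riemannian manifolds are (tacitly, and as needed for the result) local isometries.

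First I would take two distinct points $g_1, g_2 \in F$ and an arbitrary piecewise regular path $\gamma : [0,1] \to G$ from $g_1$ to $g_2$. Its image $\alpha := p \circ \gamma$ is a piecewise regular loop based at $h$. The key observation is that $\alpha$ cannot be nullhomotopic in $H$: if it were, then by the homotopy lifting property of the covering $p : G \to H$, the homotopy to the constant loop at $h$ would lift to a homotopy in $G$ starting from $\gamma$; since the constant loop at $h$ lifts to the constant path at $g_1$, the endpoint of the lifted homotopy would have to keep $\gamma(0) = g_1$ fixed and continuously deform $\gamma(1)$ within the discrete fiber $F$, forcing $\gamma(1) = g_1$, contradicting $\gamma(1) = g_2 \neq g_1$.

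Next, by definition of the systole (\cref{systole-def}), any non-nullhomotopic loop in $H$ has length at least $\sysp(H)$, so $L(\alpha) \geq \sysp(H)$. Because $p$ is a Riemannian covering, it is a local isometry, and hence the velocity of $\alpha$ equals that of $\gamma$ pointwise, giving $L(\gamma) = L(\alpha) \geq \sysp(H)$. Taking the infimum over all piecewise regular paths $\gamma$ from $g_1$ to $g_2$ yields $d_G(g_1, g_2) \geq \sysp(H)$, and the claim follows by taking the infimum over pairs of distinct points in $F$.

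The only mildly delicate point is the tacit assumption that $G$ carries the pulled-back Riemannian metric, so that $p$ is a local isometry; this is standard for what the paper calls a ``covering map between Riemannian manifolds'' and is implicit in the way \cref{change-of-coefficients} uses this lemma (where $G$ is topologized with the geodesic distance coming from the biinvariant metric on $H$). Apart from that, the proof is a direct application of unique path lifting plus the definition of the systole, with no serious obstacle.
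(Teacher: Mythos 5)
Your proof is correct and follows essentially the same route as the paper's: project a path joining two fiber points to a loop in $H$, note it cannot be nullhomotopic (hence has length at least $\sysp(H)$), and use that the covering is a local isometry to transfer the length bound back to $G$. You are in fact slightly more careful than the paper, working with arbitrary piecewise regular paths and taking an infimum rather than invoking a minimizing geodesic, and spelling out the lifting argument for non-nullhomotopy, but the underlying idea is identical.
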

\begin{proof}
    Let $a \neq b \in F$ and consider a geodesic between them.
    The length of this path is equal to the length of the path mapped to $H$, since $G \to H$ is a local isometry.
    The path mapped to $H$ cannot be nullhomotopic since $a$ and $b$ are distinct points of the fiber, so its length is bounded below by the systole of $H$.
\end{proof}

\begin{lem}
    \label{metric-lift}
    Let $\zeta : G \to H$ be a covering map between Riemannian manifolds, with $H$ compact.
    \begin{enumerate}
        \item Let $h,h' \in H$ be such that $d_H(h,h') < \sysp(H)/2$ and let $g \in G$ be a preimage of $h$.
        Then, there exists a unique preimage $g'$ of $h'$ such that $d_G(g, g') < \sysp(H)/2$, and $g'$ has the property that $d_G(g,g') = d_H(h,h')$.
        \item Let $U$ be locally path connected and simply connected and let $v,w : U \to H$ continuous such that, for all $z \in U$, $d_H(v(z),w(z)) < \sysp(H)/2$.
        Then, there exist lifts $\overline{v},\overline{w} : U \to G$ of $v$ and $w$ respectively such that, for all $z \in U$, $d_G(\overline{v}(z), \overline{w}(z)) = d_H(v(z), w(z))$.
    \end{enumerate}
\end{lem}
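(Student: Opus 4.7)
The plan is to prove part (1) by lifting a minimizing geodesic, using that any covering map between Riemannian manifolds (equipped here with a biinvariant metric on $H$ and the pullback metric on $G$) is a local isometry. Since $H$ is compact (hence complete), there is a geodesic from $h$ to $h'$ of length $d_H(h,h') < \sysp(H)/2$; lift it uniquely to a path in $G$ starting at $g$, and let $g'$ be its endpoint. Because $\zeta$ is a local isometry, this lifted path has the same length as the original, so $d_G(g,g') \leq d_H(h,h') < \sysp(H)/2$. Conversely, any path from $g$ to $g'$ projects to a path from $h$ to $h'$ of the same length, giving $d_H(h,h') \leq d_G(g,g')$, whence equality. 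For uniqueness, any other preimage $g''$ of $h'$ with $d_G(g,g'') < \sysp(H)/2$ would satisfy $d_G(g',g'') < \sysp(H)$, contradicting \cref{reach-and-systole}.

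For part (2), fix $z_0 \in U$, choose any lift $\overline{v}(z_0) \in G$ of $v(z_0)$, and use part (1) to let $\overline{w}(z_0)$ be the unique preimage of $w(z_0)$ within distance $\sysp(H)/2$ of $\overline{v}(z_0)$. Since $U$ is simply connected and locally path connected, the standard lifting theorem for covering maps produces unique continuous extensions $\overline{v},\overline{w} : U \to G$ of $v$ and $w$ with these chosen basepoint values. It remains to show that the equality $d_G(\overline{v}(z),\overline{w}(z)) = d_H(v(z),w(z))$ holds for every $z \in U$.

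To do this, consider the subset $A = \{z \in U : d_G(\overline{v}(z),\overline{w}(z)) = d_H(v(z),w(z))\}$. By construction $z_0 \in A$, and I claim $A$ is both open and closed in $U$. Openness follows from continuity of $\overline{v},\overline{w},v,w$ together with the uniqueness in part (1): if $z \in A$, then $d_G(\overline{v}(z),\overline{w}(z)) < \sysp(H)/2$, so the same strict inequality persists on a neighborhood, forcing $\overline{w}(z')$ to be the unique near preimage of $w(z')$ and hence forcing $z' \in A$. Closedness is immediate from continuity of the distance functions on $G$ and $H$ and passing to the limit. Since $U$ is connected (being simply connected), $A = U$, which yields the conclusion.

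The main obstacle is the closedness argument for $A$: naively one only controls the open condition $d_G < \sysp(H)/2$, which is not closed. The trick is to state $A$ as an equality of two continuous functions, which automatically gives a closed set, while exploiting the openness of the strict bound $d_G < \sysp(H)/2$ only in the openness argument via part (1). Once this dichotomy is set up cleanly, the connectedness of $U$ closes the argument without further ado.
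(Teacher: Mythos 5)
Your proof is correct and follows essentially the same route as the paper: part (1) by lifting a minimizing geodesic, using that the projection preserves path lengths, with uniqueness from \cref{reach-and-systole}; part (2) by choosing lifts matched at a basepoint via part (1) and running a clopen/connectedness argument on the equality set, with openness coming from the strict bound $d_G < \sysp(H)/2$ persisting nearby and the uniqueness in part (1), and closedness from continuity. The only cosmetic difference is that the paper spells out the "persists on a neighborhood" step with an explicit $\epsilon/2$ triangle-inequality estimate rather than citing continuity of $z \mapsto d_G(\overline{v}(z),\overline{w}(z))$.
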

\begin{proof}
    We start with the first claim.
    To prove existence, consider a shortest geodesic from $h$ to $h'$, whose length must be strictly less than $\sysp(H)/2$.
    By lifting this path we get a preimage $g'$ of $h'$ such that $d_G(g,g') \leq d_H(h,h')$.
    To prove uniqueness, suppose that $g'' \neq g'$ is a preimage of $h'$.
    By \cref{reach-and-systole}, we have $d_G(g'',g') \geq \sysp(H)$, so $d_G(g,g'') \geq \sysp(H)/2$.
    Finally, since $G \to H$ is $1$-Lipschitz, we have $d_H(h,h') \leq d_G(g,g')$, and thus $d_G(g,g') = d_H(h,h')$.

    For the second claim, let $y \in U$ and use the first claim to choose lifts $g,g' \in G$ of $v(y)$ and $w(y)$ respectively such that $d_G(g,g') = d_H(v(y),w(y))$.
    Since $U$ is locally path connected and simply connected, there exist unique lifts $\overline{v}$ and $\overline{w}$ of $v$ and $w$ respectively such that $\overline{v}(y) = g$ and $\overline{w}(y) = g'$.
    Consider the subset $U' = \{z \in U : d_G(\overline{v}(z),\overline{w}(z)) = d_H(v(z), w(z))\} \subseteq U$.
    This subset is closed and non-empty, so it suffices to show that it is open, since $U$ is connected.
    We conclude the proof by proving this fact.

    Let $z \in U'$ and let $\epsilon = \sysp(H)/2 - d_H(v(z),w(z)) > 0$.
    Let $N$ be an open neighborhood of $z$ such that $\overline{v}(N)$ is contained in the open ball of radius $\epsilon/2$ around $\overline{v}(z)$, and $\overline{w}(N)$ is contained in the open ball of radius $\epsilon/2$ around $\overline{w}(z)$.
    If $z' \in N$, then
    \begin{align*}
    d_G(\overline{v}(z'),\overline{w}(z')) &\leq d_G(\overline{v}(z'),\overline{v}(z)) + d_G(\overline{v}(z), \overline{w}(z)) + d_G(\overline{w}(z),\overline{w}(z'))\\
    &= d_G(\overline{v}(z'),\overline{v}(z)) + d_H(v(z), w(z)) + d_G(\overline{w}(z),\overline{w}(z'))\\
    &< d_H(v(z), w(z)) + 2 \epsilon/2 = \sysp(H)/2
    \end{align*}
    From the first claim, it follows that $d_G(\overline{v}(z'), \overline{w}(z')) = d_H(v(z'), w(z'))$, so $N \subseteq U'$, and thus $U'$ is open, concluding the proof.
\end{proof}

\printbibliography

\end{document}